\newcommand{\g}{\mathfrak{g}}
\newcommand{\pp}{\mathfrak{p}}
\newcommand{\n}{\mathfrak{n}}
\newcommand{\gh}{\hat{\mathfrak{g}}}
\def\Z{\mathbb Z}
\def\C{\mathbb C}
\newcommand{\J}{{J}}
\newcommand{\Jhalf}{\J\text{-mod}_{\frac 12}}
\newcommand{\JJ}{\J\text{-mod}_{1}}
\newcommand{\ghat}{\hat\g}
\newcommand{\gm}{\hat\g\text{-mod}_{1}}
\newcommand{\gmnada}{\hat\g\text{-mod}}
\newcommand{\gmi}{\hat\g\text{-mod}_{i}}
\newcommand{\gt}{\hat\g\text{-mod}^{\,t}_{1}}
\newcommand{\ggm}{\g\text{-mod}_{1}}
\newcommand{\gmh}{\g\text{-mod}_{\frac 12}}
\newcommand{\gmhh}{\hat{\g}\text{-mod}_{\frac 12}}
\newcommand{\so}{\mathfrak{so}}
\newcommand{\ssl}{\mathfrak{sl}}
\newcommand{\psl}{\mathfrak{psl}}
\newcommand{\gl}{\mathfrak{gl}}
\newcommand{\po}{\mathfrak{po}}
\newcommand{\spo}{\mathfrak{spo}}
\newcommand{\oo}{\mathfrak{o}}
\newcommand{\h}{\ensuremath{\mathfrak{h}}}
\newcommand{\End}{\operatorname{End}}
\newcommand{\Hom}{\operatorname{Hom}}
\newcommand{\Der}{\operatorname{Der}}
\newcommand{\Ext}{\operatorname{Ext}}
\newcommand{\im}{\operatorname{Im}}
\newcommand{\Ker}{\operatorname{Ker}}
\newcommand{\rad}{\operatorname{rad}}
\newcommand{\ad}{\operatorname{ad}}
\newtheorem{thm}{Theorem}[section]
\newtheorem{prop}[thm]{Proposition}
\newtheorem{lem}[thm]{Lemma}
\newtheorem{rem}[thm]{Remark}
\newtheorem{Cor}[thm]{Corollary}
 \newcommand{\p}{\rho}
\begin{document}

\title{Representations of simple Jordan superalgebras}
\author{Iryna Kashuba and Vera Serganova} 

\ 
\begin{abstract} This paper completes description of categories of representations of finite-dimensional simple 
unital Jordan superalgebras over algebraically closed field of characteristic zero.

\end{abstract}

\maketitle
\section{Introduction}
The first appearance of Jordan superalgebras goes back to the late 70-s, \cite{Kac1}, \cite{Kan}, \cite{Kap}. 
Recall that a $\Z_2$-graded algebra $J=J_{\bar 0}\oplus J_{\bar 1}$ over a field ${\C}$ is called a Jordan superalgebra
if it satisfies the graded identities:
$$
\begin{array}{c}
a\cdot b=(-1)^{|a||b|}a\cdot b,\\
((a\cdot b)\cdot c)\cdot d+(-1)^{|b||c|+|b||d|+|c||d|}((a\cdot d)\cdot c)\cdot b+(-1)^{|a||b|+|a||c|+|a||d|+|c||d|}((b\cdot d)\cdot c)\cdot a=\\ 
=(a\cdot b)\cdot (c\cdot d)+(-1)^{|b||c|}(a\cdot c)\cdot (b\cdot d)+(-1)^{|d|(b+c)}(a\cdot d)\cdot (b\cdot c),
\end{array}
$$
where $a,b,c,d\in J$ and $|a|=i$ if $a\in J_{\bar i}$. The subspace $J_{\bar 0}$ is a Jordan subalgebra of $J$, while $J_{\bar 1}$ is a Jordan bimodule over
$J_{\bar 0}$, they are referred as the even and the odd 
parts of $J$, respectively.

As in the case of Jordan algebras a lot of examples of Jordan superalgebras come from associative superalgebras, or associative superalgebras with superinvolutions. 
Let $A=A_{\bar 0}\oplus A_{\bar 1}$ be {an} associative superalgebra with product $ab$ then
\begin{equation}
a\cdot b=\frac12(ab+(-1)^{|a||b|}ba).
\end{equation}
is the Jordan product on $A$. The corresponding Jordan superalgebra is usually denoted by $A^+$.
Furthermore, if $\star$ is a superinvolution on $A$, then 
$H(A,\star)=\{a\in A\,|\, a^{\star}=a\}$ is a Jordan superalgebra with respect to the product  $a\cdot b$.

The classification of simple finite-dimensional Jordan superalgebras over a field $\mathbb C$ of characteristic zero was obtained in \cite{Kac1} and then completed in \cite{Kan}. 
Then main tool used in both papers was the seminal Tits-Kantor-Koecher (TKK) construction, which associates to a Jordan 
superalgebra $J$ a certain Lie superalgebra $Lie(J)$.
Let us recall this classification; {we use notations from \cite{ZM}}. There are four series of so called Hermitian superalgebras related to the matrix superalgebra
$M_{m,n}:=\operatorname{End}(\mathbb C^{(m|n)})$: $M_{m,n}^{+}$, $m,n\geq 1$, $Q^{+}(n)$, $n\geq 2$, $Osp_{m,2n}$, $m, n \geq 1$ and $JP(n)$, $n\geq 2$;
the Kantor series $Kan(n)$, $n\geq 2$, exceptional superalgebras introduced in \cite{Kan}; a one-parameter family of $4$-dimensional Jordan superalgebras $D_t$, 
$t\in \C$; the Jordan superalgebra $J(V,f)$ of a bilinear form $f$ and, in addition, {the $3$-dimensional 
non-unital Kaplansky superalgebra $K_3$ and the exceptional $10$-dimensional superalgebra  $K_{10}$ introduced by V. Kac in \cite{Kac1}}.

A superspace $V=V_{\bar 0}\oplus V_{\bar 1}$ with the linear map $\beta:J\otimes V\to V$ is a (super)bimodule over a Jordan superalgebra $J$ if
  $J(V):=J\oplus V$ with the product $\cdot$ on $J$ extended by
$$ v\cdot w=0,\,\,a\cdot v=v\cdot a=\beta(a\otimes v)\,\,\text{for}\,\,v,w\in V,\,a\in J$$
  is a Jordan superalgebra.  The category of finite-dimensional $J$-bimodules will be denoted by $J$-mod. Furthermore if
$J$ is a unital superalgebra the category $J$-mod decomposes into the direct sum of three subcategories
\begin{equation}\label{module-splitting}
J\text{-mod}=J\text{-mod}_0\oplus J\text{-mod}_{\frac12}\oplus J\text{-mod}_1
\end{equation} 
according to the action of the identity element $e\in J$, see \cite{ZM2}. The category $J\text{-mod}_0$ consists of
  trivial bimodules only and is not very interesting.
The category of special (or one-sided) $J$-modules, 
$J\text{-mod}_{\frac12}$, consists of $J$-bimodules on which $e\in J$ acts as $\frac12\operatorname{id}$. Finally, the last category consists of bimodules on
which $e$ acts as $\operatorname{id}$, they are called unital bimodules.  
For the  categories of special and unital bimodules one may introduce the corresponding associative universal enveloping algebras
characterized by the property that the categories of their representations are isomorphic to the categories 
$J\text{-mod}_{\frac12}$ and $J\text{-mod}_1$.

 The classification of  bimodules for simple Jordan superalgebras was started in \cite{Sh1} and \cite{Sh2} where unital irreducible bimodules were studied for the exceptional superalgebras $K_{10}$ and $Kan(n)$ respectively. The method used in these papers was to apply the TKK-construction to bimodules, i.e. to associate to any unital Jordan $J$-bimodule a certain graded $Lie(J)$-module. However the answer for $Kan(n)$ was not complete, since in order to describe $J$-mod$_1$ one has to consider modules over the universal central extension
 $\widehat{Lie(J)}$ instead of $Lie(J)$, this was noticed in \cite{ZM4}. In  \cite{MSZ}, \cite{ZM} the
 coordinatization theorem was proved and classical methods from Jordan theory were applied to classify representations of Hermitian superalgebras. In \cite{ZM2} using the universal enveloping algebras authors deduced the problem of describing bimodules over 
 Jordan superalgebra to associative ones. Finally Lie theory proved to be very useful, as already was mentioned the TKK functors can be extended to representations of $J$ and $Lie(J)$ \cite{ZM}, \cite{ZM4}. Observe that the TKK method can only be used in characteristic zero.
 
 In \cite{ZM}, \cite{ZM2}, \cite{ZM3}, \cite{MSZ}, \cite{Tr}, \cite{ShO} finite-dimensional irreducible modules were classified for all simple
 Jordan superalgebras. Moreover it was shown that both categories $J\text{-mod}_{\frac12}$ and $J\text{-mod}_1$ are completely reducible for all simple 
Jordan superalgebras except $JP(2)$, $Kan(n)$, $M^+_{1,1}$, $D_t$ and  superalgebras of bilinear forms. 
The series $D_t$ for $t\neq\pm 1$ was studied in \cite{ZM3}, the authors showed that all special bimodules are completely reducible and
  unital bimodules are completely reducible if $t\neq -\frac{m}{m+2},-\frac{m+2}{m}$ for some $m\in \Z_{>0}$. In the latter case all indecomposable unital
  bimodules were classified in \cite{ZM3}.
 For $t=\pm 1$ we have $D_{-1}\simeq M^+_{1,1}$, and $D_1$ is isomorphic to the Jordan superalgebra of a bilinear form. We study these cases in the present paper.

 We will describe the categories $J\text{-mod}_{\frac12}$ and $J\text{-mod}_1$ when $J$ is one of the following algebras: $JP(2)$, $Kan(n)$, $M^+_{1,1}$ and superalgebras of bilinear form. Our main tool is the functors $Lie$ and $Jor$ between categories 
\begin{equation}\label{Jor-Lie} J\text{-mod}_{\frac12}\leftrightarrow \gmhh \qquad {\rm and} \qquad J\text{-mod}_1\leftrightarrow \gm
\end{equation}
where $\ghat$ is the universal central extension of $\g=Lie(J)$,  $\gm$ is the category of $\ghat$-modules admitting a short grading 
$M=M[-1]\oplus M[0]\oplus M[1]$, while $\gmhh$ the category of $\ghat$-modules admitting a very short grading 
$M=M[-1/2]\oplus M[1/2]$. For the latter pair the functors $Lie$ and $Jor$ establish the equivalence of categories, in the former case the categories 
$J\text{-mod}_1$ and $\gm$ are not equivalent due to the fact that $\gm$ contains the trivial module. 
  More precisely, the splitting \eqref{module-splitting}  $J\text{-mod}_0\oplus J\text{-mod}_1$ can not be lifted to the Lie algebra $\ghat$ since some
  $\ghat$-modules in $\gm$ have non-trivial extensions with the trivial module.

In all non-semisimple cases considered in this paper $\ghat\neq \g$. This has two consequences. There are more irreducible representations
  with non-trivial central charge and there are self extensions on which the center does not act diagonally. In particular, the categories $\gmhh$ and $\gm$ do
  not have enough projective objects and we have to consider the chain of subcategories defined by restriction of the nilpotency degree of central elements.

The paper is organized as follows.
  In section $2$ we recall the Tits-Kantor-Koecher construction, introduce functors $Jor$ and $Lie$ between the categories in \eqref{Jor-Lie} and discuss
  their properties. Section $3$ contains some miscellaneous facts on ext quivers of the categories and Lie cohomology which we use in the rest of the paper.
  In Sections $4$-$7$ we study $\gm$ and $\gmhh$ for $\g=Lie(J)$ with  $J$ equal to $JP(2)$, $Kan(n)$, $n\geq 2$, $M^{+}_{1,1}$ and the Jordan superalgebra of a bilinear form respectively.

We will use several different gradings on a Lie superalgebra $\g$ and fix notations here to avoid the confusion. The $\Z_2$-grading
will be denoted as $\g=\g_{\bar{0}}\oplus\g_{\bar{1}}$. The short $\Z$-grading corresponding to the Tits-Kantor-Koecher construction
will be denoted as $\g=\g[-1]\oplus\g[0]\oplus\g[-1]$. We would like to point out here that this grading is not compatible with the $\mathbb Z_2$-grading.
Finally some superalgebras have another grading consistent with the superalgebra grading, which will be denoted as 
$\g=\g_{-2}\oplus\g_{-1}\oplus\dots\oplus\g_{l}$.

\section{TKK construction for (super)algebras and their representations}
The Tits-Kantor-Koecher construction was introduced independently in 
\cite{Tits}, \cite{Kan}, \cite{Koecher}. We recall it below. For superalgebras it works in the same way as for algebras.

A {\em short grading} of an (super)algebra $\g$ is a $\Z$-grading of the
form $\g=\g[-1]\oplus\g[0]\oplus\g[1]$. Let $P$ be the commutative
bilinear map on a Jordan superalgebra $\J$ defined by $\,P(x,y)=x\cdot y$. Then we
associate to $\J$ a vector space $\g=Lie(\J)$ with short grading
$\g=\g[-1]\oplus\g[0]\oplus\g[1]$ in the following way. We 
put $\g[1]=\J$, $\g[0]=\langle L_a,[L_a,L_b]\,|\,a,b\in\J\rangle$, where $L_a$ 
denotes the operator of left multiplication in $\J$,
and $\g[-1]=\langle P,[L_a,P]\,|\,a\in\J\rangle$ with the following 
bracket
\begin{itemize}
\item $[x,y]=0$ for $x,\,y\in \g[1]$ or $x,\,y\in \g[-1]$; \item
$[L,x]=L(x)$ for $x\in\g[1]$, $L\in\g[0]$; \item
$[B,x](y)=B(x,y)$ for $B\in\g[-1]$ and $x,y\in\g[1]$; \item
$[L,B](x,y)=L(B(x,y))-(-1)^{|L||B|}B(L(x),y) +(-1)^{|x||y|}B(x,L(y))$ for $B\in\g[-1]$,
$L\in\g[0]$, $x,y\in\g[1].$ 
\end{itemize}
Then $Lie(\J)$ is a Lie superalgebra. Note that by construction $Lie(\J)$ is generated as a Lie superalgebra
by $Lie(\J)_1\oplus Lie(\J)_{-1}$. 

Let $\g=\g[-1]\oplus\g[0]\oplus\g[1]$ be a $\Z$-graded Lie superalgebra and let $f\in\g[-1]$ be even element 
of $\g$ ($f\in\g_{\bar 0}$),
then $\Z_2$-graded space $\g[1]=:Jor(\g)$ is a Jordan superalgebra with respect to the product
\begin{equation}
x\cdot y=\left[[f,x],y\right] \qquad x,y\in\g[1].
\end{equation}

A {\em short subalgebra} of a Lie superalgebra $\g$ is an $\ssl_2$
subalgebra spanned by elements $e,h,f$, satisfying $[e,f]=h, [h,e]=e, [h,f]=-f$, such that the eigenspace
decomposition of $ad\,h$ defines a short grading on $\g$. Consider a
Jordan superalgebra $\J$ with unit element $e$. Then  $e$, $h_{\J}=L_e$ and $f_{\J}=P$ span
a short subalgebra $\alpha_J\subset Lie(\J)$. A
$\Z$-graded Lie superalgebra $\g=\g[-1]\oplus\g[0]\oplus\g[1]$ is called
{\em minimal} if any non-trivial ideal $I$ of $\g$ intersects
$\g[-1]$ non-trivially, i.e. $I\cap \g[-1]$ is neither $0$ nor
$\g[-1]$. Then $Jor$ and $Lie$ establish a bijection between Jordan unital superalgebras
and minimal Lie superalgebras with short subalgebras, \cite{CanK}. Furthermore, a unital
Jordan superalgebra $J$ is simple if and only of $Lie(J)$ is a simple Lie superalgebra.

Let $\J$ be a Jordan superalgebra and $\g=Lie(\J)$. By $\hat{\g}$ we denote the universal central extension of $\g$.
Note that the injective homomorphism $\alpha_J\hookrightarrow \g$ can be lifted
to the injective homomorphism $\alpha_J\hookrightarrow \hat{\g}$ since all finite-dimensional representations of $\alpha_J$ are completely reducible.
In particular, $\hat{\g}$ also has a short grading, the center of $\hat{\g}$ is in $\hat{\g}[0]$, and $\hat{\g}[\pm 1]={\g}[\pm 1]$.

Let $\gmhh$ denote the category of finite-dimensional $\hat{\g}$-modules $V$ over
$\hat{\g}$ such that $h\in\alpha_J$ acts on $V$ with eigenvalues $\pm\frac{1}{2}$ and hence induces the grading $V=V[-\frac{1}{2}]\oplus V[\frac{1}{2}]$. {In non-graded case functors $Jor$ and $Lie$ between $\gmhh$ and $\Jhalf$ were introduced in \cite{KS2}. 
The super case is analogous.}
 Define an $\J$-action on
$V[\frac 12]$ by the formula
$$X\circ v=Xfv=[X,f] v \text{\ for\  any\ } X\in\J, v\in V.$$
Then for any $Y\in\J$
$$X\circ (Y\circ v)+(-1)^{|X||Y|}Y\circ (X\circ v)=(XfY+(-1)^{|X||Y|}YfX)fv.$$
On the other hand,
$$(X\circ Y)\circ v=\frac{1}{2}((Xf-fX)Y-(-1)^{|X||Y|}Y(Xf-fX))fv=\frac{1}{2}(XfY+(-1)^{|X||Y|}YfX)fv.$$
Therefore $V[\frac{1}{2}]$ is a special $\J$-module. Set
$Jor(V):=V[\frac{1}{2}]$. Then
$Jor:\gmhh\to\J-$mod$_{\frac{1}{2}}$  is an exact functor between abelian categories.

Next we construct the inverse functor $Lie:\Jhalf\to
\gmhh$. Assume that $M$ is a special $\J$-module. Let
$V=M\oplus M$, for any $X\in\hat{\g}[1]=\J$, $Z=\frac12
[f,[f,Y]]\in\hat{\g}[-1]$, where $Y\in\hat\g[1]=\J$ and
$(m_1,m_2)\in V$ set
$$X(m_1,m_2)=(0,X\circ m_1),\quad Z(m_1,m_2)=(Y\circ m_2,0).$$
Let $\mathfrak h$ be the Lie subalgebra of End $V$
generated by $\hat\g[\pm 1]$. Note that
$$[X,Z](m_1,m_2)=((-1)^{|X||Y|}Y\circ (X\circ m_1),X\circ (Y\circ m_2)).$$
If $A\in\hat\g[1]$, then
$$
\begin{array}{r}
[[X,Z],A](m_1,m_2)=(0, X\circ (Y\circ (A\circ m_1))+(-1)^{|X||Y|+|X||A|+|A||Y|}A\circ (Y\circ
(X\circ m_1)))=\\=(0,((X\cdot Y)\cdot A-(-1)^{|X||Y|}Y\cdot (X\cdot 
A)+X\cdot (Y\cdot A))\circ m_1).\end{array}$$
Similarly if $C=\frac12[f,[f,B]]$ for some $B\in\hat\g[1]$, then
$$
\begin{array}{r} [[X,Z],C](m_1,m_2)=(X\circ (Y\circ (B\circ m_2))+(-1)^{|X||Y|+|X||B|+|B||Y|}B\circ (Y\circ
(X\circ m_2)),0)=\\=(((X\cdot Y)\cdot B-(-1)^{|X||Y|}Y\cdot (X\cdot 
B)+X\cdot (Y\cdot 
 B))\circ m_1,0).\end{array}$$
Let $\rho:\J\to\End(M)$ denote the homomorphism of Jordan
superalgebras corresponding to the structure of the special $\J$-module on $M$, it induces the epimorphism 
$Lie(\rho): \g\to Lie(\rho(J))$, see Theorem 5.15 in \cite{CanK}. The above calculation shows that $Jor(\mathfrak h)=\rho(\J)$. 
By construction of $Lie$ we have the exact sequence
$$
0\to Z({\mathfrak h})\to {\mathfrak h}\to Lie(Jor({\mathfrak h}))\to 0.
$$
Then $Lie(\rho)$ can be lifted to an epimorphism $\hat\g\to \mathfrak
h$. The latter morphism defines a structure of $\hat\g$-module on
$V$. We put $Lie(M):=V$.

\begin{prop}\label{equivalence_of_categories} The functors $Lie$ and $Jor$ define an
equivalence of the categories $\Jhalf$ and $\gmhh$.
\end{prop}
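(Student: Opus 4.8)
The plan is to show that the two functors $Jor$ and $Lie$ constructed above are mutually quasi-inverse, which amounts to producing natural isomorphisms $Jor\circ Lie\cong \operatorname{id}_{\Jhalf}$ and $Lie\circ Jor\cong \operatorname{id}_{\gmhh}$. Both functors are exact between abelian categories, so once the natural isomorphisms are in place the equivalence follows.

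First I would check $Jor(Lie(M))\cong M$ for a special $\J$-module $M$. By construction $Lie(M)=M\oplus M$ with $\hat\g[1]=\J$ acting by $X(m_1,m_2)=(0,X\circ m_1)$; the grading induced by $h\in\alpha_J$ has $V[\tfrac12]=M\oplus 0$ and $V[-\tfrac12]=0\oplus M$ (one checks $h$ acts as $[f,\cdot]$ followed by $[\cdot,\cdot]$ shifts degrees by $\pm1$, so the first summand sits in degree $\tfrac12$). Then $Jor(Lie(M))=V[\tfrac12]=M\oplus 0$, and the $\J$-action on it is $X\circ v = Xfv$; but $X$ sends $(m,0)$ to $(0,X\circ m)\in V[-\tfrac12]$ and $f=\tfrac12[f,[f,Y]]$-type elements send it back, so the composite recovers exactly the original action $X\circ m$ on $M$. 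This identification is visibly natural in $M$.

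Next, the harder direction $Lie(Jor(V))\cong V$ for $V\in\gmhh$. Given such $V$, set $M=Jor(V)=V[\tfrac12]$; we must identify $Lie(M)=M\oplus M$ with $V$ as $\hat\g$-modules. The natural candidate isomorphism sends $(m_1,m_2)\in V[\tfrac12]\oplus V[\tfrac12]$ to $m_1 + \Phi(m_2)\in V[\tfrac12]\oplus V[-\tfrac12]$, where $\Phi:V[\tfrac12]\to V[-\tfrac12]$ is (a suitable normalization of) the action of $f\in\alpha_J[-1]$. Since $h$ acts with eigenvalues $\pm\tfrac12$ and $e,f$ generate $\alpha_J\cong\ssl_2$ acting completely reducibly, $V$ is a sum of two-dimensional $\alpha_J$-modules, so $f:V[\tfrac12]\to V[-\tfrac12]$ is an isomorphism and $\Phi$ is well-defined and bijective; hence the map $M\oplus M\to V$ is a linear isomorphism preserving the grading. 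One then has to verify it intertwines the $\hat\g$-action: for $X\in\hat\g[1]$ the defining formula for $Lie(M)$ gives $X(m_1,m_2)=(0,X\circ m_1)=(0, Xfm_1)$, which under $\Phi$ matches $Xm_1$ inside $V$ up to the normalization of $\Phi$ and the relation $[f,[f,\cdot]]$; the $\hat\g[-1]$-action is checked symmetrically, and since $\hat\g$ is generated by $\hat\g[\pm1]$ (this holds because $\hat\g[\pm1]=\g[\pm1]$ and $\g=Lie(\J)$ is generated in degrees $\pm1$), intertwining on the generators suffices. The main obstacle is bookkeeping here: making the normalization constants in $\Phi$ and in $Z=\tfrac12[f,[f,Y]]$ consistent, and confirming that the epimorphism $\hat\g\to\mathfrak h$ used to define the $\hat\g$-module structure on $Lie(M)$ agrees, under the identification, with the original action of $\hat\g$ on $V$ — this uses that both are quotients of $\hat\g$ and that the short subalgebra $\alpha_J$ is embedded compatibly. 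Naturality in $V$ is then immediate since $\Phi$ is built functorially from the $f$-action. Finally, since $Jor$ and $Lie$ are exact and mutually quasi-inverse on objects with natural isomorphisms, they give the asserted equivalence of abelian categories.
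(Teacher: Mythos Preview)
Your approach is exactly the one the paper takes: the paper's entire proof is the sentence ``One has to check $Lie(Jor(V))\simeq V$ and $Jor(Lie(M))\simeq M$. Both are straightforward.'' You are simply supplying the details of that check, and your outline---identify $Jor(Lie(M))$ with the appropriate copy of $M$ inside $M\oplus M$, and for the other direction use that $f$ (equivalently $e$) gives an $\alpha_J$-equivariant isomorphism $V[\tfrac12]\to V[-\tfrac12]$ since $V$ is a sum of two-dimensional $\ssl_2$-modules---is the right one.

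Two small points worth tightening. First, your justification that $\hat\g$ is generated by $\hat\g[\pm1]$ is not quite complete: knowing $\g$ is so generated and that $\hat\g[\pm1]=\g[\pm1]$ only tells you that the subalgebra generated by $\hat\g[\pm1]$ surjects onto $\g$. What closes the gap is that $\hat\g$, being a universal central extension, is perfect; then the subalgebra generated by $\hat\g[\pm1]$ is an ideal with abelian quotient, hence equals $\hat\g$. Second, the grading bookkeeping (which summand of $M\oplus M$ carries $h$-eigenvalue $+\tfrac12$) and the normalization of $\Phi$ are genuinely fiddly because of the sign in $[e,f_{\J}]$ coming from the TKK bracket conventions; you already flag this as the main obstacle, and it is---but it is purely a matter of tracking constants, not a conceptual issue.
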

\begin{proof}One has to check $Lie(Jor(V))\simeq V$ and
$Jor(Lie(M))\simeq M$. Both are straightforward.
\end{proof}

Let $\gm$ denote the category of $\hat{\g}$-modules $N$  such that
the action of $\alpha_J$ induces a short grading on $N$, recall that $\JJ$ is the category of unital $\J$-modules. In \cite{KS} the two functors
$$Jor:\gm\to\JJ,\quad Lie:\JJ\to\gm$$ were constructed for Jordan algebra $J$. Analogously, one define these functors in the supercase. 
Namely, if $N\in\gm$, then $N=N[1]\oplus N[0]\oplus N[-1]$. We set $Jor(N):=N[1]$ with action of $\J=\g[1]=\hat\g[1]$ given by
$$x(m)=[f,x]m,\quad x\in\J=\g[1],\ m\in N[1].$$ It is clear that $Jor$ is an exact functor.

Let $M\in\JJ$. Consider the associated null split extension
$\J\oplus M$. Let $\mathcal A=Lie (\J\oplus M)$. Then we have an exact sequence of Lie superalgebras
\begin{equation}\label{exact1}
0\to N\to\mathcal A\xrightarrow{\pi} \g\to 0,
\end{equation}
where $N$ is an abelian Lie superalgebra and $N[1]=M$. By Lemma 3.1, \cite{KS} $M$
is $\hat\g[0]$-module.
Now let $\pp=\hat\g[0]\oplus \g[1]$ and we extend the above  $\hat\g_0$-module structure on $M$ to a $\pp$-module structure
by setting $\g[1] M=0$. Finally we define $Lie(M)$ to be the maximal quotient in $\Gamma (M)=U(\hat\g)\otimes_{U(\pp)}M$ which 
belongs to $\gm$. 

\begin{prop}\label{adjoint}\cite{KS} Functors $Jor$ and $Lie$ have the following properties
\begin{itemize}
\item Let $M\in\gm$ and $K\in\JJ$ $$\operatorname{Hom}_{\hat\g}(Lie(M),K)\simeq \operatorname{Hom}_{\J}(M,Jor(K)),$$
\item If $P$ is a projective module in $\JJ$, then $Lie(P)$ is a projective module in $\gm$.
\item $Jor\circ Lie$ is isomorphic to the identity functor in $\JJ$.
\item  Let $P$ be a projective module in $\gm$ such that $\hat{\g}P=P$. Then $Jor(P)$ is projective in $\JJ$.
\item  Let $L$ be a simple non-trivial module in $\gm$. Then $Jor(L)$ is simple in $\JJ$.
\end{itemize}
\end{prop}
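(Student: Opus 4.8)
All five items are proved exactly as the corresponding statements for Jordan algebras in \cite{KS}; I indicate the strategy and the delicate points. The plan is to begin with the third item, $Jor\circ Lie\cong\mathrm{id}$, on which the rest relies. By the PBW theorem applied to $\hat\g=\hat\g[-1]\oplus\pp$, one has $\Gamma(M)=U(\hat\g)\otimes_{U(\pp)}M\cong U(\hat\g[-1])\otimes M$ as $\Z$-graded vector spaces, with $M$ in degree $1$; hence $\Gamma(M)$ is concentrated in degrees $\leqs1$ and its maximal quotient $Lie(M)$ lying in $\gm$ is obtained by killing the $\hat\g$-submodule generated by the components of degree $\leqs-2$. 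The step that needs care --- a computation with the PBW basis, done in \cite{KS} and valid verbatim here --- is that this submodule meets $\Gamma(M)[1]=M$ in $0$, so that $Lie(M)[1]=M$; the induced $\J$-action $x(m)=[f,x]m$ then reproduces the original one on $M$ by the computation of Section~2, and naturality in $M$ is clear.

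Next I would prove the adjunction of the first item, for $M\in\JJ$ and $K\in\gm$. Any $\hat\g$-homomorphism between $\Z$-graded modules commutes with the action of $h\in\alpha_J$, hence preserves the grading; so every $\varphi\colon\Gamma(M)\to K$ kills the components of $\Gamma(M)$ of degree $\leqs-2$ (which would land in $K$ in those degrees, i.e. in $0$) and therefore the submodule they generate, so that $\Hom_{\hat\g}(Lie(M),K)=\Hom_{\hat\g}(\Gamma(M),K)\cong\Hom_{\pp}(M,K)$ by Frobenius reciprocity. Since $M$ sits in degree $1$ with $\hat\g[1]M=0$, a $\pp$-homomorphism $M\to K$ is the same as an $\hat\g[0]$-homomorphism $M\to K[1]$, and by the dictionary between $\hat\g[0]$-modules and unital $\J$-bimodules (Lemma~3.1 of \cite{KS} and its super analogue) this is precisely a morphism $M\to Jor(K)$ in $\JJ$. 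The second item is then formal: $Lie$ is left adjoint to the exact functor $Jor$, and the left adjoint of an exact functor preserves projectivity.

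For the fifth item, let $L\in\gm$ be simple and non-trivial. Restricting $L$ to $\alpha_J\cong\ssl_2$, every irreducible $\alpha_J$-submodule that meets $L[-1]$ also meets $L[1]$; so if $L[1]=0$ then $L=L[0]$ is $\alpha_J$-trivial, whence $\hat\g[\pm1]L=0$, and since $\hat\g$ is generated by $\hat\g[\pm1]$ the module $L$ is trivial, a contradiction. Thus $Jor(L)=L[1]\neq0$. Now if $0\neq W\subseteq Jor(L)$ is a $\J$-submodule, the composite $Lie(W)\to Lie(Jor(L))\xrightarrow{\varepsilon_L}L$, with $\varepsilon_L$ the counit of the adjunction, is non-zero (apply $Jor$ and use the third item) and hence surjective since $L$ is simple; applying the exact functor $Jor$ once more shows the inclusion $W\hookrightarrow Jor(L)$ is onto, i.e. $W=Jor(L)$. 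Hence $Jor(L)$ is simple.

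The last item is the one I expect to be the main obstacle. Let $P$ be projective in $\gm$ with $\hat\g P=P$, and let $\varepsilon_P\colon Lie(Jor(P))\to P$ be the counit. Applying the exact functor $Jor$ and using the third item, $Jor(\varepsilon_P)$ is an isomorphism, so $C:=\operatorname{coker}\varepsilon_P$ satisfies $Jor(C)=C[1]=0$; every composition factor of $C$ is then a simple object of $\gm$ with vanishing degree-$1$ part, hence trivial by the fifth item, so $\hat\g$ carries $C$ into a proper submodule along any composition series, and $\hat\g C=C$ forces $C=0$. Thus $\varepsilon_P$ is onto; being a surjection onto a projective it splits. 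Finally, given a surjection $g\colon M\twoheadrightarrow M''$ in $\JJ$ and $\psi\colon Jor(P)\to M''$, transport $\psi$ through the adjunction of the first item and the splitting of $\varepsilon_P$ to a map $P\to Lie(M'')$, lift it through the surjection $Lie(M)\twoheadrightarrow Lie(M'')$ (using that $Lie$ is right exact and $P$ is projective), and apply $Jor$; by the first and third items the resulting map $Jor(P)\to M$ is a lift of $\psi$ along $g$. Hence $Jor(P)$ is projective in $\JJ$.
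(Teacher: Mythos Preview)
Your outline is correct and matches the paper's approach: the paper gives no proof of its own and simply cites \cite{KS}, whose argument you have faithfully sketched (and whose adaptation to the super case is, as the paper notes, straightforward). You also silently corrected the evident typo in the first item (the hypotheses should read $M\in\JJ$ and $K\in\gm$), and your justification that $\hat\g$ is generated by $\hat\g[\pm1]$ is fine once one observes that $\hat\g$, as the universal central extension of a simple Lie superalgebra, is perfect.
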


\begin{rem} Note that the correspondence $J\mapsto Lie(J)$ does not define a functor from the category of Jordan superalgebras to the category of Lie superalgebras with short $\ssl(2)$-subalgebra.
    In construction of our functors $Jor$ and $Lie$ we use the following property of TKK construction proven in \cite{CanK}, Section $5$.
    An epimorphism $J\to J'$ of Jordan superalgebras induces the epimorphism $Lie(J)\to Lie(J')$. One can think about analogy with Lie groups and Lie algebras.
    There is more than one Lie group with given Lie algebra. Pushing this analogy further, $\hat\g$ plays the role of a simply connected Lie group. 
  \end{rem}

Let $Z$ denote the center of $\hat{\g}$. For every $\chi\in Z^*$ we denote by $\gm^{\,\chi}$ and $\gmhh^{\chi}$ the full subcategories of 
$\gm$ and $\gmhh$ respectively consisting of the modules annihilated by $(z-\chi(z))^N$ for sufficiently large $N$. We have the decompositions
\begin{equation}\label{subcategory-t}
\gm=\bigoplus_{ {\chi\in Z^*}} \gm^{\,\chi},\qquad \gmhh=\bigoplus_{{\chi\in Z^*}} \gmhh^{\chi}.
\end{equation}

We define $ \Jhalf^{\chi}$ (resp., $\JJ^{\,\chi}$) the full subcategory
  of $\Jhalf$ (resp., $\JJ$) consisting of objects lying in the  image of $\gmhh^{\,\chi}$ (resp., $\gm^{\,\chi}$) under $Jor$.
  It is easy to see that $Jor$ is a full functor. Therefore \eqref{subcategory-t} provides
the decompositions
\begin{equation}\label{jorsubcategory-t}
\JJ=\bigoplus_{ {\chi\in Z^*}} \JJ^{\,\chi},\qquad \Jhalf=\bigoplus_{{\chi\in Z^*}} \Jhalf^{\chi}.
\end{equation}
\begin{rem}\label{corr} Note that $Jor: \gmhh^{\chi}\to  \Jhalf^{\chi}$ is an equivalence of categories. If $\chi\neq 0$, then by Proposition~\ref{adjoint}
 $Jor$ establishes a bijection
between isomorphism classes of simple objects in $\gm^{\,\chi}$ and  $\JJ^{\,\chi}$. Hence in this case it also defines an equivalence of categories.\end{rem}

Furthermore, the categories $\gm^{\,\chi}$ and $\gmhh^{\chi}$ have the filtrations
$$F^1(\gmi^{\chi})\subset F^2(\gmi^{\chi})\subset\dots\subset F^m(\gmi^{\chi})\subset\dots, \quad i=1,\frac12, $$
where $F^m(\mathcal C)$ is the full subcategory of $\mathcal C$ consisting of modules annihilated by $(z-\chi)^m$.
Very often the category $\gm^{\,\chi}$ and $\gmhh^\chi$ do not have projectives but $F^m(\gm^{\,\chi})$ and $F^m(\gmhh^{\chi})$ 
always have enough projective objects.

\section{Auxiliary facts}
\subsection{Quiver of abelian category}

Let $\mathcal C$ be an abelian category and $P$ be a projective generator in $\mathcal C$.
It is a well-known fact (see \cite{Gelf-Manin} ex.2 section 2.6) that the functor $\operatorname{Hom}_{\mathcal C}(P,M)$
provides an equivalence of $\mathcal C$ and the category of right modules over the ring
$A=\operatorname{Hom}_{\mathcal C}(P,P)$. In case when every object in $\mathcal C$  has finite length, $\mathcal C$ has
finitely many non-isomorphic simple objects and every simple object 
has a projective cover, one reduces the problem of classifying indecomposable objects in $\mathcal C$ 
to the similar problem for modules over a finite-dimensional algebra $A$(see \cite{Gab,Gab2}).
If $L_1,\dots,L_r$ is
the set of all up to isomorphism simple objects in $\mathcal C$
and $P_1,\dots,P_r$ are their projective covers, then $A$ is a
pointed algebra which is usually realized as the path algebra of a
certain quiver $Q$ with relations. The vertices of $Q$ correspond
to simple (resp. projective) modules and the number of arrows from
vertex $i$ to vertex $j$ equals to
$\operatorname{dim}\operatorname{Ext}^1(L_j,L_i)$ (resp.
$\operatorname{dim}\operatorname{Hom}(P_i,\operatorname{rad}P_j/\operatorname{rad}^2 P_j$)).

We apply this approach to the case when $\mathcal C$ is  $\gm^\chi$ (respectively $\JJ^\chi$) and $\gmhh^\chi$ 
(respectively $\Jhalf^\chi$). {There is the following relation between quivers of $\gmi^\chi$ and $J\text{-mod}^{\,\chi}_i$}
 \begin{prop}\label{quivers-relation}
  \begin{enumerate}
\item The  $\Ext$ quivers
corresponding to $\gmhh^{\,\chi}$ and $\Jhalf^{\,\chi}$ coincide.
\item If $\chi\neq 0$ the $\Ext$ quivers 
corresponding to $\gm^{\,\chi}$ and $\JJ^{\,\chi}$ coincide.
\item Let $\chi=0$, $Q'$  (resp. $Q$)
  be the $\Ext$ quiver of the category $\JJ^{\,0}$, (resp $\gm^{\,0}$ ) and 
  $A'$ (resp. $A$) be its corresponding path algebra with relations. Then
  $A'=(1-e_0)A(1-e_0)$, where $e_0$ is the idempotent of the vertex $v_0$ corresponding to 
  the trivial representation.
\end{enumerate}
\end{prop}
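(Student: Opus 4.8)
Parts (1) and (2) are immediate from the equivalences already available. By Remark~\ref{corr} the functor $Jor$ restricts to an equivalence $\gmhh^{\,\chi}\to\Jhalf^{\,\chi}$ for every $\chi$, and to an equivalence $\gm^{\,\chi}\to\JJ^{\,\chi}$ whenever $\chi\neq 0$. An equivalence of abelian categories sends a projective generator to a projective generator and induces an isomorphism of the associated endomorphism rings; hence on both sides the path algebras with relations — and in particular the $\Ext$ quivers — coincide. (If one prefers to argue inside the subcategories $F^m$, where projective generators genuinely exist, the same applies, since $Jor$ preserves the condition $z^mV=0$.)

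For (3) the plan is to recognise $Jor\colon\gm^{\,0}\to\JJ^{\,0}$ as the quotient by the Serre subcategory generated by the trivial simple. The first step is to determine which $N\in\gm^{\,0}$ satisfy $Jor(N)=N[1]=0$. Restricting $N$ to the short $\ssl_2$-subalgebra $\alpha_J$ and using that $h$ acts semisimply with eigenvalues in $\{-1,0,1\}$, one sees that $N$, as an $\alpha_J$-module, is a direct sum of one- and three-dimensional irreducibles, the former contained in $N[0]$ and the latter meeting all three graded components; thus $Jor(N)=0$ iff $N=N[0]$, i.e. iff $N$ is killed by $\hat\g[\pm1]$ and hence by $[\hat\g[1],\hat\g[-1]]$, which is equivalent to every composition factor of $N$ being the trivial simple $L_0$ (the vertex $v_0$). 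Writing $L_0,L_1,\dots,L_r$ for the simple objects of $\gm^{\,0}$, the remaining $Jor(L_1),\dots,Jor(L_r)$ are simple by Proposition~\ref{adjoint}; since $\JJ^{\,0}=Jor(\gm^{\,0})$ and $Jor$ is exact, applying $Jor$ to a composition series shows every simple object of $\JJ^{\,0}$ is isomorphic to one of the $Jor(L_i)$, $i\geq 1$.

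The main step is then to transport projective covers. I would fix $m\gg 0$ and work in $F^m(\gm^{\,0})$ and $F^m(\JJ^{\,0})$, where projective covers exist and Proposition~\ref{adjoint} still applies. Let $P_i$ be the projective cover of $L_i$, $i\geq 1$. Its head is $L_i\not\cong L_0$, so there is no nonzero morphism from $P_i$ to any object all of whose composition factors are $L_0$; in particular $\hat\g P_i=P_i$, so by Proposition~\ref{adjoint} $Jor(P_i)$ is projective in $F^m(\JJ^{\,0})$. The key technical point is that $Jor$ is faithful on morphisms out of $P_i$: if $Jor(\phi)=0$ for $\phi\colon P_i\to N$, then $Jor(\operatorname{im}\phi)=0$ by exactness, so every composition factor of $\operatorname{im}\phi$ is $L_0$, forcing $\operatorname{im}\phi=0$ by the previous remark. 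Together with fullness of $Jor$ this gives, for all $i,j\geq 1$, isomorphisms $\Hom_{\gm^{\,0}}(P_i,L_j)\cong\Hom_{\JJ^{\,0}}(Jor P_i,Jor L_j)$ and $\Hom_{\gm^{\,0}}(P_i,P_j)\cong\Hom_{\JJ^{\,0}}(Jor P_i,Jor P_j)$. The first shows $Jor(P_i)$ has simple head $Jor(L_i)$, hence is the projective cover $P'_i$ of $Jor(L_i)$, the $Jor(L_i)$ are pairwise non-isomorphic, and $\{P'_i\}_{i\geq 1}$ is a complete set of indecomposable projectives of $\JJ^{\,0}$. The second, being compatible with composition, makes $\phi\mapsto Jor(\phi)$ a ring isomorphism $\End_{\gm^{\,0}}(\bigoplus_{i\geq 1}P_i)\xrightarrow{\sim}\End_{\JJ^{\,0}}(\bigoplus_{i\geq 1}P'_i)$. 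Taking $P=\bigoplus_{i=0}^{r}P_i$ as projective generator of $\gm^{\,0}$ and $e_0$ the idempotent of its summand $P_0$, the left-hand ring is exactly $(1-e_0)A(1-e_0)$ and the right-hand ring is $A'$; this is the assertion.

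The one genuine difficulty is the bookkeeping imposed by $\hat\g\neq\g$: ``$Jor$ annihilates $N$'' must be read as ``$N$ has only trivial composition factors'', not ``$N$ is a trivial module'' (the latter fails because the centre of $\hat\g$ need not act semisimply on objects of $\gm^{\,0}$), and the whole discussion must be placed inside the categories $F^m$ in order for ``path algebra with relations'' to make literal sense. Granting these points — together with the remark that the arrows of the $\Ext$ quiver are already $m$-independent for $m\geq 2$, since an extension of one simple by another is annihilated by $z^2$ — the identification above is uniform in $m$, which is what is needed.
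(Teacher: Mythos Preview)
Your argument is essentially the content of Lemma~4.10 in \cite{KS}, which is exactly what the paper cites for part~(3); parts~(1) and~(2) you derive from Proposition~\ref{equivalence_of_categories} and Remark~\ref{corr} just as the paper does. So the strategy is the intended one.

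One point deserves care. You appeal to ``fullness of $Jor$'' (which the paper asserts), but $Jor:\gm\to\JJ$ is \emph{not} full in general: if $E$ is a non-split extension $0\to\mathbb C\to E\to L\to 0$ with $L$ a non-trivial simple, then $Jor(E)\cong Jor(L)$, yet the identity in $\Hom_J(Jor L,Jor E)$ has no preimage in $\Hom_{\hat\g}(L,E)=0$. What you actually need, and what \emph{is} true, is that $Jor$ is full (indeed bijective) on $\Hom(P_i,-)$ for $i\geq 1$. You have already proved injectivity. For surjectivity, use the adjunction: the counit $\varepsilon:Lie\,Jor\,P_i\to P_i$ is surjective (since $Jor(\varepsilon)=\operatorname{id}$ and $\operatorname{coker}\varepsilon$ would be a quotient of $P_i$ with only $L_0$ as composition factors), and since $P_i$ is projective the sequence splits, giving $Lie\,Jor\,P_i\cong P_i\oplus\ker\varepsilon$ with $\ker\varepsilon$ projective and $Jor(\ker\varepsilon)=0$; then $\Hom_{\hat\g}(\ker\varepsilon,L_0)=\Hom_J(Jor\ker\varepsilon,0)=0$ forces $\ker\varepsilon=0$. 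Thus $Lie\,Jor\,P_i\cong P_i$, and the adjunction gives
\[
\Hom_J(Jor P_i,Jor N)\cong\Hom_{\hat\g}(Lie\,Jor\,P_i,N)\cong\Hom_{\hat\g}(P_i,N),
\]
which is precisely the bijection you want. With this correction your argument goes through.
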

\begin{proof} First two items follow from Proposition~\ref{equivalence_of_categories}  and Remark~\ref{corr} respectively.
The last part is proved in Lemma 4.10, \cite{KS} for non-graded case and the proof trivially generalizes to supercase.
  \end{proof}
  \begin{rem} Observe that $Q'$ is obtained from $Q$ by removing the vertex $v_0$  and replacing some paths $v\to v_0\to v'$ by the edge $v\to v'$.
    \end{rem}

\subsection{Relative cohomology and extensions}
Let $\g$ be a superalgebra and $M,N$ be two $\g$-modules. Then the extension group $\Ext^i(M,N)$ can be computed via Lie superalgebra cohomology
$$\Ext^i(M,N)\simeq H^i(\g,\Hom_{\mathbb C}(M,N))$$
see, for example, \cite{F}. Let $\h$ be a subalgebra of $\g$ and $\mathcal C$ be the category of $\g$-modules semisimple over $\h$. Then the extension groups
between objects in $\mathcal C$ are given by relative cohomology groups:
$$\Ext_{\mathcal C}^i(M,N)\simeq H^i(\g,\h;\Hom_{\mathbb C}(M,N)).$$
The relative cohomology groups $H^i(\g,\h;X)$ are the cohomology groups of the cochain complex
$$0\to X\to\Hom_{\h}(\Lambda^1(\g/\h),X)\to \Hom_{\h}(\Lambda^2(\g/\h),X)\to \Hom_{\h}(\Lambda^3(\g/\h),X)\to\dots.$$
We use relative cohomology to compute $\Ext^1(M,N)$ when $M,N$ are finite-dimensional $\g$-modules and $\h$ is a simple Lie algebra. The $1$-cocycle
$\varphi\in \Hom_{\h}(\g/\h,X)$ satisfies the condition
 $$\varphi([g_1,g_2])=g_1(\varphi(g_2))-(-1)^{\bar{g}_1\bar{g}_2}g_1(\varphi(g_2)).$$

 We also going to use the following version of Shapiro's lemma for relative cohomology.
 Let $\pp$ be the subalgebra of $\g$ containing $\h$, $M$ be a $\pp$-modules and $N$ be a $\g$-module, then
\begin{equation}\label{shapiro's_lemma}
 H^i(\g,\h;\Hom_{\mathbb C}(\operatorname{Ind}^\g_\pp M,N))\simeq  H^i(\pp,\h;\Hom_{\mathbb C}(M,N)).
 \end{equation}

 \subsection{Some general statements about representations of Lie superalgebras}
 Let $\g$ be a Lie superalgebra and $\h$ be the Cartan subalgebra of $\g$, i.e. a maximal self-normalizing nilpotent subalgebra.
 Then one has a root decomposition
 $\g=\h\oplus\bigoplus\g_{\alpha}$ where $\g_{\alpha}$ is the generalized eigenspace of the adjoint action of $\h_{\bar 0}$.
 Let $\g$ be a simple Lie superalgebra. Assume that $\h_{\bar 1}=0$. It follows from the classification of simple Lie superalgebras
 that this assumption does not hold only for $\mathfrak{q}(n)$ or $H(2n+1)$.  Then for every root $\alpha$ either
 $(\g_{\alpha})_{\bar 0}=0$ or $(\g_{\alpha})_{\bar 1}=0$. Furthermore, if $Q$ is a root lattice of $\g$, one can define a homomorphism
 $p:Q\to\mathbb Z_2$ such that $p(\alpha)$ equals the parity of $\g_{\alpha}$.
 \begin{lem}\label{weighargument}Assume that $\g$ is simple and  $\h_{\bar 1}=0$.
   If $M$ is an indecomposable finite-dimensional $\hat{\g}$-module, then every generalized weight space of $M$ is either purely even or purely odd.
   Hence for a simple module $L$ we have that  $L$ and $L^{op}$ are not isomorphic and do not belong to the same
    block in the category of finite-dimensional $\hat{\g}$-modules.
  \end{lem}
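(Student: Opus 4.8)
The plan is to exploit the homomorphism $p\colon Q\to\mathbb Z_2$ recording the parity of root spaces, which exists precisely because $\h_{\bar 1}=0$ forces each $\g_\alpha$ to be homogeneous. First I would fix a generalized weight $\mu$ of $M$ and consider the decomposition $M=\bigoplus_\mu M^\mu$ into generalized $\h_{\bar 0}$-weight spaces. The key structural observation is that for $g\in\g_\alpha$ one has $g\cdot M^\mu\subset M^{\mu+\alpha}$, and $g$ shifts parity by $p(\alpha)$. Consequently, if I define, for a fixed reference weight $\mu_0$ appearing in $M$, the subspace $M'=\bigoplus_{\mu}\,\{\text{even part of }M^\mu \text{ if } p(\mu-\mu_0)=\bar 0,\ \text{odd part if } p(\mu-\mu_0)=\bar 1\}$ and $M''$ the complementary sum, then both $M'$ and $M''$ are $\g$-submodules: acting by $\g_\alpha$ sends the $\mu$-component to the $\mu+\alpha$-component while flipping parity by $p(\alpha)$, which is exactly the bookkeeping that keeps us inside $M'$ (resp. $M''$). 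Since the center $Z\subset\hat\g[0]$ acts by weight $0$ and does not change parity, the same decomposition is $\hat\g$-stable. Indecomposability of $M$ then forces $M''=0$, i.e. on each weight space $M^\mu$ the parity is dictated by $p(\mu-\mu_0)$; in particular every generalized weight space is purely even or purely odd.

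The one point needing care is the well-definedness of the recipe for $M'$: I must check that $p(\mu-\mu_0)$ depends only on $\mu$ and not on a chosen path of roots connecting $\mu_0$ to $\mu$ inside the support of $M$. This is immediate because $p$ is a homomorphism on the root lattice $Q$ and every weight of $M$ differs from $\mu_0$ by an element of $Q$ (all weights lie in $\mu_0+Q$ since $M$ is generated, under the connected action of the root vectors, from any single weight space — or, more robustly, one works block by block after first splitting $M$ along cosets of $Q$ in $\h_{\bar 0}^*$, which is automatic). I expect this compatibility check — and the verification that the paper's running hypothesis ``$\g$ simple, $\h_{\bar 1}=0$'' is exactly what is used (so that homogeneity of $\g_\alpha$ and hence the existence of $p$ is available, the exceptions being $\mathfrak q(n)$ and $H(2n+1)$ which are explicitly excluded) — to be the only genuinely delicate part; the rest is the parity bookkeeping sketched above.

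For the second assertion, recall $L^{op}$ denotes $L$ with reversed parity. If $M=L$ is simple, the first part says the parities on its weight spaces follow the pattern $\mu\mapsto p(\mu-\mu_0)+c$ for a fixed constant $c\in\mathbb Z_2$; for $L^{op}$ the constant is $c+\bar 1$. An isomorphism $L\xrightarrow{\sim}L^{op}$ would have to be degree $0$ (both sides have the same underlying $\hat\g$-module, and a $\hat\g$-map preserves weight spaces), hence parity-preserving, contradicting that corresponding weight spaces carry opposite parities. More strongly, any nonzero extension $0\to L^{op}\to E\to L'\to 0$ or a chain of such would make $E$ indecomposable with a weight space — namely any $M^\mu$ on which both $L$- and $L^{op}$-type constituents are supported — that is not homogeneous, contradicting the first part applied to $E$. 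Therefore $L$ and $L^{op}$ cannot lie in the same block: no indecomposable module, in particular no projective cover, can have both among its composition factors. This completes the argument.
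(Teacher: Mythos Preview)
Your proposal is correct and follows essentially the same approach as the paper: use the parity homomorphism $p:Q\to\mathbb Z_2$ (available because $\h_{\bar 1}=0$ forces each $\g_\alpha$ to be homogeneous), observe that $\g_\alpha M_\mu\subset M_{\mu+\alpha}$ so that all weights of an indecomposable $M$ lie in a single $Q$-coset, and then the parity of each weight space is determined by $p$. The paper's proof is only three lines and leaves the details you spell out (the explicit $M'\oplus M''$ decomposition, the check that the center preserves it, the block argument for $L$ versus $L^{op}$) to the reader; your write-up is simply a fleshed-out version of the same idea.
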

  \begin{proof} Let $M_\mu$ denote the generalized weight space of weight $\mu$. We have $\g_{\alpha}(M_{\mu})\subset M_{\mu+\alpha}$. Therefore all weights of $M$ belong to $\mu+Q$.
Hence the statement follows from existence of parity homomorphism $p$.
\end{proof}

\begin{lem}\label{selfext} Let $\g$ be a Lie superalgebra with semisimple even part and $M$ be a simple finite-dimensional $\g$-module. Then $\Ext_{\g}^1(M,M)=0$.
Furthermore, if 
$\operatorname{sdim}M=\dim M_{\bar 0}-\dim M_{\bar 1}\neq 0$ then
$\Ext_{\hat g}^1(M,M)=0$.
\end{lem}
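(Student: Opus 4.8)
\textbf{Proof plan for Lemma~\ref{selfext}.}
The plan is to reduce both assertions to the fact that the cohomology of a Lie superalgebra with values in a module can be computed as the cohomology of the invariants once we know that the even part is semisimple. For the first statement, I would use $\Ext^1_{\g}(M,M)\simeq H^1(\g,\Hom_{\C}(M,M))$. Since $\g_{\bar 0}$ is semisimple, a Lie algebra cohomology comparison (the Hochschild--Serre spectral sequence for the pair $\g_{\bar 0}\subset\g$, or simply the averaging argument over $\g_{\bar 0}$) lets me replace the standard complex by the subcomplex of $\g_{\bar 0}$-invariant cochains, whose degree-one term is $\Hom_{\g_{\bar 0}}(\g_{\bar 1},\Hom_{\C}(M,M))$, because $(\g/\g_{\bar 0})$ as a $\g_{\bar 0}$-module is just $\g_{\bar 1}$, which is purely odd. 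A one-cocycle is then an odd map $\varphi\colon\g_{\bar 1}\to\End_{\C}(M)$, i.e. $\varphi(x)$ is an odd endomorphism of $M$ for $x\in\g_{\bar 1}$. Since $M$ is simple with semisimple even part, $\End_{\g_{\bar 0}}(M)=\C$ sits in even degree, so any $\g_{\bar 0}$-equivariant odd map $\g_{\bar 1}\to\End_{\C}(M)$ must land in $\Hom_{\g_{\bar 0}}(\g_{\bar 1},\End_{\C}(M))$, which need not vanish a priori — so the actual input is that the first \emph{cohomology}, not the cochains, vanishes. The cleanest route is to invoke Lemma~\ref{weighargument}: $M$ and $M^{op}$ lie in different blocks, hence $\Ext^1_{\g}(M,M^{op})$ considerations aside, the point is that $\Ext^1_{\g}(M,M)$ is an extension $0\to M\to E\to M\to 0$ in the category of honest $\g$-modules, and the parity homomorphism $p$ forces $E$ to split as a $\g$-module on weight-space grounds exactly as in Lemma~\ref{weighargument}; I would spell this out by noting a nonsplit self-extension would produce an indecomposable $E$ with a weight space that is neither purely even nor purely odd, contradicting Lemma~\ref{weighargument}. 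This disposes of the first claim whenever $\h_{\bar 1}=0$, and the excluded cases $\mathfrak q(n)$, $H(2n+1)$ are not among the $Lie(J)$ arising here, so I would either restrict to that setting or handle them separately by a direct highest-weight argument.

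For the second statement I would pass to $\hat\g$, writing $0\to Z\to\hat\g\to\g\to 0$ with $Z$ the center, which lies in $\hat\g[0]_{\bar 0}$ and acts on $M$ by a character $\chi$. The Hochschild--Serre spectral sequence for this central extension gives a five-term exact sequence
\begin{equation*}
0\to H^1(\g,\End_{\C}M)\to H^1(\hat\g,\End_{\C}M)\to \Hom_{\C}(Z,\End_{\C}(M)^{\g})\to H^2(\g,\End_{\C}M),
\end{equation*}
where I used that $Z$ acts trivially on $\End_{\C}M\simeq M\otimes M^*$ (the character $\chi$ cancels). By the first part $H^1(\g,\End_{\C}M)=0$, so $H^1(\hat\g,\End_{\C}M)$ injects into $\Hom_{\C}(Z,\End_{\C}(M)^{\g})$. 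Now $\End_{\C}(M)^{\g}=\End_{\g}(M)$, and since $M$ is simple with semisimple even part this is just $\C\cdot\operatorname{id}_M$, concentrated in even degree. Thus a class in $H^1(\hat\g,\End_{\C}M)$ is detected by a linear functional $\lambda\colon Z\to\C$, and the corresponding self-extension $0\to M\to E\to M\to 0$ of $\hat\g$-modules is one on which a central element $z$ acts as $\begin{pmatrix}\chi(z)&\lambda(z)\\0&\chi(z)\end{pmatrix}$ in a suitable basis of the two $M$-layers.

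The remaining point — and this is where $\operatorname{sdim}M\neq 0$ enters — is to show such a $\lambda$ must vanish. The idea is that the trace of the action of $z$ on $E$ can be computed in two ways: directly from the matrix it is $2\chi(z)\dim M$ (summing over both even and odd parts with honest trace, not supertrace), but because $z$ lies in the image of a bracket in $\hat\g$ — indeed $Z=[\hat\g,\hat\g]\cap Z$ by universality of the central extension, so every $z\in Z$ is a sum of brackets $[\hat a,\hat b]$ — its \emph{supertrace} on any finite-dimensional module vanishes, giving $\operatorname{str}_E(z)=0$. Comparing, $\operatorname{str}_E(z)=2\,\operatorname{sdim}(M)\,\chi(z)+\text{(contribution of }\lambda)$; the off-diagonal $\lambda$ term contributes $0$ to any trace, so this only yields $\operatorname{sdim}(M)\chi(z)=0$, which is not quite what I want. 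The fix is to instead feed $z=[\hat a,\hat b]$ into the cocycle condition for $\varphi$ representing the class: $\varphi([\hat a,\hat b]) = \hat a\varphi(\hat b)\mp \hat b\varphi(\hat a)$, so $\varphi|_Z$ is a coboundary-like expression, but we must be careful that $\hat a,\hat b$ themselves are not in $Z$. The honest argument: the extension $E$, restricted to $\g_{\bar 0}$ (semisimple), splits, so choose a $\g_{\bar 0}$-splitting; then $\lambda(z)\operatorname{id}_M = $ the action of $z$ relative to this splitting, and writing $z$ as a bracket and using that the splitting is $\g_{\bar 0}$-equivariant plus the Jacobi identity shows $\lambda(z)$ equals a supertrace-type pairing weighted by $\operatorname{sdim}M$; concretely one shows $\lambda(z)\cdot\operatorname{sdim}(M)=0$ by evaluating the supertrace of the $\g$-action on $E$ built from $z=[\hat a,\hat b]$, which forces $\lambda\equiv 0$ once $\operatorname{sdim}M\neq0$.

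\textbf{Main obstacle.} The delicate part is the last paragraph: turning ``$z$ is a bracket'' plus ``$\operatorname{sdim}M\neq0$'' into the vanishing of $\lambda$, i.e. correctly identifying which trace/supertrace identity kills the central self-extension. I expect the clean formulation is: the pairing $(\hat a,\hat b)\mapsto \operatorname{str}_M(\rho_E(\hat a)\rho_E(\hat b))$-type expression associated to a would-be nonsplit central extension is proportional to $\operatorname{sdim}(M)$ times the natural invariant form, and nondegeneracy considerations (or simply $\operatorname{sdim}M\ne0$ making a certain scalar nonzero in the obstruction) force $\lambda=0$; getting the signs and the exact multiple of $\operatorname{sdim}M$ right is the only real calculation, and the conceptual content is entirely in the five-term exact sequence together with $\End_{\g}(M)=\C$.
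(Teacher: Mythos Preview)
Your argument for the first assertion is flawed. Lemma~\ref{weighargument} tells you that in an indecomposable module each generalized weight space has a definite parity, and hence that $M$ and $M^{op}$ cannot occur in the same block. But in a self-extension $0\to M\to E\to M\to 0$ both layers have \emph{the same} parity pattern, so every weight space of $E$ is automatically pure; nothing in Lemma~\ref{weighargument} obstructs $E$ from being nonsplit. The paper's argument is instead a direct highest-weight argument: since $\g_{\bar 0}$ is semisimple its Cartan subalgebra acts semisimply on $\tilde M$, so the highest weight space $\tilde M_\lambda$ is honestly two-dimensional, spanned by two highest weight vectors $v_1,v_2$; then $\tilde M=U(\g)v_1\oplus U(\g)v_2$. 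This works for any $\g$ with semisimple even part, with no simplicity hypothesis on $\g$ and no case analysis.

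For the second assertion your five-term sequence reduction is correct and your endpoint is the right one: a putative nontrivial class is detected by a nonzero linear map $\lambda:Z\to\C$, i.e.\ $\varphi(z)=\lambda(z)\operatorname{id}_M$. But your attempt to kill $\lambda$ via $\operatorname{str}_E(z)$ cannot work: as you yourself observe, the off-diagonal block carrying $\lambda$ contributes nothing to any trace of $z$ on $E$, so you only recover $\operatorname{sdim}(M)\chi(z)=0$, which is useless. The paper's trick is to take the supertrace of $\varphi(z)$ itself, not of the action of $z$ on $E$. From the cocycle identity $\varphi([x,x])=2[x,\varphi(x)]$ for $x\in\g_{\bar 1}$ one gets that $\varphi(z)$ is a supercommutator of odd operators on $M$, hence $\operatorname{str}\varphi(z)=0$; but $\varphi(z)=\lambda(z)\operatorname{id}_M$ has supertrace $\lambda(z)\operatorname{sdim}M$, so $\lambda(z)=0$ once $\operatorname{sdim}M\neq 0$. (Here one uses that the center of $\hat\g$ is spanned by elements of the form $[x,x]$ with $x$ odd, or more generally lies in $[\hat\g_{\bar 1},\hat\g_{\bar 1}]$, which holds in the cases at hand.) This is the missing computation you flagged as the ``main obstacle''.
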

\begin{proof}
  Consider a
  short exact sequence of $\g$-modules
  $$0\to M\to\tilde M\to M\to 0.$$
Then  $\tilde M$ is generated by a highest weight vectors of some weight $\lambda$
  with respect to some Borel subalgebra of $\g$.
  Since the action of Cartan subalgebra of $\g_{\bar{0}}$  on $\tilde M$ is semisimple the weight space $\tilde M_\lambda$ is a span of two
  highest weight vectors $v_1,v_2$. Then $\tilde M=U(\g)v_1\oplus U(\g)v_2\simeq M\oplus M$ and the sequence splits. 

  Now we prove the second identity. We have to show that $H^1(\g,\g_{\bar 0}, \End(M))=0$. Let $\varphi$ be a non-trivial one-cocycle. By the previous proof $\varphi$ is
  not identically zero on the center of $\hat\g$. On the other hand $[x,\varphi(z)]=0$ for every $x\in\hat\g$ and the central element $z$. By Schur's lemma
  we have $\varphi(z)$ is the scalar operator. Furthermore, there exists $x\in\g_{\bar 1}$ such that $z=[x,x]$. That implies
  $$\varphi(z)=2[x,\varphi(x)].$$
  That implies  $\operatorname{str}(\varphi(z))=0$. If $\operatorname{sdim} M\neq 0$ we obtain $\varphi(z)=0$. That gives a contradiction.
     \end{proof}
 
\section{Representations of  $JP(2)$}
Superalgebras $JP(n)$ and $P(n)$ both emerge from the associative superalgebra $M_{n,n}$ with the superinvolution 
$$
\left[\begin{array}{cc} A&B\\ C& D\end{array}\right]^*=\left[\begin{array}{cc} D^T&B^T\\ -C^T& A^T\end{array}\right],
$$
namely $JP(n)$ is the Jordan superalgebra of symmetric elements,  while $P(n)$
is the Lie superalgebra of skewsymmetric elements of $(M_{n+n}^+,*)$.  These superalgebras also related to each other via the TKK 
construction $Lie(JP(n))=P(2n-1)$, where
$$
JP(n)=\left\{\left[\begin{array}{cc} A&B\\ C& A^{T}\end{array}\right]\,|\, A,\,B,\,C\in M_{n}(\C),\  B^{T}=B,\ C^{T}=-C\right\}=
\left[\begin{array}{cc} A&0\\ 0& A^{T}\end{array}\right]_{\bar 0}+\left[\begin{array}{cc} 0&B\\ C& 0\end{array}\right]_{\bar 1}
$$
and
$$
P(2n-1)=\left\{\left[\begin{array}{cc} A&B\\ C& -A^{T}\end{array}\right]\,|\, A,\,B,\,C\in M_{2n}(\C),\ tr A=0,\  B^{T}=B,\ C^{T}=-C\right\}.
$$
The short grading on $P(2n-1)$ is defined by element  
$$
h=\sum_{i=1}^n E_{i,i}-E_{i+n,i+n}+E_{i+2n,i+2n}-E_{i+3n,i+3n}
$$
and the short $\ssl(2)$ algebra is given by the elements $h$, $e$, $f$, where
$$
e=\sum_{i=1}^n E_{i,i+n}- E_{3n+i,2n+i}, \qquad f=\sum_{i=1}^n  E_{i+n,i}-E_{2n+i,3n+i}.
$$
Observe that we follow notations in \cite{Kac2} and \cite{ZM} where $P(n)$ is the Lie 
superalgebra of rank $n$. Both $JP(n)$, $n\geq 2$ and $P(n)$, $n\geq 3$ are simple superalgebras. 

Another way to describe $P(n)$ is to consider the
$(n+1|n+1)$-dimensional superspace $V$ equipped with odd symmetric
non-degenerate form $\beta$, i.e., the map $S^2(V)\to\mathbb C^{op}$ which
establishes an isomorphism $V^*\simeq V^{op}$. Then $\tilde P(n)$ is
the Lie superalgebra preserving this form and $P(n)=[\tilde P(n),\tilde P(n)]$.
The following isomorphisms of $\tilde P(n)$-modules are important to us
\begin{equation}\label{isomP}
S^2(V^*)\simeq S^2(V^{op})\simeq\Lambda^2(V), \quad S^2(V)\simeq \ad^{op}.  
\end{equation}
The second isomorphism is given by the formula
\begin{equation}\label{beta}v\otimes w\mapsto X_{v,w},\,\,
X_{v,w}(u):=\beta(w,u)v+(-1)^{|v||w|}\beta(v,u)w\,\,\text{for
  all}\,\,u,v,w\in V.
  \end{equation}

Finally, denote by $\hat P(n)$ the universal central extension of 
$P(n)$, then for $n\geq 4$ $P(n)=\hat P(n)$, while the superalgebra $\hat P(3)$  has a one-dimensional center.

\subsection{Construction of $\hat P(3)$-modules with short grading and very short grading}
When $n\geq 3$ both categories $JP(n)\text{-mod}_{\frac12}$, $JP(n)\text{-mod}_1$ are semi-simple, \cite{ZM} and \cite{ZM2}.
In \cite{ZM2} it was shown that the category $JP(2)-{\rm mod}_{\frac12}$ is isomorphic to the category of finite-dimensional
modules over the associative superalgebra $M_{2,2}(\C[t])$, i.e. there exists a one-parameter family of irreducible special $JP(2)$-modules.  Unital irreducible $JP(2)$-modules were described in \cite{ZM}, 
for each $\alpha\in \C$ there are two non-isomorphic modules $R(\alpha)$ and $S(\alpha)$ and their opposite. Modules $R(\alpha)$ and $S(\alpha)$ are constructed as a subspaces in $M_{2+2}(A)$, where $A$ is a certain Weyl algebra. In this section we define a family $W(t)$, $t\in\C$ of special irreducible  $JP(2)$-modules and provide another realization of unital irreducible modules, namely $S^2(W(t/2))$ and 
$\Lambda^2(W(t/2))$. We also construct the ext quiver for $JP(2)\text{-mod}_{\frac12}$ and $JP(2)\text{-mod}_1$.

Let $\ghat$ be the central extension of the simple Lie superalgebra $P(3)$.
There is a consistent (with $\mathbb Z_2$-grading)  $\mathbb Z$-grading 
$$\ghat=\g_{-2}\oplus\g_{-1}\oplus\g_0\oplus\g_1,$$
where $\g_{-2}$ is a one-dimensional center, $\g_0$ is isomorphic to $\so(6)$ and $\g_{-1}$ is the standard $\so(6)$-module.
Furthermore, $\g_1$ is isomorphic to one of the two irreducible components of $\Lambda^3(\g_{-1})$ 
(the choice of the component gives isomorphic superalgebra). The commutator $\g_{-1}\times\g_{-1}\to\g_{-2}$ is given by 
the $\g_0$-invariant form. 

Fix $z\in\g_{-2}$. In \cite{Vera} a $(4|4)$-dimensional
simple $\ghat$-module $V(t)$ on which $z$ acts by multiplication by $t$, $t\in\mathbb C$ was introduced. Let $V=\C^{4|4}$ and define a representation $\rho_t:\ghat\to\End_{\C}(V)$ by 
$$\rho_t\left[\begin{matrix}A&B\\C&-A^t\end{matrix}\right]:=\left[\begin{matrix}A&B+tC^*\\C&-A^t\end{matrix}\right],\quad\rho_t(z):=t,$$
where $c^*_{ij}=(-1)^\sigma c_{kl}$ for the permutation $\sigma=\{1,2,3,4\}\to\{i,j,k,l\}$.
We denote the corresponding $\ghat$-module by $V(t)$. When $t=0$ this module coincides with the standard
$\ghat$-module. Observe that for any $t$, $s\in\C$, $V(t)\simeq V(s)$ as $\g_0+\g_1$-modules.

\begin{rem} The other realization of $V(t)$ is as follows. Let $\mathcal D(3)$ be the superalgebra of differential operators on $\Lambda(\xi_1,\xi_2,\xi_3)$ 
with the odd generators $\xi_1,\xi_2,\xi_3,d_1,d_2,d_3$ satisfying the relation:
$$[d_i,\xi_j]=\delta_{ij}, [\xi_i,\xi_j]=[d_i,d_j]=0.$$
Observe that $\mathcal D(3)$ is isomorphic to the Clifford algebra.
It is easy to see that the Lie subsuperalgebra of $\mathcal D(3)$ generated by $1,d_i,\xi_j,\xi_i\xi_j,d_id_j, \xi_1\xi_2\xi_3$ is isomorphic to
$\hat\g$.  As follows from the general theory of Clifford superalgebras
$\mathcal D(3)$ has a unique  $(4|4)$-dimensional
simple module  $V(1)=\Lambda(\xi_1,\xi_2,\xi_3)$. Since $\mathcal D(3)$ is generated by $d_i,\xi_j$ as the associative algebra, the restriction of $V(1)$ is a simple
$\ghat$-module.
\end{rem}

Let $\sigma_t$ denote the automorphism of $\ghat$ such that $\sigma_t(x)=t^i x$ for every $x\in \g_i$, then
 $V(t)\simeq V(1)^{\sigma_{t^{-1/2}}}$. Note that $V(1)^{\sigma_{-1}}$ is isomorphic to $V(1)$. Hence
the construction does not depend on a choice of the square root.

Observe also that $V(t)^*$ is isomorphic to $V(-t)^{op}$.

It is easy to see that $V(t)$ admits a very short grading with respect to the action of $h$ thus $V(t)\in\gmhh$. Moreover from the equivalence of categories  $M_{2,2}(\C[t])$-mod, $JP(2)\text{-mod}_{\frac12}$ 
 and $\hat{P(3)}\text{-mod}_{\frac12}$, \cite{ZM2}, and  Proposition~\ref{equivalence_of_categories}, it follows that $V(t)$ together with its opposite exhaust  all possibilities for simple objects in $\hat{P(3)}\text{-mod}_{\frac12}$.

\begin{prop}\label{jordanhalfmodule}
Let $t\in\C$. On $W=\C^{2|2}$ define a representation $\rho_t:JP(2)\to \End_{\C}(W)$ by
$$
\rho_t\left[\begin{matrix}A&B\\C&-A^T\end{matrix}\right]:=\left[\begin{matrix}A&B+tC\\C&-A^T\end{matrix}\right].
$$
Then any irreducible module in $JP(2)-$mod$_{\frac{1}{2}}$ is isomorphic either to $W(t)=(W,\rho_t)$ or $W(t)^{op}$.
\end{prop}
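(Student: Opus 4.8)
The plan is to deduce the classification of simple objects of $JP(2)\text{-mod}_{\frac12}$ from that of $\gmhh=\hat P(3)\text{-mod}_{\frac12}$ by transporting it through the equivalence $Jor$. Recall from the discussion just above that the simple objects of $\gmhh$ are exactly the modules $V(t)$ and $V(t)^{op}$, $t\in\C$. By Proposition~\ref{equivalence_of_categories} the functor $Jor\colon\gmhh\to JP(2)\text{-mod}_{\frac12}$ is an exact equivalence of abelian categories; an equivalence preserves and reflects simplicity and is essentially surjective, so every simple object of $JP(2)\text{-mod}_{\frac12}$ is isomorphic to $Jor(V(t))$ or $Jor(V(t)^{op})$ for some $t$. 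Inspecting the defining formula $X\circ v=[X,f]v$ one also sees that $Jor$ commutes with parity reversal: on the superspace $V[\frac12]$ the $JP(2)$-action coming from $V^{op}$ differs from the one coming from $V$ exactly by the sign $(-1)^{|X|}$ (as $f$ is even), so $Jor(V^{op})\cong Jor(V)^{op}$. Hence it is enough to establish the single isomorphism $Jor(V(t))\cong W(t)$ of special $JP(2)$-modules, and the statement for the opposites follows.

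To compute $Jor(V(t))$ I would first fix explicit coordinates. By definition $Jor(V(t))=V(t)[\frac12]$, the $\frac12$-eigenspace of $h=h_{JP(2)}$ acting on $V(t)=\C^{4|4}$ through $\rho_t$, equipped with the $JP(2)$-action $X\circ v=[X,f_{JP(2)}]v$, where $X\in JP(2)=\g[1]=\hat\g[1]$ and the bracket is taken in $\hat P(3)$. Using the matrix realization of $P(3)$ and the short $\ssl(2)$-triple $e,h,f$ recorded above, one identifies the weight-$+1$ part $\g[1]$ of $\hat P(3)$ with $JP(2)$ (the matrices of the shape displayed in the statement), and one identifies the eigenspace $V(t)[\frac12]\subset\C^{4|4}$ with the superspace $W=\C^{2|2}$. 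Since the short grading on $P(3)$ is not its $\Z_2$-grading, this step requires care: one must keep track of which $2\times2$ sub-blocks of the $(4|4)\times(4|4)$ matrices carry short-grading weight $+1$ and which coordinate lines of $\C^{4|4}$ span $V(t)[\frac12]$, the two gradings being interleaved as explained in the introduction.

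The heart of the argument is then the computation of the operator $[X,f_{JP(2)}]\in\g[0]$ acting on $V(t)[\frac12]$ through $\rho_t$. Since it lies in $\g[0]$, it acts on $\C^{4|4}$ by the explicit recipe defining $\rho_t$, namely the modification $B\mapsto B+tC^*$ of the standard action; one then checks that the restriction of this operator to the block $V(t)[\frac12]=W$ equals $\frac12\rho_t(X)$, where on the right $\rho_t$ is the map of the statement. In particular the twist $B\mapsto B+tC^*$ of the ambient $(4|4)$-dimensional representation restricts, on the part constituting $\g[1]=JP(2)$ and evaluated on $W$, to exactly the twist $B\mapsto B+tC$ of the statement (up to a fixed nonzero rescaling of the parameter, which is immaterial as $t$ runs over all of $\C$). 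The normalization $\frac12$ is forced by $e\circ v=[e,f]v=hv=\frac12 v$, which also re-confirms that $W(t)$ lies in $JP(2)\text{-mod}_{\frac12}$. This yields $Jor(V(t))\cong W(t)$; in particular $W(t)$ genuinely carries the structure of a special $JP(2)$-module and is irreducible, since $Jor$ is an equivalence. Together with the first paragraph, every irreducible object of $JP(2)\text{-mod}_{\frac12}$ is then isomorphic to $W(t)$ or $W(t)^{op}$ for some $t\in\C$.

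The only genuinely non-formal ingredient is the explicit matching of the last two paragraphs: choosing compatible bases of $\g[1]=JP(2)$ and of $V(t)[\frac12]=W$ inside $\hat P(3)$ and $\C^{4|4}$, and verifying that the $\rho_t$-twist of the ambient $(4|4)$-dimensional representation descends on $W$ to precisely $\frac12$ times the map $\rho_t$ of the statement. I expect the delicate point to be the interplay between the short grading and the $\Z_2$-grading (so that $\g[1]$ really picks out the matrices of the displayed shape) together with the behaviour of the operation $C\mapsto C^*$ under restriction to the relevant sub-blocks. Everything else is available off the shelf: the list of simple objects of $\gmhh$ has already been established above, $Jor$ is known to be an exact equivalence by Proposition~\ref{equivalence_of_categories}, and its compatibility with $(-)^{op}$ is immediate from the defining formulas.
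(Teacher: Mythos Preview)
Your proposal is correct and follows essentially the same route as the paper: use that the simple objects of $\gmhh$ are $V(t)$ and $V(t)^{op}$, transport them through the equivalence $Jor$ of Proposition~\ref{equivalence_of_categories}, and identify $Jor(V(t))\cong W(t)$. The paper's proof is the one-line ``it is enough to check that $W(t)=Jor(V(t))$''; you simply unpack that check in more detail, including the compatibility of $Jor$ with parity change and the bookkeeping between the short grading and the $\Z_2$-grading.
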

\begin{proof} $V(t)\in \gmhh$, thus it is enough to check that $W(t)=Jor(V(t))$. \end{proof}

{The next theorem follows from the equivalence of categories  $M_{2,2}(\C[t])$-mod and $JP(2)\text{-mod}_{\frac12}$, \cite{ZM2}, we give a proof here for the sake of completeness.}
\begin{thm} 
(a)  Every block in the category $\gmhh$ ($JP(2)\text{-mod}_{\frac12}$) has a unique up to isomorphism simple object. 

(b) The category $\gmhh$ ($JP(2)\text{-mod}_{\frac12}$) is equivalent to the category of finite-dimensional $\mathbb Z_2$-graded representations of
the polynomial ring $\mathbb C[x]$.
\end{thm}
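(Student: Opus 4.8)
The plan is to establish both statements simultaneously by identifying a small projective generator in each $F^m(\gmhh^\chi)$ and computing its endomorphism ring. First I would observe that, by Proposition~\ref{quivers-relation}(1) and the equivalence of Proposition~\ref{equivalence_of_categories}, it suffices to work entirely inside $\gmhh$. By Lemma~\ref{weighargument}, applied to $\g = P(3)$ (which does satisfy $\h_{\bar 1}=0$), the simple object $V(t)$ and its opposite $V(t)^{op}$ lie in different blocks, and more generally the functor $M\mapsto M^{op}$ is a block-preserving-free involution; hence it is enough to analyse the blocks containing the $V(t)$. Combined with the remark following Proposition~\ref{jordanhalfmodule} that $V(t)$, $V(t)^{op}$ exhaust the simple objects, this already shows every block contains a unique simple object up to isomorphism once we check $V(t)\not\simeq V(s)$ for $t\neq s$, which follows because the central element $z$ acts by the scalar $t$. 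This gives part (a).

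For part (b), the key computation is $\Ext^1_{\gmhh}(V(t),V(t))$. By the relative-cohomology description in Section~3.2 and Lemma~\ref{selfext}, since $P(3)$ has semisimple even part $\ssl(4)\times\C$-ish (more precisely even part with the required semisimplicity, and here $\operatorname{sdim}V(t)=4-4=0$, so the second part of Lemma~\ref{selfext} does not apply directly and the center genuinely contributes), one expects $\Ext^1_{\gmhh}(V(t),V(t))$ to be one-dimensional, spanned by the non-split self-extension on which the central element $z$ of $\hat P(3)$ acts non-diagonally (with Jordan block) — this is exactly the phenomenon flagged in the introduction. I would verify this by a direct $1$-cocycle computation: a class in $H^1(\hat\g,\h;\End V(t))$ restricted appropriately is detected by its value on $z$, which by Schur is a scalar, and the cocycle condition together with $z=[x,x]$ forces that scalar to be the only parameter. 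The higher self-extensions are then governed by the nilpotency filtration $F^m$, and inside $F^m(\gmhh^{\chi})$ the module built by iterating this self-extension $m$ times — equivalently $V(t)\otimes_{\C}\C[x]/(x^m)$ with $x$ acting as the nilpotent part of $z$ — is the projective cover of $V(t)$. Its endomorphism ring is $\C[x]/(x^m)$, so $F^m(\gmhh^\chi)$ is equivalent to finite-dimensional $\C[x]/(x^m)$-modules; taking the union over $m$ gives the equivalence with all finite-dimensional $\C[x]$-modules claimed in (b), and the $\Z_2$-grading is carried along because $V(t)$ is homogeneous and $x$ acts by an even operator.

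Concretely, I would realize this projective/injective object explicitly rather than abstractly: using the realization $\rho_t$ of $V(t)$, deform it over $\C[x]$ by replacing the scalar $t$ with $t + x$ acting on $V\otimes \C[x]/(x^m)$, i.e. set $\tilde\rho\left[\begin{smallmatrix}A&B\\C&-A^t\end{smallmatrix}\right]=\left[\begin{smallmatrix}A&B+(t+x)C^*\\C&-A^t\end{smallmatrix}\right]$ and $\tilde\rho(z)=t+x$. One checks this is a $\hat P(3)$-module in $F^m(\gmhh^\chi)$ with $\chi(z)=t$, that it surjects onto $V(t)$, and that $\Hom$ out of it into any object of $F^m(\gmhh^\chi)$ is exact (projectivity) — the last being where the argument has real content. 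Then $\End(\tilde V) = \C[x]/(x^m)$ by a weight/Schur argument, and the quoted equivalence of Section~3.1 between $\mathcal C$ and modules over $\End$ of a projective generator finishes it.

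\textbf{Main obstacle.} The delicate point is proving projectivity of the deformed module $\tilde V$ in $F^m(\gmhh^\chi)$ (equivalently, that $\dim\Ext^1_{\gmhh}(V(t),V(t))=1$ and that there are no exotic extensions with $V(s)$, $s\neq t$, or with $V(t)^{op}$). The vanishing of cross-extensions $\Ext^1(V(t),V(s))=0$ for $t\neq s$ should follow from the central character decomposition \eqref{subcategory-t}; the vanishing $\Ext^1(V(t),V(t)^{op})=0$ follows from Lemma~\ref{weighargument}. So the crux is pinning the self-extension group to be exactly one-dimensional, for which the relative Lie-cohomology computation sketched above — detecting the class on the center $z$ and using $z=[x,x]$ with $x\in\g_{\bar 1}$ — is the natural tool; alternatively one may simply cite the equivalence $JP(2)\text{-mod}_{\frac12}\cong M_{2,2}(\C[t])\text{-mod}$ of \cite{ZM2}, under which the statement is transparent since $M_{2,2}(\C[t])$ is Morita equivalent to $\C[t]=\C[x]$.
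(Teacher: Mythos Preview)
Your proposal is correct and rests on the same key construction as the paper: the one-parameter deformation of $V(t)$ obtained by promoting the scalar $t$ to a formal variable $x$ (your $\tilde\rho$ with $t+x$, the paper's $V(x)$ as a $U(\hat\g)\otimes\C[x]$-module). Part (a) is handled identically in both.

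The difference is in how part (b) is finished. You climb through the filtration $F^m(\gmhh^\chi)$, arguing that the truncation $V\otimes\C[x]/(x^m)$ is a projective generator with endomorphism ring $\C[x]/(x^m)$; this requires the explicit check that $\dim\Ext^1(V(t),V(t))=1$, which you correctly identify as the crux. The paper instead writes down the adjoint pair $F(M)=V(x)\otimes_{\C[x]}M$ and $G=\Hom_{\hat\g}(V(x),-)$ and observes that they induce a bijection on simples, hence an equivalence. The paper's route is shorter and sidesteps the cohomology computation entirely; your route is more hands-on and makes the block structure and the role of the central self-extension more transparent. Both are standard and either would be acceptable; your fallback of citing the Morita equivalence $M_{2,2}(\C[t])\sim\C[t]$ from \cite{ZM2} is exactly what the paper also acknowledges as the underlying reason.
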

\begin{proof} To prove (a) we just note that
$\Ext^1(V(s), V(t))=\Ext^1(V(s), V(t)^{op})=0$ if $t\neq s$ since the modules have different central charge.
Furthermore, from Lemma~\ref{weighargument} we have $\Ext^1(V(t), V(t)^{op})=0$.

To prove (b) we consider the family $V(x)$ defined as above where $x$ is now a formal parameter. Then $V(x)$ is a module 
over $U(\hat\g)\otimes \mathbb \C[x]$. Let $M$ be a finite-dimensional 
$\mathbb \C[x]$-module. Set $F(M):=V(x)\otimes_{\mathbb \C[x]}M$. Obviously $F(M)$ is a $\hat\g$-module. Moreover, $F$ defines an exact functor from the category of
finite-dimensional $\mathbb Z_2$-graded $\mathbb \C[x]$-modules to the category $\gmhh$. The functor $G:=\Hom_{\g}(V(x), ?)$ is its left adjoint.
The functors $F$ and $G$ provide a bijection between isomorphism classes of simple  objects in both categories and hence establish their equivalence.
\end{proof}

Now we will describe the simple modules in the category $\gm$. Let us consider the decomposition
$$V(t/2)\otimes V(t/2)= S^2V(t/2)\oplus \Lambda^2 V(t/2).$$
Then clearly both $ S^2V(t/2)$ and $\Lambda^2 V(t/2)$ are objects in $\gm$ and have central charge $t$.

\begin{lem}\label{simple} 

(a) If $t\neq 0$, then $ S^2V(t/2)$ and $\Lambda^2 V(t/2)$ are simple. 

(b) If $t=0$ we have the following exact sequences
$$0\to L^+(0)\to S^2(V)\to \C^{op}\to 0,\quad 0\to \C^{op}\to \Lambda^2(V)\to  L^-(0)\to 0,$$
where $L^\pm(0)$ are some simple $\g$-modules.
\end{lem}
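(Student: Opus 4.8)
The plan is to analyze the two cases separately. For part (a), since $S^2V(t/2)$ and $\Lambda^2V(t/2)$ both lie in $\gm$ with central charge $t\neq 0$, Remark~\ref{corr} gives that $Jor$ sends simple objects of $\gm^{\,\chi}$ bijectively to simple objects of $\JJ^{\,\chi}$; so it suffices to show that $Jor(S^2V(t/2))$ and $Jor(\Lambda^2V(t/2))$ are simple $JP(2)$-modules, or alternatively to argue directly. The cleanest route is the latter: restrict to $\g_0\cong\so(6)$ (together with $\g_1$), using the fact noted in the excerpt that $V(t)\cong V(1)$ as a $\g_0+\g_1$-module. Thus $S^2V(t/2)$ and $\Lambda^2V(t/2)$ decompose, as $\g_0+\g_1$-modules, the same way as $S^2V(1)$ and $\Lambda^2V(1)$, which I can read off from the $\so(6)$-module structure of $V=\C^{4|4}$ (the two half-spinor-type pieces together with the trivial summand). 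I would then show that the operators $\rho_t$ corresponding to the off-diagonal $B$-blocks, whose action is genuinely $t$-dependent (the $tC$ term), mix these $\g_0$-isotypic pieces, so that no proper nonzero $\ghat$-submodule can exist when $t\neq 0$: any submodule is a sub-$\g_0$-module, hence a sum of isotypic components, and the extra $\rho_t$-terms force it to be everything.

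For part (b), when $t=0$ the module $V(0)=V$ is the standard $\ghat$-module $\C^{4|4}$. Here I would compute $S^2(V)$ and $\Lambda^2(V)$ as $\ghat$-modules directly. The key input is the isomorphisms \eqref{isomP}: for $P(n)$ one has $S^2(V)\cong\ad^{op}$ and $\Lambda^2(V)\cong S^2(V^*)\cong S^2(V^{op})$, with the odd symmetric form $\beta$ giving the pairing. Concretely, the map \eqref{beta} sending $v\otimes w\mapsto X_{v,w}$ realizes $S^2(V)$ inside $\End(V)$; its image is $\tilde P(3)$, and $P(3)=[\tilde P(3),\tilde P(3)]$ has codimension one inside $\tilde P(3)$ (or rather, the trace-type functional cuts out a one-dimensional quotient), which gives the short exact sequence $0\to L^+(0)\to S^2(V)\to\C^{op}\to 0$ after passing to the central extension $\ghat$; here $L^+(0)$ is the adjoint-type simple module. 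Dually, $\Lambda^2(V)\cong S^2(V^{op})$ carries $\C^{op}$ as a \emph{sub}module (the copy of the form $\beta^{-1}\in S^2(V^{op})$, which is $\ghat$-invariant) with simple quotient $L^-(0)$, giving $0\to\C^{op}\to\Lambda^2(V)\to L^-(0)\to 0$. That $L^\pm(0)$ are simple follows because $P(3)$ is simple (hence its adjoint-type module, suitably interpreted for $\ghat$, is simple) and the quotient/sub of the spinor-square by a trivial line has no further invariants; alternatively one identifies $L^\pm(0)=Lie$ of the corresponding simple unital $JP(2)$-modules from \cite{ZM} via the correspondence of Section~2.

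I expect the main obstacle to be part (a): verifying cleanly that the $t$-deformed operators mix the $\g_0$-isotypic components and that this really forces irreducibility, rather than just generic irreducibility. One has to be careful that $V(t)$ is genuinely different from $V(0)$ only through the $B+tC$ entry, and that squaring this representation the cross-terms linking the two ``spinor'' halves of $S^2$ (resp. $\Lambda^2$) do not vanish; a short explicit computation with a single $C$-block, e.g. $C=E_{ij}-E_{ji}$, acting on a highest-weight-type vector should settle it, but one must make sure the relevant matrix coefficient is a nonzero multiple of $t$. A secondary subtlety is bookkeeping the parity shift $\C^{op}$ versus $\C$ and the ``op'' decorations throughout, so that the two exact sequences in (b) come out with the trivial module on the correct side (quotient for $S^2$, sub for $\Lambda^2$), which is dictated by the duality $\Lambda^2(V)\cong S^2(V^{op})$ together with $S^2(V)\cong\ad^{op}$ in \eqref{isomP}. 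Everything else is routine representation theory of $\so(6)$ and the definitions of $Jor$ and $Lie$.
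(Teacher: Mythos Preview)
Your plan for (b) is essentially the paper's: the odd invariant form $\beta$ gives the surjection $S^2(V)\to\C^{op}$ with kernel $L^+(0)$; simplicity of $L^+(0)$ follows since $L^+(0)^{op}\simeq\ad$ and $P(3)$ is simple; dualizing via $V^*\simeq V^{op}$ yields the second sequence and the simplicity of $L^-(0)$.

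For (a) the paper takes a different and much shorter route. It proves (b) \emph{first}, then argues: the family $S^2V(t/2)$ is a polynomial deformation of $S^2(V)$, and for $t\neq 0$ all members are related by the automorphism twist $\sigma_{t^{-1/2}}$, hence have identical submodule lattices. Combined with the length-two structure from (b), this yields the dichotomy ``either $S^2V(t/2)$ is simple or it has a one-dimensional quotient.'' But a one-dimensional $\hat\g$-module has central charge $0$, contradicting the charge $t\neq 0$. Duality via $V(t)^*\simeq V(-t)^{op}$ then handles $\Lambda^2$. No matrix coefficient is ever computed.

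Your direct approach is viable but heavier than you indicate, and the sketch has two soft spots. First, a bookkeeping slip: the $t$-dependence in $\rho_t$ lives in the $\g_{-1}$-action (the $C$-block contributes $tC^*$ in the upper right), not in the $B$-block. Second, ``the extra terms mix the $\g_0$-isotypic pieces'' is not yet irreducibility: at $t=0$ the span of the $\g_0$-components of $L^+(0)$ is a proper $\hat\g$-submodule of $S^2(V)$, so mixing among components is not sufficient. What you actually need is that for $t\neq 0$ the deformed $\g_{-1}$-action carries that span \emph{out} into the $\C^{op}$-line; one explicit matrix coefficient would do, and you correctly flag this as the crux, but you have not computed it. The paper's deformation argument replaces that computation by the single observation that no one-dimensional module can carry a nonzero central charge.
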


\begin{proof} Let us prove (b). The first exact sequence follows from existence of $\g$-invariant odd symmetric form $\beta$ on $V$, \eqref{beta},  
the second is the dualization.
Moreover $L^+(0)^{op}$ is the adjoint representation in $P(3)$, hence simple. But then $L^+(0)$ 
is obviously simple, $L^-(0)$ is simple by duality.

To prove (a) we observe that $ S^2V(t/2)$ is a polynomial deformation of $S^2(V)$. Moreover, for all $t\neq 0$ the corresponding modules are 
related by twisting with an automorphism. Thus, either $ S^2V(t/2)$ is simple or it has a $1$-dimensional quotient. 
But there is no one dimensional module with
non-zero central charge. Hence $ S^2V(t/2)$ is simple. The proof for $\Lambda^2 V(t/2)$ follows by duality.
\end{proof}

For $t\neq 0$ we set $L^+(t)=S^2V(t/2)$, $L^-(t)=\Lambda^2V(t/2)$.
\begin{thm} A simple object in  $\gm$ is isomorphic to one of the following: $L^\pm(t), L^\pm(t)^{op}, \C$ or $\C^{op}$.
\end{thm}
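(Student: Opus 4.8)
The plan is to reduce the classification of simple objects in $\gm$ to the classification of simple objects in $\gmhh$ via the adjunction between $Lie$ and $Jor$ of Proposition~\ref{adjoint}, combined with the fact that $\gm$ decomposes along central characters as in \eqref{subcategory-t}. First I would treat the case of nonzero central charge $\chi\neq 0$. Here Remark~\ref{corr} tells us that $Jor:\gm^{\,\chi}\to\JJ^{\,\chi}$ gives a bijection on isomorphism classes of simple objects, and more importantly the decomposition $V(t/2)\otimes V(t/2)=S^2V(t/2)\oplus\Lambda^2V(t/2)$ together with Lemma~\ref{simple}(a) shows $L^\pm(t)$ and their opposites are genuinely simple objects of $\gm^{\,\chi}$ for $\chi$ the character sending $z\mapsto t$. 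So the real content is to show these exhaust all simple objects with that central charge. For this I would argue: a simple $N\in\gm^{\,\chi}$ with $\chi\neq0$ has $N=N[-1]\oplus N[0]\oplus N[1]$, and the $\g[1]=\J$-module $Jor(N)=N[1]$ is a simple unital $JP(2)$-module by the last item of Proposition~\ref{adjoint}; dually $N[-1]$ is governed by a special-type module. The key point is that $S^2V(t/2)$ and $\Lambda^2V(t/2)$ are built out of $V(t/2)$, which by the Theorem above exhausts (together with its opposite) all simple objects of $\gmhh^{\,\chi'}$ for the appropriate $\chi'$; one then checks that any simple $N\in\gm^{\,\chi}$ must have its degree-$1$ piece, as a module over the short subalgebra and over $\g[0]$, realized inside $V(t/2)\otimes V(t/2)$, forcing $N\cong L^+(t)$, $L^-(t)$, or one of their opposites. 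Lemma~\ref{weighargument} applies since $\g=P(3)$ has $\h_{\bar1}=0$, so opposites are never isomorphic to the original and the list has no redundancy.

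Next I would handle the case $\chi=0$. By Lemma~\ref{simple}(b) we have the nontrivial modules $L^\pm(0)$ sitting in exact sequences with the one-dimensional $\C^{op}$, and by Proposition~\ref{quivers-relation}(3) the category $\gm^{\,0}$ differs from $\JJ^{\,0}$ exactly by the extra vertex $v_0$ corresponding to the trivial representation. So the simple objects of $\gm^{\,0}$ are: the simple objects of $\JJ^{\,0}$ pulled back via (the essential surjectivity part of) $Jor$, namely $L^+(0)$, $L^-(0)$ and their opposites (these are simple by Lemma~\ref{simple}(b): $L^+(0)^{op}$ is the adjoint representation of the simple Lie superalgebra $P(3)$), plus the trivial module $\C$ and its opposite $\C^{op}$. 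To see there are no others one uses that $Jor$ sends simple nontrivial modules to simple modules and is essentially surjective onto $\JJ^{\,0}$ up to the trivial-module discrepancy, together with the identification $L^+(0)^{op}\cong P(3)$ as the adjoint module; a simple $N\in\gm^{\,0}$ which is not trivial has $Jor(N)=N[1]$ a nonzero simple unital module, hence $N[1]$ is isomorphic to one of $Jor(L^\pm(0))$ and then $N\cong L^\pm(0)$ or its opposite since $Lie$ of a simple unital module recovers the simple $\gm$-object.

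Finally I would combine the two cases: every simple object of $\gm$ lies in some $\gm^{\,\chi}$, and by the above is one of $L^\pm(t)$, $L^\pm(t)^{op}$ for $t\neq0$, or $L^\pm(0)$, $L^\pm(0)^{op}$, $\C$, $\C^{op}$ — and $L^\pm(0)$ fits the pattern of the $t=0$ specialization of the family, so the clean statement is exactly the list $L^\pm(t)$, $L^\pm(t)^{op}$, $\C$, $\C^{op}$. The main obstacle I anticipate is the exhaustiveness argument in the $\chi\neq0$ case: one needs to rule out simple $\gm$-objects whose graded pieces are not visibly of the form $S^2$ or $\Lambda^2$ of $V(t/2)$. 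The cleanest route is probably to observe that for a simple $N\in\gm^{\,\chi}$ the degree-$(\pm1)$ pieces are, via the short $\ssl(2)$ and the functor $Jor$ composed with the equivalence of Proposition~\ref{equivalence_of_categories}, controlled by $\gmhh$-objects of half the central charge, which are direct sums of copies of $V(t/2)$ and $V(t/2)^{op}$; parity considerations (Lemma~\ref{weighargument}) then force the $V(t/2)$-isotypic or $V(t/2)^{op}$-isotypic case, and a dimension/highest-weight count inside $V(t/2)\otimes V(t/2)$ pins down which of $S^2$, $\Lambda^2$ occurs. I would also need to double-check that $S^2V(t/2)$ and $\Lambda^2V(t/2)$ are not isomorphic to each other or to their own opposites, which again follows from Lemma~\ref{weighargument} and the fact that they have different highest weights (equivalently, $L^+(0)^{op}$ being the adjoint module of $P(3)$ while $L^-(0)$ is its dual).
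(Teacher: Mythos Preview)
Your proposal has a genuine gap precisely where you flagged it: the exhaustiveness argument. The paper does not attempt an intrinsic classification of simple objects in $\gm^{\,\chi}$ from scratch; instead it quotes an external result (Theorem~3.10 of \cite{ZM2}) asserting that for each $t\in\C$ there are exactly four non-isomorphic simple objects in $\JJ^{\,t}$, then matches them to $L^\pm(t)$ and $L^\pm(t)^{op}$ by a dimension comparison, and finally appends $\C$ and $\C^{op}$ on the Lie side. Via Proposition~\ref{adjoint} this immediately gives the list of simples in $\gm^{\,\chi}$.

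Your substitute for this external input does not go through. The assertion that the degree-$(\pm 1)$ pieces of a simple $N\in\gm^{\,\chi}$ are ``controlled by $\gmhh$-objects of half the central charge'' has no basis: modules in $\gm$ carry an $h$-grading with eigenvalues in $\{-1,0,1\}$, not $\pm\tfrac12$, so there is no functor $\gm^{\,t}\to\gmhh^{\,t/2}$ extracting such a piece, and no a priori reason why a simple $N$ should embed in (or be a quotient of) $V(t/2)\otimes V(t/2)$. The halving of the central charge in the construction of $L^\pm(t)$ is an artifact of how those particular modules were built, not a general structural feature of $\gm$. Your $\chi=0$ argument is then circular: you assume the simple unital $JP(2)$-modules are already known to be $Jor(L^\pm(0))$ and their opposites, which is exactly what requires either the cited classification or a working exhaustiveness proof. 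Without the input from \cite{ZM2} (or an independent classification of simple unital $JP(2)$-modules), the list cannot be shown to be complete.
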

\begin{proof} It follows from Theorem 3.10, \cite{ZM2} that for an arbitrary  $t\in\C$
there are exactly four non-isomorphic simple objects in $\JJ^{\,t}$. Comparing their dimensions one can see that the image of these modules 
via the TKK-constructions is one of  $L^\pm(t)$ or $L^\pm(t)^{op}$. 
Adding the one-dimensional trivial module and its opposite to $\ggm$ we finish the proof.
\end{proof}

Recall that $W(t)$, $t\in \C$ is the irreducible special $JP(2)$-module defined in Lemma~\ref{jordanhalfmodule}. Then $W(t)\otimes W(t)$
has a structure of unital $JP(2)$-module, \cite{J}. As a superspace $W(t)\otimes W(t)=S^2(W(t))\oplus \Lambda^2(W(t))$. 

\begin{Cor} Both $S^2(W(t/2))$, $\Lambda^2(W(t/2))$ are simple $JP(2)$-modules. A simple module in $JP(2)\text{-mod}_1$ is isomorphic to one of the following: $S^2(W(t/2))$, $\Lambda^2(W(t/2))$ and their opposites.
\end{Cor}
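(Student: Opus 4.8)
The plan is to transport the description of simple objects in $\gm$ to $JP(2)\text{-mod}_1$ via the functor $Jor$, using the properties collected in Proposition~\ref{adjoint} together with the explicit compatibility of $Jor$ with tensor squares. First I would record that by Lemma~\ref{simple} and the theorem preceding this corollary, the nontrivial simple objects of $\gm$ are precisely $L^{\pm}(t)$ and $L^{\pm}(t)^{op}$, and that $L^{+}(t)=S^2V(t/2)$, $L^{-}(t)=\Lambda^2 V(t/2)$ for $t\neq 0$ while for $t=0$ the modules $L^{\pm}(0)$ are the simple subquotients of $S^2(V),\Lambda^2(V)$. Since each $L^{\pm}(t)$ has nonzero central charge $t$ when $t\neq 0$, Remark~\ref{corr} gives that $Jor$ sends nontrivial simples in $\gm^{\,t}$ bijectively to simples in $\JJ^{\,t}$; for $t=0$ one uses the last bullet of Proposition~\ref{adjoint} (that $Jor$ of a simple nontrivial module is simple) to see $Jor(L^{\pm}(0))$ is simple. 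Combined with the theorem classifying simples in $\gm$, this shows every simple in $JP(2)\text{-mod}_1$ is $Jor$ of one of $L^{\pm}(t),L^{\pm}(t)^{op}$ (the trivial module $\C$ maps to the trivial $J$-module which lies in $J\text{-mod}_0$, not $J\text{-mod}_1$, so it is discarded).

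The second ingredient is the identification $Jor(S^2V(t/2))=S^2(W(t/2))$ and $Jor(\Lambda^2V(t/2))=\Lambda^2(W(t/2))$. Here I would argue as in the proof of Proposition~\ref{jordanhalfmodule}: since $Jor:\gmhh\to J\text{-mod}_{\frac12}$ takes $V(s)$ to $W(s)$, and since $Jor$ on $\gm$ is just ``take the degree $1$ piece'', one computes $(V(t/2)\otimes V(t/2))[1] = V(t/2)[1/2]\otimes V(t/2)[1/2] = W(t/2)\otimes W(t/2)$ as superspaces, and the $J$-action defined by $x(m)=[f,x]m$ restricted to this piece is exactly the action on $W(t/2)\otimes W(t/2)$ coming from the two special structures, i.e. the Jordan tensor-square structure of \cite{J}. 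Passing to the $S^2$ and $\Lambda^2$ summands, which are $\g$-submodules of $V(t/2)\otimes V(t/2)$ and hence graded, gives $Jor(S^2V(t/2))=S^2(W(t/2))$ and likewise for $\Lambda^2$. For $t=0$ the same computation together with the exact sequences of Lemma~\ref{simple}(b) and exactness of $Jor$ identifies $Jor(L^{\pm}(0))$ with the simple subquotient of $S^2(W(0))$ (resp.\ $\Lambda^2(W(0))$); one checks the one-dimensional pieces are killed because $Jor(\C)=Jor(\C^{op})=0$ in $J\text{-mod}_1$ (the trivial $\g$-module sits in degree $0$, so its degree-$1$ part is zero). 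Thus $S^2(W(t/2))$ and $\Lambda^2(W(t/2))$ are simple $JP(2)$-modules for all $t$, and every simple in $JP(2)\text{-mod}_1$ is one of $S^2(W(t/2)),\Lambda^2(W(t/2))$ or an opposite.

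Finally I would close the argument by matching the opposites: $L^{\pm}(t)^{op}$ corresponds under $Jor$ to $(Jor\,L^{\pm}(t))^{op}=S^{\pm}(W(t/2))^{op}$ since $Jor$ commutes with the parity-shift functor $(-)^{op}$ (this is immediate from the definitions, as $op$ acts the same way on the underlying module and on the grading), so the list $S^2(W(t/2)),\Lambda^2(W(t/2)),S^2(W(t/2))^{op},\Lambda^2(W(t/2))^{op}$ exhausts the simples, with no coincidences because for fixed $t\neq 0$ Theorem~3.10 of \cite{ZM2} (quoted in the preceding theorem's proof) says $\JJ^{\,t}$ has exactly four simples, and for $t=0$ the four modules are distinguished by their $\g_0=\so(6)$-content together with parity.

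\textbf{Main obstacle.} The routine but genuinely checkable point is the second ingredient: verifying that the $JP(2)$-bimodule structure on $W(t/2)\otimes W(t/2)$ obtained from the two special structures (as in \cite{J}) agrees on the nose with $Jor(V(t/2)\otimes V(t/2))$, i.e.\ that the formula $x(m)=[f,x]m$ on $(V\otimes V)[1]$ unpacks to the tensor-square Jordan action. This is a direct computation with the TKK bracket and the explicit matrices $\rho_{t}$, analogous to the unital-functor computations in Section~2 and in \cite{KS}, but it is where the actual content of the corollary (as opposed to the abstract classification) lives.
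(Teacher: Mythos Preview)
Your proposal is correct and follows essentially the same route as the paper: the paper's proof simply asserts that one checks $Jor(L^+(t))=S^2(W(t/2))$ and $Jor(L^-(t))=\Lambda^2(W(t/2))$ for all $t\in\C$ and then invokes the preceding theorem together with Proposition~\ref{adjoint}. Your write-up is a faithful expansion of that check---in particular, your handling of $t=0$ via exactness of $Jor$ and $Jor(\C^{op})=0$ is exactly what makes the paper's ``for any $t$'' clause work, and your identification of the main obstacle (matching the Jordan tensor-square action on $W(t/2)\otimes W(t/2)$ with $Jor(V(t/2)\otimes V(t/2))$) is precisely the computation the paper hides behind ``one can easily check''.
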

\begin{proof} One can easily check that $Jor(L^+(t))= S^2(W({t/2}))$, $Jor(L^-(t))= \Lambda^2(W(t/2))$ for any $t\in\C$. The rest follows from previous theorem and from Proposition~\ref{adjoint}.
\end{proof}

Recall that $\gt$ is the full subcategory of $\gm$ consisting of modules on which $z$ acts with generalized eigenvalue $t$.
Note that if $t,s\neq 0$ then $\gt$ and $\gm^s$ are equivalent, by twist with $\sigma_{t^{1/2}s^{-1/2}}$.

\begin{lem}\label{invariants} Let $t\neq 0$. We have the following isomorphisms of $\g_0$-modules
$$H^0(\g_1,L^-(t))\simeq \Lambda^2(V_{\bar 0})\oplus \C,\quad
  H^0(\g_1,L^+(t))\simeq S^2(V_{\bar 0}),$$

  $$H_0(\g_1,L^-(t))\simeq S^2(V_{\bar 1}),\quad
  H_0(\g_1,L^+(t))\simeq \Lambda^2(V_{\bar 0})\oplus \C.$$

\end{lem}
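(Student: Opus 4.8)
The plan is to compute the $\g_1$-cohomology (and homology) of the two modules $L^\pm(t)$ by exploiting that $V(t)\simeq V(1)^{\sigma_{t^{-1/2}}}$ and, more importantly, that as a $\g_0\oplus\g_1$-module $V(t)$ is independent of $t$ and coincides with the standard module $V=\C^{4|4}$; in particular $V\simeq V_{\bar 0}\oplus V_{\bar 1}$ where, under $\g_0\cong\so(6)$, both $V_{\bar 0}$ and $V_{\bar 1}$ are (the two) half-spin representations $\C^4$, and $\g_{-1}$ is the vector representation $\C^6$. Recall from the consistent grading $\hat\g=\g_{-2}\oplus\g_{-1}\oplus\g_0\oplus\g_1$ that $\g_1$ is abelian (both $\g_1$ and $\g_{-1}$ are abelian, being the $\pm$ pieces; more precisely here $[\g_1,\g_1]=0$), so that $H^\bullet(\g_1,-)$ and $H_\bullet(\g_1,-)$ are just the Chevalley–Eilenberg (co)homology of an abelian Lie superalgebra, i.e. $H^0(\g_1,X)=X^{\g_1}$ and $H_0(\g_1,X)=X/\g_1 X$. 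So the whole computation reduces to identifying the $\g_1$-invariants and $\g_1$-coinvariants of $S^2V$ and $\Lambda^2V$ as $\g_0$-modules.

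First I would fix bases adapted to the $\Z$-grading: $\g_1$ acts on $V(t/2)=\C^{4|4}$ by the nilpotent odd operator $\left[\begin{smallmatrix}0&tC^*/2\\0&0\end{smallmatrix}\right]$-type pieces coming from $\rho_{t/2}$, i.e. $\g_1$ maps $V_{\bar 1}$ into $V_{\bar 0}$ and kills $V_{\bar 0}$ (for $t\neq0$ this map $\g_1\otimes V_{\bar 1}\to V_{\bar 0}$ is the natural $\so(6)$-equivariant contraction $\Lambda^3\C^4\otimes\C^4\to\C^4$, nonzero since $t\neq0$). Then on $V\otimes V$ the operator $\g_1$ acts by the Leibniz rule, and on $S^2V$, $\Lambda^2V$ by restriction. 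Decompose $S^2V=S^2V_{\bar 0}\oplus (V_{\bar 0}\otimes V_{\bar 1})\oplus \Lambda^2V_{\bar 1}$ and likewise $\Lambda^2V=\Lambda^2V_{\bar 0}\oplus(V_{\bar 0}\otimes V_{\bar 1})\oplus S^2V_{\bar 1}$ (signs flip because $V_{\bar 1}$ is odd). For $H^0(\g_1,-)=\ker$: on $S^2V$, the component $S^2V_{\bar 0}$ is killed by $\g_1$ (since $\g_1$ kills $V_{\bar 0}$), giving the summand $S^2(V_{\bar 0})$; one checks the map out of $V_{\bar 0}\otimes V_{\bar 1}\oplus\Lambda^2V_{\bar 1}$ into the rest is injective (for $t\neq0$), so $H^0(\g_1,L^+(t))\simeq S^2(V_{\bar 0})$. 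For $\Lambda^2V$: again $\Lambda^2V_{\bar 0}$ is in the kernel, but now one must check whether anything in $V_{\bar 0}\otimes V_{\bar 1}\oplus S^2V_{\bar 1}$ lies in the kernel; comparing with Lemma~\ref{simple}, $\Lambda^2(V)$ contains the trivial submodule $\C^{op}$ (the invariant odd form $\beta$), which is $\g_1$-invariant and sits partly in $V_{\bar 0}\otimes V_{\bar 1}$; this contributes the extra $\C$, giving $H^0(\g_1,L^-(t))\simeq\Lambda^2(V_{\bar 0})\oplus\C$. The homology statements are dual: $H_0(\g_1,X)=X/\g_1X$, the image of $\g_1$ lands in $S^2V_{\bar 0}\oplus(V_{\bar 0}\otimes V_{\bar 1})$ for $\Lambda^2 V$ and in $\Lambda^2V_{\bar 0}\oplus(V_{\bar 0}\otimes V_{\bar 1})$ for $S^2V$; quotienting, $H_0(\g_1,L^-(t))\simeq S^2(V_{\bar 1})$ and $H_0(\g_1,L^+(t))\simeq\Lambda^2(V_{\bar 0})\oplus\C$ — here the residual $\C$ comes from the dual phenomenon, $S^2(V)\twoheadrightarrow\C^{op}$ of Lemma~\ref{simple}, surviving the coinvariants. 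Throughout, I would use the isomorphisms \eqref{isomP} ($S^2(V^*)\simeq\Lambda^2(V)$ and $S^2(V)\simeq\ad^{op}$) to cross-check: since $L^+(t)^{op}$ is (a twist of) the adjoint and $\g_1\subset\g$, one has $H^0(\g_1,\g)=\g^{\g_1}$, which by the $\Z$-grading picks up exactly $\g_0^{\g_1}\oplus\g_1$-pieces — and $\g_0\cong\so(6)$ has $S^2(\C^4)$ sitting inside, matching $S^2(V_{\bar 0})$.

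The main obstacle I anticipate is the bookkeeping of parities and of which $\so(6)$-isotypic pieces actually survive, particularly pinning down the two stray one-dimensional summands $\C$: it would be easy to either miss them or double-count, and the clean way to control them is precisely via the four exact sequences in Lemma~\ref{simple} relating $S^2(V),\Lambda^2(V)$ to $\C^{op}$ and $L^\pm(0)$, together with the fact (from the $t\neq0$ twist) that $\dim$ of each cohomology is constant in $t$, so it may be computed at a convenient $t$ or related back to $t=0$ by a dimension/Euler-characteristic argument using that $\g_1$ is abelian (hence the Euler characteristic $\sum(-1)^i\dim H^i(\g_1,X)$ equals $\operatorname{sdim} X$ up to the factor $\operatorname{sdim}\Lambda^\bullet\g_1$, which is $0$ — so one should instead argue degree-by-degree, not via Euler characteristic, and this is the delicate point). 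A secondary check I would run: $H^0$ and $H_0$ should be related by the nondegenerate pairing coming from $V(t)^*\simeq V(-t)^{op}$, which swaps $S^2\leftrightarrow\Lambda^2$ up to $op$ and $V_{\bar 0}\leftrightarrow V_{\bar 1}$ — and indeed the four formulas in the statement are visibly consistent with that duality, which gives me confidence the final answer is as stated.
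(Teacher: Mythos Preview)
Your approach is essentially the paper's: reduce to the $\g^+=\g_0\oplus\g_1$-module structure, use that $V(t)\simeq V$ as a $\g^+$-module (so $L^\pm(t)\simeq S^2(V),\ \Lambda^2(V)$ as $\g^+$-modules independently of $t$), and then read off the $\g_1$-invariants and coinvariants with the stray $\C$ summands coming from the exact sequences of Lemma~\ref{simple}(b). The paper compresses all of this into three sentences; your write-up is a more explicit version of the same computation.

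One correction: your description of the $\g_1$-action on $V(t/2)$ is garbled. From $\rho_t$ one sees that a pure $\g_1$-element (i.e.\ $C=0$, only $B$) acts by $\left[\begin{smallmatrix}0&B\\0&0\end{smallmatrix}\right]$ \emph{independently of $t$}; it is the $\g_{-1}$-action (the $C$-block) that picks up the $tC^*$ correction. So the map $\g_1\otimes V_{\bar 1}\to V_{\bar 0}$ is nonzero for every $t$, not ``nonzero since $t\neq 0$''. This is exactly the observation the paper records as ``$V(t)\simeq V$ as $\g^+$-modules for all $t$'', and it is what makes the whole reduction work. The slip does not damage your argument (you only need $t\neq 0$ so that $L^\pm(t)$ are themselves $S^2,\Lambda^2$ rather than proper subquotients), but it obscures the reason the computation is $t$-independent.
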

\begin{rem} Observe that $\g_0\simeq\mathfrak{sl}(4)$ and $V_{\bar 0}$ (resp.,$V_{\bar 1}$) are the standard (resp., costandard) $\g_0$-modules. \end{rem}
\begin{proof} Consider the subalgebra $\g^+:=\g_0\oplus\g_1$. 
  Recall that $V(t)$ is isomorphic to $V$ as a $\g^+$-module. Therefore
  $L^+(t)=S^2(V_{t/2})$ is isomorphic to
  $S^2(V)$ and  $L^-(t)$ is isomorphic to
  $\Lambda^2(V)$ as $\g^+$-modules. Hence the statement follows from
  Lemma \ref{simple}(b).
\end{proof}

Let $\mathfrak p=\g_{-2}\oplus\g_0\oplus\g_1$ and $\C_t$ be the $(0|1)$-dimensional $\mathfrak p$-module with central charge $t$. Consider the induced module
$$K(t):=\operatorname{Ind}_{\mathfrak p}^{\g}\C_t\simeq\operatorname{Coind}_{\mathfrak p}^{\g}\C_t.$$

\begin{prop} The category $\gt$ has two equivalent blocks $\Omega_t^+$ and $\Omega_t^-$. The equivalence of these blocks is established by
the change of parity functor. If $t\neq 0$, then $\Omega_t^+$  has two simple objects $L^+(t)$ and $L^{-}(t)$. The block $\Omega_0^+$ has three simple objects
$\C^{op}$,  $L^+(0)$ and $L^{-}(0)$.  
\end{prop}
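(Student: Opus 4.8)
The plan is to analyze the category $\gt$ by first splitting it according to the parity homomorphism and then identifying the simple objects and their extensions in each half. First I would invoke Lemma~\ref{weighargument}: since $\g = P(3)$ is simple with $\h_{\bar 1}=0$, every indecomposable finite-dimensional $\ghat$-module has purely-even-or-purely-odd weight spaces, so a module $M$ and its parity reversal $M^{op}$ never lie in the same block. This immediately gives the decomposition $\gt = \Omega_t^+ \oplus \Omega_t^-$ (defined by the parity of, say, the highest weight space), and the change-of-parity functor $M \mapsto M^{op}$ is an equivalence $\Omega_t^+ \simeq \Omega_t^-$ since it is an involutive exact autoequivalence of $\gt$ swapping the two halves. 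It remains to describe the simple objects in $\Omega_t^+$.

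Next I would use the classification of simple objects in $\gm$ already established: a simple object is one of $L^\pm(t), L^\pm(t)^{op}, \C, \C^{op}$. Restricting to fixed central charge $t$ (i.e. to $\gt$), the simple objects are $L^+(t), L^-(t)$ and their opposites when $t\neq 0$, and additionally $\C, \C^{op}$ when $t=0$. One then has to sort these into $\Omega_t^+$ and $\Omega_t^-$ by parity. By Lemma~\ref{invariants} (or by the explicit realizations $L^+(t)=S^2V(t/2)$, $L^-(t)=\Lambda^2 V(t/2)$ and the fact that $V(t/2)$ is $(4|4)$-dimensional), both $L^+(t)$ and $L^-(t)$ have the same parity pattern on their weight spaces — the even weight space of $V\otimes V$ sits in $S^2(V_{\bar 0})\oplus S^2(V_{\bar 1})\oplus (V_{\bar 0}\otimes V_{\bar 1})_{\text{sym}}$ vs.\ the antisymmetric analogue, and in both cases the extreme weight spaces have the same parity — so $L^+(t)$ and $L^-(t)$ lie in the same half, say $\Omega_t^+$, while their opposites lie in $\Omega_t^-$. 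For $t=0$ one checks that $L^\pm(0)$ and $\C^{op}$ are in one half (consistent with the exact sequences in Lemma~\ref{simple}(b), which force $\C^{op}$ into the same block as $L^\pm(0)$) and $L^\pm(0)^{op}$, $\C$ in the other. This gives the stated simple objects in $\Omega_0^+$: $\C^{op}, L^+(0), L^-(0)$.

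Finally, to justify that $\Omega_t^+$ really is a single block (not a further direct sum), I would check that the three — respectively two — simple objects are linked by nonzero $\Ext^1$. For $t\neq 0$: one must show $\Ext^1(L^+(t), L^-(t))\neq 0$ (or the reverse). This follows either by transporting the $t=0$ computation via the automorphism twist $\sigma_{t^{1/2}}$ — note $\Ext^1$ between modules of the same central charge is unaffected by restricting from $\gm$ to $\gt$, and $\gt \simeq \gm^s$ for $s\neq 0$ — or directly: the module $V(t/2)\otimes V(t/2)$ need not be semisimple, and more to the point the relative cohomology $H^1(\g,\g_{\bar 0};\Hom(L^-(t),L^+(t)))$ can be computed, the nonvanishing coming ultimately from the existence of the invariant form $\beta$ degenerating the $t=0$ picture. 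For $t=0$ the exact sequences $0\to L^+(0)\to S^2(V)\to\C^{op}\to 0$ and $0\to\C^{op}\to\Lambda^2(V)\to L^-(0)\to 0$ of Lemma~\ref{simple}(b) exhibit nonzero extensions $\Ext^1(\C^{op},L^+(0))\neq 0$ and $\Ext^1(L^-(0),\C^{op})\neq 0$, linking all three simples. The main obstacle will be the $t\neq 0$ connectedness claim: one has to produce a genuine nonsplit self-consistent extension between $L^+(t)$ and $L^-(t)$, and the cleanest route is probably to exhibit an indecomposable $(8|8)$-dimensional module directly (e.g.\ as a suitable quotient of $\operatorname{Ind}_{\pp}^{\g}$ of a two-dimensional $\pp$-module, or inside $K(t)\otimes(\text{something})$), rather than to grind through the relative-cohomology differential; the parity and central-charge bookkeeping is then routine given the earlier lemmas.
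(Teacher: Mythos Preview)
Your overall structure matches the paper: invoke Lemma~\ref{weighargument} for the parity split into $\Omega_t^\pm$, list the simples by central charge, and for $t=0$ use the nonsplit sequences of Lemma~\ref{simple}(b) to link $\C^{op}$, $L^+(0)$, $L^-(0)$. The divergence is exactly where you flag it, the $t\neq 0$ connectedness.

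The paper's argument there is far more direct than any of your suggested routes. The induced module $K(t)=\operatorname{Ind}^{\g}_{\pp}\C_t$ (from the one-dimensional odd $\pp$-module, already introduced just before the proposition) \emph{is} the desired indecomposable. Lemma~\ref{invariants} gives $\Hom_{\g_0}(\C_t,H^0(\g_1,L^-(t)))=\C$ and $\Hom_{\g_0}(\C_t,H_0(\g_1,L^+(t)))=\C$; Frobenius reciprocity then produces a surjection $K(t)\twoheadrightarrow L^-(t)$ and an injection $L^+(t)\hookrightarrow K(t)$, and a dimension count yields $0\to L^+(t)\to K(t)\to L^-(t)\to 0$. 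Non-splitting is immediate from $\Hom_{\g}(K(t),L^+(t))=\Hom_{\g_0\oplus\g_{-2}}(\C_t,H^0(\g_1,L^+(t)))=0$, again by Lemma~\ref{invariants}. So the nonzero $\Ext^1$ is $\Ext^1(L^-(t),L^+(t))$; in fact $\Ext^1(L^+(t),L^-(t))=0$ by the later Lemma~\ref{lmext}.

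Two of your proposed alternatives actually fail. The twist $\sigma_s$ rescales the center and never maps a nonzero central charge to zero, so you cannot transport the $t=0$ extensions to $t\neq 0$; the equivalence $\gt\simeq\gm^{\,s}$ only holds for $t,s\neq 0$ and tells you nothing new. And $V(t/2)\otimes V(t/2)=S^2V(t/2)\oplus\Lambda^2V(t/2)=L^+(t)\oplus L^-(t)$ \emph{is} semisimple for $t\neq 0$ by Lemma~\ref{simple}(a), so no nonsplit extension lives there. (Also, the linking module $K(t)$ has dimension $(32|32)$, not $(8|8)$.) Your final fallback --- a quotient of an induced module from $\pp$ --- is the right instinct, but you only need to induce a one-dimensional module, and it is $K(t)$ itself rather than a proper quotient.
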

\begin{proof} By the weight parity argument, Lemma~\ref{weighargument}, $\Ext^1(L^\pm(t),L^\pm(t)^{op})=0$. For $t=0$  the statement follows from the fact that the 
sequences in
Lemma \ref{simple}  do not split.
It remains to show 
$\Ext^1(L^+(t),L^{-}(t))\neq 0$ if $t\neq 0$. It follows from Lemma ~\ref{invariants}  that
$$\Hom_{\g_0}(\C_t,H_0(\g_1,L^+(t)))=\C,\quad \Hom_{\g_0}(\C_t,H^0(\g_1,L^-(t)))=\C.$$
By Frobenius reciprocity we have a surjection $K(t)\to L^-(t)$ and injection $L^+(t)\to K(t)$. A simple check of dimensions implies the exact sequence
$$0\to L^+(t)\to K(t)\to L^-(t)\to 0$$
and it remains to prove that it does not split. Indeed,
$$\Hom_\g(K(t), L^+(t))=\Hom_{\mathfrak p}(\C_t, L^+(t))=\Hom_{\g_0\oplus\g_{-2}}(\C_t, H^0(\g_1, L^+(t)))=0.$$
\end{proof}

\begin{lem}\label{dual} We have isomorphisms
$$L^+(t)^*\simeq L^-(-t),\quad L^-(t)^*\simeq L^+(-t),\quad K(t)^*\simeq K(-t).$$
\end{lem}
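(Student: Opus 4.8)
The plan is to compute the three duals separately, using the explicit definitions $L^+(t)=S^2V(t/2)$, $L^-(t)=\Lambda^2V(t/2)$ (for $t\neq0$) and $K(t)=\operatorname{Ind}_{\mathfrak p}^{\g}\C_t$, together with the two facts about $V(t)$ already recorded in the excerpt: $V(t)^*\simeq V(-t)^{op}$, and the isomorphism $S^2(V)\simeq\ad^{op}$ from \eqref{isomP}. First I would treat the generic case $t\neq0$. Taking duals commutes with $S^2$ and $\Lambda^2$ up to a parity shift: since $V(t)^*\simeq V(-t)^{op}$, one gets $S^2(V(t/2))^*\simeq S^2(V(-t/2)^{op})$ and $\Lambda^2(V(t/2))^*\simeq\Lambda^2(V(-t/2)^{op})$. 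Now the point is that passing to the opposite superspace interchanges the symmetric and exterior squares: $S^2(U^{op})\simeq\Lambda^2(U)^{op}$ and $\Lambda^2(U^{op})\simeq S^2(U)^{op}$ as $\g$-modules (this is the super sign rule, and it is exactly the mechanism behind the first isomorphism in \eqref{isomP}, $S^2(V^{op})\simeq\Lambda^2(V)$). Hence $L^+(t)^*=S^2V(t/2)^*\simeq\Lambda^2V(-t/2)^{op}{}^{op}\simeq\Lambda^2V(-t/2)=L^-(-t)$, and dually $L^-(t)^*\simeq L^+(-t)$. One should double-check the bookkeeping of the two parity shifts so that they cancel and no residual $op$ survives; this is the one spot where a sign slip is easy, so I would verify it on weights using the parity homomorphism $p$ of Lemma~\ref{weighargument}.

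Next, the case $t=0$. Here $L^+(0)$ and $L^-(0)$ are defined by the non-split exact sequences of Lemma~\ref{simple}(b), and $L^+(0)^{op}\simeq P(3)$ is the adjoint module. Dualizing the first sequence $0\to L^+(0)\to S^2(V)\to\C^{op}\to0$ and using $S^2(V)^*\simeq\Lambda^2(V)^{op}$ (same super sign rule as above, now with $V(0)=V$ self-dual up to $op$) gives $0\to\C\to\Lambda^2(V)^{op}\to L^+(0)^*\to0$; comparing with the $op$ of the second sequence in Lemma~\ref{simple}(b), $0\to\C\to\Lambda^2(V)^{op}\to L^-(0)^{op}\to0$, and noting $L^-(0)$ is simple (so the quotient is determined), we conclude $L^+(0)^*\simeq L^-(0)^{op}\simeq L^-(-0)$ — consistent with the generic formula, so the two cases patch together. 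Alternatively one can simply invoke that $L^+(0)^{op}$ is the adjoint representation of the simple Lie superalgebra $P(3)$, which carries an invariant odd form, forcing $\ad^*\simeq\ad^{op}$, and then read off $L^-(0)$ by duality.

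For $K(t)$ I would use that $K(t)=\operatorname{Ind}_{\mathfrak p}^{\g}\C_t\simeq\operatorname{Coind}_{\mathfrak p}^{\g}\C_t$, which is stated just before the proposition defining $K(t)$. Dualizing an induced module from a subalgebra gives the coinduced module of the dual: $K(t)^*\simeq\operatorname{Coind}_{\mathfrak p}^{\g}(\C_t^*)$. Now $\C_t$ is the $(0|1)$-dimensional $\mathfrak p$-module on which the center acts by $t$; its dual is the $(0|1)$-dimensional $\mathfrak p$-module on which the center acts by $-t$, i.e. $\C_t^*\simeq\C_{-t}$ (the parity shift from $(0|1)$ being the same on both sides). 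Hence $K(t)^*\simeq\operatorname{Coind}_{\mathfrak p}^{\g}\C_{-t}\simeq K(-t)$, using the self-duality $\operatorname{Ind}\simeq\operatorname{Coind}$ again. The main obstacle throughout is purely the parity/sign bookkeeping — making sure each $op$ is tracked correctly through $S^2$, $\Lambda^2$, the short exact sequences, and the $\operatorname{Ind}/\operatorname{Coind}$ duality — and I would organize the write-up so that all parity shifts are displayed explicitly and seen to cancel, rather than suppressed.
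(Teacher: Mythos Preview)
Your approach is exactly the paper's: the proof there is the single line ``Follows from the isomorphism $V^*(t/2)\simeq V^{op}(-t/2)$,'' and you are unpacking precisely that. The $K(t)$ argument via $\operatorname{Ind}\simeq\operatorname{Coind}$ is fine.

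There is, however, a concrete parity slip in your key identity. You write $S^2(U^{op})\simeq\Lambda^2(U)^{op}$, but the correct statement is $S^2(U^{op})\simeq\Lambda^2(U)$ with \emph{no} residual parity shift --- this is exactly what the paper records in \eqref{isomP} as $S^2(V^{op})\simeq\Lambda^2(V)$. (A dimension check confirms it: if $\dim U=(p\,|\,q)$ then $\dim S^2(U^{op})=\bigl(\binom{q+1}{2}+\binom{p}{2}\,\big|\,pq\bigr)$, which matches $\dim\Lambda^2(U)$, not its opposite.) You flagged this as the sign-slip danger zone, and indeed your $t\neq0$ line has an unexplained second ${}^{op}$ so that two of them cancel. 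With the correct identity only one parity change enters, from $V(t/2)^*\simeq V(-t/2)^{op}$, and it is absorbed by the $S^2\leftrightarrow\Lambda^2$ swap, yielding $L^+(t)^*\simeq\Lambda^2 V(-t/2)=L^-(-t)$ directly.

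The slip does bite in your $t=0$ paragraph: you conclude $L^+(0)^*\simeq L^-(0)^{op}$ and then assert this equals $L^-(-0)$. But $L^-(0)^{op}\not\simeq L^-(0)$ by Lemma~\ref{weighargument}, so this is a genuine contradiction, not a cosmetic one. With the corrected identity $S^2(V)^*\simeq S^2(V^{op})\simeq\Lambda^2(V)$, the dualized sequence reads $0\to\C^{op}\to\Lambda^2(V)\to L^+(0)^*\to0$, which matches the second sequence of Lemma~\ref{simple}(b) on the nose and gives $L^+(0)^*\simeq L^-(0)$ as required.
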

\begin{proof} Follows from the isomorphism $V^*(t/2)\simeq V^{op}(-t/2)$.
\end{proof}

\subsection{Unital modules with non-zero central charge}
\begin{lem}\label{lmext}   
If $t\neq 0$ we have   
\begin{enumerate}
\item $\Ext^1(L^+(t),L^{+}(t))=\Ext^1(L^-(t),L^{-}(t))=\C$;
\item $\Ext^1(L^-(t),L^{+}(t))=\C$;
\item $\Ext^1(L^+(t),L^{-}(t))=0$.
\end{enumerate}
\end{lem}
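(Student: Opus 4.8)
The plan is to compute all four groups as relative Lie superalgebra cohomology with respect to the simple Lie algebra $\g_0\cong\so(6)\cong\ssl(4)$. Since every finite-dimensional $\ghat$-module is $\g_0$-semisimple and an extension of two short-graded modules is again short-graded (restriction to $\alpha_J\cong\ssl(2)$ is semisimple), one has
\[\Ext^1_{\gm}(M,N)=\Ext^1_{\ghat}(M,N)=H^1(\ghat,\g_0;\Hom_{\C}(M,N))\]
for $M,N\in\gm$. The main device is the nonsplit exact sequence $\xi\colon 0\to L^+(t)\to K(t)\to L^-(t)\to 0$ constructed above, together with $K(t)=\operatorname{Ind}_{\mathfrak p}^{\ghat}\C_t=\operatorname{Coind}_{\mathfrak p}^{\ghat}\C_t$ for $\mathfrak p=\g_{-2}\oplus\g_0\oplus\g_1$. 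Feeding $\xi$ into the long exact $\Hom_{\ghat}$-sequences and invoking Shapiro's lemma \eqref{shapiro's_lemma} and its coinduced analogue expresses every group $\Ext^i_{\ghat}(K(t),N)$ and $\Ext^i_{\ghat}(N,K(t))$ as a relative cohomology group over $\mathfrak p$; since $\mathfrak p/\g_0=\g_{-2}\oplus\g_1$ is an abelian ideal ($\g_{-2}$ is central, $[\g_1,\g_1]=0$) and $\g_0$ is reductive, Hochschild--Serre collapses this to $H^i(\g_{-2}\oplus\g_1;X)^{\g_0}$, and as all the coefficient modules $X$ occurring have central charge $0$ the Künneth formula gives $H^1(\mathfrak p,\g_0;X)\simeq\bigl(H^1(\g_1;X)\oplus X^{\g_1}\bigr)^{\g_0}$. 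The term $(X^{\g_1})^{\g_0}$ is then read off from Lemma~\ref{invariants}, using that $L^+(t)\cong S^2V$ and $L^-(t)\cong\Lambda^2V$ as $\g_0\oplus\g_1$-modules and (see the remark after Lemma~\ref{invariants}) that $V_{\bar0},V_{\bar1}$ are the standard and costandard $\g_0$-modules and $\g_1\cong S^2V_{\bar0}$; the term $\bigl(H^1(\g_1;X)\bigr)^{\g_0}$ is a subquotient of $\Hom_{\g_0}(\g_1,X)$ and is therefore at most one-dimensional in each instance.

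\emph{Item (1).} Apply $\Hom_{\ghat}(L^+(t),-)$ to $\xi$; since $\Hom(L^+(t),L^-(t))=0$, this gives an embedding $\Ext^1(L^+(t),L^+(t))\hookrightarrow\Ext^1_{\ghat}(L^+(t),K(t))\simeq H^1(\mathfrak p,\g_0;\Hom(L^+(t),\C_t))$. By the reduction above the right-hand side is $\Hom_{\g_0}(H_0(\g_1,L^+(t)),\C)\oplus\bigl(H^1(\g_1;\Hom(L^+(t),\C_t))\bigr)^{\g_0}$; the first summand is $\C$ because $H_0(\g_1,L^+(t))\simeq\Lambda^2(V_{\bar0})\oplus\C$, and the second vanishes because $\g_1\cong S^2V_{\bar0}$ is not a composition factor of $\Hom(L^+(t),\C_t)\cong(L^+(t))^*$, which as a $\g_0$-module is isomorphic to $\Lambda^2V$ (a weight comparison). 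Hence $\dim\Ext^1(L^+(t),L^+(t))\leq1$. For the opposite inequality, the equivalence of $\gmhh$ with finite-dimensional graded $\C[x]$-modules provides a nonsplit self-extension of $V(t/2)$; applying the quadratic construction $S^2$ to it produces a self-extension of $L^+(t)$ in $\gm$ on which the central element of $\ghat$ acts nonsemisimply, hence a nonzero class. Thus $\Ext^1(L^+(t),L^+(t))=\C$, and $\Ext^1(L^-(t),L^-(t))=\C$ then follows by dualizing (Lemma~\ref{dual}: $L^+(t)^*\simeq L^-(-t)$) and transporting along the $\sigma$-twist equivalence between the blocks of central charge $-t$ and $t$.

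\emph{Item (2).} Apply $\Hom_{\ghat}(L^-(t),-)$ to $\xi$. Since $\Hom(L^-(t),L^+(t))=0$ and $\operatorname{End}_{\ghat}(L^-(t))=\C$, the connecting homomorphism sends $\operatorname{id}_{L^-(t)}$ to the class $[\xi]\neq0$, so $\Ext^1(L^-(t),L^+(t))$ contains a copy of $\C$ and the quotient by it injects into $\Ext^1_{\ghat}(L^-(t),K(t))\simeq\bigl(H^1(\g_1;\Hom(L^-(t),\C_t))\bigr)^{\g_0}$ (the $X^{\g_1}$-term drops out, since $H_0(\g_1,L^-(t))\simeq S^2(V_{\bar1})$ has no trivial $\g_0$-summand). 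It remains to show that this residual group — a priori at most one-dimensional — contributes nothing; equivalently, that the connecting map $\operatorname{id}_{L^-(t)}\mapsto[\xi]$ is already surjective onto $\Ext^1(L^-(t),L^+(t))$.

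\emph{Item (3).} Apply $\Hom_{\ghat}(L^+(t),-)$ to $\xi$. By item (1) the induced map $\Ext^1(L^+(t),L^+(t))\to\Ext^1_{\ghat}(L^+(t),K(t))$ is an isomorphism of one-dimensional spaces, so the next map $\Ext^1_{\ghat}(L^+(t),K(t))\to\Ext^1(L^+(t),L^-(t))$ is zero and $\Ext^1(L^+(t),L^-(t))$ embeds into $\Ext^2_{\ghat}(L^+(t),L^+(t))$. One then has to check that this contribution vanishes; alternatively — and this is how I would actually carry it out — one computes $H^1(\ghat,\g_0;\Hom(L^+(t),L^-(t)))$ directly from the relative cochain complex, using the explicit $\ssl(4)$-decompositions of $S^2V$ and $\Lambda^2V$ and verifying that every $\g_0$-invariant $1$-cocycle is a coboundary. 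I expect this step, along with the identical-looking residual vanishing needed in item (2), to be the main obstacle: it is the only place where one needs to know the $\g_1$-action on $S^2V$ and $\Lambda^2V$ beyond the zeroth (co)homology provided by Lemma~\ref{invariants}. Everything else is formal diagram chasing with the sequence $\xi$, Shapiro's lemma, duality, and the $\sigma$-twist.
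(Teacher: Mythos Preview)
Your framework is sound and genuinely different from the paper's. The paper does not run the long exact sequence of $\xi$ through Shapiro; instead it argues each item by a separate, more concrete trick: for (1) it builds the nonzero class exactly as you do (via a quadratic functor on a self-extension of $V(t/2)$), but bounds the dimension by observing that a cocycle $\psi$ with $\psi(z)=0$ forces $M^{\g_0\oplus\g_1}=\C_t^2$, hence $M$ is a quotient of $K(t)^2$ and splits; for (2) it shows directly that \emph{any} nonsplit extension of $L^-(t)$ by $L^+(t)$ admits a nonzero map to $K(t)=\operatorname{Coind}_{\mathfrak p}^{\ghat}\C_t$ and is therefore isomorphic to $K(t)$; for (3) it shows that a hypothetical extension $M$ must be a quotient of $\operatorname{Ind}_{\mathfrak p}^{\ghat}S^2V_{\bar 0}$ (because $H^1(\g_1,L^-(t))$ contains no $S^2V_{\bar 0}$), and then rules this out by the count $\dim\Hom_{\g_0}(\operatorname{Ind}_{\mathfrak p}^{\ghat}S^2V_{\bar 0},\C)=1<2=\dim\Hom_{\g_0}(M,\C)$.

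On your actual gaps: the ``residual'' computation in (2) goes through with one more line. Write $X=(L^-(t))^*\otimes\C_t$. You already have $(X^{\g_1})^{\g_0}=0$ and $\dim\Hom_{\g_0}(\g_1,X)=1$. But $X^{\g_0}=\C$ (the trivial summand of $V_{\bar 0}\otimes V_{\bar 1}$ inside $\Lambda^2V^*$), and this $\g_0$-invariant is \emph{not} $\g_1$-invariant, precisely because $(X^{\g_1})^{\g_0}=0$. Hence $d^0\colon X^{\g_0}\to\Hom_{\g_0}(\g_1,X)$ is injective between one-dimensional spaces, so every $\g_0$-equivariant $1$-cochain is a coboundary and $\Ext^1(L^-(t),K(t))=0$. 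That closes (2).

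For (3), neither of your proposed routes is as clean as you hope. The embedding into $\Ext^2(L^+(t),L^+(t))$ is correct but that group is not obviously zero (the block has loops), and the direct cochain calculation for $H^1(\ghat,\g_0;\Hom(L^+(t),L^-(t)))$ requires exactly the piece of information you flag as missing: how $\g_1$ moves between the $\ssl(4)$-constituents of $S^2V$ and $\Lambda^2V$. The paper sidesteps this by the induced-module argument above, which only needs the $\g_0$-content of $\g_1^*\otimes L^-(t)$ and a single $\Hom_{\g_0}$ count. If you want to stay inside your framework, the quickest patch is to run your Shapiro reduction on $\Ext^1(K(t),L^-(t))$ instead (using $K(t)=\operatorname{Ind}$), bound it, and feed the long exact sequence in the first variable; but at that point you are essentially reproducing the paper's argument.
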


\begin{proof} For (1) first we show that  $\Ext^1(L^-(t),L^{-}(t))\neq 0$. For this consider a non-trivial self-extension 
$$0\to V(t/2)\to\bar V(t/2)\to V(t/2)\to 0.$$
The action of $z$ on $\bar V(t/2)$ is given by the Jordan blocks of size $2$. Now consider $\Lambda^2\bar V(t/2)$. Then the 
Jordan-Hoelder multiplicities are as 
follows:
$$[\Lambda^2\bar V(t/2):L^-(t)]=3,\quad [\Lambda^2\bar V(t/2):L^+(t)]=1.$$
Moreover, the action of $z$ on $\Lambda^2\bar V(t/2)$ is given by Jordan blocks of size $3$ and $1$. This implies 
that $\Lambda^2\bar V(t/2)$ contains a non-trivial
self-extension of $L^-(t)$. 

Now we show that $\Ext^1(L^-(t),L^-(t))$ is one-dimensional. Indeed, let $\psi: \g\to \End_\C(L^-(t))$ be a cocycle defining
the extension. The cocycle condition implies that $\psi(z)\in  \End_{\hat\g}(L^-(t))=\C$. Therefore if $\dim  \Ext^1(L^-(t),L^{-}(t))>1$,
then there exists a non-trivial cocycle
$\psi$ such that $\psi(z)=0$. Consider the corresponding self-extension
$$0\to L^-(t)\to M\to L^-(t)\to 0.$$
Note that $M^{\g_1+\g_0}$ is isomorphic to $\C_t\oplus \C_t$ as $\g_0+\g_{-2}$-module. Therefore $M$ is a quotient of $K(t)\oplus K(t)$ and hence 
$M\simeq  L^-(t)\oplus L^-(t)$. Thus, the corresponding extension is trivial.
Finally, since $L^-(-t)^*\simeq L^+(t)$, we obtain by duality that  $\Ext^1(L^+(t),L^{+}(t))=\C$.

Next we will prove (2). Consider a non-split extension
$$0\to L^+(t)\to M\to L^-(t)\to 0.$$
Since coinvariants is a right exact functor, there exists a surjection
$H_0(\g_1,M)\rightarrow  H_0(\g_1,L^-(t))$. Hence by Lemma~\ref{invariants}
$\Hom_{\mathfrak p}(M,\C_t)\neq 0$. By the Frobenius
reciprocity we must have a non-zero map
$$\phi:M\to\operatorname{Coind}_{\mathfrak p}^{\g}\C_t=K(t).$$ 
Since the socles of $M$ and $K(t)$ are isomorphic and both modules
have length $2$, $\phi$ is an isomorphism. 
Hence $\Ext^1(L^-(t),L^+(t))$ is one-dimensional.

Finally we will show (3). Assume that there is a non-split exact sequence
$$0\to L^-(t)\to M\to L^+(t)\to 0.$$
Consider the following piece of the long exact sequence
$$\dots\to H^0(\g_1,M)\xrightarrow{r} H^0(\g_1,L^+(t))\xrightarrow{r'}  H^1(\g_1,L^-(t))\to\dots.$$
By Lemma \ref{invariants} we have
$H^0(\g_1,L^+(t))=S^2(V_{\bar 0})$. We use the decomposition of
$L^-(t)$ as an $\g_0=\ssl(4)$-module: 
$$L^-(t)\simeq\C\oplus\Lambda^2(V_{\bar 0})\oplus \ssl(4)\oplus S^2(V_{\bar 1}).$$ 
Since  $H^1(\g_1,L^-(t))$ is a submodule in
$$\g^*_1\otimes L^-(t)=S^2(V_{\bar 1})\otimes (\C\oplus\Lambda^2(V_{\bar 0})\oplus
\ssl(4)\oplus S^2(V_{\bar 1})),$$
we conclude that $H^1(\g_1,L^-(t))$ does not contain an
$\g_0$-submodules, isomorphic to $S^2(V_{\bar 0})$. Since $r$ and $r'$ are morphisms of $\g_0$-modules, $r'=0$.
Thus, we obtain that $r$ is surjective and therefore $M$ is a quotient of the induced module 
$\operatorname{Ind}_{\mathfrak p}^{\g}S^2(V_{\bar 0})$, (here we assume that $z$ acts on $S^2(V_{\bar 0})$ 
as $t$ and $\g_1$ acts by zero). Next consider  an isomorphism of $\g_0$-modules
$$\operatorname{Ind}_{\mathfrak p}^{\g}S^2(V_{\bar 0})\simeq
\Lambda^{\cdot}(\Lambda^2(V_{\bar 1}))\otimes S^2(V_{\bar 0})$$
which implies
$$\Hom_{\g_0}(\operatorname{Ind}_{\mathfrak p}^{\g}S^2(V_{\bar 0}),\C)=\Hom_{\g_0}(\Lambda^{\cdot}(\Lambda^2(V_{\bar 1}), S^2(V_{\bar 1}))=\C.$$
On the other hand, $\Hom_{\g_0}(M,\C)=\C^2$ and we obtain a contradiction.
\end{proof}

\begin{thm}\label{quiver-p(3)-non-zero-charge} If $t\neq 0$, then the category $\Omega_t^+$ is equivalent to the category of nilpotent representations of the quiver
$$
\xymatrix{\bullet \ar@(ul,ur)[]|{\alpha} \ar@<0.5ex>[r]^\beta & \bullet\ar@(ul,ur)[]|{\gamma}}
$$
with relations $\beta\alpha=\gamma\beta$.
\end{thm}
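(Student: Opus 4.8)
The plan is to establish the equivalence of $\Omega_t^+$ with the category of nilpotent representations of the displayed quiver by identifying a projective generator, computing its endomorphism algebra, and matching it to the path algebra with the relation $\beta\alpha=\gamma\beta$. The block $\Omega_t^+$ has two simple objects $L^+(t)$ and $L^-(t)$, so by the general formalism recalled in Section 3.1 it suffices to understand the projective covers $P^+$ of $L^+(t)$ and $P^-$ of $L^-(t)$ in the relevant category. However, as emphasized in the introduction, $\gm^{\,\chi}$ does not in general have enough projectives because $\hat\g\neq\g$ and the center acts non-semisimply; so first I would pass to a truncated subcategory $F^m(\Omega_t^+)$ (modules killed by $(z-t)^m$) which does have enough projectives, build the quiver there, and then take the limit $m\to\infty$. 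The $\Ext^1$ computations in Lemma~\ref{lmext} already give the arrows: loops $\alpha$ at $L^+(t)$ and $\gamma$ at $L^-(t)$ (from parts (1)), one arrow $\beta$ from $L^-(t)$ to $L^+(t)$ corresponding to $\Ext^1(L^-(t),L^+(t))=\C$ (part (2)), and no arrow in the other direction since $\Ext^1(L^+(t),L^-(t))=0$ (part (3)). This pins down the underlying quiver; what remains is to determine the relations.

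The key structural input is the module $K(t)=\operatorname{Ind}_{\mathfrak p}^{\g}\C_t\simeq\operatorname{Coind}_{\mathfrak p}^{\g}\C_t$, which is self-dual by Lemma~\ref{dual}, has length $2$ with the non-split sequence $0\to L^+(t)\to K(t)\to L^-(t)\to 0$, and was shown in the proof of Lemma~\ref{lmext}(2) to be the universal target for the arrow $\beta$. I would next identify the projective cover $P^-$ of $L^-(t)$: its radical layers should be $L^-(t)$ on top, then $L^+(t)\oplus L^-(t)$ (reflecting the loop $\gamma$ and the arrow $\beta$), and then a further layer. The crucial point is to show that the composite path $\gamma\beta$ (first $\beta:L^-\to L^+$, then the $L^+$-loop) and the path $\beta\alpha$ (first the $L^-$-loop $\alpha$... no: $\alpha$ is the loop at $L^+$) act the same way — more precisely that in $P^-$ the two ways of getting from the top copy of $L^-(t)$ down two steps, namely $\beta$ then $\alpha$ versus $\gamma$ then $\beta$, coincide, while there are no further independent relations. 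Concretely I would compute $\dim\operatorname{Hom}_{\hat\g}(P^{\pm},P^{\pm})$ via the adjunction of Proposition~\ref{adjoint} (or directly via $\operatorname{Hom}_{\mathfrak p}$ and Shapiro's lemma \eqref{shapiro's_lemma}) and via the relative cohomology description of $\Ext$; the path algebra of the quiver modulo $\beta\alpha=\gamma\beta$ has a specific graded dimension vector, and I would check the endomorphism algebra of $P^+\oplus P^-$ has exactly these dimensions.

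In more detail, the steps in order: (i) fix $m$ and work in $F^m(\Omega_t^+)$, where by the remarks at the end of Section 2 enough projectives exist; (ii) record from Lemma~\ref{lmext} that the $\Ext^1$-quiver is exactly the displayed one (two vertices, loops $\alpha,\gamma$, single arrow $\beta:L^-(t)\to L^+(t)$); (iii) analyze the structure of $K(t)$ and of the self-extensions constructed in Lemma~\ref{lmext} (using $\Lambda^2\bar V(t/2)$ with its size-$3$ Jordan block for $z$) to see that the loops are "the same" as multiplication by $z-t$, so that in any module $\alpha$ and $\gamma$ both represent the nilpotent part of the $z$-action; (iv) deduce that along $\beta$ the $z$-action is intertwined, giving $\beta\alpha=\gamma\beta$ as a relation in the path algebra of any module; (v) show there are no further relations by exhibiting, for each $m$, a module realizing the largest quotient of the path algebra modulo $\beta\alpha=\gamma\beta$ truncated at nilpotency degree $m$ — this is essentially $\operatorname{Sym}$-type module $S^2\bar V$-constructions or iterated extensions built from $K(t)$ and the self-extensions, whose existence forces the projective covers to be as large as the path-algebra prediction; (vi) pass to the direct limit over $m$ to conclude the equivalence with \emph{nilpotent} representations of the quiver with the single relation $\beta\alpha=\gamma\beta$. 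The main obstacle I expect is step (v): ruling out extra relations requires explicitly constructing sufficiently large indecomposable modules (or computing $\operatorname{Hom}$ spaces between projective covers precisely), and doing this cleanly — probably by leveraging the tensor-product/deformation realization $L^\pm(t)=S^2V(t/2),\Lambda^2V(t/2)$ together with the non-trivial self-extension $\bar V(t/2)$ of $V(t/2)$ — is where the real work lies; everything about the arrows and the one relation follows comparatively formally from Lemmas~\ref{lmext}, \ref{invariants}, \ref{dual} and the properties of $K(t)$.
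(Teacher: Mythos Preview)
Your plan is essentially correct and follows the same scaffolding as the paper: truncate to $F^m(\Omega_t^+)$, use Lemma~\ref{lmext} to get the quiver, build projective covers there, and read off the relation. The paper is simply more explicit where you are conceptual. It constructs the projective covers directly as
\[
K(t)_{(m)}:=\operatorname{Ind}^{\hat\g}_{\mathfrak p}\bigl(\C[z]/(z-t)^m\bigr)\quad\text{(cover of }L^-(t)\text{)},\qquad
L^+(t)_{(m)}\quad\text{(iterated self-extension, cover of }L^+(t)\text{)},
\]
proves their projectivity by an induction on $m$ using the $\Ext^1$ values from Lemma~\ref{lmext}, and then reads off the relation $\beta\alpha=\gamma\beta$ from the radical filtrations
\[
\rad^i K(t)_{(m)}/\rad^{i+1} K(t)_{(m)}=L^+(t)\oplus L^-(t),\qquad
\rad^i L^+(t)_{(m)}/\rad^{i+1} L^+(t)_{(m)}=L^+(t).
\]
Your step~(iii)--(iv), identifying both loops with multiplication by $z-t$ and deducing $\beta\alpha=\gamma\beta$ from centrality of $z$, is a cleaner way to \emph{see} the relation than the paper's filtration computation, and is in fact exactly what underlies the construction of $K(t)_{(m)}$ from $\C[z]/(z-t)^m$. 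Conversely, the paper's explicit projective covers dispose of your step~(v) (``no further relations'') immediately, since knowing the full projective covers determines the endomorphism algebra; your proposed route via $S^2\bar V(t/2)$ would work but is more roundabout than simply inducing from $\C[z]/(z-t)^m$.

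One small correction: with the paper's convention (arrows $i\to j$ count $\dim\Ext^1(L_j,L_i)$), the equality $\Ext^1(L^-(t),L^+(t))=\C$ gives an arrow $\beta$ from the vertex $L^+(t)$ to the vertex $L^-(t)$, not the reverse. This does not affect your argument, only the labeling.
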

\begin{proof} Consider the subcategories $F^{m}(\gm^t)$ of $\gm^t$ defined in Section $2$.
\begin{lem} Let 
  $K(t)_{(m)}:=\operatorname{Ind}^\g_{\mathfrak p}(\C[z]/((z-t)^m)$ and
  $L^+(t)_{(m)}$ be the indecomposable of length $m$ with all composition factors isomorphic to $L^+(t)$. Then $K(t)_{(m)}$ and  
  $L^+(t)_{(m)}$
 are projective covers of $L^-(t)$ and $L^+(t)$, respectively, in the category $F^{m}(\gm^t)$.
\end{lem}
\begin{proof}
 The projectivity of  $L^+(t)_{(m)}$ follows easily by induction on $m$. Indeed, in the case $m=0$, we have $\Ext^1(L^+(t),L^{-}(t))=0$ 
 and in the only 
non-trivial self-extension of  $L^+(t)$ the action of the center is not semisimple. Then by induction and the long exact sequence we get
$\Ext^1(L^+(t)_{(m)},L^{-}(t))=0$ and the only non-trivial extension $\Ext^1(L^+(t)_{(m)},L^{+}(t))$, the action of the center is given by the Jordan block of length $m+1$.

To prove the projectivity of $K(t)_{(m)}$ we have to show $$\Ext_{(1)}^1(K(t), L^\pm(t))=0$$
where $\Ext_{(1)}$ stand for extension in the category $F^{(1)}(\gm^t)$ and then again proceed by induction as in the previous case.
We recall the exact sequence $$0\to L^+(t)\to K(t)\to L^-(t)\to 0.$$
Consider the corresponding long exact sequences for computing $\Ext_{(1)}^1(K(t), L^\pm(t))$.
For $\Ext_{(1)}^1(K(t), L^-(t))$ we get
$$0=\Ext_{(1)}^1(L^-(t), L^-(t))\to \Ext^1_{(1)}(K(t), L^-(t))\to \Ext_{(1)}^1(L^+(t), L^-(t))=0$$
and for $\Ext_{(1)}^1(K(t), L^+(t))$ we get
$$\begin{array}{c}
0=\Hom(K(t), L^+(t))\to \Hom(L^+(t), L^+(t))\to \Ext_{(1)}^1(L^-(t), L^+(t))\to \vspace{0,1cm}\\
\qquad \qquad \to \Ext_{(1)}^1(K(t), L^+(t))\to \Ext_{(1)}^1(L^+(t), L^+(t))=0, \end{array}$$
$$\Hom(L^+(t), L^+(t))\simeq \Ext_{(1)}^1(L^-(t), L^+(t))=\C.$$
Thus  $\Ext_{(1)}^1(K(t), L^+(t))=0$.
\end{proof}

Finally the relation $\beta\alpha=\gamma\beta$ follows from the calculation of the second and the third terms of the radical filtration 
for $K(t)_{(m)}$ and $L^+(t)_{(m)}$ for the large $m$.
Indeed, 
$$\rad K(t)_{(m)}/\rad^2 K(t)_{(m)}=\rad^2 K(t)_{(m)}/\rad^3 K(t)_{(m)}=L^+(t)\oplus L^-(t),$$
and 
$$\rad L^+(t)_{(m)}/\rad^2 L^+(t)_{(m)}=\rad^2 L^+(t)_{(m)}/\rad^3 L^+(t)_{(m)}= L^+(t).$$
\end{proof}

\subsection{The case of zero central charge}

\begin{lem}\label{extzero} For $t=0$ we have
\begin{enumerate}
\item $\Ext^1(L^+(0),L^+(0))=\Ext^1(L^-(0),L^-(0))=\Ext^1(L^+(0),L^{-}(0))=0$;
\item $\Ext^1(L^-(0),L^+(0))=\C$;
\item $\Ext^1(L^{\pm}(0),\C^{op})=\C$;
\item $\Ext^1(\C^{op}, L^{\pm}(0))=\C$.
\end{enumerate}
\end{lem}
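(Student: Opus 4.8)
The plan is to compute each of the eight $\Ext^1$ groups via relative Lie algebra cohomology with respect to the consistent $\Z$-grading $\ghat=\g_{-2}\oplus\g_{-1}\oplus\g_0\oplus\g_1$, working relative to the reductive subalgebra $\g_0\simeq\ssl(4)$ exactly as in the $t\neq 0$ analysis of Lemma~\ref{lmext}. The main structural inputs are: (i) the realizations $L^+(0)\simeq S^2(V)$, $L^-(0)\simeq\Lambda^2(V)$ as $\g^+=\g_0\oplus\g_1$-modules, together with the $\g_0$-decompositions $S^2(V_{\bar 0})$, $\Lambda^2(V_{\bar 0})\oplus\ssl(4)\oplus S^2(V_{\bar 1})\oplus\C$, etc.; (ii) the cohomology computations of Lemma~\ref{invariants} for $H^0(\g_1,-)$ and $H_0(\g_1,-)$; (iii) Lemma~\ref{dual}, which lets me halve the work: $L^+(0)^*\simeq L^-(0)$ and $(\C^{op})^*\simeq\C^{op}$, so that items (1) and (3)--(4) come in dual pairs. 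I also keep in mind Lemma~\ref{selfext}: since $\operatorname{sdim}S^2(V)\neq 0$ and $\operatorname{sdim}\Lambda^2(V)\neq 0$ (these are $(10|6)$- and $(6|10)$-dimensional, say), that lemma immediately gives $\Ext^1_{\hat\g}(L^\pm(0),L^\pm(0))=0$, disposing of the first two equalities in (1) at once.

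For the remaining equalities in (1), namely $\Ext^1(L^+(0),L^-(0))=0$, I would run the long exact sequence for $H^\bullet(\g_1,-)$ attached to a hypothetical non-split extension $0\to L^-(0)\to M\to L^+(0)\to 0$, verbatim as in the proof of Lemma~\ref{lmext}(3): the connecting map lands in $H^1(\g_1,L^-(0))\hookrightarrow\g_1^*\otimes L^-(0)=S^2(V_{\bar 1})\otimes\Lambda^2(V)$, which contains no copy of $S^2(V_{\bar 0})=H^0(\g_1,L^+(0))$; hence the restriction map $H^0(\g_1,M)\to H^0(\g_1,L^+(0))$ is onto, $M$ is a quotient of $\operatorname{Ind}_{\pp}^\g S^2(V_{\bar 0})$, and a $\Hom_{\g_0}(-,\C)$ count (using $\operatorname{Ind}_{\pp}^\g S^2(V_{\bar 0})\simeq\Lambda^\bullet(\Lambda^2(V_{\bar 1}))\otimes S^2(V_{\bar 0})$ as $\g_0$-modules) forces $\Hom_\g(M,\text{triv})$ to be too small, contradicting non-splitness. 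Note the bookkeeping now differs from the $t\neq 0$ case precisely because $\C^{op}$ is a composition factor of neither $L^+(0)$ nor $L^-(0)$, so the combinatorics is actually cleaner. For (2), $\Ext^1(L^-(0),L^+(0))=\C$, I would exhibit the non-split extension directly: the $\g$-invariant odd symmetric form $\beta$ of \eqref{beta} gives $0\to L^+(0)\to S^2(V)$ — wait, rather I use that $S^2(V)$ and $\Lambda^2(V)$ sit in the two sequences of Lemma~\ref{simple}(b), and splicing them (or restricting $\Lambda^2\bar V$ of a self-extension $\bar V$ of $V$, as in Lemma~\ref{lmext}(1)) produces a length-two module with socle $L^+(0)$ and top $L^-(0)$ that does not split; one-dimensionality follows from a cocycle argument showing any such cocycle is determined up to scalar, via the same $\operatorname{Coind}_{\pp}^\g$-uniqueness used for $K(t)$.

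The four extensions (3) and (4) involving the trivial-type module $\C^{op}$ are genuinely new and are where I expect the real work to lie. By Lemma~\ref{dual}-type duality it suffices to compute $\Ext^1(L^+(0),\C^{op})$ and $\Ext^1(\C^{op},L^+(0))$ (the $L^-$ versions follow by dualizing, since $(\C^{op})^*\simeq\C^{op}$ and $(L^+(0))^*\simeq L^-(0)$, so in fact one of (3) pairs with one of (4)). The cleanest route: $\Ext^1(\C^{op},L^+(0))\simeq H^1(\g,\g_0;L^+(0)\otimes(\C^{op})^*)=H^1(\g,\g_0;L^+(0)^{op})$, and I compute this by the Hochschild--Serre-type spectral sequence for the ideal $\g_1\lhd\pp$ refined through $\g_{-1},\g_{-2}$, but more efficiently I would just note that the sequences $0\to L^+(0)\to S^2(V)\to\C^{op}\to 0$ and $0\to\C^{op}\to\Lambda^2(V)\to L^-(0)\to 0$ of Lemma~\ref{simple}(b) are themselves the non-split extensions witnessing $\Ext^1(\C^{op},L^+(0))\neq 0$ and $\Ext^1(L^-(0),\C^{op})\neq 0$ — non-splitness being exactly the content of "the sequences do not split" asserted there. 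Dualizing gives the other two non-vanishings. For the upper bounds (each $\Ext^1$ is \emph{exactly} $\C$, not larger), I would argue as follows: a cocycle $\varphi:\g\to\Hom_\C(\C^{op},L^+(0))=L^+(0)^{op}$ relative to $\g_0$ is a $\g_0$-map $\g/\g_0=\g_{-2}\oplus\g_{-1}\oplus\g_1\to L^+(0)^{op}$ satisfying the cocycle identity; decomposing $L^+(0)^{op}=S^2(V)^{op}$ into $\g_0$-isotypic pieces and matching against the $\g_0$-types occurring in $\g_{-2}\simeq\C$, $\g_{-1}\simeq V_{\bar 0}$, $\g_1\simeq S^2(V_{\bar 1})$ (or $\Lambda^3 V_{\bar 1}$) pins the space of candidate cochains to low dimension, and the cocycle/coboundary relations cut it to one dimension. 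The hard part will be this last bookkeeping — correctly identifying which $\ssl(4)$-isotypic components survive in $H^1$ after imposing the cocycle condition across all three graded pieces simultaneously, since unlike the $H^\bullet(\g_1,-)$ computations above, here all of $\g_{-2},\g_{-1},\g_1$ contribute and their cocycle constraints are coupled. I would organize it by first restricting a cocycle to $\g_1$ (reducing to the already-known $H^\bullet(\g_1,-)$ data), then propagating via the brackets $[\g_{-1},\g_1]\subset\g_0$ and $[\g_{-1},\g_{-1}]\subset\g_{-2}$ to determine the $\g_{-1}$- and $\g_{-2}$-components, checking at each stage that no extra cohomology appears.
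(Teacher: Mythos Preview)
Your overall plan is sound, but there are two genuine gaps.

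\textbf{The vanishing $\Ext^1(L^+(0),L^-(0))=0$.} Your plan is to rerun the $t\neq 0$ argument of Lemma~\ref{lmext}(3), ending with a $\Hom_{\g_0}(-,\C)$ count. That count does \emph{not} go through at $t=0$. In the $t\neq 0$ case the contradiction was that $\Hom_{\g_0}(M,\C)=\C^2$ exceeds $\Hom_{\g_0}(\operatorname{Ind}_{\pp}^{\g}S^2(V_{\bar 0}),\C)=\C$. But at $t=0$ neither $L^+(0)$ nor $L^-(0)$ has a trivial $\g_0$-summand: the unique trivial $\ssl(4)$-constituent of $S^2(V)$ (resp.\ $\Lambda^2(V)$) sits in the odd piece $V_{\bar 0}\otimes V_{\bar 1}\simeq\ssl(4)\oplus\C$, and it is exactly the $\C^{op}$ that is split off in the sequences of Lemma~\ref{simple}(b). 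Hence $\Hom_{\g_0}(M,\C)=0$ and there is nothing to contradict. The paper instead appeals to the explicit composition series of $\operatorname{Ind}_{\pp}^{\g}S^2(V_{\bar 0})$ computed in \cite{Vera}, which simply does not contain $L^-(0)$ as a constituent.

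\textbf{Non-vanishing in (3)--(4).} Your claim that ``dualizing gives the other two non-vanishings'' is wrong. The two sequences of Lemma~\ref{simple}(b) are dual \emph{to each other}; combined with $\Ext^1(A,B)\simeq\Ext^1(B^*,A^*)$, $L^+(0)^*\simeq L^-(0)$, $(\C^{op})^*\simeq\C^{op}$, they give only one independent non-vanishing, namely $\Ext^1(\C^{op},L^+(0))\simeq\Ext^1(L^-(0),\C^{op})\neq 0$. The other dual pair, $\Ext^1(L^+(0),\C^{op})\simeq\Ext^1(\C^{op},L^-(0))$, needs a separate construction. The paper supplies it by working out the full radical filtration of $K^{op}(0)$ (length four, with successive layers $\C^{op}$, $L^-(0)$, $L^+(0)$, $\C^{op}$); the length-two subquotients of this filtration simultaneously exhibit the missing extensions here and the non-split extension for (2).

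For the upper bounds in (2)--(4) your cocycle bookkeeping across $\g_{-2}\oplus\g_{-1}\oplus\g_1$ might be made to work, but it is considerably heavier than what the paper does. Since $L^\pm(0)$ are, up to parity switch, the adjoint and coadjoint $P(3)$-modules, one gets $\Ext^1(\C,\ad)=\operatorname{Der}(\g)/\g=\C$ for free from Kac's computation of outer derivations, and a one-line cochain bound $\dim\Ext^1(\C,\ad^*)\leq\dim\Hom_{\g_0}(\g_{-1}\oplus\g_1,\ad^*)=1$ handles the dual case. For $\dim\Ext^1(L^-(0),L^+(0))\leq 1$ the paper uses a lowest-weight Verma module argument with an explicit root-multiplicity count, rather than a $\operatorname{Coind}$-type uniqueness statement.
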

\begin{proof} In view of Lemma  \ref{selfext} we already have that $\Ext^1(L^\pm(0),L^\pm(0))=0$.
  Let us show that $\Ext^1(L^+(0),L^{-}(0))=0$. Recall the proof of Lemma \ref{lmext}(3). By the same argument as in this proof,
  we obtain that if the sequence
  $$0\to L^{-}(0)\to M\to L^+(0)\to 0$$
  does not split then $M$ is a quotient of the induced module 
  $\operatorname{Ind}_{\mathfrak p}^{\g}S^2(V_{\bar 0})$. By (13) Section 4.3 in \cite{Vera} this induced module does not have a simple constituent isomorphic to
  $L^-(0)$. Therefore
  there is no such non-split exact sequence. This completes the proof of (1).

  By Lemma \ref{simple} (b) $\Ext^1(L^{-}(0),\C^{op})\neq 0$ and  $\Ext^1(\C^{op},L^{+}(0),)\neq 0$. To prove that other extensions are not zero, consider the
  Kac module $K^{op}(0)$. We claim that it has the following radical filtration
  $$\begin{array}{cc} K^{op}(0)/\rad K^{op}(0)=\C^{op},& \rad K^{op}(0)/\rad^2 K^{op}(0)=L^-(0),\\
      \rad^2 K^{op}(0)/\rad^3 K^{op}(0)=L^+(0),& 
      \rad^3 K^{op}(0)/\rad^4 K^{op}(0)=\C^{op},\\
      \rad^4 K^{op}(0)=0.
      \end{array}$$
  Indeed, $K^{op}(0)=U(\g_{-1})v$ for a $\g_0$-invariant vector $v$. Moreover,
  $$\Hom_{\g}(K^{op}(0),L^{\pm}(0))=0,$$
  since $(L^\pm(0))^{\g_0}=0$. That proves $K^{op}(0)/\rad K^{op}(0)=\C^{op}$. Furthermore, $\g_1\g_{-1}v=0$, hence the maximal submodule
  $N$ of $K^{op}(0)$ is generated by $\g_{-1}v$. Thus, $N$  is a quotient of the induced module $\operatorname{Ind}_{\mathfrak p}^{\g}\Lambda^2(V_{\bar 1})$ and hence $N$ has a simple cosocle
  isomorphic to $L^-(0)$. That implies $\rad K^{op}(0)/\rad^2 K^{op}(0)=L^-(0)$. Finally the rest follows from the self-duality of $K^{op}(0)$.

By considering different subquotients of length $2$ of $K^{op}(0)$ we obtain non-trivial elements in 
$\Ext^1(\C^{op},L^{-}(0))$, $\Ext^1(L^{-}(0), L^{+}(0))$ and
$\Ext^1(L^{+}(0),\C^{op})$. To finish the proof of Lemma we have to show that all above $\Ext^1$ groups are one-dimensional.

Recall that $L^-(0)\simeq \ad^{op}$. Using the duality and change of parity functor it suffices to check that
$\Ext^1(\C,\ad)$, $\Ext^1(\C,\ad^*)$ and $\Ext^1(\ad^*, \ad)$ are one-dimensional. First we have 
$\Ext^1(\C,\ad)=\operatorname{Der}(\g)/\g=\mathbb C$, see \cite{Kac2}. Next,
$$\dim\Ext^1(\C,\ad^*)\leq\dim\Hom_{\g_0}(\g_{1}\oplus\g_1,\ad^*)=1.$$
Now let us prove that $\dim\Ext^1(\ad^*,\ad)\leq 1$.
The Lie superalgebra $\g$ has a root decomposition with even roots
$$\Delta_{\bar 0}=\{(\pm(\varepsilon_i\pm\varepsilon_j)\,|\, 1\leq i<j\leq 3\},$$
and the odd roots
$$\Delta_{\bar 1}=\{\pm\varepsilon_1,\pm\varepsilon_2,\pm\varepsilon_3,\varepsilon_1+\varepsilon_2+\varepsilon_3,
\varepsilon_1-\varepsilon_2-\varepsilon_3,-\varepsilon_1-\varepsilon_2+\varepsilon_3,-\varepsilon_1+\varepsilon_2-\varepsilon_3\}.$$
Note that the odd roots $\pm\varepsilon_i$ have multiplicity $2$ and the roots $\varepsilon_1+\varepsilon_2+\varepsilon_3$,
$\varepsilon_1-\varepsilon_2-\varepsilon_3,-\varepsilon_1-\varepsilon_2+\varepsilon_3,-\varepsilon_1+\varepsilon_2-\varepsilon_3$ are not invertible.
Let $\Delta^{+}$ (respectively, $\Delta^{-}$) be the set of roots $a\varepsilon_1+b\varepsilon_2+c\varepsilon_3$ such that $a+2b+4c>0$ (respectively, $a+2b+4c<0$).
The decomposition $\Delta=\Delta^+\cup\Delta^-$ defines a triangular decomposition $\g=\n^-\oplus\h\oplus n^+$. Every finite-dimensional simple $\g$-modules
has a unique up to proportionality lowest weight vector. The lowest weight of $\ad$ is $\nu=-\varepsilon_2-\varepsilon_3$ and the lowest weight of $\ad^*$ is
$\lambda=-\varepsilon_1-\varepsilon_2-\varepsilon_3$. Let $M$ be an indecomposable $\g$-module of length $2$ with socle $\ad$ and cosocle $\ad^*$. Then $M$ is generated by the
lowest weight vector of weight $\lambda$. Hence $M$ is a quotient of the Verma module $M(\lambda):=U(\g)\otimes_{U(\h\oplus\n^{-})}\C_\lambda$. Multiplicity of
weight $\nu$ in $M(\lambda)$ equals $2$ since the multiplicity of the simple root $\varepsilon_1$ is $2$. However, $\nu$ appears as a weight of $\ad^*$ as well as a weight of $\ad$, hence $\ad$ appears in
$M(\lambda$ with multiplicity at most one. The proof is complete.
\end{proof}
\begin{thm}\label{quiver-p(3)-zero-charge} The Ext quiver  of the category $\Omega_0^+$ is 
$$
\xymatrix{\bullet\ar@/^1.2pc/[rr]^{\mu} \ar@/^0.4pc/[r]|{\alpha}&\bullet\ar@/^0.4pc/[r]|{\delta}\ar@/^0.4pc/[l]|{\beta}&\bullet\ar@/^0.4pc/[l]|{\gamma}}
$$
Therefore the category $\Omega_0^+$ is equivalent of the category of nilpotent representations of the path algebra of the above quiver modulo some relations.
These relations include $\delta\alpha=\beta\gamma=0$, $\mu\beta\alpha=\delta\gamma\mu$ .
\end{thm}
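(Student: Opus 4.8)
The plan is to extract the quiver directly from Lemma~\ref{extzero} and then to pin down the listed relations by analysing the radical filtrations of the indecomposable projectives in the subcategories $F^m(\Omega_0^+)$, in the same spirit as the proof of Theorem~\ref{quiver-p(3)-non-zero-charge}. The block $\Omega_0^+$ has the three simple objects $L^+(0)$, $\C^{op}$, $L^-(0)$, which we take as the vertices $1,2,3$ respectively. By the general principle recalled in Section~3.1, the number of arrows $i\to j$ equals $\dim\Ext^1(L_j,L_i)$, and all of these are supplied by Lemma~\ref{extzero}: every self-extension vanishes, the only vanishing off-diagonal group is $\Ext^1(L^+(0),L^-(0))=0$, and each of the five remaining groups $\Ext^1(\C^{op},L^{\pm}(0))$, $\Ext^1(L^{\pm}(0),\C^{op})$ and $\Ext^1(L^-(0),L^+(0))$ is one-dimensional. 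This yields precisely the displayed quiver $Q$, with $\mu\colon 1\to3$, $\alpha\colon 1\to2$, $\beta\colon 2\to1$, $\delta\colon 2\to3$ and $\gamma\colon 3\to2$ corresponding to these five extension groups.

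For the equivalence of categories, observe that every object of $\Omega_0^+$ is annihilated by a power of the central element $z$ and hence lies in some $F^m(\Omega_0^+)$, so $\Omega_0^+=\bigcup_m F^m(\Omega_0^+)$. Each $F^m(\Omega_0^+)$ is a finite-length category with finitely many simples and, as recalled at the end of Section~2, enough projective objects; by the formalism of Section~3.1 it is therefore equivalent to the category of finite-dimensional modules over a pointed algebra $A_m$, a quotient of $\C Q$ by relations (for $m$ large the Ext quiver of $F^m(\Omega_0^+)$ equals $Q$). The inclusions $F^m(\Omega_0^+)\hookrightarrow F^{m+1}(\Omega_0^+)$ induce surjections $A_{m+1}\twoheadrightarrow A_m$, so $\Omega_0^+$ is equivalent to the category of finite-dimensional nilpotent representations of the path algebra of $Q$ modulo relations. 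It then remains to verify that $\delta\alpha$, $\beta\gamma$ and $\mu\beta\alpha-\delta\gamma\mu$ act by zero on every object.

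To obtain these relations we analyse the indecomposable projective covers of $L^+(0)$, $\C^{op}$ and $L^-(0)$ in $F^m(\Omega_0^+)$ for $m\gg0$. As in Theorem~\ref{quiver-p(3)-non-zero-charge}, these are built from the Kac modules $K(0)=\operatorname{Ind}_{\mathfrak p}^{\g}\C_0$ and $K^{op}(0)$ — whose radical series were computed in the proof of Lemma~\ref{extzero} — together with induced modules $\operatorname{Ind}_{\mathfrak p}^{\g}(-)$ on which $z$ acts by a nilpotent Jordan block of size $m$, their projectivity being established by the long exact sequence argument of Theorem~\ref{quiver-p(3)-non-zero-charge}. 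The vanishing relations $\delta\alpha=0$ and $\beta\gamma=0$ say that $L^-(0)$, respectively $L^+(0)$, does not occur in the layer $\rad^2/\rad^3$ of the projective cover of $L^+(0)$, respectively of $L^-(0)$; this is deduced, exactly as in Lemma~\ref{extzero}(1),(3), from the $\g_0=\ssl(4)$-module decompositions of the relevant induced modules (cf. Lemma~\ref{invariants} and $(13)$, Section~$4.3$ of \cite{Vera}), which show that the induced module in question has no constituent of the corresponding type. The relation $\mu\beta\alpha=\delta\gamma\mu$ follows, exactly as $\beta\alpha=\gamma\beta$ does in Theorem~\ref{quiver-p(3)-non-zero-charge}, by computing $\rad^2/\rad^3$ and $\rad^3/\rad^4$ of the projective cover of $L^+(0)$ for large $m$: the layer $\rad^3/\rad^4$ contains $L^-(0)$ with multiplicity one, so the two a priori distinct length-three paths $1\to2\to1\to3$ and $1\to3\to2\to3$ must be proportional, and after rescaling $\mu$ they agree.

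The main obstacle is this last step, namely the explicit identification of the indecomposable projectives of $F^m(\Omega_0^+)$ and the computation of enough terms of their radical filtrations. Unlike the non-zero-charge case, the block now has three simples and involves the trivial-type module $\C^{op}$, so one must control the submodule lattices of the large induced modules $\operatorname{Ind}_{\mathfrak p}^{\g}(-)$ uniformly in $m$, for which the $\so(6)\cong\ssl(4)$-branching data of \cite{Vera}, together with Lemmas~\ref{simple} and \ref{invariants}, are needed. Since the theorem only asserts that these relations are present, there is no need to determine a complete set of generators for the ideal of relations.
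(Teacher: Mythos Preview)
Your derivation of the Ext quiver from Lemma~\ref{extzero} is correct and matches the paper exactly.

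The treatment of the relations, however, has a genuine gap. You propose to read off $\delta\alpha=0$, $\beta\gamma=0$ and $\mu\beta\alpha=\delta\gamma\mu$ from the radical filtrations of projective covers in $F^m(\Omega_0^+)$, but you never identify these projectives, and you explicitly acknowledge this as ``the main obstacle''. Unlike the non-zero-charge block, where $K(t)_{(m)}$ and $L^+(t)_{(m)}$ are written down concretely, here the block has three simples including $\C^{op}$ and there is no obvious induced-module candidate for the projective covers; the appeal to ``$\g_0=\ssl(4)$-module decompositions'' and to Lemma~\ref{extzero}(1),(3) does not supply the needed radical layers. There is also a bookkeeping slip: both paths in $\mu\beta\alpha=\delta\gamma\mu$ run from vertex $1$ to vertex $3$, so the relevant multiplicity is that of $L^+(0)$ in a radical layer of the projective cover of $L^-(0)$, not of $L^-(0)$ in the cover of $L^+(0)$.

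The paper avoids projectives altogether and proves each relation by a direct contradiction argument. For $\delta\alpha=0$ one assumes a length-$3$ module $R$ with socle $L^+(0)$, cosocle $L^-(0)$ and middle $\C^{op}$; both $R$ and the known length-$2$ extension $M$ are quotients of the same Verma module $M(\lambda)$ (with $\lambda=-\varepsilon_1-\varepsilon_2-\varepsilon_3$), and a weight-multiplicity count at $\nu=-\varepsilon_2-\varepsilon_3$ forces the kernels to coincide, contradicting the different lengths. For $\beta\gamma=0$ one assumes a length-$3$ module $F$ with socle $L^-(0)$, cosocle $L^+(0)$ and middle $\C^{op}$; since $zF=0$ and $[x,x]\in\C z$ for $x\in\g_{-1}$, the $\g_0$-invariant vector $v$ representing $\C^{op}$ is killed by $\g_{-1}$, and a separate check gives $\g_1v=0$, so $v$ is $\g$-invariant, a contradiction. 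For $\mu\beta\alpha=\delta\gamma\mu$ one writes down the radical filtration that a counterexample $T$ would have, observes that $z^2T=0$ while $zT$ is the length-$2$ module with socle $L^+(0)$ and cosocle $L^-(0)$, and obtains a contradiction by locating $zT$ inside the two summands of $\operatorname{rad}T$. These arguments are short, use only the central element $z$ and elementary weight considerations, and do not require knowing any projectives in $F^m(\Omega_0^+)$.
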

\begin{rem} We suspect that there is no other relations but this fact is not needed for the description of the corresponding category for the Jordan algebra.
\end{rem}
  \begin{proof} Lemma \ref{extzero} implies that the above quiver is the Ext quiver of $\Omega_0^+$, where the left vertex corresponds to $L^+(0)$, the right vertex to
  $L^-(0)$ and the middle vertex to $\C^{op}$. We have to prove the relations.

  Showing that $\delta\alpha=0$ is equivalent to proving that there is no $\g$-module $R$
  with socle isomorphic to $L^+(0)$ and cosocle isomorphic to $L^-(0)$ with middle layer of the radical filtration  $\C^{op}$. In the proof of Lemma \ref{extzero}
  we constructed a module $M$ of length $2$ with socle $L^+(0)$ and cosocle $L^-(0)$ which is a quotient of the Verma module $M(\lambda)$. Since the multiplicity of
  weight $\nu$ in $M(\lambda)$, $M$ and $R$ is the same and equals $2$, we obtain that $M=M(\lambda)/N$ and $R=M(\lambda)/Q$, where $N$ and $Q$ are maximal submodules
  of $M(\lambda)$ which intersect weight spaces of weights $\lambda$ and $\nu$ trivially. Since $Q+N$ satisfies the same property, maximality of $N$ and $Q$ implies
  $N=Q$.

  Next we show that $\beta\gamma=0$. It suffices to prove that there is no $\g$-module $F$
  with socle isomorphic to $L^-(0)$ and cosocle isomorphic to $L^+(0)$ with middle layer of the radical filtration  $\C^{op}$. 
  Assume that such $F$ exists. Then $zF=0$.
  We have an isomorphism of $\g$-modules
  $$(F/\operatorname{soc} F)^{op}\simeq \g.$$
  Choose a non-zero $v\in F^{\g_0}$. Then by above isomorphism for any $x\in\g_{-1}$ such that $[x,x]\neq 0$ we have $v\in\operatorname{Im}x$. Since $zF=0$
  and $[x,x]=2x^2=cz$, we obtain $xv=0$. Therefore $\g_{-1}v=0$. On the other hand, $\g_1v=0$ as $L^-(0)$ does not have $\g_0$ components isomorphic to $\g_1$. That
  implies $v\in F^{\g}$, that leads to a contradiction.

Finally we show the relation $\mu\beta\alpha=\delta\gamma\mu$. If for the sake of contradiction we assume that this relation does not hold, then there exists
a $\g$-module $T$ with the following radical filtration:
 \begin{equation}\label{filtration-loops}
\begin{array}{c}
L^-(0)\\
\overline{\C^{op} \oplus L^+(0)}\\
\overline{L^-(0)\oplus \C^{op}}\\
\overline{L^+(0)\oplus L^+(0)}
\end{array}
\end{equation}
In particular we have $\rad T=T'\oplus T''$, where $T'$ has cosocle $\C^{op}$ and $T''$ has cosocle $L^+(0)$.
Note that $zT\neq 0$ and $z^2T=0$. This implies that the submodule $zT$ has length $2$ with cosocle $L^-(0)$ and socle $L^+(0)$. Therefore $zT\subset T'$.
On the other hand, $zT''\neq 0$. A contradiction.
\end{proof}

\begin{thm}\label{description-JP(2)} The category $\JJ$ consists of infinite number of equivalent blocks, each block is 
 equivalent to the category of nilpotent representations of the quiver
$$
\xymatrix{\bullet \ar@(ul,ur)[]|{\alpha} \ar@<0.5ex>[r]^\beta & \bullet\ar@(ul,ur)[]|{\gamma}}
$$
with relations $\beta\alpha=\gamma\beta$.
\end{thm}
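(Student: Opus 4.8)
The plan is to split $\JJ=JP(2)\text{-mod}_1$ according to the action of the centre of $\hat\g=\hat P(3)$, which is one--dimensional, so that $\JJ=\bigoplus_{t\in\C}\JJ^{\,t}$, and then to determine each summand by comparison with $\gm^{\,t}=\Omega_t^+\oplus\Omega_t^-$ through the functor $Jor$. By Theorem~3.10 of \cite{ZM2} each $\JJ^{\,t}$ has exactly four simple objects, which (using that $Jor$ is exact and full and kills the trivial modules, together with Proposition~\ref{adjoint}) are $Jor(L^\pm(t))$ and their opposites; by Lemma~\ref{weighargument} a simple and its opposite lie in different blocks, so the blocks of $\JJ$ will occur in opposite pairs, and it remains to describe one block for each $t$.

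For $t\neq 0$ there is nothing more to do. By Remark~\ref{corr} the functor $Jor\colon\gm^{\,t}\to\JJ^{\,t}$ is an equivalence of categories; the change--of--parity functor identifies $\Omega_t^-$ with $\Omega_t^+$; and Theorem~\ref{quiver-p(3)-non-zero-charge} identifies $\Omega_t^+$ with the category of nilpotent representations of the quiver in the statement subject to $\beta\alpha=\gamma\beta$. Hence $\JJ^{\,t}$ is a sum of two blocks, each equivalent to the asserted quiver category, and the twists $\sigma_s$ identify all these blocks for different non--zero $t$.

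For $t=0$ the functor $Jor$ is no longer an equivalence, since it kills the trivial modules $\C$ and $\C^{op}$. Here I would apply Proposition~\ref{quivers-relation}(3): writing $A$ for the path algebra with relations of $\gm^{\,0}$ and $e_0$ for the idempotent supported at the trivial vertices, the path algebra with relations of $\JJ^{\,0}$ is $A'=(1-e_0)A(1-e_0)$. Starting from the block $\Omega_0^+$ with the quiver of Theorem~\ref{quiver-p(3)-zero-charge} — vertices $L^+(0),\C^{op},L^-(0)$, arrows $\alpha,\beta,\gamma,\delta,\mu$, and relations containing $\delta\alpha=\beta\gamma=0$ and $\mu\beta\alpha=\delta\gamma\mu$ — deleting the vertex $\C^{op}$ turns the composites $\beta\alpha$ and $\delta\gamma$ into loops at $L^+(0)$ and $L^-(0)$, kills the composites $\delta\alpha$ and $\beta\gamma$, and leaves $\mu$ as the only arrow between the two remaining vertices; under the evident identification of $\beta\alpha,\mu,\delta\gamma$ with the arrows $\alpha,\beta,\gamma$ of the target quiver the surviving relation $\mu\beta\alpha=\delta\gamma\mu$ reads exactly $\beta\alpha=\gamma\beta$. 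Applying the same computation to $\Omega_0^-$ produces the second block of $\JJ^{\,0}$, equivalent to the first by parity.

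The main obstacle is this last point: ruling out \emph{extra} relations in $A'$, since Theorem~\ref{quiver-p(3)-zero-charge} only pins down the relations of $\gm^{\,0}$ up to possible further ones of length $\geq 3$. I see two ways to finish. One is to observe that any hypothetical additional relation of $A$ is a combination of paths of length $\geq 3$ in the three--vertex quiver, and to check case by case that its image in $(1-e_0)A(1-e_0)$ already lies in the two--sided ideal generated by $\beta\alpha-\gamma\beta$ in the path algebra of the target quiver, so that the natural surjection onto $A'$ is an isomorphism. The more robust route is to exhibit the indecomposable $JP(2)$-modules directly: for each $m$ take the indecomposable $\hat\g$-module of length $m$ all of whose composition factors are the standard module $V(0)$ and on which $z$ acts by a single Jordan block, form its symmetric and exterior squares, apply $Jor$, and read off the radical layers exactly as in the proofs of Lemma~\ref{lmext} and Theorem~\ref{quiver-p(3)-non-zero-charge}; these realise the generic indecomposables of every length of the target quiver algebra, forcing $A'$ to be precisely that algebra. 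Either way one concludes that every block of $\JJ$, one opposite pair for each $t\in\C$, is equivalent to the category of nilpotent representations of the stated quiver with the relation $\beta\alpha=\gamma\beta$.
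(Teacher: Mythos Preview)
Your approach is exactly the paper's: decompose by central character, invoke Theorem~\ref{quiver-p(3)-non-zero-charge} together with the equivalence of Remark~\ref{corr} for $t\neq 0$, and apply Proposition~\ref{quivers-relation}(3) to Theorem~\ref{quiver-p(3)-zero-charge} for $t=0$. The paper's own proof is literally one sentence doing this.

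You are right, however, to flag the issue of possible extra relations at $t=0$: Theorem~\ref{quiver-p(3)-zero-charge} gives only a partial list of relations for $\Omega_0^+$, and the Remark immediately following it concedes that the full relation ideal is not determined (``this fact is not needed for the description of the corresponding category for the Jordan algebra''). The paper's one-line proof does not spell out why it is not needed, so on this point your proposal is actually more scrupulous than the source. Of your two proposed fixes, the second---building indecomposables in $\JJ^{\,0}$ as $Jor$ of symmetric and exterior squares of the length-$m$ self-extension of $V(0)$ and reading off their radical layers---is the one that works cleanly and mirrors the argument already carried out for $t\neq 0$; your first fix (arguing that any further relation in $A$ lands in the ideal generated by $\beta\alpha-\gamma\beta$ in $A'$) goes in the wrong direction, since unknown extra relations in $A$ could only produce \emph{more} relations in $A'=(1-e_0)A(1-e_0)$, not fewer, and it is precisely the absence of further relations in $A'$ that must be established.
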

\begin{proof} {It follows immediately by applying Proposition~\ref{quivers-relation} to quivers obtained in Theorem~\ref{quiver-p(3)-non-zero-charge} and Theorem~\ref{quiver-p(3)-zero-charge}  }
\end{proof}
\begin{rem}
This quiver has wild representation type, see (12), Table W in \cite{Han}.
\end{rem}

\section{Representations of $Kan(n)$, $n\geq 2$}
Let $\Lambda(n)$ be the Grassmann superalgebra generated by $n\geq 2$ odd generators $\{\xi_1,\dots,\xi_n\}$
such that $\xi_i\xi_j+\xi_j\xi_i=0$. Define odd superderivations $\frac{\partial}{\partial\xi_i}$, $i=1,\dots,n$ on $\Lambda(n)$\begin{equation}
\frac{\partial}{\partial\xi_i}\  \frac{\partial\xi_j}{\partial\xi_i} =\delta_{ij}, \quad
\frac{\partial(uv)}{\partial\xi_i}=\frac{\partial u}{\partial\xi_i}v+(-1)^{|u|}u\frac{\partial v}{\partial\xi_i}.
\end{equation} 
Then the linear superspace $J_n=\Lambda (n)\oplus\overline{\Lambda (n)}$, 
is a Jordan superalgebra with respect to the product $"\cdot"$
\begin{equation}
f\cdot g=fg \quad f\cdot\overline{g}=\overline{fg}, \quad 
\overline{f}\cdot\overline{g}:=\{f,g\}=(-1)^{|f|}\sum_{i=1}^n \frac{\partial f}{\partial \xi_i}\frac{\partial g}{\partial \xi_i}.
\end{equation}
Here $\overline{\Lambda (n)}$ is a copy of $\Lambda(n)$, $f,g\in \Lambda(n)$, both homogeneous and $\{f,g\}$ is Poisson bracket.
The $\Z_2$-grading of $J_n=(J_n)_{\bar{0}}+(J_n)_{\bar{1}}$ is given by 
$(J_n)_{\bar{0}}=\Lambda(n)_{\bar{0}}+\overline{\Lambda(n)_{\bar{1}}}$ and $(J_n)_{\bar{1}}=\Lambda(n)_{\bar{1}}+\overline{\Lambda(n)_{\bar{0}}}$. The superalgebra $J_n$ is called the Kantor double of the Grassmann Poisson superalgebra and it is simple Jordan superalgebra for any $n\geq 1$.  Observe that $J_1$ is isomorphic to the general linear superalgebra $M^+_{1,1}$ (this superalgebra will be considered in next Section) and 
for $n\geq 2$, $J_n$ is exceptional. 

To determine the TKK construction of $Kan(n)$ we will introduce another set of generators of $J_n$, namely
if $n=2k$ define 
\begin{equation}
\eta_i=\frac{1}{\sqrt{2}}\left(\frac{\partial f}{\partial\eta_i}+\frac{\partial f}{\partial\eta_{k+i}}\right), \quad 
\eta_{i+k}=\frac{1}{\sqrt{2}}\left(\frac{\partial f}{\partial\eta_i}-\frac{\partial f}{\partial\eta_{k+i}}\right), \quad i=1,\dots, k,
\end{equation}
while if $n=2k+1$ add $\eta_0=\frac{1}{\sqrt{2}}\xi_{2k+1}$. The Poisson bracket may be rewritten as
\begin{equation}
\{f,g\}=(-1)^{|f|}\left(\sum_{i=1}^k \left(\frac{\partial f}{\partial\eta_i} \frac{\partial g}{\partial\eta_{i+k}}+\frac{\partial f}{\partial\eta_{i+k}}\frac{\partial g}{\partial\eta_i}\right)+\frac12 \frac{\partial f}{\partial\eta_0} \frac{\partial g}{\partial\eta_0}\right),
\end{equation}
where the last summand only appears for odd $n$.

The Poisson Lie superalgebra $\po(0\,|\,n)$ can be describe as $\Lambda(n)$ endowed with the bracket
$[f,g]=-\{f,g\}$.  Let $\spo(0\,|\,n)=[\po(0\,|\,n),\po(0\,|\,n)]$, then $H(n)=\spo(0\,|\,n)/\mathbb C$ can be identified with the set of $f\in\Lambda(n)$, such that
$f(0)=0$ and $\deg\, f< n$. To define a short grading on $\g=H(n)$ denote by $\g_1$ ($\g_{-1}$) the subspace generated by  the monomials which contain  $\eta_{k+1}$ and do not contain $\eta_1$ 
($\eta_{1}$ and $\eta_{k+1}$, respectively). For $n=2k+1$ the subspaces $\Lambda_1$ and $\Lambda_2$
generated  by all monomials from $\g_{-1}$ which contain or do not contain generator $\eta_0$, respectively,
may be identified with two copies of $\Lambda(2k-2)$ in $\eta_2,\dots,\eta_{k},\eta_{k+2},\eta_{2k}$. 
Moreover $\Lambda_1+\Lambda_2$ is a Jordan superalgebra with respect to multiplication 
$$
x\cdot y=[[a,x],y], \quad a=\eta_0\eta_{k+1}.
$$
Observe that $\cdot$ corresponds to the usual associative product in $\Lambda_1$ and the Poisson bracket
 in $\Lambda_2$. For the case of even $n=2k$ choose a different set of generators $\eta_1$, 
 $\eta'_2=\eta_2-\eta_{n+1}$, $\eta_3$, $\dots$, $\eta_{n+1}$, $\eta'_{n+2}=\eta_2+\eta_{m+1}$, 
 $\eta_{n+3}$, $\dots$, $\eta_{2n}$. The subspace $\Lambda_1$ (the space $\Lambda_2$) is generated by monomials that contain (don't contain) $\eta'_2$. Then $\Lambda_1\oplus\Lambda_2$ is the Kantor double $J_{2n-3}$.

\subsection{Construction of $\spo(0,n)$-modules with short grading.}

As we already mentioned in Introduction representations of Kantor double superalgebra were studied in \cite{Sh2}. 
The authors have shown 
that $Kan(n)$ $n>4$ (over field of characteristic zero) is rigid, i.e. has only regular irreducible supermodule and its opposite. 
The fact that the
$H(n)$, the TKK of $Kan(n)$, has non-trivial central extension $\spo(n)$ was not taken into consideration. In \cite{ZM4} it was corrected, the authors
proved that under the same restriction on characteristic of field and number of variables there exists (up to change of parity) only one-parameter 
family $V(\alpha)$ of irreducible supermodules. Finally in \cite{ShO} it was shown that  every irreducible finite dimensional Jordan 
$Kan(n)$ supermodule for $n\geq 2$ and characteristic of field is different from $2$ is isomorphic  (up to change of parity)  to 
$V(\alpha)$. In this section we study indecomposable $Kan(n)$-modules. 

Assume that $\g=H(n)$, $n>4$ then the universal central extension of $\g$,
$\hat \g$ is isomorphic to the special Poisson algebra: $\spo(0,n)$. It is useful to recall that $\po(0,n)$ is equipped with invariant bilinear form $\omega$ 
$$\omega(f,g)=\frac{\partial}{\partial\xi_1}\dots\frac{\partial}{\partial\xi_n}(fg).$$
The form $\omega$ is symmetric and even (resp. odd) if $n$ is even (resp. odd). It induces the invariant form on $\g=H(n)$.

We also equip $\g$ and $\hat\g$ with a $\Z$-grading
(consistent with $\Z_2$-grading):
\begin{equation}\label{long_grading_poisson}
  \hat{\g}=\hat\g_{-2}\oplus
  {\g}=\g_{-2}\oplus\g_{-1}\oplus\g_{0}\oplus\dots\oplus\g_{(n-3)}.
\end{equation}
where the linear space $\g_{i}$ is generated by monomials of degree $i+2$, $i\geq -2$. Then
$\hat\g_{-2}=\C $ is one-dimensional center, $\g_0$ is orthogonal algebra $\oo(n)$ and  $\g_i$ is 
$\oo(n)$-module $\Lambda^{i+2} V$, $V$ the standard $\oo(n)$-module. This grading is called standard.
We use the notation
$$\g^+:=\bigoplus_{i\geq 0}\g_i,\quad \g^{++}=\bigoplus_{i>0}\g_i.$$

Consider the subalgebra $\pp=\g^+\oplus \hat\g_{-2}\subset\gh$. Let  $N$ be a $\g_0$-module,
extend it to $\pp$-module by setting $\g_i N=0$, $i>0$, $z=t \operatorname {Id}_N$. Then
$Ind^{\hat\g}_{\pp} N = U(\g)\otimes_{U(\pp)} N$ is a $\gh$-module by construction and it is a $\g$-module if $t=0$.
One has the following isomorphism of $\g_0$-modules
\begin{equation}\label{isoshap}
  Ind^{\hat\g}_{\pp} N\simeq N\otimes \Lambda V.
  \end{equation}

  Let $M_t(\lambda)$ be an even simple $\g_0+\g_{-2}$-module with $\oo(n)$-highest weight $\lambda$ and and central charge $t$. 
  We extend it to a
  simple $\pp$-module by setting $\g^{++}M_t(\lambda)=0$.  
  Every simple finite dimensional $\pp$-module is isomorphic to $M_t(\lambda)$ or $M_t(\lambda)^{op}$.
  
Finite dimensional irreducible representations of both $\g$
and $\hat\g$ were described by A. Shapovalov in \cite{Sha1}, \cite{Sha2}.
Let us formulate these results here.
\begin{thm}\label{Shap} Let $n\geq 4$, $\gh=\spo(n)$. 
  \begin{enumerate}
\item    Every simple  $\gh$-module  is a quotient of the induced module
  $Ind^{\hat\g}_{\pp} M_t(\lambda)$ or $Ind^{\hat\g}_{\pp} M_t(\lambda)^{op}$.
   If $t=0$, this quotient is unique,
  we denote it by $L_\lambda$.
  \item Let $\omega_1$ denote the first fundamental weight of $\g_0=\oo(n)$. If the highest weight $\lambda$ is different from $l\omega_1$, $l\in\Z^{\geq 0}$
    then the induced module  $Ind_{\pp}^{\hat\g} M_t(\lambda)$ is simple for every $t$. If $t\neq 0$ then  $Ind_{\pp}^{\hat\g} M_t(0)$ is also simple.
  \item If $k>1$  then  $Ind_{\pp}^{\hat\g} M_0(k\omega_1)$ is an indecomposable module length $4$ with simple socle and cosocle isomorphic to $L_{k\omega_1}$
    and two other simple subquotients isomorphic to $L^{op}_{(k-1)\omega_1}$ and $L^{op}_{(k+1)\omega_1}$.
  \item  There exists a homomorphism
    $\gamma: Ind_{\g^+}^{\g} M_0(2\omega_1)^{op}\to Ind_{\g^+}^{\g} M_0(\omega_1)$ and $\im\gamma$ is an indecomposable module of length $2$ with socle $L_{\omega_1}$ and cosocle $L^{op}_{2\omega_1}$. 
  \item  $Ind_{\pp}^{\hat\g} M_0(0)$ has length $3$ with one dimensional socle and cosocle.
  \item\label{poisson-complex} If $k>0$ and $t\neq 0$ then  $Ind_{\pp}^{\hat\g} M_t(k\omega_1)$ is a direct sum of two non-isomorphic simple modules. There exists an exact complex
    $$Ind_{\pp}^{\hat\g} M_t(0)\to Ind_{\pp}^{\hat\g} M_t(\omega_1)\to Ind_{\pp}^{\hat\g} M_t(2\omega_1)\to\dots $$
    such that the image of every differential is a simple $\gh$-module.
    \end{enumerate}
  \end{thm}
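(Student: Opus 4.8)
This theorem is proved in \cite{Sha1, Sha2}; the plan of the argument is as follows. Write $\g_0=\oo(n)$, let $z$ span the one-dimensional center $\hat\g_{-2}$, and note that $\g^{++}=\bigoplus_{i>0}\g_i$ is a graded, hence nilpotent, ideal of $\pp=\g^+\oplus\hat\g_{-2}$ with $\pp/\g^{++}\simeq\g_0\oplus\C z$. The soft part of the statement is the reduction to parabolic induction. For a nonzero finite-dimensional $\gh$-module $M$, nilpotency of $\g^{++}$ forces $M^{\g^{++}}\neq0$, and $M^{\g^{++}}$ is $\pp$-stable since $\g^{++}$ is an ideal of $\pp$; when $M$ is simple, $z$ acts on $M^{\g^{++}}$ by a scalar $t$ (Schur's lemma) and $\g_0$ acts semisimply, so $M^{\g^{++}}$ contains an irreducible $\g_0$-submodule $N$, which is a $\pp$-submodule isomorphic to $M_t(\lambda)$ or $M_t(\lambda)^{op}$ (every simple $\pp$-module has $\g^{++}$ acting by zero). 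Frobenius reciprocity then gives a nonzero, hence surjective, map $\operatorname{Ind}^{\gh}_{\pp}N\to M$. This proves the first assertion of (1) and reduces the whole classification to determining the submodule lattice of the modules $\operatorname{Ind}^{\gh}_{\pp}M_t(\lambda)$.

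The computational core is the location of the singular vectors. Via the $\g_0$-isomorphism $\operatorname{Ind}^{\gh}_{\pp}M_t(\lambda)\simeq M_t(\lambda)\otimes\Lambda V$ of \eqref{isoshap}, grade the induced module by exterior degree $j$, so that $\Lambda^jV\otimes M_t(\lambda)$ sits in $\Z$-degree $-j$; it is generated by its degree-$0$ part, and a finite-dimensional proper submodule necessarily contains a nonzero $\g^{++}$-invariant of negative degree. Since $\g^{++}$ is generated by $\g_1=\Lambda^3V$, such a singular vector must be killed by the $\g_0$-equivariant maps $\g_1\otimes(\Lambda^jV\otimes M_t(\lambda))\to\Lambda^{j-1}V\otimes M_t(\lambda)$ whose structure constants are read off from the Poisson bracket on the low-degree part of $\Lambda(n)$. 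Decomposing into $\g_0$-isotypic components and tracking the action, one finds that a singular vector can exist only if a certain contraction coefficient vanishes, and this coefficient is a linear function of $t$ and of the $\omega_1$-coordinate of $\lambda$ that vanishes precisely when $\lambda=l\omega_1$ (and, for $\lambda=0$, only when $t=0$). In particular $\operatorname{Ind}^{\gh}_{\pp}M_t(\lambda)$ has no proper submodule, i.e. is simple, whenever $\lambda\neq l\omega_1$, and likewise $\operatorname{Ind}^{\gh}_{\pp}M_t(0)$ is simple for $t\neq0$; this is (2).

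When $\lambda=l\omega_1$ the same computation exhibits the singular vectors explicitly, and from them one reads off the submodule structure. For $t=0$ and $k\geq2$ there are exactly two singular vectors, of $\g_0$-highest weights $(k-1)\omega_1$ and $(k+1)\omega_1$ with the parity opposite to that of $M_0(k\omega_1)$; the submodules they generate assemble into the length-$4$ indecomposable of (3), and in particular $\operatorname{Ind}^{\g}_{\g^+}M_0(\lambda)$ has simple socle and a simple cosocle, which is the module $L_\lambda$ of (1); this yields the uniqueness statement there. The borderline values $k=1$ and $k=0$ produce, respectively, the single map $\gamma$ of (4) and the length-$3$ module of (5), where a dimension count identifies the socle and cosocle with the two one-dimensional summands $\Lambda^0V$ and $\Lambda^nV$ of $\Lambda V$. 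Finally (6) is a BGG-type resolution: for $t\neq0$ one forms the complex from the canonical maps $\operatorname{Ind}^{\gh}_{\pp}M_t(k\omega_1)\to\operatorname{Ind}^{\gh}_{\pp}M_t((k+1)\omega_1)$ dual to the singular vectors, checks that consecutive composites vanish, and deduces exactness and simplicity of the images by comparing Jordan-Hoelder multiplicities, using that $\operatorname{Ind}^{\gh}_{\pp}M_t(k\omega_1)$ is already a direct sum of two simple modules for $k>0$, $t\neq0$.

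The main obstacle is precisely the explicit singular-vector computation carried out in \cite{Sha1, Sha2}: one must compute, through the Poisson bracket, the $\g_0$-equivariant contraction coefficients governing the $\g^{++}$-action on $\operatorname{Ind}^{\gh}_{\pp}M_t(\lambda)$, show that they vanish exactly in the cases $\lambda=l\omega_1$ (and $\lambda=0$ with $t=0$), and then extract the submodule lattices needed for (3)--(5). Once this is in hand, the reduction in (1) and the exactness of the complex in (6) reduce to routine bookkeeping with $\oo(n)$-characters.
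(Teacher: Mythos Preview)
Your proposal is correct and matches the paper's treatment: the paper does not supply its own proof of this theorem but simply attributes it to Shapovalov \cite{Sha1,Sha2} and states the result, exactly as you do. Your added sketch of the parabolic-induction reduction and the singular-vector analysis is a faithful outline of Shapovalov's argument and goes beyond what the paper itself provides.
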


Let  $I_t=Ind_{\pp}^{\hat\g} \mathbb C_t$ be the smallest induced module. Since
$I_t\simeq \Lambda(V)$ as a $\oo$-module, $I_t$ has a short grading.
For $t\neq 0$, the $I_t$ is simple and we denote it by $S(t)$. On the other hand, $I_0$ is the restriction of
the coadjoint module $\po$ to $\spo$ and hence it has length $3$ with one-dimensional trivial module in the cosocle and socle and the coadjoint
$\g$-module at the middle layer of the radical filtration.
If we denote by $S(0)$ the coadjoint module of $\g=H(n)$, then we have the following diagram for the radical filtration of $I_0$ 

$$
\begin{array}{c}
\C\\
\overline{S(0)}\\
\overline{\ \C\ }
\end{array}
\quad {\rm for\ even\ } n \qquad {\rm and}\qquad
\begin{array}{c}
\C\\
\overline{S(0)}\\
\overline{\C^{op}}
\end{array}
\quad {\rm for\ odd\ } n.
$$
Using the form $\omega$ it is easy to check that
$I_0^*\simeq I_0$ for even $n$ and $I_0^*\simeq I_0^{op}$ for odd $n$.

\begin{prop}\label{simplepoisson} Let $n\geq 4$. 
\begin{enumerate}
\item There are no $\spo(n)$ modules which admit very short grading.
\item A simple object in $\spo(n)-\text{mod}_{1}$ is  isomorphic to $\C$, $\C^{op}$,
$S(t)$ or $S^{op}(t)$. 
\end{enumerate}
\end{prop}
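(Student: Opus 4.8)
The plan is to prove both items by analyzing the short grading induced by the short $\ssl(2)$-subalgebra $\alpha_J$ and comparing it with the standard $\Z$-grading \eqref{long_grading_poisson}. For item (1), suppose $M$ is a finite-dimensional $\spo(n)$-module admitting a very short grading $M=M[-\frac12]\oplus M[\frac12]$ with respect to the semisimple element $h$ of the short subalgebra. The key observation is that by Proposition~\ref{equivalence_of_categories} such a module would correspond to a special $Kan(n)$-module, but $Jor(Kan(n))=Kan(n)$ has no associated very short grading: for $n\geq 2$ the Lie superalgebra $H(n)$ (hence $\hat\g=\spo(n)$) has no short subalgebra whose eigenspace decomposition is concentrated in weights $\pm\frac12$, equivalently the element $h=h_J=L_e$ acts with eigenvalues $1,0,-1$ and not merely $\pm\frac12$ on any module. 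Concretely, I would argue that the center $z\in\hat\g_{-2}$ has $\alpha_J$-weight $-1$ (it pairs $\hat\g[-1]\times\hat\g[-1]$ into $\hat\g_{-2}$ only trivially, while $\g[1]\times\g[1]\to z$ is the relevant pairing), so $z$ shifts $h$-weights by $-1$; a module with only two weights $\pm\frac12$ would force $zM[\frac12]\subset M[-\frac12]$ and $zM[-\frac12]\subset M[-\frac32]=0$, hence $z^2M=0$, and since $z$ is realized as $[x,x]$ for $x\in\g_{\bar1}$ one gets, as in the proof of Lemma~\ref{selfext}, that $\operatorname{str}$ of $z$ on $M$ vanishes — but a direct computation of the action of $z$ coming from the short subalgebra shows it acts as a nonzero scalar times the parity-twisting of $M[\frac12]$ against $M[-\frac12]$, giving a contradiction unless $M=0$. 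The cleanest route, though, is simply: $Kan(n)$ for $n\geq 2$ has no unit-free coordinatization making it a subalgebra of a $J(V,f)$ or $M^+_{1,1}$, and special modules exist only when the TKK algebra admits a very short grading, which by inspection of the root system of $H(n)$ (the short subalgebra sits inside a copy of $\ssl(2)$ whose adjoint action on $\g$ has three eigenvalues, never two) it does not. I will phrase the argument through the explicit $\eta$-coordinates: the element $a=\eta_0\eta_{k+1}$ (resp.\ $\eta'_2\eta_{k+1}$) defining the short grading has $\ad a$ with three distinct eigenvalues on $\g$, so no nonzero $\spo(n)$-module can be $h$-graded into only two pieces.

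For item (2), I would proceed by reduction to parabolic induction via Theorem~\ref{Shap}. Let $L$ be a simple object in $\spo(n)\text{-mod}_1$, so $L=L[-1]\oplus L[0]\oplus L[1]$ with respect to $h$, and let $t=\chi(z)$ be its central charge; since $L$ is simple, $z$ acts by the scalar $t$. By Theorem~\ref{Shap}(1) every simple $\hat\g$-module is a quotient of $Ind^{\hat\g}_{\pp}M_t(\lambda)$ or its opposite, so it suffices to determine for which $\lambda$ the resulting simple quotient admits a short grading. The standard grading \eqref{long_grading_poisson} and the isomorphism \eqref{isoshap}, $Ind^{\hat\g}_\pp M_t(\lambda)\simeq M_t(\lambda)\otimes\Lambda V$ as $\g_0$-modules, let me compute the $\oo(n)$-content of each candidate. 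The requirement that $h$ act with eigenvalues only $-1,0,1$ is very restrictive: writing $h$ in terms of the $\eta$-generators and tracking its action across the $\g_0$-layers of $Ind^{\hat\g}_\pp M_t(\lambda)\otimes\Lambda V$, I expect that $h$ stays within three eigenvalues precisely when $\lambda=0$ (giving, for $t\neq0$, the simple module $S(t)=I_t$, and for $t=0$ the constituents $\C,\C^{op}$ of $I_0$) and fails for all $\lambda\neq0$ because the higher $\Lambda^{i}V$-layers produce $h$-eigenvalues of absolute value $\geq2$. Thus the short-graded simple modules are exactly the simple subquotients of $I_t$ over all $t$: for $t\neq0$ this is $S(t)$ and, after parity change, $S(t)^{op}$; for $t=0$ these are $\C$, $\C^{op}$ (the $S(0)$ layer of $I_0$ has $h$-eigenvalue $0$ only? — no, it is $H(n)$ itself under the short grading, which is genuinely short, so $S(0)$ and $S(0)^{op}$ also qualify).

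The main obstacle is the bookkeeping in item (2): I must verify carefully that the $h$-grading coming from $\alpha_J$ and the standard $\Z$-grading are related so that the ``short'' condition selects exactly $\lambda=0$. The point is that $h=h_J$ is \emph{not} the grading element of \eqref{long_grading_poisson}; rather $h_J$ lies in $\g_0=\oo(n)$ and its adjoint action redistributes the standard layers. I would pin this down by choosing coordinates in which $h_J$ is a coroot-type element of $\oo(n)$, compute its eigenvalues on $V$ (they are $\pm\frac12$ on a hyperbolic pair and $0$ elsewhere, or $\pm1,0$ — this determines everything), and then the $h_J$-weights on $M_t(\lambda)\otimes\Lambda^\bullet V$ are sums of an $h_J$-weight of $M_t(\lambda)$ and of a subset of weights of $V$. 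Boundedness of these sums in $\{-1,0,1\}$ across all of $\Lambda V$ (dimension $2^n$, weights ranging over a full cube) forces the $V$-weights to be $\{\pm\frac12,0\}$-valued \emph{and} $M_t(\lambda)$ to have $h_J$-weight range contained in a single point, i.e.\ $\lambda$ proportional to $\omega_1$ with the smallest value — and then a direct check among $L_{k\omega_1}$ using Theorem~\ref{Shap}(3),(5),(6) eliminates $k\geq1$, leaving $\lambda=0$. I anticipate this is where all the real work lies; the rest is assembling the simple subquotients of $I_t$ from the radical-filtration descriptions already given in the excerpt and applying the change-of-parity functor.
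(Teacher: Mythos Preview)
Your argument for item (1) contains a genuine error: you claim that the central element $z\in\hat\g_{-2}$ has $\alpha_J$-weight $-1$ and therefore shifts $h$-weights. This conflates the standard $\Z$-grading \eqref{long_grading_poisson} (in which $z$ has degree $-2$) with the TKK short grading coming from $h\in\alpha_J$. Since $z$ is central, $[h,z]=0$, so $z$ has $h$-weight $0$; the paper even states in Section~2 that the center of $\hat\g$ lies in $\hat\g[0]$. All the consequences you draw from the claimed weight-shift collapse. Your fallback argument, that $\ad h$ having three eigenvalues on $\g$ forbids a module with only two $h$-eigenvalues, is a non sequitur: the adjoint eigenvalues of $h$ say nothing about its eigenvalues on an arbitrary module.

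The paper's actual mechanism is the one you grope toward but never quite isolate. The short $\ssl_2$ lies inside $\g_0=\oo(n)$, so $h$-weights can be read off from the $\oo(n)$-module structure. The simple $\g_0$-module $M_t(\lambda)$ survives as a submodule of any simple quotient of $\operatorname{Ind}^{\hat\g}_{\pp}M_t(\lambda)$; hence if that quotient has (very) short grading, so must $M_t(\lambda)$. Now use \eqref{isoshap}: on $\Lambda V$ the element $h$ has eigenvalues $\{-1,0,1\}$ (only $\eta_1$ and $\eta_{k+1}$ carry nonzero $h$-weight), so the induced module always exhibits three consecutive $h$-weights $\mu-1,\mu,\mu+1$ and is never very short. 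When the induced module is simple this already finishes (1); when it is not, Theorem~\ref{Shap} forces $\lambda=k\omega_1$, and then $M_t(k\omega_1)$ has highest $h$-weight $k\in\Z_{\geq 0}$, never $\tfrac12$.

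For item (2) your outline is broadly right but misses a case. You assert that the short-grading constraint ``eliminates $k\geq 1$, leaving $\lambda=0$'', but $M_t(\omega_1)=V$ \emph{does} have a short grading, so $\lambda=\omega_1$ is not excluded by the argument on $M_t(\lambda)$ alone. The paper handles this separately: the induced module $\operatorname{Ind}^{\hat\g}_\pp M_t(\omega_1)\simeq V\otimes\Lambda V$ has $h$-weight $2$, hence no short grading; for $t\neq 0$ Theorem~\ref{Shap}(\ref{poisson-complex}) gives a splitting $S(t)\oplus S'$, and since $S(t)$ is short-graded, $S'$ must absorb the weight-$\pm 2$ part and is therefore not. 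The case $t=0$ is disposed of via Theorem~\ref{Shap}(1) and the identification $S(0)\simeq L_{\omega_1}^{op}$. Your claim that the $h$-weights on $V$ are ``$\{\pm\tfrac12,0\}$-valued'' is also incorrect; they are $\{\pm 1,0\}$, which is exactly what makes $\Lambda V$ itself short-graded and underlies the whole argument.
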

\begin{proof} The short $\mathfrak{sl}_2$-subalgebra of $\hat{\g}$ lies in $\g_0=\oo(n)$. Therefore an irreducible quotient of
  $Ind_{\p}^{\hat\g} M_t(\lambda)$
  has a chance to have a short grading only if  $ M_t(\lambda)$ has a short grading as a module over $\g_0$.
  On the other hand, the isomorphism of $\oo$-modules
  $Ind_{\pp}^{\hat\g} M_t(\lambda)\simeq M_t(\lambda)\otimes\Lambda(V)$ implies that the induced module never has a very short grading. Furthermore, 
  for non-zero $\lambda$ the induced module does not have a short grading. On the other hand, the induced module is not irreducible only for $\lambda=k\omega_1$.
  Thus, it remains to consider the cases $\lambda=0$ and $\lambda=\omega_1$. We 
  already considered the former case. Let $\lambda=\omega_1$ and $t\neq 0$. By Theorem~\ref{Shap}(6) 
  $Ind_{\pp}^{\gh} M_t(\omega_1)= S(t)\oplus S'$ for some simple module $S'$ not isomorphic to $S(t)$.
  Since $Ind_{\pp}^{\gh} M_t(\omega_1)$ does not have the short grading, the same is true for $S'$. For $t=0$  $S(0)$ is isomorphic to $L^{op}_{\omega_1}$ and the statement follows from Theorem~\ref{Shap}(1).
\end{proof}

\begin{rem}
  It follows from Proposition \ref{simplepoisson}(1) that category $Kan(n)$-mod$_{\frac12}$ is trivial. This is a consequence of the fact that
 $Kan(n)$ for
$n\geq 2$ is exceptional, \cite{KMC}.
\end{rem}

\begin{rem}\label{remchar} Note that $S(t)$ is isomorphic to $\Lambda V=\oplus_{i=0}^{n}\Lambda^i V$ as a $\g_0$-module and $S(0)$ is isomorphic to $\oplus_{i=1}^{n-1}\Lambda^i V$.
  \end{rem}

\subsection{The case of non-zero central charge}
\begin{lem}\label{extnonzero} If $t\neq 0$ then
  $$\Ext^1(S(t),S^{op}(t))=0,\quad\Ext^1(S(t),S(t))=\C. $$
  \end{lem}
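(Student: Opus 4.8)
The plan is to compute both $\Ext^1$ groups using the relative cohomology machinery with respect to the subalgebra $\g_0=\oo(n)$, i.e. $\Ext^1(M,N)=H^1(\g,\g_0;\Hom_{\C}(M,N))$, and to reduce the computation to an invariant-theoretic count over $\g_0$ via Shapiro's lemma. Since $S(t)=I_t=\operatorname{Ind}_{\pp}^{\gh}\C_t$ for $t\neq 0$, I would apply \eqref{shapiro's_lemma} with $\pp=\g^+\oplus\hat\g_{-2}$ to get
$$H^1(\g,\g_0;\Hom_\C(S(t),N))\simeq H^1(\pp,\g_0;\Hom_\C(\C_t,N))=H^1(\pp,\g_0;N\otimes\C_{-t}),$$
where on the right $N$ is restricted to $\pp$. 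Since $\g^{++}=\bigoplus_{i>0}\g_i$ is the nilradical of $\pp$ and $\pp/\g^{++}\cong\g_0\oplus\hat\g_{-2}$ is reductive, I expect the relative cochain complex to simplify: the relevant $H^1$ is computed from $\g_0$-invariant cochains on $\g^{++}$ (the abelianization $\g_1$ contributes $\Hom_{\g_0}(\g_1, N\otimes\C_{-t})$-type terms), with the central-charge shift by $t$ playing the decisive role in killing contributions.

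For the first identity, $\Ext^1(S(t),S^{op}(t))=0$: after Shapiro, I need $H^1(\pp,\g_0;S^{op}(t)\otimes\C_{-t})$. The central element $z$ acts on $S^{op}(t)$ by $t$ and on $\C_{-t}$ by $-t$, so it acts by $0$ on the coefficient module, which is harmless; but the key observation is a parity/weight obstruction — $S(t)$ and $S^{op}(t)$ lie in different blocks, analogous to Lemma~\ref{weighargument} (here applicable since $H(n)$ has the requisite property for $n>4$). Concretely, $S^{op}(t)$ as a $\g_0$-module is $\Lambda V$ with a parity shift, and a degree-one relative cocycle would have to land in $\Hom_{\g_0}(\g_1, \Lambda V^{op})$; one checks via the $\Z$-grading \eqref{long_grading_poisson} that the grading-degree bookkeeping (a cocycle $\varphi$ on $\g_i$ raises internal degree, while the parity constraint from the opposite module forces incompatibility) leaves no room, giving $0$. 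I would first try to simply invoke Lemma~\ref{weighargument} directly: $S(t)$ and $S^{op}(t)$ do not lie in the same block, hence $\Ext^1$ between them vanishes — this is likely the intended one-line argument.

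For the second identity, $\Ext^1(S(t),S(t))=\C$: one inclusion, $\neq 0$, I would get by exhibiting an explicit nonsplit self-extension — the natural candidate is to deform $t$ to a formal parameter and take $I$ over $\C[t]/(t-t_0)^2$, i.e. $\operatorname{Ind}_\pp^{\gh}(\C[z]/(z-t)^2)$, which has length $2$ with the center acting by a Jordan block (cf. the analogous argument for $L^+(t)$ in Lemma~\ref{lmext}, and the projective cover $K(t)_{(m)}$ construction). The upper bound $\dim\le 1$ is the main obstacle: after Shapiro, $\Ext^1(S(t),S(t))\simeq H^1(\pp,\g_0;S(t)\otimes\C_{-t})$, and since the cocycle condition forces $\varphi(z)\in\End_{\gh}(S(t))=\C$, any cocycle is determined up to coboundary by its value on $z$ (the "extra" direction coming from $\g_1$ must vanish by an $\oo(n)$-invariant count: $\Hom_{\g_0}(\g_1=\Lambda^3 V,\,\Lambda V)=0$ for $n>4$, since $\Lambda^3 V$ is irreducible and not a summand of $\Lambda V$). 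So the only cocycle direction is the central one, which accounts for exactly the one-dimensional self-extension already constructed, proving equality. The duality $S(t)^*\simeq$ (something with charge $-t$) is not needed here since we are in fixed charge $t$; the crux is the $\oo(n)$-invariant-theoretic vanishing $\Hom_{\g_0}(\Lambda^3 V,\Lambda V)=0$, which I'd verify by decomposing $\Lambda^\bullet V$ into irreducibles for $\oo(n)$, $n>4$.
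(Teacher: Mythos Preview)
Your overall framework---reduce via Shapiro to relative cohomology $H^1(\g^+,\g_0;\,\cdot\,)$ and count $\g_0$-invariants---matches the paper. But there are two genuine gaps.

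\textbf{First gap: the weight-parity argument does not cover odd $n$.} You propose to dispose of $\Ext^1(S(t),S^{op}(t))$ by invoking Lemma~\ref{weighargument}. But that lemma requires $\h_{\bar 1}=0$, and the paper explicitly notes this fails for $H(2k+1)$. So the one-line block argument works only for even $n$; for odd $n$ the paper carries out the cochain computation you sketched: $\dim C^0=1$ with $d_1\neq 0$ (since $H^0(\g^+,\g_0;S^{op}(t))=0$), while any $1$-cocycle is determined by its restriction to $\g_1$ and $\dim\Hom_{\g_0}(\g_1,S^{op}(t))=1$, forcing $\Ker d_2=\im d_1$.

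\textbf{Second gap: the invariant count is wrong.} For the upper bound on $\Ext^1(S(t),S(t))$ you assert $\Hom_{\g_0}(\Lambda^3 V,\Lambda V)=0$ because ``$\Lambda^3 V$ is not a summand of $\Lambda V$.'' But $\Lambda^3 V$ is literally a summand of $\Lambda V=\bigoplus_i\Lambda^i V$. Taking parities into account, the paper finds $\dim\Hom_{\g_0}(\g_1,S(t))=1$ for odd $n$ and $=2$ for even $n$ (the second copy coming from $\Lambda^{n-3}V\simeq\Lambda^3 V$). So there \emph{is} a candidate $\g_0$-equivariant map $\varphi:\g_1\to S(t)$ beyond the coboundaries, and the real work is to show it does not extend to a cocycle on $\g^{++}$. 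The paper does this by an explicit monomial computation: with $u=\xi_1\xi_2\xi_3$ one has $\{u,u\}=0$, so a cocycle must satisfy $u\varphi(u)=0$, whereas the candidate $\varphi(\xi_i\xi_j\xi_k)=\xi_i(\xi_j(\xi_k v))$ gives $u\varphi(u)=tv\neq 0$. Your argument skips this obstruction entirely; without it there is no upper bound. Your lower bound via $\operatorname{Ind}^{\gh}_{\pp}\C[z]/(z-t)^2$ is exactly the paper's construction and is fine.
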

  \begin{proof} Note that for even $n$ the first assertion follows  {from Lemma~\ref{weighargument}}.
    Let us prove the first assertion for odd $n$. By \eqref{shapiro's_lemma} we have
    $$\Ext^1(S(t),S^{op}(t))=\Ext_{\pp}^1(\C_t, S^{op}(t))=\Ext_{\g^+}^1(\C, S^{op}(t)).$$
    The latter equality follows from the fact that the center always acts semisimply on an extension of two non-isomorphic simple modules.

    Every finite-dimensional $\g_0$-module is semisimple. Therefore we have to show that the relative Lie algebra cohomology
    $H^1(\g^+,\g_0; S^{op}(t))$ vanishes. Let us write the cochain complex calculating this cohomology:
$$0\rightarrow C^0=\Hom_{\g_0}(\C,S^{op}(t))\xrightarrow{d_1}C^1=\Hom_{\g_0}(\g^{++},S^{op}(t))\xrightarrow{d_2}C^2=\Hom_{\g_0}(\Lambda^2\g^{++},S^{op}(t))\xrightarrow{d_3}\dots$$
    By Remark \ref{remchar} $\dim C^0=1$. By Theorem \ref{Shap} $H^0(\g^+,\g_0;S^{op}(t))=\C^{op}$. Therefore $d_1\neq 0$. To determine the kernel of $d_2$ we observe that
    $\g_1$ generates $\g^{++}$, hence any $1$-cocycle is determined by its value on $\g_1$. Thus,
    $\Ker d_2$ is a subspace in $\Hom_{\g_0}(\g_1,S(t)^{op})$ and the latter space is one-dimensional. Hence
    $\im d_1=\Ker d_2$ and the assertion is proved.

    Now we will deal with the second assertion. We observe that $S(t)$ has a non-trivial self-extension  given by the induced module
    $Ind^{\gh}_\pp\C[z]/(z-t)^2.$ Therefore it suffices to prove that there is no self-extensions of $S(t)$ on which $z$ acts semisimply.
    Then again by Shapiro's lemma it suffices to prove $H^1(\g^+,\g_0; S(t))=0$.

    Consider again the chain complex:
    $$0\rightarrow C^0=\Hom_{\g_0}(\C,S(t))\xrightarrow{d_1}C^1=\Hom_{\g_0}(\g^{++},S(t))\xrightarrow{d_2}C^2=\Hom_{\g_0}(\Lambda^2\g^{++},S(t))\xrightarrow{d_3}\dots.$$
    If $n$ is odd then $\dim C^0=1$ and $H^0(\g^+,\g_0,S(t))=\C$, hence $d_1=0$. By the same argument as above a $1$-cocycle is determined by
    its value on $\g_1$. By Remark \ref{remchar} 
    $\dim\Hom_{\g_0}(\g_1,S(t))=1$, which gives  $\dim\Ker d_2\leq 1$, in other words, there is exactly one up to proportionality
    $\varphi\in  \Hom_{\g_0}(\g_1,S(t))$. In the monomial basis of $\gh$ the map $\varphi$ can be written in the following form: fix  $v\in \C_t$ then
    $$\varphi(\xi_i\xi_j\xi_k)=\xi_i(\xi_j(\xi_k v)).$$ We claim that $\varphi$ can not be extended to a one cocylce in $C^1$. Indeed, let
    $u=\xi_1\xi_2\xi_3$, then $\{u,u\}=0$ and the cocycle condition on
    $\varphi$ implies $u\varphi(u)=0$. But the direct computation shows
    $$u(\xi_1(\xi_2(\xi_3v)))=\{u,\xi_1\}(\xi_2(\xi_3v))-\xi_1(\{u,\xi_2\}(\xi_3 v)+\xi_1\xi_2(\{u,\xi_3\}v)).$$
    Since $\{u,\xi_3\}\subset\g_0 v=0$, the last summand is zero. Continue the computation and get
    $$u(\xi_1(\xi_2(\xi_3v)))=(\xi_2\xi_3)(\xi_2(\xi_3v))-\xi_1((\xi_1\xi_3)(\xi_3 v))=\xi^2_3v-\xi^2_2v+\xi_1^2v=tv\neq 0.$$
    That proves $\Ker d_2=0$.

    If $n$ is even the proof goes similarly to the case of an odd $n$. In this case we have $H^0(\g^+,\g_0,S(t))=\C$, $\dim C^0=2$ and hence
    $\im d_1$ is one-dimensional. Furthermore $\dim\Hom_{\g_0}(\g_1,S(t))=2$. We can choose a basis $\varphi,\psi$ in $\Hom_{\g_0}(\g_1,S(t))$ such that $\varphi$ is given by the same formula as in the
    odd case and $\psi\in d_1(C^0)$.
    The same calculation shows $\varphi$ does not extend to a cocycle. This completes the proof.
  \end{proof}

\begin{prop}\label{nonzeroblocks}
  If $t\neq 0$ the category $\gt$ has two equivalent blocks $\Omega^+_t$ and $\Omega^-_t$. The equivalency of these blocks
  is established by the change parity functor. Both  $\Omega^+_t$ and $\Omega^-_t$ contain only one up to isomorphism simple object  $S(t)$ and $S(t)^{op}$ respectively. Moreover,
  $\Omega^+_t$ is equivalent to the category $\C[x]$-modules with nilpotent action of $x$.
\end{prop}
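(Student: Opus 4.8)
The plan is to first split $\gt$ into two blocks and then identify each of them. By Proposition~\ref{simplepoisson} the only simple objects of $\gm$ of central charge $t\neq 0$ are $S(t)$ and $S(t)^{op}$, so every object of $\gt$ has all its composition factors among these two. By Lemma~\ref{extnonzero} we have $\Ext^1(S(t),S(t)^{op})=0$, and applying the exact change of parity functor also $\Ext^1(S(t)^{op},S(t))=0$; hence $S(t)$ and $S(t)^{op}$ are not linked and $\gt=\Omega_t^+\oplus\Omega_t^-$, where $\Omega_t^+$ (resp. $\Omega_t^-$) is the full subcategory of objects all of whose composition factors are isomorphic to $S(t)$ (resp. $S(t)^{op}$). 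The change of parity functor is an exact autoequivalence of the category of finite-dimensional $\ghat$-modules that preserves the central charge and interchanges $S(t)$ with $S(t)^{op}$, so it restricts to an equivalence $\Omega_t^+\xrightarrow{\ \sim\ }\Omega_t^-$. This already gives the first two assertions; it remains to show $\Omega_t^+$ is equivalent to the category of finite-dimensional $\C[x]$-modules on which $x$ acts nilpotently.

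Next I would introduce the witnessing family of indecomposables, in analogy with $L^+(t)_{(m)}$ in Section~4.2. Set $M_m:=\operatorname{Ind}^{\ghat}_{\pp}\bigl(\C[z]/(z-t)^m\bigr)$, where $\C[z]/(z-t)^m$ is the $\pp$-module on which $\g^{++}$ and $\g_0$ act by zero and $z$ acts in the obvious way. Since induction is exact and $\operatorname{Ind}^{\ghat}_{\pp}\C_t=S(t)$ by Theorem~\ref{Shap}(2) (as $t\neq 0$), applying $\operatorname{Ind}^{\ghat}_{\pp}$ to the filtration of $\C[z]/(z-t)^m$ by the powers of $(z-t)$ shows that $M_m$ lies in $\Omega_t^+$, has length $m$, satisfies $(z-t)^mM_m=0$ and $(z-t)^{m-1}M_m\cong M_1=S(t)\neq 0$; in particular $M_m\in F^m(\Omega_t^+)$, and for $m=2$ the module $M_2$ realizes (a generator of) $\Ext^1(S(t),S(t))$, with $z-t$ acting non-trivially on it.

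Finally, fix $m$ and let $P_m$ be the projective cover of $S(t)$ in $F^m(\Omega_t^+)$, which exists since $F^m(\Omega_t^+)$ has enough projectives. By Section~3.1, $F^m(\Omega_t^+)$ is equivalent to the category of finite-dimensional modules over the local algebra $A_m:=\operatorname{End}_{\ghat}(P_m)$, and $\rad A_m/\rad^2 A_m$ is one-dimensional because $\dim\Ext^1(S(t),S(t))=1$ by Lemma~\ref{extnonzero}. The central element $z-t$ acts nilpotently on $P_m$, hence lies in $\rad A_m$, and it is not in $\rad^2A_m$ (otherwise it would act by zero on $P_m/\rad^2P_m\cong M_2$, a contradiction); therefore $z-t$ generates $\rad A_m$, so $A_m\cong\C[z-t]/\bigl((z-t)^d\bigr)$ for some $d$. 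On the one hand $d\le m$ since $(z-t)^m$ annihilates every object of $F^m(\Omega_t^+)$; on the other hand $M_m$ is a quotient of $P_m$ (as $P_m$ covers the quotient $S(t)$ of $M_m$ and $M_m\in F^m(\Omega_t^+)$), and $(z-t)^{m-1}M_m\neq 0$ forces $(z-t)^{m-1}P_m\neq 0$, so $d\geq m$. Hence $A_m\cong\C[x]/(x^m)$ and $F^m(\Omega_t^+)\simeq\C[x]/(x^m)\text{-mod}$; since $\Omega_t^+=\bigcup_m F^m(\Omega_t^+)$, passing to the limit gives the claimed equivalence of $\Omega_t^+$ with the category of $\C[x]$-modules on which $x$ acts nilpotently. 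The only point that requires any genuine argument beyond formal manipulation is the identification of $\rad A_m$ with the ideal generated by $z-t$, which rests on Lemma~\ref{extnonzero} and the explicit family $M_m$ (equivalently, one may instead run the inductive projectivity argument of Section~4.2 verbatim with $S(t)$ in the role of $L^+(t)$); the block decomposition and the parity equivalence are immediate from the already-established $\Ext^1$ computations.
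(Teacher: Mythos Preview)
Your argument is correct and follows essentially the same route as the paper: the block decomposition comes from Proposition~\ref{simplepoisson} and Lemma~\ref{extnonzero}, and the identification of $\Omega_t^+$ uses the induced modules $\operatorname{Ind}^{\ghat}_{\pp}\bigl(\C[z]/(z-t)^m\bigr)$ together with the filtration by the $F^m$. The only cosmetic difference is that the paper asserts directly that these induced modules are projective in $F^m(\gt)$ (which is what your inductive remark at the end alludes to), whereas you take the abstract projective cover $P_m$ first and then pin down $\operatorname{End}(P_m)$ via the action of $z-t$; both arrive at the same conclusion $P_m\cong M_m$ with endomorphism ring $\C[x]/(x^m)$.
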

\begin{proof} The first two assertions follow immediately from Proposition \ref{simplepoisson} and Lemma \ref{extnonzero}. To prove the last assertion
  we consider the subcategory
  $F^n(\gt)$ of modules annihilated by $(z-t)^n$. Then $Ind^{\gh}_{\pp}\C[z]/(z-t)^n$ is projective in  $F^n(\gt)$ by Lemma \ref{extnonzero}
  and  its indecomposability. Since every object of $\gt$ lies in some $F^n(\gt)$ the statement follows.
  \end{proof}
  \begin{Cor}\label{indecpononzero} If $t\neq 0$ every indecomposable module in $\gt$ is isomorphic to $Ind^{\gh}_{\pp}\C[z]/(z-t)^n$ or
$(Ind^{\gh}_{\pp}\C[z]/(z-t)^n)^{op}$.
  \end{Cor}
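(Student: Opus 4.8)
The plan is to deduce the corollary directly from Proposition~\ref{nonzeroblocks}. Once we know that the block $\Omega^+_t$ is equivalent to the category of finite-dimensional $\C[x]$-modules on which $x$ acts nilpotently, it suffices to recall the classification of indecomposables on the $\C[x]$ side: every finite-dimensional $\C[x]$-module with nilpotent action of $x$ decomposes into a direct sum of Jordan blocks, and the indecomposable ones are exactly $\C[x]/(x^n)$ for $n\geq 1$. Under the equivalence of Proposition~\ref{nonzeroblocks}, the object $\C[x]/(x^n)$ corresponds to a module in $\Omega^+_t$ whose unique simple subquotient is $S(t)$ with multiplicity $n$, and on which the center acts via one Jordan block of size $n$; that module is precisely $Ind^{\gh}_{\pp}\C[z]/(z-t)^n$, since that induced module was identified in the proof of Proposition~\ref{nonzeroblocks} as the projective cover of $S(t)$ in $F^n(\gt)$ and has exactly $n$ composition factors all isomorphic to $S(t)$.

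First I would observe that, by Proposition~\ref{nonzeroblocks}, $\gt=\Omega^+_t\oplus\Omega^-_t$, so any indecomposable object lies entirely in one of the two blocks. Since the change-of-parity functor is an equivalence $\Omega^+_t\xrightarrow{\sim}\Omega^-_t$ sending $Ind^{\gh}_{\pp}\C[z]/(z-t)^n$ to its opposite, it is enough to handle $\Omega^+_t$. Then I would transport the question through the equivalence $\Omega^+_t\simeq \C[x]\text{-mod}_{\mathrm{nilp}}$ and invoke the structure theorem for modules over a PID (or just the classification of nilpotent Jordan forms): the indecomposables are $\C[x]/(x^n)$, $n\geq 1$. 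Finally I would match $\C[x]/(x^n)$ with $Ind^{\gh}_{\pp}\C[z]/(z-t)^n$ using the two facts already established in the proof of Proposition~\ref{nonzeroblocks}: this induced module is indecomposable, and it is the projective cover of $S(t)$ in $F^n(\gt)$, hence corresponds to the projective cover of the simple $\C[x]$-module in the length-$n$ truncation, which is $\C[x]/(x^n)$.

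The only point requiring a little care — and the closest thing to an obstacle — is to make sure the matching of indecomposables under the equivalence is pinned down by an invariant that is visible on both sides; the natural choice is the nilpotency order of the central element $z$ (equivalently, the length of the module, since all composition factors are $S(t)$). On the $\C[x]$ side this is the nilpotency order of $x$, which is a complete invariant of the indecomposable $\C[x]/(x^n)$; under the equivalence $x$ corresponds to $z-t$, so $Ind^{\gh}_{\pp}\C[z]/(z-t)^n$, on which $z-t$ acts by a single nilpotent Jordan block of size $n$, is forced to correspond to $\C[x]/(x^n)$. Since this is routine, the corollary follows immediately; indeed this is exactly how the authors phrase it.
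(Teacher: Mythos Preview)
Your argument is correct and is exactly the intended deduction: the paper states this corollary with no proof, as an immediate consequence of Proposition~\ref{nonzeroblocks}, and your unpacking via the Jordan block classification of nilpotent $\C[x]$-modules together with the identification of $Ind^{\gh}_{\pp}\C[z]/(z-t)^n$ as the projective cover in $F^n(\gt)$ is precisely what makes the corollary evident.
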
 
  \begin{Cor}\label{jordannonzero}  If $t\neq 0$, then every block in the category $\JJ^{\,t}$ is equivalent to  the category of $\C[x]$-modules with  nilpotent action of $x$.
\end{Cor}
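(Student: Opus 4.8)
The plan is to transport the block description of $\gt$ from Proposition~\ref{nonzeroblocks} to $\JJ^{\,t}$ through the functor $Jor$. Since $\hat\g=\spo(0,n)$ has one-dimensional center spanned by $z$, we identify $Z^*$ with $\C$; under this identification the subcategory $\gm^{\,t}$ of the decomposition \eqref{subcategory-t} coincides with $\gt$, the full subcategory of $\gm$ on which $z$ acts with generalized eigenvalue $t$, and $\JJ^{\,t}$ is by definition its image under $Jor$. The key input is that, because $t\neq 0$, Remark~\ref{corr} (which rests on Proposition~\ref{adjoint}) upgrades the bijection on isomorphism classes of simple objects to a genuine equivalence of abelian categories $Jor\colon\gm^{\,t}\xrightarrow{\ \sim\ }\JJ^{\,t}$, not merely the $\Ext$-quiver comparison of Proposition~\ref{quivers-relation}(2).

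Granting this, the statement follows at once. An equivalence of abelian categories carries indecomposable direct summands of the ambient category to indecomposable direct summands, hence sends blocks to blocks; thus the blocks of $\JJ^{\,t}$ are exactly the $Jor$-images of the blocks of $\gt$. By Proposition~\ref{nonzeroblocks} the category $\gt$ is the direct sum of its two blocks $\Omega^+_t$ and $\Omega^-_t$, and each of them is equivalent to the category of finite-dimensional $\C[x]$-modules with nilpotent action of $x$. Composing with $Jor$ shows every block of $\JJ^{\,t}$ is equivalent to that category, which is the claim; in particular one recovers, via $Jor$ and Corollary~\ref{indecpononzero}, the list of indecomposable objects.

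No serious obstacle is expected: the substantive work lies in Lemma~\ref{extnonzero} and Proposition~\ref{nonzeroblocks} on the Lie side. The only thing to be careful about is to invoke the full strength of Remark~\ref{corr}, namely that for $t\neq 0$ the functor $Jor$ is an actual equivalence of categories, so that the conclusion is an equivalence of categories rather than merely a Morita-type statement about the associated path algebras.
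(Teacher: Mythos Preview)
Your proposal is correct and is precisely the argument the paper intends: the corollary is stated without proof because it follows immediately from Proposition~\ref{nonzeroblocks} via the equivalence $Jor:\gm^{\,t}\to\JJ^{\,t}$ for $t\neq 0$ recorded in Remark~\ref{corr}. Your care in distinguishing the full equivalence of Remark~\ref{corr} from the mere quiver comparison of Proposition~\ref{quivers-relation}(2) is well placed, though either would suffice here since a single-vertex one-loop quiver with no relations already pins down the category.
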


\subsection{The case of zero central charge}

\begin{lem}\label{poext} 
\begin{enumerate}
    \item If $n$ is even then $\Ext^1(\C,S(0))=\Ext^1(S(0),\C)=\C^2$ and $\Ext^1(\C^{op},S(0))=\Ext^1(S(0),\C^{op})=0$.
      \item If $n$ is odd then $\Ext^1(\C,S(0))=\Ext^1(S(0),\C)=\Ext^1(\C^{op},S(0))=\Ext^1(S(0),\C^{op})=\C$.
      \end{enumerate}
    \end{lem}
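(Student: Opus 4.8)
The computation is a relative Lie algebra cohomology calculation, entirely parallel to the proof of Lemma~\ref{extnonzero}, but now at central charge $0$, so there is no help from semisimplicity of the center and we must work directly with the cochain complex. Since extensions among simple modules that are semisimple over $\g_0$ are computed by $H^1(\g,\g_0;-)$, and since $S(0)=L^{op}_{\omega_1}$ and $\C,\C^{op}$ are the relevant simples, the plan is to use Shapiro's lemma \eqref{shapiro's_lemma} wherever one of the two simples is induced from $\pp$ or from $\g^+$. Concretely, $I_0=\operatorname{Ind}^{\g}_{\g^+}\C$ has radical layers $\C$, $S(0)$, $\C$ (even $n$) or $\C$, $S(0)$, $\C^{op}$ (odd $n$) by the discussion preceding Proposition~\ref{simplepoisson}; this already exhibits the nonvanishing of one or two of the four $\Ext^1$ groups and, dualizing via $I_0^*\simeq I_0$ (even $n$) or $I_0^*\simeq I_0^{op}$ (odd $n$), of the "opposite-direction" ones. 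So the substance is to prove the stated upper bounds.

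First I would reduce $\Ext^1(\C,S(0))$ to $H^1(\g^+,\g_0;S(0))$ via Shapiro applied to $\C=\operatorname{Ind}^\g_{\g^+}\C$ — wait, that is backwards; rather $\Ext^1_\g(\C,S(0))=\Ext^1_{\g^+}(\C,S(0))$ only if $S(0)$ is coinduced, which it is not, so instead I use $\Ext^1_\g(S(0),\C)\simeq H^1(\g,\g_0;S(0)^*\otimes\C)=H^1(\g,\g_0;S(0)^*)$ and compute this by writing $S(0)$ as a quotient of $\operatorname{Ind}^\g_{\g^+}M_0(\omega_1)$ (Theorem~\ref{Shap}(1),(4)), applying Shapiro to reduce $\Ext^1$ from the induced module, and then correcting by the kernel $\operatorname{Im}\gamma$ of Theorem~\ref{Shap}(4) using the long exact sequence. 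In parallel, $\Ext^1(\C,S(0))$ and $\Ext^1(\C^{op},S(0))$ follow by the same device after dualizing, using Lemma~\ref{dual}-type isomorphisms for $S(0)$ (namely $S(0)^*\simeq S(0)$ or $S(0)^{op}$ depending on the parity of $n$, which one reads off from $\omega$). The clean way to organize all four groups: compute $H^1(\g,\g_0;S(0))$ and $H^1(\g,\g_0;S(0)\otimes\Pi\C)$ once, where $\Pi$ is change of parity, since the four target modules in the four $\Ext^1$'s are $S(0),S(0)^{op}$ tensored against $\C,\C^{op}$.

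For the upper bound I expect to run the cochain complex $0\to C^0\to C^1\xrightarrow{d_2}C^2\to\cdots$ with $C^i=\Hom_{\g_0}(\Lambda^i\g^{++},S(0))$, using Remark~\ref{remchar} ($S(0)\simeq\bigoplus_{i=1}^{n-1}\Lambda^iV$ as $\g_0=\oo(n)$-module) to count $\dim C^0$ and $\dim\Hom_{\g_0}(\g_1,S(0))=\dim\Hom_{\oo(n)}(\Lambda^3V,\bigoplus_{i=1}^{n-1}\Lambda^iV)$, which is $1$. As in Lemma~\ref{extnonzero}, a $1$-cocycle is determined by its restriction to $\g_1$ because $\g_1$ generates $\g^{++}$; so $\dim Z^1\le\dim\Hom_{\g_0}(\g_1,S(0))=1$, and one then decides whether the unique candidate cocycle $\varphi(\xi_i\xi_j\xi_k)=\xi_i(\xi_j(\xi_k v))$ actually extends, by checking the cocycle relation on $u=\xi_1\xi_2\xi_3$ with $\{u,u\}=0$ — the very same computation as in Lemma~\ref{extnonzero}, except that now the central term $tv$ is replaced by $0\cdot v=0$, so the obstruction vanishes and the cocycle does extend. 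Combined with $\dim C^0$ (which is $1$ for odd $n$ and $2$ for even $n$ by Remark~\ref{remchar}, giving $\dim\operatorname{Im}d_1=0$ or $1$ respectively after identifying which coboundaries hit $\Hom_{\g_0}(\g_1,S(0))$), this yields $\dim H^1=1$ for odd $n$ and the appropriate values for even $n$.

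The main obstacle: keeping careful track, in the even-$n$ case, of the two-dimensionality. There the socle and cosocle of $I_0$ are both $\C$, so the "diagonal" extensions $\Ext^1(\C,S(0))$ and $\Ext^1(S(0),\C)$ each pick up dimension $2$ while $\Ext^1(\C^{op},S(0))=\Ext^1(S(0),\C^{op})=0$; one must show the second cocycle is genuinely new and not a coboundary, which means pinning down $\dim C^0=2$ and checking that exactly one of the two $\g_0$-invariants of $S(0)$ gives a nonzero coboundary in $\Hom_{\g_0}(\g_1,S(0))$ while the other survives to $H^1$, i.e. that the map $d_1\colon C^0\to C^1$ has rank $1$. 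Equivalently, phrase the whole even-$n$ count as: $\dim H^1(\g^+,\g_0;S(0))=\dim\Ker d_2-\dim\operatorname{Im}d_1\le 2-0$, with the correction from $\operatorname{Im}\gamma$ restoring the value $2$ rather than $1$; this bookkeeping, together with separating the four parity combinations, is where the care is needed, but no genuinely new idea beyond Lemma~\ref{extnonzero} is required.
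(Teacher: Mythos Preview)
Your plan has a genuine gap rooted in the parallel with Lemma~\ref{extnonzero}. There, Shapiro's reduction $\Ext^1_{\hat\g}(S(t),M)\simeq H^1(\g^+,\g_0;M)$ works because $S(t)=I_t=\operatorname{Ind}^{\hat\g}_\pp\C_t$ is itself an induced module for $t\neq 0$. At $t=0$ this fails: $S(0)$ is only the middle subquotient of $I_0$, not $I_0$. So the cochain complex you write down, $C^i=\Hom_{\g_0}(\Lambda^i\g^{++},S(0))$, computes $H^1(\g^+,\g_0;S(0))$, whereas the groups you need are $\Ext^1_{\g}(\C,S(0))=H^1(\g,\g_0;S(0))$ and its parity twist. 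These do not coincide, and your dimension counts (e.g.\ $\dim\Hom_{\g_0}(\g_1,S(0))=1$) are for the former. You sense the mismatch (``wait, that is backwards'') but never repair it: bridging via the long exact sequence attached to $\operatorname{Ind}^\g_{\g^+}M_0(\omega_1)$ and $\operatorname{Im}\gamma$ would require controlling $\Ext^1$ of $\operatorname{Im}\gamma$ against $\C$ and $\C^{op}$, which is not simpler than the original problem.

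The paper's proof bypasses all of this with an observation you are missing entirely: $S(0)$ is the coadjoint module of $\g=H(n)$, and the invariant form $\omega$ (even for even $n$, odd for odd $n$) identifies it with $\ad$ or $\ad^{op}$. Hence the pair $\Ext^1(\C,S(0))$, $\Ext^1(\C^{op},S(0))$ is exactly the even and odd parts of $H^1(\g,\ad)=\Der(\g)/\g$, which by \cite{Kac2} equals $\C^2$ for even $n$ and $\C^{1|1}$ for odd $n$, generated by Poisson bracket with $\xi_1\cdots\xi_n$ and by the Euler field. Duality and, for even $n$, Lemma~\ref{weighargument} then give the remaining groups. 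No cochain bookkeeping is needed.
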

    \begin{proof} It suffices to show that $\Ext^1(\C,S(0))=\C^2$ for even $n$ and $\Ext^1(\C,S(0))=\C=\Ext^1(\C^{op},S(0))$ since the rest follows from
      duality and Lemma~\ref{weighargument}. Both statement follow from the well-known fact about derivation superalgebra. Indeed, it is shown in \cite{Kac2} that
      $\Der\g/\g=\C^2$ for even $n$ and $\Der\g/\g=\C^{1|1}$ for odd $n$. These derivations are given by {the} Poisson bracket with $\xi_1\dots\xi_n$ and
      by the commutator with the Euler vector field $\sum_{i=1}^n\xi_i\partial_i$. The latter derivation defines the standard grading of $\g$ and $\gh$.
    \end{proof}
To compute other extensions between simple modules we first consider only extensions in $\g$-mod$_1$ which we denote  $\Ext^1_\g$.
    \begin{lem}\label{crucial} Let $M=Ind_{\g^+}^{\g} M_0(\omega_1)$
      and $n>5$. Then $\Ext^1_\g(M,S(0))=\Ext^1_\g(M,S(0)^{op})=0$. In the case of $n=5$ we have $\Ext^1_\g(M,S(0)^{op})=0$ and $\Ext^1_\g(M,S(0))=\C$. 
    \end{lem}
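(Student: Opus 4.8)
The plan is to convert the statement into a computation in relative Lie superalgebra cohomology and use Shapiro's lemma to replace the large induced module $M$ by its small inducing piece. Since $\g_0=\oo(n)$ is semisimple, $\g\text{-mod}_1$ is the category of $\g$-modules semisimple over $\g_0$, so by the version of Shapiro's lemma \eqref{shapiro's_lemma}, applied with $\pp=\g^+$ and $\h=\g_0$,
\[\Ext^1_\g(M,N)\simeq H^1(\g^+,\g_0;\Hom_\C(M_0(\omega_1),N))\qquad\text{for }N=S(0)\text{ and for }N=S(0)^{op}.\]
As $M_0(\omega_1)=V$ is the standard $\oo(n)$-module and $V\simeq V^*$, one has $\Hom_\C(M_0(\omega_1),N)\simeq V\otimes N$ as a $\g^+$-module, with $\g^{++}$ acting only through $N$. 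I would then compute $H^1=\Ker d_2/\im d_1$ from the relative cochain complex $0\to C^0\xrightarrow{d_1}C^1\xrightarrow{d_2}C^2\to\cdots$ with $C^i=\Hom_{\g_0}(\Lambda^i\g^{++},V\otimes N)$, exactly as in the proof of Lemma~\ref{extnonzero}.

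\textbf{The two low terms.} By Remark~\ref{remchar}, $N|_{\g_0}\simeq\bigoplus_{i=1}^{n-1}\Lambda^iV$, and since $\Lambda^iV\simeq\Lambda^{n-i}V$ the module $V$ occurs in $N$ with multiplicity $2$; hence $\dim C^0=\dim\Hom_{\g_0}(V,N)=2$. On the other hand $H^0(\g^+,\g_0;V\otimes N)=\Hom_\g(M,N)=\Hom_{\g_0}(V,N^{\g^{++}})$, and because $N$ is the coadjoint module and $[\g^{++},\g]=\g^+$ (here one uses that $\g_1$ brackets onto $\g_0$ and that $\g_1=\Lambda^3V$ generates $\g^{++}$, which needs $n\geq5$), we get $N^{\g^{++}}\simeq(\g/\g^+)^*=\g_{-1}^*\simeq V$, so $\dim H^0=1$ and $\dim\im d_1=1$, for both $N=S(0)$ and $N=S(0)^{op}$. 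It therefore remains to show $\dim\Ker d_2=1$ when $n>5$ (both $N$), while for $n=5$ one has $\dim\Ker d_2=2$ for $N=S(0)$ and $\dim\Ker d_2=1$ for $N=S(0)^{op}$.

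\textbf{Computing $\Ker d_2$.} Again using that $\g_1$ generates $\g^{++}$ for $n\geq5$, the restriction map $\Ker d_2\hookrightarrow\Hom_{\g_0}(\g_1,V\otimes N)=\Hom_{\g_0}(\Lambda^3V\otimes V,N)$ is injective; decomposing $\Lambda^3V\otimes V$ over $\oo(n)$ and matching isotypic pieces against $\bigoplus_{i=1}^{n-1}\Lambda^iV$ shows this space is $4$-dimensional (uniformly in $n\geq5$). The cocycle condition then cuts it down: for $u=\xi_1\xi_2\xi_3\in\g_1$ one has $\{u,u\}=0$, hence $2\,u\varphi(u)=\varphi([u,u])=0$, and an explicit Poisson-bracket computation of $u\varphi(u)$ inside $N$ (in the spirit of the one in Lemma~\ref{extnonzero}), together with its analogues for the mixed brackets $[u,w]$ with $w\in\g_1$, decides which of the four candidates extend to genuine $1$-cocycles on $\g^{++}$.

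\textbf{Main obstacle and the split into cases.} This last step is the main difficulty, and it is also where the three cases separate. For $n>5$ the obstructions eliminate everything except the coboundary, so $H^1=0$ for both $S(0)$ and $S(0)^{op}$. For $n=5$ the isomorphism $\Lambda^4V\simeq V$ of $\oo(5)$-modules (equivalently $\Lambda^3V\simeq\Lambda^2V$) produces an additional $V$-isotypic component in $\Lambda^3V\otimes V$, and the corresponding $\g_0$-map does extend to a $1$-cocycle which is not a coboundary when $N=S(0)$, giving $\Ext^1_\g(M,S(0))=\C$; when $N=S(0)^{op}$ the parity sign in $\{u,u\}$ (and hence in $2\,u\varphi(u)=0$) is reversed and obstructs this extra cocycle, so $\Ext^1_\g(M,S(0)^{op})=0$. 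I would organize the write-up so that the case $n>5$ is treated uniformly first and the $n=5$ anomaly is isolated in a short separate computation; note that $S(0)$ and $S(0)^{op}$ have the same restriction to $\g_0$, so the cochain spaces $C^i$ coincide and only the differentials — through the signs on the odd part of $\g^{++}$ — distinguish the two.
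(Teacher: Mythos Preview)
Your overall strategy --- Shapiro's lemma to pass to $H^1(\g^+,\g_0;V\otimes N)$, then bound $\Ker d_2$ by restricting to $\g_1$ and testing with explicit Poisson brackets --- is exactly the paper's. But there is a genuine conceptual error in how you separate $S(0)$ from $S(0)^{op}$.

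You claim that ``the cochain spaces $C^i$ coincide and only the differentials --- through the signs on the odd part of $\g^{++}$ --- distinguish the two.'' This is false. The module $N^{op}$ carries the \emph{same} $\hat\g$-action as $N$ (identical linear operators); only the $\Z_2$-grading is shifted. Hence the differentials in the two complexes are literally identical. What changes is the parity of each cochain: an even cocycle with values in $S(0)$ is an odd cocycle with values in $S(0)^{op}$, and conversely. The distinction between $\Ext^1(M,S(0))$ and $\Ext^1(M,S(0)^{op})$ is obtained by splitting the common cocycle space by parity, not by sign changes in $u\varphi(u)$. This also invalidates your dimension count: the two copies of $V$ in $S(0)$ give $\dim C^0=2$ \emph{in total}, not $2$ for each choice of $N$; you are double-counting.

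The paper avoids this trap by treating even and odd $n$ separately. For $n$ even, Lemma~\ref{weighargument} (the weight/parity argument) gives $\Ext^1_\g(M,S(0)^{op})=0$ immediately, so the full four-dimensional space $\Hom_{\g_0}(\g_1\otimes V,S(0))$ is tested only against $\Ext^1(M,S(0))$. For $n$ odd, Lemma~\ref{weighargument} does not apply (it excludes $H(2n+1)$), and the four maps $\varphi_1,\dots,\varphi_4$ are split by parity into two pairs, one for each target.

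Finally, your explanation of the $n=5$ anomaly --- that ``the parity sign in $\{u,u\}$ is reversed'' when $N=S(0)^{op}$ --- cannot be right: $\{u,u\}$ is a bracket in $\g$ and is independent of $N$. The paper's mechanism is concrete and different: the obstruction killing the coefficient $c_2$ uses the element $f=\xi_1\xi_5\xi_6+\xi_2\xi_3\xi_4$, which needs $n\geq 6$. When $n=5$ the map $\varphi_2$ (landing in $\Lambda^4 V$) extends to a genuine cocycle by setting $\varphi_2(\g_2,V)=0$, and one then reads off from its parity which of the two Ext groups it sits in.
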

    \begin{proof} Let us start with the case of even $n$. The weight argument, Lemma~\ref{weighargument}, implies 
    $\Ext^1_\g(M,S(0)^{op})=0$. Let us show that
      $\Ext^1_\g(M,S(0))=0$. By Shapiro's lemma
      $$\Ext^1_\g(M,S(0))=\Ext^1_{\g^+}(M_0(\omega_1),S(0))=H^1(\g^+,M_0(\omega_1)^*\otimes S(0))=
      H^1(\g^+,\g_0; M_0(\omega_1)^*\otimes S(0)).$$
      The computations are similar to ones in the proof of Lemma \ref{extnonzero}. We are looking for 
      $\varphi\in \Hom_{\g_0}(\g_1\otimes M_0(\omega_1), S(0))$
      which can be extended to a cocycle in $\Hom_{\g_0}(\g^{++}\otimes M_0(\omega_1), S(0))$.
      We use the fact that $M_0(\omega_1)=V$ is the standard representation of $\g_0=\oo(n)$ and
      $$S(0)=\bigoplus_{i=1}^{n-1}\Lambda^i(V).$$
      Therefore it is not hard to compute that  $\Hom_{\g_0}(\g_1\otimes M_0(\omega_1), S(0))$ is a $4$-dimensional and we can write down a basis
      $\{\varphi_j\,|\,j\leq 4\}$ homogeneous with respect
      to the standard grading. We identify $V$ with $\Lambda^1(V)\subset S(0)$ and denote by $\ \bar{\ \ }:V\to\Lambda^{n-1}(V)\subset S(0)$
      the natural $\g_0$-isomorphism. We set for every $f\in\g_1,x\in V$
      $$\varphi_1(f,x)=L_f(x),\quad \varphi_2(f,x)=fx,\quad \varphi_3(f,x)=L_f^{(2)}(\bar x), \quad \varphi_4(f,x)=L_f^{(3)}(\bar x),$$
      where
      $$L_f=\sum_{i=1}^n\partial_i(f)\partial_i,\quad L_f^{(2)}=\sum_{i<j}(\partial_i\partial_j(f))\partial_j\partial_i,
      \quad L_f^{(3)}=\sum_{i<j<k}(\partial_i\partial_j\partial_k(f))\partial_k\partial_j\partial_i.$$
      We first notice that $\varphi_1$ is a coboundary by
      construction, thus we can assume without loss of generality that the restriction of our cocycle on $\g_1$ is given by
      $\varphi=c_2\varphi_2+c_3\varphi_3+c_4\varphi_4$. Let us show that if $\varphi$ extends to a cocycle then $c_1=c_2=c_3$.

      First, we take $f=\xi_1\xi_2\xi_3$, $x=\xi_1$, then $\{f,f\}=0$. Hence $\varphi(\{f,f\},x)=2\{f,\varphi(f,x)\}=0$. But
      $\varphi_2(f,x)=\varphi_4(f,x)=0$ and
      $$2\{f,\varphi(f,x)\}=2c_3\{f,\varphi_3(f,x)\}=2c_3\{\xi_1\xi_2\xi_3,\xi_1\xi_4\xi_5\dots\xi_n\}=2c_3\xi_2\xi_3\xi_4\xi_5\dots\xi_n.$$
      This implies $c_3=0$.
      Next we take $x=\xi_1$, $f=\xi_1\xi_5\xi_6+\xi_2\xi_3\xi_4$. Again we must have $2\{f,\varphi(f,x)\}=0$. Therefore
$$\{f,\varphi(f,x)\}=-c_2\{\xi_1\xi_5\xi_6+\xi_2\xi_3\xi_4,\xi_1\xi_2\xi_3\xi_4\}+c_4\{\xi_1\xi_5\xi_6+\xi_2\xi_3\xi_4,\xi_5\xi_6\dots\xi_n\}=
      -c_2\xi_5\xi_6\xi_2\xi_3\xi_4=0.$$
      Thus $c_2=0$.
     
      It remains to check that $\varphi_4$ can not be extended to a cocycle. Let $f=\xi_1(\xi_2\xi_3+\xi_4\xi_5)$, $u=\{f,f\}=2\xi_2\xi_3\xi_4\xi_5$, $x=\xi_2$.
      Then
      $$\varphi_4(f,x)=\xi_3\alpha, \quad\alpha=\xi_6\dots\xi_n,$$
      $$\varphi_4(u,x)=2\{f,\varphi_4(f,x)\}=2\{f,\xi_3\alpha\}=2\xi_1\xi_2\alpha.$$
      Let $g=\xi_2(\xi_1\xi_3+\xi_4\xi_5)$, $v=\{g,g\}=2\xi_1\xi_3\xi_4\xi_5$. Then $\varphi_4(g,x)=0$, hence $\varphi_4(v,x)=0$. On the other hand, $\{u,v\}=0$,
      therefore
    $$0=\varphi_4(\{u,v\},x)=\{u,\varphi_4(v,x)\}-\{v,\varphi_4(u,x)\}=-\{2\xi_1\xi_3\xi_4\xi_5,2\xi_1\xi_2\alpha\}=4\xi_3\xi_4\xi_5\xi_2\alpha.$$
    A contradiction.

      The case of odd $n$ for $n\geq 7$ can be proven similarly. The only difference is that both $\Hom_{\g_0}(M_0(\omega_1), S(0))$ and
      $\Hom_{\g_0}(M_0(\omega_1), S(0)^{op})$ are $2$-dimensional, the former space is spanned by $\varphi_3,\varphi_4$ and the latter is
      spanned by $\varphi_1,\varphi_2$.

      Finally, for $n=5$ all above arguments are applicable except the proof that $c_2=0$. 
      In this case if we set $\varphi_2(\g_2,M_0(\omega_1))=0$ we obtain a cocycle which
      gives a non-trivial extension in  $\Ext^1_\g(M,S(0)^{op})$.  
    \end{proof}
    It follows from \cite{Sha1} Theorem 3 that there exists a homomorphism
    $\gamma: Ind_{\g^+}^{\g} M_0(2\omega_1)^{op}\to Ind_{\g^+}^{\g} M_0(\omega_1)$ and $\im\gamma$ is an indecomposable module of length $2$ with socle $L_{\omega_1}$ and
    cosocle $L^{op}_{2\omega_1}$.  Let $Q$ denote the quotient of $M=Ind_{\g^+}^{\g} M_0(\omega_1)$ by $\im\gamma$. 
    \begin{lem}\label{projaux} Let $n>5$. We have $\Ext^1_\g(Q,S(0))=\Ext^1_\g(Q,S(0)^{op})=0$. 
    \end{lem}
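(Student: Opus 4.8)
The plan is a short diagram chase built on the defining exact sequence
$$0\to\im\gamma\to M\to Q\to 0.$$
Applying $\Hom_\g(-,S(0))$ produces the exact fragment
$$\Hom_\g(\im\gamma,S(0))\to\Ext^1_\g(Q,S(0))\to\Ext^1_\g(M,S(0)),$$
so it will be enough to show that the two flanking terms vanish. The right-hand term is zero by Lemma~\ref{crucial}, which is precisely where the hypothesis $n>5$ is used. For the left-hand term I would argue as follows: $\im\gamma$ has length $2$ with socle $L_{\omega_1}$ and cosocle $L^{op}_{2\omega_1}$, while $S(0)$ is simple; hence a nonzero homomorphism $\im\gamma\to S(0)$ would have to be surjective with kernel $L_{\omega_1}$, forcing $S(0)\simeq L^{op}_{2\omega_1}$. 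But $S(0)\simeq L^{op}_{\omega_1}$, as recorded in the proof of Proposition~\ref{simplepoisson}, and $L^{op}_{\omega_1}\not\simeq L^{op}_{2\omega_1}$ because the two irreducibles have distinct $\oo(n)$-highest weights. Thus $\Hom_\g(\im\gamma,S(0))=0$, and consequently $\Ext^1_\g(Q,S(0))=0$.

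Running the identical argument with $S(0)^{op}$ in place of $S(0)$ will settle the second vanishing: $\Ext^1_\g(M,S(0)^{op})=0$ by Lemma~\ref{crucial}, and a nonzero map $\im\gamma\to S(0)^{op}$ would force $S(0)^{op}\simeq L^{op}_{2\omega_1}$, i.e. $S(0)\simeq L_{2\omega_1}$, which is again impossible by comparison of highest weights (and parity). Hence $\Ext^1_\g(Q,S(0)^{op})=0$.

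I do not expect a real obstacle here: the structure of $\im\gamma$, the identification $S(0)\simeq L^{op}_{\omega_1}$, and the vanishing of $\Ext^1_\g(M,-)$ on $S(0)$ and $S(0)^{op}$ are all already available, and the only point to be careful about is that the simplicity of $S(0)$ (equivalently of $S(0)^{op}$) together with the known cosocle of $\im\gamma$ genuinely kills $\Hom_\g(\im\gamma,S(0))$. Once that observation is in place, the lemma follows immediately from the long exact sequence and Lemma~\ref{crucial}.
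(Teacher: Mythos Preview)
Your argument is correct and follows exactly the paper's approach: both use the long exact sequence associated to $0\to\im\gamma\to M\to Q\to 0$ together with Lemma~\ref{crucial} for the vanishing of $\Ext^1_\g(M,S)$ and the observation that $\Hom_\g(\im\gamma,S)=0$. Your justification of the latter via the cosocle of $\im\gamma$ is in fact more explicit than what the paper writes, which simply asserts the vanishing.
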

    \begin{proof} Consider the exact sequence
      $$0\to\im\gamma\to M\to Q\to 0.$$
      Let $S=S(0)$ or $S(0)^{op}$. Consider the corresponding long exact sequence
      \begin{equation}\label{exactseq}
        \dots\to \Hom_\g(\im\gamma,S)\to \Ext^1_\g(Q,S)\to \Ext^1_\g(M,S)\to \dots
        \end{equation}
      We have  $\Hom_\g(\im\gamma,S)=0$ and  $ \Ext^1_\g(M,S)=0$ if $n>5$ or $S=S(0)$. Therefore  $\Ext^1_\g(Q,S)=0$.
      \end{proof}
  \begin{prop} Let $t=0$ and $n>5$. Then $Q$ is projective in the category $\ggm$.
\end{prop}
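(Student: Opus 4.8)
The plan is to show $Q$ is projective in $\ggm$ by verifying that $\Ext^1_\g(Q,L)=0$ for every simple object $L$ of $\ggm$; the long exact sequence together with induction on length then gives $\Ext^1_\g(Q,N)=0$ for all $N\in\ggm$, hence projectivity. By Proposition~\ref{simplepoisson}(2), the simple objects of $\gm$ of central charge zero --- equivalently, the simple objects of $\ggm$, since an $H(n)$-module is just an $\hat\g$-module on which $z$ acts by zero --- are $\C$, $\C^{op}$, $S(0)$ and $S^{op}(0)$. One should also note that $Q\in\ggm$: by Shapovalov's description of $M=\operatorname{Ind}_{\g^+}^{\g}M_0(\omega_1)$ the only composition factor of $M$ that does not admit a short grading is $L^{op}_{2\omega_1}$, and this factor occurs inside $\im\gamma$, so every composition factor of $Q=M/\im\gamma$ is one of the four simples above and $Q$ is short graded.

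For $L=S(0)$ and $L=S^{op}(0)$ the vanishing $\Ext^1_\g(Q,L)=0$ is precisely Lemma~\ref{projaux}; this is the step where the hypothesis $n>5$ enters, via Lemma~\ref{crucial}. For $L=\C$ and $L=\C^{op}$ I would apply $\Hom_\g(-,L)$ to the short exact sequence $0\to\im\gamma\to M\to Q\to 0$, which gives the exact piece
$$\Hom_\g(\im\gamma,L)\longrightarrow\Ext^1_\g(Q,L)\longrightarrow\Ext^1_\g(M,L),$$
so it suffices to kill the two outer terms. The left one vanishes because the cosocle of $\im\gamma$ is $L^{op}_{2\omega_1}$, which is not one-dimensional and in particular is neither $\C$ nor $\C^{op}$.

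For $\Ext^1_\g(M,L)=0$ the argument is a simplified version of Lemma~\ref{crucial}. With $V:=M_0(\omega_1)$ the standard $\oo(n)$-module, Shapiro's lemma \eqref{shapiro's_lemma} gives $\Ext^1_\g(M,L)\cong H^1(\g^+,\g_0;V^*\otimes L)$, and since $\g^{++}$ acts trivially on $V^*\otimes L$ the relevant cochain complex $\Hom_{\g_0}(\Lambda^\bullet\g^{++},V^*\otimes L)$ has $\Hom_{\g_0}(\C,V^*\otimes L)=0$ in degree zero (no $\g_0$-invariants in $V^*$), and every $1$-cocycle vanishes on $[\g^{++},\g^{++}]$. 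As in the proof of Lemma~\ref{crucial}, $\g_1\cong\Lambda^3V$ generates $\g^{++}$, so $\g^{++}/[\g^{++},\g^{++}]\cong\g_1$ and $H^1(\g^+,\g_0;V^*\otimes L)$ injects into $\Hom_{\g_0}(\Lambda^3V,V^*)\cong\Hom_{\oo(n)}(\Lambda^3V,V)$. This last space is zero for $n\geq 5$: for $n\geq 7$, $\Lambda^3V$ is irreducible of dimension $\binom{n}{3}>n$; for $n=5$ one has $\Lambda^3V\cong\Lambda^2V$ (the adjoint), and for $n=6$ it splits into two ten-dimensional pieces, none isomorphic to $V$. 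Hence $\Ext^1_\g(M,L)=0$, so $\Ext^1_\g(Q,L)=0$ for $L=\C,\C^{op}$ too, and the proof is complete.

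The main obstacle is the computation $\Ext^1_\g(M,\C)=\Ext^1_\g(M,\C^{op})=0$, which reduces to the $\oo(n)$-fact $\Hom_{\oo(n)}(\Lambda^3V,V)=0$; the small-rank cases $n=5,6$ (where $\Lambda^3V$ need not be irreducible) must be inspected by hand, while $n=4$, where $\Lambda^3V\cong V$, is genuinely excluded, consistent with the hypothesis $n>5$. A secondary point requiring care is the verification that $Q$ actually lies in $\ggm$, which rests on the precise composition structure of the induced module $M$ from \cite{Sha1} recorded in Theorem~\ref{Shap}.
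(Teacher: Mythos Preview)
Your proof is correct. For $L=S(0)$ and $S(0)^{op}$ you and the paper both invoke Lemma~\ref{projaux}. For $L=\C$ and $\C^{op}$ the two arguments diverge.

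The paper exploits the internal structure of $Q$: it uses the short exact sequence $0\to R\to Q\to S(0)^{op}\to 0$, where $R=\C^2$ for even $n$ and $R=\C\oplus\C^{op}$ for odd $n$, and then applies $\Hom_\g(-,\C)$. Since $\Ext^1_\g(R,\C)=0$ trivially, what remains is to show that the connecting map $\Hom_\g(R,\C)\to\Ext^1_\g(S(0)^{op},\C)$ is surjective; this follows from the dimension count in Lemma~\ref{poext}, and similarly for $\C^{op}$.

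You instead reuse the sequence $0\to\im\gamma\to M\to Q\to 0$ already exploited in Lemma~\ref{projaux}, and reduce to showing $\Ext^1_\g(M,\C)=\Ext^1_\g(M,\C^{op})=0$ directly. Your Shapiro argument here is clean: since $\g^{++}$ acts trivially on $V^*\otimes L$, every $1$-cocycle factors through $\g^{++}/[\g^{++},\g^{++}]\cong\g_1\cong\Lambda^3V$, and $\Hom_{\oo(n)}(\Lambda^3V,V)=0$ for $n\geq 5$. This bypasses both Lemma~\ref{poext} and the radical filtration of $Q$, so it is more self-contained; the paper's route, by contrast, makes the structure of $Q$ explicit and recycles Lemma~\ref{poext}, which is needed independently for Lemma~\ref{qext}. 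One small remark: your verification that $Q\in\ggm$ leans on knowing the full composition series of $M$ for $k=1$, which is not stated verbatim in Theorem~\ref{Shap} but follows from \cite{Sha1}; the paper sidesteps this by exhibiting the sequence $0\to R\to Q\to S(0)^{op}\to 0$ directly.
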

\begin{proof} It suffices to check that $\Ext^1_\g(Q,S)=0$ for all simple $S$ in $\ggm$. For $S=S(0)$ or $S^{op}(0)$ this is Lemma \ref{projaux}. For $S=\mathbb C$
  consider the exact sequence $0\to R\to Q\to F\to 0$ where $F=S(0)^{op}$  and  $R=\mathbb C^2$ for even $n$ , $R=\mathbb C\oplus\mathbb C^{op}$
  for odd $n$. The corresponding long exact sequence degenerates
  $$0\to\Hom_\g(R,\mathbb C)\xrightarrow{\theta}\Ext^1_\g(F,\mathbb C)\to \Ext^1_\g(Q,\mathbb C)\to\Ext^1_\g(R,\mathbb C)=0.$$
  By Lemma \ref{poext} $\theta$ is an isomorphism and hence $\Ext^1_\g(Q,\mathbb C)=0$. The case $S=\mathbb C^{op}$ is similar.
\end{proof}
Let $I^{(m)}:=Ind^{\gh}_{\pp}\mathbb C[z]/(z^{m+1})$ and $J^{(m)}$ be the unique maximal submodule of $I^{(m)}$ and $Q^{(m-1)}$ be the quotient of $J^{(m)}$ by
the unique maximal submodule in $Ind^{\gh}_{\g^+}z^m\subset I^{(m)}$.  
\begin{lem}\label{qext} Let $n>5$, $m\geq 1$. Then $z^iQ^{(m-1)}/z^{i+1}Q^{(m-1)}$ is isomorphic to $Q$ for $i=0,\dots,m$. Moreover, $Q^{(m-1)}$ is projective in 
$F^1(\gm^{\,0})$.
\end{lem}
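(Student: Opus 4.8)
The plan is to first identify $Q^{(m-1)}$ explicitly, then read off its $z$-adic filtration, and finally recognise $Q^{(m-1)}$ as the projective cover of its top. I would begin by recording two structural facts. Since $z$ is central and nilpotent, any simple quotient of an induced module $Ind^{\gh}_{\pp}N$ has $z$ acting by zero, hence is already a quotient of $Ind^{\gh}_{\pp}(N/zN)$; applied with $N=\mathbb C[z]/(z^{k+1})$ this shows $I^{(k)}$ has simple top $\mathbb C$ and a unique maximal submodule $J^{(k)}$ with $I^{(k)}/J^{(k)}\cong\mathbb C$, so \emph{every} proper submodule of $I^{(k)}$ lies in $J^{(k)}$. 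Secondly, $Ind^{\gh}_{\pp}$ is exact, so $z^{i}I^{(m)}\cong I^{(m-i)}$; in particular the submodule of $I^{(m)}$ generated by $z^{m}$ (the module written $Ind^{\gh}_{\g^+}z^{m}$) equals $z^{m}I^{(m)}\cong I_{0}$, whose unique maximal submodule has colength one and, chasing the map $x\mapsto zx$, is seen to be $z^{m}J^{(m)}$; hence $Q^{(m-1)}=J^{(m)}/z^{m}J^{(m)}$. The same chase applied to $z^{i}\colon J^{(m)}\to I^{(m)}$, together with uniqueness of maximal submodules, gives $z^{i}J^{(m)}\cong J^{(m-i)}$ inside $z^{i}I^{(m)}\cong I^{(m-i)}$, whence each successive quotient $z^{i}Q^{(m-1)}/z^{i+1}Q^{(m-1)}\cong z^{i}J^{(m)}/z^{i+1}J^{(m)}\cong J^{(m-i)}/zJ^{(m-i)}$. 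So the first assertion comes down to proving $J^{(k)}/zJ^{(k)}\cong Q$ for every $k\geq1$.

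This I would prove by induction on $k$. The reduction $I^{(k)}\twoheadrightarrow I^{(k)}/z^{k}I^{(k)}=I^{(k-1)}$ carries $J^{(k)}$ onto $J^{(k-1)}$ and $zJ^{(k)}$ onto $zJ^{(k-1)}$, with kernel $z^{k}I^{(k)}$; since $z^{k}I^{(k)}\subset zJ^{(k)}$ for $k\geq2$ (transported along $zI^{(k)}\cong I^{(k-1)}$ this is just the inclusion $z^{k-1}I^{(k-1)}\subset J^{(k-1)}$, which holds as proper submodules lie in the unique maximal one), the induced map $J^{(k)}/zJ^{(k)}\to J^{(k-1)}/zJ^{(k-1)}$ is an isomorphism. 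For $k=1$ the module $J^{(1)}/zJ^{(1)}$ is killed by $z$, hence lies in $\ggm$; computing the radical layers of $J^{(1)}$ (which is an extension of the maximal submodule of $I_{0}$ by $I_{0}$) shows $J^{(1)}/zJ^{(1)}$ has length $3$ and simple top, hence is cyclic, hence a quotient of $M=Ind_{\g^+}^{\g}M_{0}(\omega_{1})$; a comparison of composition factors with the description of $M$ provided by Theorem~\ref{Shap} forces the kernel of this quotient map to be $\im\gamma$, i.e.\ $J^{(1)}/zJ^{(1)}\cong M/\im\gamma=Q$.

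For projectivity, note that $Q^{(m-1)}/zQ^{(m-1)}\cong Q$ shows $Q^{(m-1)}$ has simple top $T$, and by the Proposition preceding the Lemma $Q$ is the projective cover of $T$ in $\ggm=F^{1}(\gm^{\,0})$. Let $P$ be the projective cover of $T$ in $F^{m}(\gm^{\,0})$. Because the functor $N\mapsto N/zN$ is left adjoint to the exact inclusion $F^{1}(\gm^{\,0})\hookrightarrow F^{m}(\gm^{\,0})$, the module $P/zP$ is projective in $\ggm$ with top $T$, so $P/zP\cong Q$; multiplication by $z$ then yields surjections $z^{i-1}P/z^{i}P\twoheadrightarrow z^{i}P/z^{i+1}P$, so every $z$-layer of $P$ has length at most $\operatorname{length}(Q)=3$, and since $z^{m}P=0$ we obtain $\operatorname{length}(P)\leq 3m=\operatorname{length}(Q^{(m-1)})$. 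The surjection $P\twoheadrightarrow Q^{(m-1)}$ afforded by projectivity of $P$ is therefore an isomorphism, so $Q^{(m-1)}\cong P$ is projective in $F^{m}(\gm^{\,0})$ (which for $m=1$ is $\ggm$).

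The delicate point is the base case $J^{(1)}/zJ^{(1)}\cong Q$: matching an abstract subquotient of the large induced module $I^{(1)}$ with the concretely constructed module $Q=M/\im\gamma$ requires careful bookkeeping of the radical layers and parities coming from $I_{0}$, from $M$, and from Shapovalov's Theorem~\ref{Shap}. Everything else is formal once one invokes exactness of $Ind^{\gh}_{\pp}$, uniqueness of maximal submodules, and the earlier computation that $Q$ is the projective cover of $T$ in $\ggm$.
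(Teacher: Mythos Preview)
Your proposal is correct (modulo the indexing typo in the statement itself: the range should be $i=0,\dots,m-1$, and the category should be $F^{m}(\gm^{\,0})$), but your route differs from the paper's in both assertions.

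For the identification of the $z$-layers, the paper simply asserts that all layers $z^{i}Q^{(m-1)}/z^{i+1}Q^{(m-1)}$ are mutually isomorphic, observes that the top layer is indecomposable of length~$3$ with cosocle $S(0)^{op}$ and socle $\mathbb C^{2}$ (resp.\ $\mathbb C\oplus\mathbb C^{op}$), and then invokes Lemma~\ref{poext} to conclude that such a module is \emph{unique up to isomorphism}, hence equal to $Q$. You instead give an explicit structural identification $Q^{(m-1)}=J^{(m)}/z^{m}J^{(m)}$, reduce all layers to $J^{(1)}/zJ^{(1)}$ by a clean inductive step, and then match this with $Q$ by recognising it as the appropriate quotient of $M=Ind_{\g^{+}}^{\g}M_{0}(\omega_{1})$ via Theorem~\ref{Shap}. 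The paper's argument is shorter because it piggybacks on the $\Ext^{1}$ computations already done in Lemma~\ref{poext}; your argument is more self-contained but, as you note, requires tracking the highest-weight generator and parities carefully in the base case.

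For projectivity, the paper argues by induction on $m$ directly from Lemma~\ref{projaux} (the vanishing $\Ext^{1}_{\g}(Q,S(0))=\Ext^{1}_{\g}(Q,S(0)^{op})=0$), lifting the $\Ext$-vanishing layer by layer. Your length-counting argument is different and rather elegant: using that $N\mapsto N/zN$ is left adjoint to the exact inclusion $F^{1}\hookrightarrow F^{m}$, you bound the length of the projective cover $P$ by $3m$ and force $P\cong Q^{(m-1)}$. This bypasses the inductive $\Ext$ computation entirely, at the cost of needing the first assertion (that every layer of $Q^{(m-1)}$ has length exactly~$3$) as input. Both approaches ultimately rest on the projectivity of $Q$ in $\ggm$, which is the Proposition preceding the Lemma.
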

\begin{proof} The first assertion is a consequence of the isomorphism $z^jQ^{(m-1)}/z^{j+1}Q^{(m-1)}\simeq z^iQ^{(m-1)}/z^{i+1}Q^{(m-1)}$ and the observation that
  $Q^{(m-1)}/zQ^{(m-1)}$ is indecomposable of length $3$ with the cosocle $S(0)^{op}$ and socle $\mathbb C^2$ (resp. $\mathbb C\oplus\mathbb C^{op}$) for even (resp., odd)
  $n$. Lemma \ref{poext} implies that the module with these properties is unique up to isomorphism, hence it is isomorphic to $Q$.

  The second assertion follows from Lemma \ref{projaux} by induction on $m$.
\end{proof}

Now we are going to prove the following
  \begin{thm}\label{jordanzero}  Let $n\geq5$. The category $\JJ^{\,0}$ has two blocks, each of these blocks is equivalent to  the category of $\C[x]$-modules with
    nilpotent action of $x$.
  \end{thm}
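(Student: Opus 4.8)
The plan is to transport the representation‑theoretic picture from the Lie side to the Jordan side using the functorial machinery already set up, exactly as was done for $JP(2)$ in Theorem~\ref{description-JP(2)}. By Proposition~\ref{quivers-relation}(3) and the accompanying remark, the $\Ext$ quiver of $\JJ^{\,0}$ is obtained from the $\Ext$ quiver of $\gm^{\,0}$ by deleting the vertices corresponding to the trivial modules $\C$ and $\C^{op}$ and adjusting the paths through them; so the first task is to determine the category $\gm^{\,0}$ (equivalently its two parity‑conjugate blocks $\Omega_0^{\pm}$) completely enough to read off the quotient quiver and relations. First I would assemble the list of simple objects in $\gm^{\,0}$ from Proposition~\ref{simplepoisson}(2): they are $\C$, $\C^{op}$, $S(0)$, $S(0)^{op}$, and by Lemma~\ref{weighargument} the pair $\{\C,S(0)\}$ and the pair $\{\C^{op},S(0)^{op}\}$ lie in separate blocks, giving two parity‑conjugate blocks $\Omega_0^{+}$ (containing $\C$ and $S(0)$) and $\Omega_0^{-}$.

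Next I would pin down the $\Ext^1$ groups inside $\Omega_0^{+}$. Lemma~\ref{poext} already gives $\Ext^1(\C,S(0))=\Ext^1(S(0),\C)$, which is $\C^2$ for even $n$ and $\C$ for odd $n$; and $\Ext^1(\C,\C)=0$ since $\Der\g/\g$ contributes only to $\Ext^1(\C,\ad)$-type extensions (or directly: a nonsplit self‑extension of $\C$ would be a two‑dimensional module with zero action of $\hat\g$, hence split). For $\Ext^1(S(0),S(0))$ I would argue as in Lemma~\ref{selfext}, or observe via Shapiro's lemma and the relative cohomology complex that there is no self‑extension; in any case the self‑loop at $S(0)$ must vanish, otherwise the block would fail to be of the simple shape we want. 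This determines the $\Ext$ quiver of $\Omega_0^{+}$: in the even case it is two vertices $v_0$ (for $\C$) and $v_1$ (for $S(0)$) with a double arrow each way; in the odd case a single arrow each way. Then I would apply Proposition~\ref{quivers-relation}(3): deleting $v_0$ leaves a single vertex $v_1$, and the loops of the resulting quiver of $\JJ^{\,0}$ come from the length‑two paths $v_1\to v_0\to v_1$, so one gets a quiver with one vertex and (a priori) several loops, with relations forcing the loop algebra to be $\C[x]$ with $x$ nilpotent.

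To nail the relations — that the block of $\JJ^{\,0}$ is genuinely $\C[x]\text{-mod}$ with $x$ nilpotent and not some larger algebra — I would use the projective module $Q^{(m-1)}$ from Lemma~\ref{qext}. That lemma says $Q^{(m-1)}$ is projective in $F^1(\gm^{\,0})$ with a $z$‑filtration whose layers are all copies of the indecomposable $Q$ (the length‑$3$ module with cosocle $S(0)^{op}$ and socle $\C^2$ resp. $\C\oplus\C^{op}$). Applying the functor $Jor$ to $Q^{(m-1)}$ and using Proposition~\ref{adjoint} (the fourth bullet: $Jor$ of a projective $P$ with $\hat\g P=P$ is projective in $\JJ$) produces a projective object in $F^1(\JJ^{\,0})$; since $Jor$ kills the trivial constituents and $Jor(Q)$ is a single copy of the unique simple object of this block of $\JJ^{\,0}$, we get an indecomposable projective of length $m+1$ on which the generator of the loop algebra acts by a single Jordan block of size $m+1$. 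Letting $m\to\infty$ shows the loop algebra is $\C[x]$ acting nilpotently, hence each block of $\JJ^{\,0}$ is equivalent to the category of finite‑dimensional $\C[x]$‑modules with $x$ nilpotent; and there are exactly two such blocks, interchanged by the change‑of‑parity functor. The main obstacle is the bookkeeping on the Lie side: precisely identifying the module $Q$ and its projective cover $Q^{(m-1)}$, and verifying that under $Jor$ the two‑dimensional (even $n$) socle $\C^2$ collapses so that the image is an \emph{indecomposable} uniserial Jordan module of length $m+1$ rather than decomposing — this is where Proposition~\ref{adjoint} and the exactness of $Jor$ have to be used carefully, together with the fact that the trivial $\hat\g$‑modules are exactly the constituents removed by passing from $\gm^{\,0}$ to $\JJ^{\,0}$.
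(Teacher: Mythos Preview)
Your final paragraph—apply $Jor$ to the projective $Q^{(m-1)}$ of Lemma~\ref{qext}; the trivial constituents die and what remains is an indecomposable projective in $F^m(\JJ^{\,0})$ with a single simple composition factor, so the endomorphism ring is $\C[x]/(x^m)$—is exactly the paper's argument for $n\geq 6$.

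The genuine gap is $n=5$. Lemma~\ref{qext} and the underlying Lemma~\ref{projaux} are stated only for $n>5$; for $n=5$ Lemma~\ref{crucial} gives $\Ext^1_\g(M,S(0))=\C$, not $0$, so the long exact sequence (\ref{exactseq}) in the proof of Lemma~\ref{projaux} no longer closes up and you cannot conclude that $Q$ is projective. The paper supplies a separate argument for $n=5$: one shows directly that any quotient of $\operatorname{Ind}^{\g}_{\g^+}\bar M_0(\omega_1)$ lying in $\ggm^{\,0}$ is already a quotient of $\operatorname{Ind}^{\g}_{\g^+}M_0(\omega_1)$, by tracking a highest-weight vector of weight $2\omega_1$ and checking that the submodule it generates swallows all of $\operatorname{Ind}^{\g}_{\g^+}M_0(\omega_1)^{op}$. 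This forces $\Ext^1_\g(Q,S(0))=0$ after all and restores projectivity of $Q$; then the construction of $Q^{(m-1)}$ and the rest go through as for $n\geq 6$. Your proposal does not address this case.

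A smaller inaccuracy: your preliminary block analysis invokes Lemma~\ref{weighargument}, but that lemma explicitly excludes $H(2k{+}1)$ (there $\h_{\bar 1}\neq 0$), and indeed Lemma~\ref{poext}(2) shows that for odd $n$ all four simples of $\gm^{\,0}$ lie in a \emph{single} block on the Lie side. This does not actually damage the real argument—the two-block structure of $\JJ^{\,0}$ falls out of the shape of $Jor(Q^{(m-1)})$ and $Jor(Q^{(m-1)})^{op}$ a posteriori, not from a Lie-side block split—but the quiver picture you sketch for $\Omega_0^+$ is wrong for odd $n$, and so is the appeal to Lemma~\ref{selfext} for $\Ext^1(S(0),S(0))$ (the even part of $\hat\g$ is not semisimple, and $\operatorname{sdim}S(0)=0$ when $n$ is odd).
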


  \begin{proof} For $n\geq 6$ it follows from the fact that $Jor(Q^{(m-1)})$ is projective in the corresponding subcategory $\JJ$. Now we consider  the case $n=5$. We would like to show that
    the module $Q$ is a projective cover  of $S(0)^{op}$  in $\ggm^{\,0}$. It suffices to show that $\Ext^1_\g(Q,S(0))=0$.

    Consider a unique up to proportionality
$$\varphi\in\Hom_{\g_{0}}(\g_1\otimes M_0(\omega_1),M_0(\omega_1)^{op}).$$ This map defines $\g^+$ module structure on
$\bar{M}_0(\omega_1):=M_0(\omega_1)\oplus M_0(\omega_1)^{op}$, assuming that $\g_2$ acts by zero.  Note that the extension of $Ind^\g_{\g^+}{M}_0(\omega_1)$ by $S(0)$ is a quotient
of $Ind^\g_{\g^+}\bar{M}_0(\omega_1)$ by the maximal proper submodule of   $Ind^\g_{\g^+}{M}_0(\omega_1)^{op}$. Therefore the exact sequence (\ref{exactseq}) implies that a non-trivial extension of
 $Q$ by $S(0)$ is a quotient of $Ind^\g_{\g^+}\bar{M}_0(\omega_1)$. We will show that every quotient of $Ind^\g_{\g^+}\bar{M}_0(\omega_1)$ which lies in $\ggm^{\,0}$ is in fact a quotient of $Ind^\g_{\g^+}M_0(\omega_1)$.
Indeed, consider a quotient $Ind^\g_{\g^+}\bar{M}_0(\omega_1)/N$ for some submodule $N$.
 Let $v$ and $v'$ be $\g_0$ highest weight vectors in $M_0(\omega_1)$ and  ${M}_0(\omega_1)^{op}$ respectively and $x\in\g_{-1}$ be a $\g_0$-highest
  vector. Then $N$ contains $xv$ and $xv'$ as the weight of these vectors is $2\omega_1$. Let $y\in\g_2$ be the lowest weight vector.
  Then
  $$yxv=xyv+[x,y]v=[x,y]v=v'.$$ 
  Therefore the whole  $Ind^\g_{\g^+}{M}_0(\omega_1)^{op}$ is contained in $N$. Now one can complete the proof of the theorem as in the case $n\geq 6$.
\end{proof}

    \begin{Cor}\label{jordanzeroindec}  Let $n\geq 5$. Every indecomposable module  in the category $\JJ^{\,0}$ is isomorphic to
$Jor(Q^{(m-1)})$ or $Jor(Q^{(m-1)})^{op}$.   
\end{Cor}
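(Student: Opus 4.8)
The plan is to reduce everything to the structural description already obtained in Theorem~\ref{jordanzero} and then to identify the abstract indecomposables with the concrete modules $Jor(Q^{(m-1)})$. By Theorem~\ref{jordanzero} the category $\JJ^{\,0}$ has exactly two blocks, each equivalent to the category of finite-dimensional $\C[x]$-modules with nilpotent action of $x$, and the two blocks are interchanged by the change of parity functor; in that model the indecomposable objects are precisely $\C[x]/(x^m)$ for $m\geq 1$, so each block of $\JJ^{\,0}$ contains exactly one indecomposable object of each length $m\geq 1$ and no others. From Proposition~\ref{simplepoisson} the simple objects of $\JJ^{\,0}$ are $Jor(S(0))$ and $Jor(S(0)^{op})$ (the functor $Jor$ kills the trivial modules $\C$ and $\C^{op}$, since their degree-$1$ component vanishes), so the two blocks are the one containing $Jor(S(0)^{op})$ and the one containing $Jor(S(0))$. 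Since $Jor$ intertwines the change of parity on both sides, it suffices to show that $Jor(Q^{(m-1)})$ is the indecomposable of length $m$ in the block of $Jor(S(0)^{op})$; the indecomposables of the other block are then the $Jor(Q^{(m-1)})^{op}$.

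Next I would compute $Jor$ on $Q$ and on its ``thickenings'' $Q^{(m-1)}$. From the proof of Lemma~\ref{qext} (together with Lemma~\ref{poext}) the module $Q=Q^{(0)}$ has length $3$ as a $\g$-module, with simple cosocle $S(0)^{op}$ and with the remaining two composition factors being one-dimensional trivial modules. Since $Jor$ is exact, annihilates the trivial modules, and (by Proposition~\ref{adjoint}) carries the simple non-trivial module $S(0)^{op}$ to the simple object $Jor(S(0)^{op})$, we get $Jor(Q)\simeq Jor(S(0)^{op})$. Applying the exact functor $Jor$ to the filtration of $Q^{(m-1)}$ by powers of the central element $z$, whose $m$ successive subquotients $z^iQ^{(m-1)}/z^{i+1}Q^{(m-1)}$ are all isomorphic to $Q$ by Lemma~\ref{qext}, we obtain a filtration of $Jor(Q^{(m-1)})$ with $m$ subquotients, each isomorphic to the simple object $Jor(S(0)^{op})$; hence $Jor(Q^{(m-1)})$ has length exactly $m$ in $\JJ^{\,0}$.

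Then I would check indecomposability and conclude. Because $z$ acts nilpotently on $Q^{(m-1)}$ and acts by zero on every simple subquotient, $zQ^{(m-1)}\subseteq\rad Q^{(m-1)}$; combined with the fact (proof of Lemma~\ref{qext}) that $Q^{(m-1)}/zQ^{(m-1)}$ has simple cosocle $S(0)^{op}$, this shows $Q^{(m-1)}$ itself has simple cosocle $S(0)^{op}$, so the exact functor $Jor$ sends it to an object with simple cosocle $Jor(S(0)^{op})$, which is therefore indecomposable. (Equivalently, one may invoke the projectivity of $Jor(Q^{(m-1)})$ in the relevant truncated subcategory established in the proof of Theorem~\ref{jordanzero}.) Thus $Jor(Q^{(m-1)})$ is an indecomposable object of length $m$ in the block of $Jor(S(0)^{op})$, hence by uniqueness it is \emph{the} indecomposable of length $m$; as $m$ runs over $\Z_{>0}$ these exhaust that block, and applying the change of parity functor finishes the other block.

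The main obstacle is the bookkeeping for the modules $Q^{(m-1)}$: one needs to know enough about their radical / $z$-power filtrations — that the successive layers are copies of $Q$ and that the cosocle is simple — to conclude that $Jor$ sends $Q^{(m-1)}$ to an \emph{indecomposable} of length exactly $m$, not merely a module with $m$ composition factors; all of this is contained in (the proofs of) Lemma~\ref{qext} and Theorem~\ref{jordanzero}. A minor additional point is the boundary case $n=5$, which is excluded from the hypotheses of Lemma~\ref{qext}: there one uses instead the dedicated argument at the end of the proof of Theorem~\ref{jordanzero}, which supplies the analogous projectivity and filtration statements for $Q$ and, inductively, for the modules $Q^{(m-1)}$.
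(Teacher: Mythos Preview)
Your proposal is correct and follows essentially the same route the paper intends: the corollary is stated without proof, as an immediate consequence of Theorem~\ref{jordanzero} (whose proof already records that $Jor(Q^{(m-1)})$ is projective in the appropriate truncated subcategory of $\JJ^{\,0}$), and your argument is precisely the natural unpacking of that implication---use the equivalence with nilpotent $\C[x]$-modules to see each block has a unique indecomposable of each length, then identify $Jor(Q^{(m-1)})$ with that indecomposable via its length and simple cosocle. Your handling of the $n=5$ case and of the parity switch is also in line with the paper.
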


\section{Representations of $M^+_{1,1}$.}

Let $M_{n,m}$ be the associative superalgebra
$$
M_{n,m}=\left\{\left[\begin{array}{cc} A&B\\ C&D\end{array}\right]\,|\, A\in M_n,\ D\in M_m,\ B\in M_{n\times m}, C\in M_{m\times n}\right\}=
\left[\begin{array}{cc} A&0\\ 0&D\end{array}\right]_{\bar{0}}\oplus \left[\begin{array}{cc} 0&B\\ C&0\end{array}\right]_{\bar{1}}.
$$
Jordan (resp. Lie) superalgebra $M^+_{n,m}$ (resp. $\gl(m,n)$) has the same underlying vector superspace and multiplication is a symmetric (resp. Lie) product $A\cdot B=\frac12(AB+BA)$ (resp. $[A,B]=AB-BA$).
These superalgebras are also related to each other via the TKK 
construction.

Denote by $E_{ij}$ $1\leq i,j\leq 4$, the standard basis of $\gl(2|2)$ consisting of the elementary matrices.
We have the direct sum decomposition
$$
\gl(2|2)=\ssl(2|2)\oplus\C(E_{11}+E_{22}-E_{33}-E_{44}),
$$ 
where $\ssl(2|2)$ is the subalgebra of $\gl(2|2)$ of matrices with zero supertrace. 

Next, the element $z_0=\frac12(E_{11}+E_{22}+E_{33}+E_{44})$ is central in $\ssl(2|2)$ and the quotient of $\ssl(2|2)$
by the ideal generated by $z_0$ is the simple Lie superalgebra $\g=\psl(2|2)$.  Then $Lie(M^+_{1,1})=\psl(2|2)$, see \cite{Kac1}. 
The short (Jordan) $\ssl(2)$-grading is given by $h=E_{11}-E_{22}+E_{33}-E_{44}$ and $\ssl(2)$ subalgebra is spanned by $h$, $E_{12}+E_{34}$
and $E_{21}+E_{43}$.

We fix the standard basis of the Cartan subalgebra of $\g$:
$$
h_1=E_{11}-E_{22}, \quad h_2=E_{33}-E_{44}. 
$$
Note that  $\g$ has an invariant symmetric  form $(\,,\, )$ induced by the form $\operatorname{str}XY$ on $\mathfrak{gl}(2|2)$. Therefore $H^2(\g,\mathbb C)$ and $H^1(\g,\g)=\operatorname{Der}(\g)/\g$ are isomorphic. Furthermore, \cite{Kac2},
$\operatorname{Der}(\g)/\g$ is isomorphic to $\ssl(2)$, and the action of $\ssl(2)$ on $H^2(\g,\mathbb C)$ equips the latter with the structure of the
adjoint representation. Therefore the universal central extension $\hat\g$ has a 3-dimensional center $Z$ with the basis $z_{-1},z_0,z_1$  such that
\begin{equation}
[E_{13},E_{24}]=-[E_{23},E_{14}]=z_1, \qquad [E_{31},E_{42}]=-[E_{32},E_{41}]=z_{-1}.
\end{equation}
Furthermore, the Lie algebra $\ssl(2)$ acts on $\hat\g$ by derivations, \cite{Vera2}. If $e,h,f$ is the standard $\ssl(2)$-triple, then
$$H(z_i)=2iz_i,\quad E(z_i)=z_{i+1},\quad F(z_i)=z_{i-1},$$ 
$$E\left[\begin{array}{cc}A&B\\C&D\end{array}\right]=\left[\begin{array}{cc}0&B+C^*\\0&0\end{array}\right],\ 
H\left[\begin{array}{cc}A&B\\C&D\end{array}\right]=\left[\begin{array}{cc}0&B\\-C&0\end{array}\right],\ 
F\left[\begin{array}{cc}A&B\\C&D\end{array}\right]=\left[\begin{array}{cc}0&0\\C+B^*&0\end{array}\right],$$
where $A,B,C,D$ are $2\times 2$-matrices and
$\left[ \begin{array}{cc}a&b\\c&d\end{array}\right]^*=\left[\begin{array}{cc}d&-b\\-c&a\end{array}\right]$. 

The eigenspace decomposition of $ad\,H$ defines a short grading on $\hat\g$
consistent with the superalgebra grading
$$
\hat\g=\hat\g_{-2}\oplus \hat\g_{-1}\oplus\hat\g_{0}\oplus\hat\g_{1}\oplus\hat\g_{2},
$$
where 
$$
\hat\g_{-1}=\left[\begin{array}{cc}0&0\\C&0\end{array}\right], \quad \hat\g_{0}=\left[\begin{array}{cc}A&0\\0&D\end{array}\right]\oplus\C z_0,\quad  \hat\g_{1}=\left[\begin{array}{cc}0&B\\0&0\end{array}\right] \quad  \text{and}\quad \hat\g_{\pm2}=\C z_{\pm}.
$$

This action can be lifted the action of the group $SL(2)$ as follows.
For any $\phi=\left[\begin{array}{cc} u&v\\ w&z \end{array}\right]\in SL(2)$ each element in $\g_{\bar 0}$ is stable under
$\phi$ while the action on $\g_{\bar 1}$ is determined by
\begin{equation}\label{autom-pairs}
\phi(E_{14})=uE_{14}+vE_{32}, \qquad \phi(E_{32})=wE_{14}+zE_{32}.
\end{equation}

Let $M$ be a finite-dimensional irreducible representation of $\hat\g$ then by twisting the action of $\hat\g$ on $M$ 
by $\phi$ we obtain another irreducible representation $M^\phi$ of $\hat\g$. Moreover, since $M$ is irreducible, it admits central character $\chi$,
i.e., every central central element $z$ acts on $M$ as the scalar $\chi(z)$. If
$\chi(z_0)=c$, $\chi(z_{-1})=p$ and $\chi(z_1)=k$, then  $M^\phi$ admits central character $\phi(\chi)$ defined by new coordinate components
$c'$ $p'$ and $k'$ 
$$
\left[\begin{array}{cc} c'&-k'\\ p'&-c' \end{array}\right]=\left[\begin{array}{cc} u&v\\ w&z \end{array}\right]
\left[\begin{array}{cc} c&-k\\ p&-c \end{array}\right]\left[\begin{array}{cc} u&v\\ w&z \end{array}\right]^{-1}.
$$

\subsection{Simple modules in $\gm$ and $\gmhh$}
Irreducible modules for $M^+_{1,1}$ were studied in \cite{ZM} and recently in \cite{MS}. The classification is obtained for any field of characteristic $\neq 2$. In this section we describe categories $M^+_{1,1}\text{-mod}_{\frac 12}$
and $M^+_{1,1}\text{-mod}_1$ via corresponding categories $\gm$ and $\gmhh$ over the field $\C$.

The category $\gmnada$ of all finite dimensional representations decomposes into blocks
$\gmnada^{\chi}$ and $(\gmnada^{\chi})^{op}$
according to the generalized central character. The action of $SL(2)$ allows to define the canonical equivalence of blocks 
$\gmnada^{\,\chi}$ and $\gmnada^{\,\phi(\chi)}$. Form the description of $SL(2)$-orbits in the adjoint 
representation it is clear
that we can reduce the study of blocks to the three essential cases
\begin{enumerate}
\item Semisimple: $k=p=0,\,\, c\neq 0$;
\item Nilpotent: $c=k=0,\,\, p\neq 0$;
  \item Trivial central character $k=p=c=0$,
  \end{enumerate}
The Lie superalgebra $\hat\g/\operatorname{Ker}\chi$ is isomorphic to $\ssl(2|2)$, $\spo(0,4)$ and $\psl(2|2)$ respectively.  

The following Lemma is straightforward but very important.
\begin{lem}\label{equivmat} The group $SL(2)$ acts on the isomorphism classes of modules in $\gm$ and in 
$\gmhh$ by twist $M\mapsto M^g$, $g\in SL(2)$.
  Moreover, if $M\in\gm^{\,\chi}$ (resp., $\gmhh^{\,\chi}$) then $M^g\in\gm^{\,g(\chi)}$ (resp., $\gmhh^{\,g(\chi)}$). In particular, the categories  $\gm^{\,\chi}$ and $\gmhh^{\,\chi}$
are equivalent to the categories  $\gm^{\,g(\chi)}$ and $\gmhh^{\,g(\chi)}$ respectively.
\end{lem}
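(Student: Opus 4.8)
The plan is to construct the group action explicitly and then track how it interacts with the grading and the central character. First I would recall that we already have, from the displayed formulas preceding the lemma, an action of the Lie algebra $\ssl(2)$ on $\hat\g$ by derivations, and that this action integrates to an action of $SL(2)$ on $\hat\g$ by automorphisms --- concretely, $\phi\in SL(2)$ fixes $\g_{\bar 0}$ pointwise and acts on $\g_{\bar 1}$ via \eqref{autom-pairs} (one checks directly that the bracket relations are preserved; this is the routine computation I would not spell out). Given such an automorphism $\phi$ of $\hat\g$ and a module $M$ with structure map $\rho\colon\hat\g\to\End(M)$, the twist $M^\phi$ is the same underlying superspace with the new action $\rho\circ\phi$. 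Functoriality of the twist in $\phi$ and the fact that $\phi\psi$ twists to the composite of twists give the $SL(2)$-action on isomorphism classes; I would state this and move on.

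Next I would verify that the twist preserves the two subcategories. For $M\in\gm$, the grading is the eigenspace decomposition of the short element $h\in\alpha_J$. Since $h$ lies in $\g_{\bar 0}=\hat\g_{\bar 0}$ and every $\phi\in SL(2)$ fixes $\g_{\bar 0}$ pointwise, we have $\phi(h)=h$; hence $h$ acts on $M^\phi$ by $\rho(\phi(h))=\rho(h)$, so $M^\phi$ carries the very same short grading and lies in $\gm$. The identical argument with $h$ replaced by the element inducing the very short grading shows the twist preserves $\gmhh$. (Strictly one should note $\alpha_J\subset\g_{\bar 0}$, which is immediate from the definition of the short subalgebra via $h_J=L_e$ and $f_J=P$ being even.)

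Then I would handle the central character. The center $Z$ has basis $z_{-1},z_0,z_1$, and the $\ssl(2)$-action --- hence the $SL(2)$-action --- permutes $Z$ according to the adjoint (two-dimensional, via the identification of $H^2(\g,\C)$ with $\ad$ of $\ssl(2)$) representation, with the explicit matrix formula for $\phi(\chi)$ given just before the lemma. If $z$ acts on $M$ by the scalar $\chi(z)$, then on $M^\phi$ it acts by $\rho(\phi(z))=\chi(\phi(z))=(\phi^{-1}\cdot\chi)(z)$; being careful about whether one wants $\phi$ or $\phi^{-1}$ here is the one place to get the bookkeeping right so that it matches the stated ``$M^g\in\gm^{g(\chi)}$''. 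The same computation applies to generalized eigenvalues, so $M^\phi\in\gm^{\phi(\chi)}$ (resp.\ $\gmhh^{\phi(\chi)}$). Finally, since $M\mapsto M^\phi$ and $M\mapsto M^{\phi^{-1}}$ are mutually inverse exact functors, they restrict to mutually inverse equivalences $\gm^{\chi}\xrightarrow{\sim}\gm^{\phi(\chi)}$ and likewise for $\gmhh$.

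The main obstacle is not conceptual but notational: making sure the $SL(2)$ really acts (associativity of twists, and that \eqref{autom-pairs} defines an automorphism for all of $SL(2)$ and not merely infinitesimally), and pinning down the variance convention so the statement ``$M^g\in\gm^{g(\chi)}$'' comes out with $g$ rather than $g^{-1}$. Everything else is a direct consequence of $h$ and the short subalgebra lying in the even part, which is fixed by the action.
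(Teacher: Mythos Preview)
Your proposal is correct and follows exactly the reasoning the paper implicitly relies on: the paper states the lemma as ``straightforward but very important'' and gives no proof, so the intended argument is precisely the one you spell out --- that the $SL(2)$-action fixes $\g_{\bar 0}$ pointwise (in particular the short element $h$), hence preserves the (very) short grading, while transporting the central character by the adjoint action already displayed before the lemma. Your caveat about the $g$ versus $g^{-1}$ convention is well taken but purely notational.
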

Now we are going to classify simple objects of $\gm^{\,\chi}$ and $\gmhh^{\,\chi}$. Denote by $O_1$ (resp. $O_2$)
the $SL(2)$-orbit  defined by the equation  $c^2-kp=1$ (resp. $c^2-kp=4$).

\begin{thm}\label{veryshortsl} $\gmhh^{\,\chi}$  is nonempty if and only if $\chi$ is semisimple and lies on  $O_1$.
 If $c=1,\,\,k=p=0$, then $\gmhh^{\,\chi}$ has two up to isomorphism simple object $V$ and $V^{op}$, where $V$ is the standard $\ssl(2|2)$-module. For any
    $\chi\in O_1$, the subcategory $\gmhh^{\,\chi}$ has two up to isomorphism simple objects $V^g$ and $(V^{op})^g$ for a suitable automorphism $g\in SL(2)$. 
\end{thm}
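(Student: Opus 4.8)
The plan is to reduce everything to the well-understood semisimple case $\hat\g/\Ker\chi \cong \ssl(2|2)$ and then exploit the $SL(2)$-action described before the theorem. First I would recall that an object of $\gmhh^{\,\chi}$ is a finite-dimensional $\hat\g$-module on which the short subalgebra element $h\in\alpha_J$ acts with eigenvalues $\pm\tfrac12$; in particular, since $h$ sits in $\g_0$, such a module is very short for the $h$-grading but need not be concentrated in low degrees for the \emph{standard} $\Z$-grading $\hat\g = \hat\g_{-2}\oplus\cdots\oplus\hat\g_2$. The key structural observation is that $z_{\pm1}$ lie in $\hat\g_{\pm2}$ with respect to the $H$-grading, which is the \emph{Jordan} short grading up to the identification of $h$ with $H$; hence on a module in $\gmhh^{\,\chi}$ the central elements $z_{\pm1}$ shift $h$-weight by $\pm2$, so they must act by $0$. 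This forces $k=p=0$, i.e. $\chi$ can only be nonzero on $z_0$, and immediately kills the nilpotent and trivial cases: $\gmhh^{\,\chi}=0$ unless $\chi$ is semisimple. Combined with Lemma~\ref{equivmat}, which says $SL(2)$ moves $\gmhh^{\,\chi}$ to $\gmhh^{\,g(\chi)}$, it suffices to treat a single representative on each relevant $SL(2)$-orbit.

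Next I would pin down \emph{which} value of $c$ can occur. With $k=p=0$ the quotient $\hat\g/\Ker\chi$ is $\ssl(2|2)$ with $z_0$ acting by the scalar $c$. By Proposition~\ref{equivalence_of_categories}, $\gmhh^{\,\chi}$ is equivalent to $\Jhalf^{\,\chi}$, and the special $JP$-type analysis is not relevant here; instead I would invoke Proposition~\ref{jordanhalfmodule}-style reasoning, i.e. the classification of irreducible special $M^+_{1,1}$-modules from \cite{ZM},\cite{MS}: these come in a single pair $W, W^{op}$ up to the relevant normalization, and under $Jor$ they correspond to $V, V^{op}$ with $V$ the standard $\ssl(2|2)$-module. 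The point is that the standard module $V$ of $\ssl(2|2)$ has $z_0$ acting by the specific scalar $\tfrac12(E_{11}+E_{22}+E_{33}+E_{44})$ evaluated on $V=\C^{2|2}$, which is $1$ after the normalization fixed in the paper; so the only semisimple $\chi$ that supports a nonzero $\gmhh^{\,\chi}$ is the one with $c=1$, $k=p=0$. Its whole $SL(2)$-orbit is exactly $O_1$: $c^2-kp=1$ is the $SL(2)$-invariant (the determinant of the matrix $\left[\begin{smallmatrix}c&-k\\ p&-c\end{smallmatrix}\right]$), and $c=1,k=p=0$ lies on it. Conversely every point of $O_1$ is $SL(2)$-equivalent to $(1,0,0)$ since $SL(2)$ acts transitively on the level set $\det = -1$ of traceless $2\times2$ matrices.

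Then I would assemble the statement. For $\chi=(c,k,p)=(1,0,0)$: a simple object $M\in\gmhh^{\,\chi}$ has, by the above, $z_0$ acting by $1$ and $z_{\pm1}$ by $0$, so it factors through $\ssl(2|2)$; by the classification of irreducibles it is either $V$ or $V^{op}$; and both genuinely carry a very short $h$-grading (one checks directly that $h$ acts on $V=\C^{2|2}$ with eigenvalues $\pm\tfrac12$ for the chosen $h=E_{11}-E_{22}+E_{33}-E_{44}$ — actually with the normalization the eigenvalues are $\pm\tfrac12$ after rescaling $h$ by $\tfrac12$), so both lie in $\gmhh^{\,\chi}$ and give two non-isomorphic simples (non-isomorphic by Lemma~\ref{weighargument}, which applies since $\psl(2|2)$ has $\h_{\bar1}=0$). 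For general $\chi\in O_1$ pick $g\in SL(2)$ with $g(\chi)=(1,0,0)$ and transport along the equivalence of Lemma~\ref{equivmat}; the two simples of $\gmhh^{\,\chi}$ are then $V^{g^{-1}}$ and $(V^{op})^{g^{-1}}$, which is what is asserted up to renaming $g^{-1}\to g$. Finally, emptiness off $O_1$: if $\chi$ is semisimple but $c\neq\pm1$ we must rule it out; here I would note that $c^2-kp=c^2$ is the orbit invariant, so semisimple $\chi$ with $c\ne\pm1,0$ lies on an orbit $c^2-kp=c^2$ other than $O_1$, and the scaling automorphism $\sigma_s$ (or the argument that $V(t)\simeq V(s)$ appearing earlier for $P(3)$, adapted here) shows no simple has that central charge — more cleanly, the standard module is the only irreducible that is very short, and it forces $c=1$.

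\medskip

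The main obstacle I anticipate is the normalization bookkeeping: matching the scalar by which $z_0$ acts on the standard $\ssl(2|2)$-module $V$ with the coordinate $c$ used to define $O_1$ (why it is $1$ and not $2$ or $\tfrac12$), and simultaneously checking that the Jordan short-grading element $h$ — which the paper writes as $E_{11}-E_{22}+E_{33}-E_{44}$, an element acting on $\C^{2|2}$ with eigenvalues $\pm1$ — is the one that must be halved so that $\gmhh$ really demands eigenvalues $\pm\tfrac12$. Getting these two normalizations consistent is what makes the constant in $c^2-kp=1$ (rather than $=4$, which is reserved for $\gm$) come out right; everything else is either the transitivity of $SL(2)$ on the quadric, a direct weight computation on $\C^{2|2}$, or a citation of the known classification of irreducible $M^+_{1,1}$-modules together with the equivalence $Jor:\gmhh^{\,\chi}\xrightarrow{\sim}\Jhalf^{\,\chi}$.
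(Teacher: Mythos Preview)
Your key structural observation is incorrect, and this breaks the first half of the argument. You claim that $z_{\pm 1}$ ``shift $h$-weight by $\pm 2$'' on any module, because they sit in $\hat\g_{\pm 2}$ for the $H$-grading, and you identify $H$ with $h$. But $H$ and $h$ are \emph{different} elements: $H$ is the semisimple generator of the outer $\ssl(2)$ acting by derivations (giving the consistent grading $\hat\g=\hat\g_{-2}\oplus\cdots\oplus\hat\g_2$), while the Jordan short grading is defined by the inner element $h=E_{11}-E_{22}+E_{33}-E_{44}\in\g_{\bar 0}$. Since $z_{\pm 1}$ are central in $\hat\g$, they commute with $h$ and therefore preserve every $h$-eigenspace; they do not shift $h$-weight at all. (Concretely: $[h,E_{13}]=0=[h,E_{24}]$, so $z_1=[E_{13},E_{24}]$ has $h$-weight $0$.) So you cannot conclude $k=p=0$ this way, and your elimination of the nilpotent and trivial central characters collapses.

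The paper handles these cases differently: for $\chi$ nilpotent the quotient $\hat\g/\Ker\chi$ is $\spo(0|4)$, and for $\chi=0$ it is $\psl(2|2)\simeq H(4)$; Proposition~\ref{simplepoisson}(1) (valid for $n\ge 4$) then gives that neither algebra has finite-dimensional modules with very short grading. For the semisimple case the paper does not appeal to an external classification of special $M^+_{1,1}$-modules (which would be somewhat circular given the aims of the section), but carries out a direct highest-weight computation for $\ssl(2|2)$: writing $h=\beta_1^\vee+\beta_2^\vee$ for suitable even simple coroots, the very-short condition forces $\lambda(h)=1$, the root-vector argument with the odd simple root $\alpha$ forces $(\lambda,\alpha)=0$, and formula~\eqref{formulaec} then pins down $c$. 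This also resolves the normalization issue you flagged without guesswork. Your proposed route via $Jor$ and the cited Jordan-side classification could in principle be made to work once the nilpotent/trivial cases are dealt with correctly, but as written it relies on the flawed first step and on normalizations you have not checked.
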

  \begin{proof} In the nilpotent and trivial case we can use the results of Shapovalov and the previous Section to see that $\po(0,4)$ and
  $H(4)\simeq \psl(2|2)$ do not have modules with very short grading.

    Assume now that $\chi$ is semisimple and furthermore $k=p=0$. We can make these assumptions without loss of generality due to Lemma \ref{equivmat}.
    Thus, our problem is reduced to the classification of simple $\ssl(2|2)$-modules  with very short grading. Let $L$ be such a module.
    Consider a Borel subalgebra $\g_0\oplus\g_1$ of $\ssl(2|2)$ with two even simple roots
    $\beta_1,\beta_2$ and one odd simple root $\alpha$. We may choose the simple coroots $\beta_1^{\vee}$ and $\beta_2^{\vee}$ so that $h=\beta^\vee_1+\beta_2^\vee$.
    Let $\lambda$ be a highest weight of $L$ with respect to this Borel subalgebra.  Observe that 
    \begin{equation}\label{formulaec}
    c=(\lambda, 2\alpha+\beta_1-\beta_2)
    \end{equation}The condition of $L$ to have a very short grading implies
    $\lambda(h)=1$, hence we have two possibilities
\begin{enumerate}
\item    $\lambda(\beta_1^\vee)=1$, $\lambda(\beta_2^\vee)=0$;
\item $\lambda(\beta_1^\vee)=0$, $\lambda(\beta_2^\vee)=1$.
\end{enumerate}
Note that we also have $\alpha(h)=-2$. Thus, if $v$ is highest weight vector and $X\in\g_{-\alpha}$ is a root vector. We must have $Xv=0$. Therefore
$(\lambda,\alpha)=0$. Hence in the first case $L$ isomorphic to the standard representation of $\ssl(2|2)$ and in the second case $L$ is isomorphic to
the dual of the standard representation with switched parity. The action by the element $\left[\begin{array}{cc} 0&1\\-1&0 \end{array}\right]\in SL(2)$ maps one representation to another. Hence the statement of the Lemma.
  \end{proof}
 
 \begin{Cor} $\Jhalf^{\,\chi}$ is nonempty if and only if $\chi$ is semisimple and lies on  $O_1$. Let $\chi=(c,p,k)\in O_1$, $c\neq 0$ then
 there are two up to isomorphism simple object $W$ and $W^{op}$ in $\Jhalf^{\,\chi}$ where $W=\langle w_1,w_2\rangle$ is $(1,1)$-dimensional space and the action of $M_{1,1}^+$ is given$$
\begin{array}{llll}
 E_{ii}w_j=\delta_{i,j} w_j& i,j=1,2&&\\
 E_{12} w_1= (c-1) w_2&  E_{21} w_1= pw_2 &
 E_{12} w_2= kw_1 &  E_{21} w_2=(c-1)w_1
 \end{array}
 $$
 \end{Cor}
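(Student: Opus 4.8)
The plan is to reduce the assertion to Theorem~\ref{veryshortsl} via the equivalence $Jor$, and then to make the resulting Jordan module explicit by a direct computation. Recall that $\Jhalf^{\chi}$ is by definition $Jor(\gmhh^{\chi})$ and that, by Proposition~\ref{equivalence_of_categories} together with Remark~\ref{corr}, $Jor:\gmhh^{\chi}\to\Jhalf^{\chi}$ is an equivalence of categories; in particular it commutes with the change-of-parity functor. Hence $\Jhalf^{\chi}$ is nonempty exactly when $\gmhh^{\chi}$ is, which by Theorem~\ref{veryshortsl} happens if and only if $\chi$ is semisimple and lies on $O_{1}$. In that case Theorem~\ref{veryshortsl} also supplies $g\in SL(2)$ with $g(\chi_{0})=\chi$, where $\chi_{0}=(1,0,0)$ is the central character of the standard module $V$, and identifies the simple objects of $\gmhh^{\chi}$ with $V^{g}$ and $(V^{op})^{g}$. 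Applying $Jor$, the simple objects of $\Jhalf^{\chi}$ are $Jor(V^{g})$ and $Jor(V^{g})^{op}$, so everything comes down to identifying $W:=Jor(V^{g})$ with the module displayed in the statement.

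First I would compute $Jor(V)$ for the untwisted standard module. By the construction of $Jor$ in Section~2, $Jor(V)=V[\tfrac12]$ with action $x\circ v=[x,f]\,v$ for $x\in\J=\ghat[1]$, where $f$ is the element of the short $\ssl(2)$-triple $e,h,f$. Since the short grading of $\psl(2|2)$ is induced by $h=E_{11}-E_{22}+E_{33}-E_{44}$, we have $\ghat[1]=\langle E_{12},E_{34},E_{14},E_{32}\rangle$, and I would fix the isomorphism $M^{+}_{1,1}\simeq\ghat[1]$ carrying the even matrix units $E_{11},E_{22}$ to $E_{12},E_{34}$ (so that the Jordan unit becomes $E_{12}+E_{34}$) and the odd matrix units $E_{12},E_{21}$ to $E_{14},E_{32}$. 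The space $V[\tfrac12]$ is $(1\,|\,1)$-dimensional; writing $w_{1},w_{2}$ for its weight basis, one reads off directly from the matrix action on $\C^{2|2}$ that $E_{11},E_{22}$ act diagonally on $w_{1},w_{2}$, and from the brackets $[E_{14},f]$ and $[E_{32},f]$ that $E_{12},E_{21}$ interchange $w_{1}$ and $w_{2}$; after fixing normalizations this is precisely $W$ for $\chi=\chi_{0}$.

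Finally I would pass to a general point of $O_{1}$ by twisting. In $V^{g}$ the underlying superspace, the $h$-grading and the element $f$ are unchanged, since $h,f\in\g_{\bar 0}$ and $SL(2)$ fixes $\g_{\bar 0}$; the action becomes $x\circ v=[\,g^{-1}(x),f\,]\,v$. As $g$ fixes $E_{12}$ and $E_{34}$, the identity $E_{ii}w_{j}=\delta_{ij}w_{j}$ survives, whereas $g^{-1}$ acts on the odd part $\langle E_{14},E_{32}\rangle$ through the matrix $g^{-1}$, so the coefficients with which $E_{12},E_{21}$ act on $w_{1},w_{2}$ become linear combinations of the entries of $g$. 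Substituting the relations $c=uz+vw$, $k=2uv$, $p=2wz$, $uz-vw=1$ that express the coordinates of $g(\chi_{0})$ in terms of $g=\bigl(\begin{smallmatrix}u&v\\ w&z\end{smallmatrix}\bigr)$ — that is, the conjugation formula for central characters recorded at the beginning of this section — one checks that these combinations collapse to $c-1$, $p$, $k$, giving the stated action, while $W^{op}=Jor((V^{op})^{g})$ since $Jor$ commutes with parity change. The reduction in the first paragraph is purely formal, so the only genuine work is this computation; within it the fiddly point is to fix all the normalizations — the scaling of $f$, the precise isomorphism $M^{+}_{1,1}\simeq\ghat[1]$, and the direction of the twist — consistently, so that the off-diagonal coefficients come out exactly as $c-1$, $p$, $k$. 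That is where all the constants are determined, and it is the step I expect to require the most care.
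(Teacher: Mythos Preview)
Your proposal is correct and follows essentially the same route as the paper: compute $Jor(V)$ at the base point $\chi_{0}=(1,0,0)$ to get the standard $M^{+}_{1,1}$-module, and then transport to a general $\chi\in O_{1}$ by an $SL(2)$-twist. The only difference is cosmetic: the paper writes down one specific element of $SL(2)$ carrying $\chi_{0}$ to $\chi$ and reads off the formulas directly, whereas you twist by a generic $g=\bigl(\begin{smallmatrix}u&v\\w&z\end{smallmatrix}\bigr)$ and then eliminate $u,v,w,z$ via the relations $c=uz+vw$, $k=2uv$, $p=2wz$, $uz-vw=1$; both lead to the same displayed action.
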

 \begin{proof} Let $c=1$, $p=0=k$. Consider standard $\ssl(2|2)$ module $V$ then $Jor(V)=W$, where $W$ is standard module for
 $M_{1,1}^+$. Suppose that $\chi'=(c',p',k')\in O_1$ then the element of $SL(2)$ which takes $\chi$ to $\chi'$ is $\left[\begin{array}{cc}
 k'& c'-1\\ c'-1&p'\end{array}\right]$. The rest follows from applying this automorphism to $W$.
  \end{proof}

  Now let us assume that $k=0$. Let $\pp=\g_0\oplus\g_1\oplus \mathbb Cz_0\oplus \mathbb Cz_{-1}$. We denote by $K_\chi$ the induced module
  $Ind^\g_{\pp}\mathbb C_{\chi}$. Note that $K_{\chi}$ is an object in $\gm^{\,\chi}$.
  
  \begin{thm}\label{shortsl}
    (a) If $\chi\neq 0$ and $\chi{\notin O_2}$, then
    $\gm^{\,\chi}$  has two up to isomorphism simple
    modules. In the case $k=0$ these modules are isomorphic to   $K_{\chi}$ and $K_{\chi}^{op}$. If $k\neq 0$, the simple objects of $\gm^{\,\chi}$
    are obtained by a suitable twist.
    
    (b) If  $\chi=0$, then $\gm^{\,\chi}$ has four up to isomorphism simple modules: $ad, ad^{op}, \C,\C^{op}$.

    (c) If $c=2,k=p=0$, then
    $\gm^{\,\chi}$ has four up to isomorphism simple modules $S^2V$, $\Lambda^2 V$, $(S^2V)^{op}$ and $(\Lambda^2V)^{op}$.  For an arbitrary $\chi\in O_2$ simple
    objects of $\gm^{\,\chi}$ are obtained from those four by a suitable twist.
  \end{thm}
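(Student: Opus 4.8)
The plan is to handle the theorem one central-character orbit at a time, exactly as in the proof of Theorem~\ref{veryshortsl}. First, a simple object of $\gm^{\,\chi}$ is a finite-dimensional simple $\hat\g$-module on which the centre acts by the scalars $\chi$, so it carries a short grading and factors through $\hat\g/\ker\chi$. By Lemma~\ref{equivmat} it suffices to treat the three representatives: the nilpotent one $(0,p,0)$ with $p\neq0$, the semisimple one $(c,0,0)$ with $c\neq0$, and $0$; for $k\neq0$ the simples are obtained from those by a suitable $SL(2)$-twist. The corresponding quotient algebras are $\spo(0,4)$, $\ssl(2|2)$ and $\psl(2|2)$, and in each case the task becomes: classify the finite-dimensional simple modules of the quotient that admit a short grading with respect to the short $\ssl(2)$-subalgebra $\langle h,E_{12}+E_{34},E_{21}+E_{43}\rangle$.

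For the nilpotent representative (which never lies on $O_2$) I would observe that under $\hat\g/\ker\chi\cong\spo(0,4)$ the parabolic $\pp$ goes to the parabolic $\g^{+}\oplus\hat\g_{-2}$ of $\spo(0,4)$ and $\mathbb C_\chi$ to $\mathbb C_p$, so $K_\chi$ is identified with the smallest induced module $I_p=S(p)$; Proposition~\ref{simplepoisson} (whose statement covers $n=4$) together with Proposition~\ref{nonzeroblocks} then gives that $\gm^{\,\chi}$ has exactly the simple objects $K_\chi=S(p)$ and $K_\chi^{op}$. For the semisimple representative one classifies simple $\ssl(2|2)$-modules with $z_0$ acting by $c\neq0$ and a short $h$-grading by the highest weight argument of Theorem~\ref{veryshortsl}: a nontrivial such module has highest weight $\lambda$ with $\lambda(h)=1$, hence $(\lambda(\beta_1^\vee),\lambda(\beta_2^\vee))\in\{(1,0),(0,1)\}$, while $c$ is recovered from $\lambda$ via \eqref{formulaec}. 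The module of the first type is $K_\chi=\operatorname{Ind}_{\pp}^{\ssl(2|2)}\mathbb C_\chi$, a $16$-dimensional indecomposable of Kac type with simple head; a typicality computation — evaluating $\lambda+\rho$ on the four odd positive roots, equivalently analysing how $\hat\g_{-1}$ acts on the socle — shows its radical is nonzero exactly when $c\in\{0,\pm2\}$, i.e. when $\chi\in O_2\cup\{0\}$. Thus off $O_2\cup\{0\}$ the category $\gm^{\,\chi}$ has exactly the two simples $K_\chi$ and $K_\chi^{op}$, which proves (a).

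For part (c), take the representative $c=2$ and the standard module $V$ of $\ssl(2|2)$ from Theorem~\ref{veryshortsl}: then $V\otimes V=S^2V\oplus\Lambda^2V$ carries a short grading, both summands have the same central character, and since $z_0$ acts on $V$ and on $V^{*}$ by opposite nonzero scalars we have $V\not\cong V^{*}$, so $V$ carries no invariant bilinear form and $S^2V,\Lambda^2V$ are simple; they are pairwise non-isomorphic and inequivalent to their parity twists, as is already visible from the decomposition of their even parts as $\g_0=\ssl(2)\oplus\ssl(2)\oplus\mathbb Cz_0$-modules. The highest weight analysis with $c=2$ shows these four modules exhaust the short-graded simples — equivalently, every composition factor of the atypical $K_\chi$ lies among $S^2V,\Lambda^2V$ and their opposites. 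For part (b), $\gm^{\,0}$ consists of the simple $\psl(2|2)$-modules with short grading; repeating the bookkeeping with $\lambda(z_0)=0$ and $\lambda(h)\in\{0,1\}$, the weight $\lambda=0$ gives $\mathbb C$ and the only admissible nonzero $\lambda$ is the highest weight of the adjoint module $\ad=\psl(2|2)=Lie(M^+_{1,1})$, which lies in $\gm^{\,0}$ by construction of the short subalgebra and is simple; adding parity twists yields $\ad,\ad^{op},\mathbb C,\mathbb C^{op}$.

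The main obstacle is the typicality computation in the semisimple case: identifying the locus where the Kac-type module $K_\chi$ fails to be simple with precisely $O_2\cup\{0\}$, and hand in hand with it the completeness assertions in (b) and (c), i.e. ruling out any further highest weight compatible with a short grading. Along the way one must keep the short grading by $h$ distinct from the consistent five-term grading by $H$, and use Lemma~\ref{weighargument} to keep a module and its parity twist in separate blocks.
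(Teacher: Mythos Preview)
Your reduction to the three representative central characters via Lemma~\ref{equivmat} and your treatment of the nilpotent case through $\spo(0,4)$ and Proposition~\ref{simplepoisson} are correct and match the paper. The semisimple case, however, contains a genuine error that propagates through (a), (b) and (c).

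You write that for a short $h$-grading ``a nontrivial such module has highest weight $\lambda$ with $\lambda(h)=1$, hence $(\lambda(\beta_1^\vee),\lambda(\beta_2^\vee))\in\{(1,0),(0,1)\}$.'' This is the condition for a \emph{very short} grading, not a short one. With the normalization $h=\beta_1^\vee+\beta_2^\vee$ used in the proof of Theorem~\ref{veryshortsl}, a short grading forces $\lambda(h)\in\{0,2\}$, so the admissible pairs $(\lambda(\beta_1^\vee),\lambda(\beta_2^\vee))$ are $(2,0)$, $(0,2)$, $(1,1)$, $(0,0)$ --- not $(1,0)$, $(0,1)$, which give the standard module $V$ and land in $\gmhh$, not $\gm$. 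In the first three cases the constraint $(\lambda,\alpha)=0$ (forced since $\alpha(h)=-2$) pins down $L\simeq S^2V$, $\Lambda^2V^*$, $\ad^{op}$, and these occur only at $c=2,-2,0$ respectively. The case $(0,0)$ is the one giving $K_\chi$: this module is induced from the \emph{trivial} $\pp$-module $\mathbb C_\chi$, so its highest weight is $0$, not $(1,0)$ as you assert, and the typicality computation must be done \emph{here}. The atypical values $(\lambda,\alpha)\in\{0,\pm1\}$ give $c\in\{0,\pm2\}$ with simple quotients $\mathbb C$, $\Lambda^2V$, $S^2V^*$. Sorting all of this by the value of $c$ is what produces the three parts of the theorem. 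As written, your list of candidate highest weights is disjoint from the correct one, and your identification of $K_\chi$ with a weight-$(1,0)$ module is inconsistent (such a module is $4$-dimensional, not $16$). A secondary point: in (c) the implication ``$V$ carries no invariant bilinear form, hence $S^2V,\Lambda^2V$ are simple'' is not valid in general; the paper obtains their simplicity from the highest-weight enumeration itself.
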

  \begin{proof} If $\chi$ is nilpotent or trivial the result is indeed a consequence of Proposition~\ref{simplepoisson}.

    Now we will deal with semisimple case and assume that $k=p=0$.  We use notation of the proof of Theorem \ref{veryshortsl}. Assume that $L$ is simple $\g=\ssl(2|2)$-module with
    short grading. Then as in the proof of the theorem we can easily conclude there are at most four possibilities for the highest weight $\lambda$ of $L$:
\begin{enumerate}
  \item    $\lambda(\beta_1^\vee)=2$, $\lambda(\beta_2^\vee)=0$;
    \item $\lambda(\beta_1^\vee)=0$, $\lambda(\beta_2^\vee)=2$;
    \item $\lambda(\beta_1^\vee)=\lambda(\beta_2^\vee)=1$;
    \item $\lambda(\beta_1^\vee)=\lambda(\beta_2^\vee)=0$.
    \end{enumerate}
    By the same argument as in the proof  of Theorem \ref{veryshortsl} we obtain the condition $(\lambda,\alpha)=0$ in the first three cases.
    This gives $L\simeq S^2V$, $L\simeq \Lambda^2 V^*$ and $L\simeq ad^{op}$ in the cases (1), (2) and (3) respectively. In case $(4)$
    $L$ is the unique quotient of the Kac module $K_{\chi}$. Recall that the latter module is simple if and only
    if $\lambda$ is typical, i.e.,
    $$(\lambda,\alpha)\neq 0,\ (\lambda,\alpha+\beta_1)+1\neq 0,\ (\lambda,\alpha+\beta_2)-1\neq 0,\ (\lambda,\alpha+\beta_1+\beta_2)\neq 0.$$
   For atypical case we have the following three possibilities
    \begin{enumerate}
    \item $(\lambda,\alpha)=1$, then $L$ is isomorphic to $\Lambda^2V$;
    \item $(\lambda,\alpha)=-1$, then $L$ is isomorphic to $S^2V^*$;
    \item $(\lambda,\alpha)=0$, then $L$ is the trivial module $\mathbb C$.
            \end{enumerate}
The first two cases will give $c=\pm 2$. The twist by $SL(2)$ completes the proof.
\end{proof}

Next we will calculate $Jor(K_{\chi})$.
Let $\chi$, $\pp$ and $\C_{\chi}$ as above. Then $\C_{\chi}=\C v$ where $h_1v=h_2v=E_{12}v=E_{34}v=z_1 v=0$, while $z_0 v=c$ and $z_{-1}v=p$.
Then  the basis of 
$K_{\chi}\simeq Ind^\g_{\pp}\mathbb C_{\chi}$ is formed by
the vectors
$$
E_{41}^{\theta_1}E_{31}^{\theta_2} E_{42}^{\theta_3} E_{32}^{\theta_4}v \qquad \text{where\ } \theta_i\in\{0,1\}.
$$
Then $R=Jor(K_{\chi})$ is generated by $R_{11}=E_{42}E_{32}v$, $R_{22}=E_{31}E_{32}v$, $R_{12}=E_{32}v$ and $R_{21}=E_{31}E_{42}E_{32}v$.
If $E_{ij}$ $1\leq i,j\leq 2$ is the standard basis for $M_{1,1}^+$ we have the following action on $R$.
$$
\begin{array}{lll}
E_{ii}R_{jj}=\delta_{i,j} R_{jj} \qquad & & E_{kk}R_{ij}=\frac12R_{ij} \quad i,j,k=0,1\\
E_{12}R_{11}=\frac12(1-c)R_{12} \qquad & & E_{21}R_{11}=\frac12R_{21}\\
E_{12}R_{22}=\frac12(1+c)R_{12} \qquad & & E_{21}R_{22}=\frac12R_{21}-\frac12pR_{12}\\
E_{12}R_{12}=0 \qquad & & E_{21}R_{12}=\frac12R_{22}-R_{11}\\
E_{12}R_{21}=\frac12(1+c)R_{11}-\frac12(1-c)R_{22} \qquad & & E_{21}R_{21}=-\frac12pR_{11} 
\end{array}
$$
Rescaling, applying automorphism given by matrix  $\left[\begin{array}{cc}0&-1\\1&0\end{array}\right]$ which interchange action of $z_1$
and $z_{-1}$  we obtain the following action on $R^{op}$
$$
\begin{array}{lll}
E_{ii}R_{jj}=\delta_{i,j} R_{jj} \qquad & & E_{kk}R_{ij}=\frac12R_{ij}  \quad i,j,k=0,1\\
E_{12}R_{11}=\frac12R_{12} \qquad & & E_{21}R_{11}=\frac12R_{21}\\
E_{12}R_{22}=\frac12(1+c)R_{12}+\frac12kR_{21} \qquad & & E_{21}R_{22}=\frac12(1-c)R_{21}-\frac12pR_{12}\\
E_{12}R_{12}=-\frac12kR_{11} \qquad & & E_{21}R_{12}=\frac12R_{22}-(1-c)\frac12R_{11}\\
E_{12}R_{21}=\frac12(1+c)R_{11}-\frac12R_{22} \qquad & & E_{21}R_{21}=-\frac12pR_{11} 
\end{array}
$$
If $\chi=0$, $R$ is a regular representation of $M^{+}_{1,1}$. If $c=2$, $p=0=k$ then $Jor(S^2 V)=\langle R_{11}+R_{22}, R_{12}\rangle$ is a submodule in $R$, while $Jor(\Lambda^2 V)=R/Jor(S^2 V)$. We now can formulate the following

\begin{Cor} 
(a) If $\chi=(c,p,k)$ and $\chi\notin O_2$, then
    $\JJ^{\,\chi}$  has two up to isomorphism simple
    modules $R$ and $R^{op}$. 
    
(b) If $c=2,k=p=0$, then
    $\JJ^{\,\chi}$ has four up to isomorphism simple modules $Jor(S^2V)$, $Jor(\Lambda^2 V)$ and their opposite.  For an arbitrary $\chi\in O_2$ simple objects of $\JJ^{\,\chi}$ are obtained from those four by a suitable twist.
\end{Cor}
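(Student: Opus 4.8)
The plan is to read this off directly from the classification of simple objects in $\gm^{\,\chi}$ obtained in Theorem~\ref{shortsl}, transported along the functor $Jor$. The two inputs that make the transport work are Remark~\ref{corr} (for $\chi\neq 0$ the functor $Jor\colon\gm^{\,\chi}\to\JJ^{\,\chi}$ induces a bijection on isomorphism classes of simple objects) together with Proposition~\ref{adjoint}, and Lemma~\ref{equivmat} (the categories attached to characters on a single $SL(2)$-orbit are equivalent, compatibly with $Jor$).

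First I would treat the case $\chi\neq 0$, $\chi\notin O_2$ with $k=0$: Theorem~\ref{shortsl}(a) says the simple objects of $\gm^{\,\chi}$ are $K_\chi$ and $K_\chi^{op}$, and the computation of $Jor(K_\chi)$ carried out just before the statement identifies these with $R$ and $R^{op}$; Remark~\ref{corr} then gives that $R$, $R^{op}$ are the only simple objects of $\JJ^{\,\chi}$. For $k\neq 0$ I would reduce to $k=0$ by the $SL(2)$-twist of Lemma~\ref{equivmat}, which commutes with $Jor$, so the simple objects of $\JJ^{\,\chi}$ are the appropriate twists of $R$ and $R^{op}$.

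Next, for $\chi=0$ I would use Theorem~\ref{shortsl}(b): $\gm^{\,0}$ has the four simple objects $ad$, $ad^{op}$, $\C$, $\C^{op}$, the last two being concentrated in degree $0$ and hence killed by $Jor$. Since $Jor$ is exact and essentially surjective onto $\JJ^{\,0}$, every simple object of $\JJ^{\,0}$ is a subquotient of $Jor(N)$ for some $N\in\gm^{\,0}$, hence — by passing to a composition series of $N$ and applying Proposition~\ref{adjoint} — is isomorphic to $Jor(ad)$ or $Jor(ad^{op})$. I would then identify $Jor(ad)$ with $R=Jor(K_0)$ (it is the image under $Jor$ of the unique non-trivial composition factor of $K_0$) and note that this regular bimodule is simple, because a subbimodule of the regular bimodule is a graded ideal and $M^+_{1,1}$ is a simple Jordan superalgebra; likewise $Jor(ad^{op})=R^{op}$, and $R\not\simeq R^{op}$ by the parity argument of Lemma~\ref{weighargument}. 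This gives (a).

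For (b) I would pick the orbit representative $c=2$, $k=p=0$ on $O_2$ (note $\chi\neq 0$ there since $c^2-kp=4$), apply Theorem~\ref{shortsl}(c) to get the four simple objects $S^2V$, $\Lambda^2V$, $(S^2V)^{op}$, $(\Lambda^2V)^{op}$ of $\gm^{\,\chi}$, and translate via Remark~\ref{corr} into the four simple objects $Jor(S^2V)$, $Jor(\Lambda^2V)$ and their opposites of $\JJ^{\,\chi}$, which were computed explicitly before the statement; the case of general $\chi\in O_2$ follows by the $SL(2)$-twist of Lemma~\ref{equivmat}. There is no serious obstacle here: the argument is essentially bookkeeping across the two functors, and the only point requiring a little care is the $\chi=0$ case, where one must observe that the two trivial objects of $\gm^{\,0}$ are annihilated by $Jor$, so that $\JJ^{\,0}$ carries only two simple objects rather than four.
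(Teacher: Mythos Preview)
Your proposal is correct and follows exactly the approach the paper intends: the corollary is stated without proof in the paper, being an immediate consequence of Theorem~\ref{shortsl} transported through $Jor$ via Remark~\ref{corr} and Proposition~\ref{adjoint}, together with the explicit computation of $Jor(K_\chi)$ performed just before the statement. Your extra care with the $\chi=0$ case (noting that the trivial constituents of $K_0$ are annihilated by $Jor$, so that $Jor(K_0)=Jor(ad)=R$ is the regular bimodule, which is simple since $M^+_{1,1}$ is a simple Jordan superalgebra) is exactly the point the paper leaves implicit, and your handling of it is fine.
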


\subsection{Description of $\gmhh$}

\begin{lem}\label{auxKacspe} There are no non-trivial self-extensions of $V$ in the category of $\ssl(2|2)$-modules semisimple over $z_0$. 
\end{lem}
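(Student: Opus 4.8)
The plan is to show directly that every short exact sequence
$0\to V\to\tilde V\to V\to 0$ of $\ssl(2|2)$-modules with $\tilde V$ semisimple over $z_0$ splits, imitating the highest-weight argument in the proof of Lemma~\ref{selfext}.

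First I would extract the consequence of the hypothesis. Since $z_0$ acts on $V$ by the scalar $\tfrac12$, and $z_0$ acts semisimply on $\tilde V$ with this same scalar on both the sub and the quotient, it acts as $\tfrac12\operatorname{id}$ on all of $\tilde V$. Now $\g_{\bar 0}=\ssl(2)\oplus\ssl(2)\oplus\C z_0$, and its semisimple part $\ssl(2)\oplus\ssl(2)$ acts completely reducibly on any finite-dimensional module by Weyl's theorem; combined with $z_0$ acting as a scalar, this shows $\tilde V$ is a semisimple $\g_{\bar 0}$-module. In particular the Cartan subalgebra acts semisimply and $\tilde V$ has a weight-space decomposition.

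Then I would run the standard argument. Fix the distinguished Borel $\g_0\oplus\g_1$ and let $\lambda$ be the highest weight of the standard module $V$, so that $\dim V_\lambda=1$. Since $\operatorname{ch}\tilde V=2\operatorname{ch}V$, the weights of $\tilde V$ are those of $V$ with doubled multiplicity; hence $\lambda$ is a maximal weight of $\tilde V$, $\dim\tilde V_\lambda=2$, and every weight-$\lambda$ vector is annihilated by $\n^+$. Choose $v_1$ spanning $V\cap\tilde V_\lambda$ and $v_2$ completing a basis. Then $U(\g)v_1=V$, while $W:=U(\g)v_2$ is a highest weight module generated by one vector of weight $\lambda$, so $\dim W_\lambda=1$; as all composition factors of $W$ are isomorphic to $V$, the identity $\dim W_\lambda=[W:V]\cdot\dim V_\lambda$ forces $[W:V]=1$, i.e. $W\cong V$. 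Since $v_2$ maps to a nonzero highest weight vector of $\tilde V/V\cong V$, $W$ surjects onto $\tilde V/V$, so $W\cap V=0$ by simplicity of $W$; a dimension count then gives $\tilde V=V\oplus W$, and the sequence splits.

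The only non-formal input is the semisimplicity of the $\g_{\bar 0}$-action on $\tilde V$ — this is precisely where the restriction to the category semisimple over $z_0$ enters, and it is what allows the plain highest-weight argument to work despite $\ssl(2|2)$ not having a semisimple even part. Everything after that step is the routine weight bookkeeping of Lemma~\ref{selfext}, so I do not anticipate a genuine obstacle.
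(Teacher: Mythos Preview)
Your proposal is correct and follows exactly the argument the paper intends by its terse ``See Lemma~\ref{selfext}.'' You have correctly identified and spelled out the one extra step needed here beyond Lemma~\ref{selfext} as stated --- namely that $\ssl(2|2)_{\bar 0}$ is not semisimple, but the $z_0$-semisimplicity hypothesis forces $z_0$ to act by the scalar $\tfrac12$ on $\tilde V$, after which the Cartan subalgebra acts semisimply and the highest-weight splitting argument goes through verbatim.
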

\begin{proof} See Lemma~\ref{selfext}.
  \end{proof}
\begin{thm}\label{half} Every block of $\Jhalf$ is equivalent to the category of finite-dimensional
  $\mathbb C[x,y]$-modules with nilpotent action of $x,y$, 
\end{thm}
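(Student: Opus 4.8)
The plan is to reduce the statement to the description of a single block of $\gmhh$ and then identify the endomorphism algebra of that block's pro-projective cover with $\C[[x,y]]$.

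\emph{Step 1 (reduction to one block).} By Proposition~\ref{equivalence_of_categories} the functors $Jor$ and $Lie$ give an equivalence $\gmhh\simeq\Jhalf$, so it suffices to treat blocks of $\gmhh$. By Theorem~\ref{veryshortsl} a nonempty block of $\gmhh$ lies over a central character $\chi\in O_1$, where it has a single simple object, $V^g$ or $(V^{op})^g$; by Lemma~\ref{equivmat} all blocks over $O_1$ are equivalent to those over $\chi_0=(1,0,0)$, and the change of parity functor identifies the block $\mathcal B$ containing $V$ with the block containing $V^{op}$. Thus every block of $\Jhalf$ is equivalent to $\mathcal B$. Since $\mathcal B$ has a unique simple object with $\End V=\C$ and each $F^m(\mathcal B)$ has enough projectives (Section~2), the discussion of Section~3.1 identifies $\mathcal B$ with the category of finite-dimensional modules, annihilated by a power of the radical, over the complete local $\C$-algebra $A=\varprojlim_m\End_{\mathcal B}(P_m)$, where $P_m$ is the projective cover of $V$ in $F^m(\mathcal B)$. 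It remains to prove $A\cong\C[[x,y]]$.

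\emph{Step 2 (the deforming family).} The orbit $O_1=\{c^2-kp=1\}$ is a smooth surface through $\chi_0$ (its gradient there is $(2,0,0)\ne 0$; in fact $O_1\cong SL(2)/T$), so $\mathcal O:=\widehat{\mathcal O}_{O_1,\chi_0}\cong\C[[x,y]]$. The modules $V^g$ depend polynomially on $g\in SL(2)$ and, since $\alpha_J$ is fixed pointwise by the $SL(2)$-action on $\hat\g$, they all have the same restriction to $\alpha_J$; hence they descend to a family over the formal neighbourhood of $\chi_0$, i.e. to a $\hat\g$-module $\widetilde V\cong\C^{2|2}\otimes_\C\mathcal O$ which is free of rank $(2|2)$ over $\mathcal O$, satisfies $\widetilde V/\mathfrak m\widetilde V=V$, and carries a very short grading. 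Therefore $F(N):=\widetilde V\otimes_{\mathcal O}N$ defines an exact functor from finite-dimensional $\mathcal O$-modules to $\mathcal B$; it is faithful because $\widetilde V$ is faithfully flat over $\mathcal O$, and $F(\C)=V$. Moreover two of the central elements $z_{-1},z_0,z_1$ act on $\widetilde V$ by coordinate functions of the central character along $O_1$, which generate $\mathfrak m_{\mathcal O}$ modulo $\mathfrak m_{\mathcal O}^2$; consequently $F(\mathcal O/\mathfrak m^m)$ is generated by a single element over $\hat\g$ and the centre acts on it with nilpotency degree exactly $m$, so $F(\mathcal O/\mathfrak m^m)\in F^m(\mathcal B)$ has $V$ as its head.

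\emph{Step 3 (conclusion).} One shows that $F(\mathcal O/\mathfrak m^m)$ is projective in $F^m(\mathcal B)$, i.e. that $\Ext^1_{F^m(\mathcal B)}(F(\mathcal O/\mathfrak m^m),V)=0$; via the adjunction properties of $F$ and the relative Shapiro lemma this is equivalent to versality of the family $\widetilde V$, which in turn reduces to the computation $\dim\Ext^1_{\mathcal B}(V,V)=2$ carried out by the relative Lie-cohomology machinery of Section~3.2: Lemma~\ref{auxKacspe} discards the self-extensions on which the centre acts semisimply, and the very short grading hypothesis eliminates the direction along $z_0$, leaving exactly the two directions along $z_1$ and $z_{-1}$. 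Granting this, $F(\mathcal O/\mathfrak m^m)$ is the projective cover $P_m$, and since an endomorphism of this cyclic module is determined by the image of a generator, which lies in a weight space isomorphic to $\mathcal O/\mathfrak m^m$, faithfulness of $F$ forces $\End_{\mathcal B}(P_m)=\mathcal O/\mathfrak m^m$. Passing to the limit, $A=\varprojlim_m\mathcal O/\mathfrak m^m=\mathcal O\cong\C[[x,y]]$, so $\mathcal B$, and hence every block of $\Jhalf$, is equivalent to the category of finite-dimensional $\C[x,y]$-modules with nilpotent action of $x$ and $y$. The main obstacle is the cohomological input of Step~3 — the equality $\dim\Ext^1_{\mathcal B}(V,V)=2$ together with the projectivity of $F(\mathcal O/\mathfrak m^m)$, i.e. the verification that the two-parameter family $\widetilde V$ is versal and unobstructed; once this is in place the rest is formal.
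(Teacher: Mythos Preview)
Your outline follows the paper's strategy closely: construct a two-parameter family of very-short-graded modules by twisting $V$ along the $SL(2)$-orbit $O_1$, and show that the truncations are projective in $F^m(\gmhh^{\chi})$. The reduction in Step~1 and the construction of the family in Step~2 match the paper (which writes down the explicit element $g(x,y)=\begin{pmatrix}1&x\\y&1+xy\end{pmatrix}\in SL(2,\C[[x,y]])$ and computes $z_0\mapsto 1+2xy$, $z_1\mapsto -2x$, $z_{-1}\mapsto 2y(1+xy)$).

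The divergence is in Step~3, and here your argument has a real gap. You want to deduce projectivity from $\dim\Ext^1_{\mathcal B}(V,V)=2$, and you justify that equality by saying Lemma~\ref{auxKacspe} kills the centre-semisimple self-extensions while ``the very short grading hypothesis eliminates the direction along $z_0$''. The first part is fine, but the second is not: Lemma~\ref{auxKacspe} only treats self-extensions on which \emph{all} of $Z$ (in particular $z_0$) acts semisimply, and the very short grading is a condition on $h=h_1+h_2\in\g_0$, not on $z_0\in Z$. Since $\operatorname{sdim}V=0$, the second half of Lemma~\ref{selfext} does not apply either, so nothing you cite rules out a self-extension with $\psi(z_0)\neq 0$. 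Without that, you only get $\dim\Ext^1_{\mathcal B}(V,V)\le 3$, and versality of your family is exactly what is in question.

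The paper avoids computing $\Ext^1$ altogether. From the explicit formulas one sees that $z_1,z_{-1}$ already generate $\mathcal R=\C[[x,y]]$, so $V^{(m)}:=\hat V/\mathcal I^m\hat V$ is \emph{free} over $\mathcal R/\mathcal I^m$, where the $\mathcal R$-structure comes from the action of $z_1,z_{-1}$ alone. For any extension $0\to V\to M\to V^{(m)}\to 0$ in $F^m$ this freeness forces a splitting over $\mathcal R/\mathcal I^m$; after that splitting the residual obstruction is a self-extension of $V$ with $z_0$ semisimple, which Lemma~\ref{auxKacspe} kills. Thus the $z_0$-direction is absorbed by the explicit parameterisation rather than by a separate cohomological argument. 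If you want to salvage your deformation-theoretic route, you need an independent proof that $\psi(z_0)=0$ for any $1$-cocycle representing a class in $\Ext^1_{\mathcal B}(V,V)$; the paper's computation of the central action on $\hat V$ is precisely what provides this.
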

\begin{proof} Theorem \ref{veryshortsl} implies that $\gmhh^{\chi}$ has two up to isomorphism simple object $L$ and $L^{op}$ and we may assume without loss of generality
  that $L=V$. Moreover, by Lemma \ref{weighargument} each block has one simple object. Thus, we may assume that this simple object is $V$. Let $\mathcal R=\mathbb C[[x,y]]$
  and $\mathcal I\subset\mathcal R$ be the maximal ideal. We will define $\mathcal R\otimes\hat\g$-module $\hat V$ such that for every $m$ the $\gh$-module
  $V^{(m)}:=\hat V/\mathcal I^m\hat V$
  is indecomposable of finite length with all simple subquotient isomorphic to $V$. Let $g(x,y)=\left[\begin{array}{cc}1&x\\ y&1+xy\end{array}\right]$ be an element of $SL(2,\mathcal R)$. Set $\hat V:=(\mathcal R\otimes V)^g$.
  By a straightforward computation we obtain that the action of $Z$ on $\hat V$ is given by the formulae:
  $$z_0\mapsto 1+2xy,\ z_1\mapsto -2x,\ z_{-1}\mapsto 2y(1+xy).$$
  This implies the desired properties of $\hat V$. We also see that $\hat V$ is a free rank $1$ module over $\mathcal R$ and that
  $z_0-1,z_1,z_{-1}$ act nilpotently on $V^{(m)}$ with the degree of nilpotency $m$. We claim that $V^{(m)}$ is projective in the category
  $F^m(\gmhh^{\chi})$ consisting of modules on which $(z-\chi(z))^m$ acts trivially.
  It suffices to show that every short exact sequence in $F^m(\gmhh^{\chi})$ of the form
  $$0\to V \to M\to V^{(m)}\to 0$$
  splits. Indeed, this sequence splits over $\mathcal R/\mathcal I^m$, and hence Lemma \ref{auxKacspe}
 implies splitting over $\gh$. Categories $\gmhh$ and $\Jhalf$ are equivalent therefore the statement follows. 
    \end{proof}

\subsection{Typical blocks}
We call $\chi$ typical if $K_{\chi}$ is simple or equivalently if  $\gm^{\,\chi}$ has two up to isomorphism simple modules $K_{\chi}$ and $K_{\chi}^{op}$ .
The condition that $\chi$ is typical is given by
$$c^2-kp\neq-4,\quad \chi\neq 0.$$

First, we assume that $\chi$ is semisimple and $p=k=0,c\neq 0$. We construct a certain deformation of $\hat K_{\chi}$ over the local ring
$\mathcal S:=\mathbb C[[x,y,t]]$. Our construction is similar to the one in the proof of Theorem \ref{half}. Let $\tilde K_{\chi}:=Ind^\g_\pp\mathbb C[[z_0-c-t]]$ and
$\hat K_{\chi}:=(\mathcal R\otimes\tilde K_{\chi})^g$ where $g$ is the same as in the proof of Theorem \ref{half}.
The action of $Z$ on $\hat K_{\chi}$ is given by the formula
\begin{equation}\label{eqnz1}
  z_0\mapsto (1+2xy)(c+t),\ z_1\mapsto -2x(c+t),\ z_{-1}\mapsto 2y(1+xy)(c+t).
\end{equation}
Let $\mathcal J$ denote the maximal ideal of $\mathcal S$ and $\hat K_{\chi}^{(m)}:=\hat K_{\chi}/\mathcal J^m$. Let  
$F^m(\gm^{\,\chi})$ denote the full subcategory of $\gm^{\,\chi}$ consisting of modules on which $(z-\chi(z))^m$ acts trivially.
\begin{lem}\label{auxKac} Assume $p=k=0$ and $c\neq 0$. Then there are no non-trivial self-extensions of $K_{\chi}$ in the category $F^1(\gm)$. 
\end{lem}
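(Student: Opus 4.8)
The plan is to reduce the statement to a cohomological computation via Shapiro's lemma, exactly as was done for the analogous self-extension claims earlier (Lemma~\ref{extnonzero}, Lemma~\ref{auxKacspe}). First I would observe that since $K_{\chi}=\operatorname{Ind}^\g_\pp\mathbb C_{\chi}$ and the center acts semisimply on any object of $F^1(\gm)$, a self-extension in $F^1(\gm)$ is the same as an extension in the category of $\hat\g$-modules semisimple over $Z$, hence over $\g_0$ as well (every finite-dimensional $\g_0$-module is semisimple). By the relative version of Shapiro's lemma \eqref{shapiro's_lemma},
$$\Ext^1_{F^1(\gm)}(K_{\chi},K_{\chi})\simeq H^1(\pp,\g_0;\Hom_{\mathbb C}(\mathbb C_{\chi},K_{\chi})),$$
and the right-hand side is computed by the cochain complex $\Hom_{\g_0}(\Lambda^{\bullet}(\pp/\g_0),K_{\chi})$ with coefficients twisted by the central character. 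Here $\pp/\g_0$ is spanned by $\g_1=\hat\g_1\oplus\hat\g_2$ (the matrix block $B$ together with $z_1$), but in the $k=p=0$, $c\neq 0$ situation $z_1$ acts as $0$ on $\mathbb C_{\chi}$ while $z_0$ acts as the nonzero scalar $c$.

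The main step is then to show that this $H^1$ vanishes. I would decompose $K_{\chi}\simeq\mathbb C_{\chi}\otimes\Lambda(\hat\g_{-1})$ as a $\g_0$-module (the PBW/Shapiro isomorphism analogous to \eqref{isoshap}), identify $H^0(\g_1;K_{\chi})$ — which should be one-dimensional, spanned by the top exterior power, giving $K_{\chi}$ its simple cosocle — and then argue that every $1$-cocycle $\varphi\in\Hom_{\g_0}(\g_1,K_{\chi})$ is forced to be a coboundary. Since $\g_1$ is generated as a subalgebra by $\hat\g_1$ together with $z_1$, and $[\hat\g_1,\hat\g_1]\subset\C z_1$, a cocycle is determined by its restriction to $\hat\g_1$ plus its value on $z_1$; the cocycle identity $\varphi(z_1)=\varphi([E_{13},E_{24}])=E_{13}\varphi(E_{24})-E_{24}\varphi(E_{13})$ pins down $\varphi(z_1)$ in terms of $\varphi|_{\hat\g_1}$, and then one computes $\dim\Hom_{\g_0}(\hat\g_1,K_{\chi})$ by plethysm and matches it against $\dim\im d_1=\dim\Hom_{\g_0}(\mathbb C,K_{\chi})$ (which is the dimension of $H^0$ up to the coboundary piece). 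The cleanest variant is probably to imitate the proof of Lemma~\ref{auxKacspe} directly: since $K_{\chi}$ is a highest-weight module (the unique quotient structure is not needed, only that it is generated by the one-dimensional $\C_\chi$), in a self-extension $0\to K_\chi\to\tilde K\to K_\chi\to 0$ with $z$ acting semisimply, the $(\g_0\oplus\C z_0\oplus\C z_{-1})$-isotypic component of weight $\chi$ in $\tilde K$ is two-dimensional with $\g_1$ acting by zero, so $\tilde K$ is a quotient of $K_\chi\oplus K_\chi$ and the sequence splits.

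I expect the main obstacle to be making the last "quotient of $K_\chi\oplus K_\chi$" argument airtight: one must check that the two generating vectors in the weight-$\chi$ space of $\tilde K$ really are $\pp$-highest (i.e. that $\hat\g_1$ and $z_1$ annihilate them, which uses that $z_1$ acts by the scalar $0$ and that $\hat\g_1$ raises the relevant grading) and that the induced surjection $K_\chi\oplus K_\chi\twoheadrightarrow\tilde K$ between modules of equal length must be an isomorphism — here one uses typicality, i.e. that $\chi\neq 0$ and $\chi\notin O_2$ so that $K_\chi$ is simple of a fixed length by Theorem~\ref{shortsl}(a). An alternative route, should the weight bookkeeping become delicate, is to invoke Lemma~\ref{selfext}: $\hat\g/\Ker\chi\cong\ssl(2|2)$ has semisimple even part, $K_\chi$ restricts to a simple $\ssl(2|2)$-module, and $\operatorname{sdim}K_\chi=\dim(K_\chi)_{\bar0}-\dim(K_\chi)_{\bar1}$; if this superdimension is nonzero then Lemma~\ref{selfext} immediately gives $\Ext^1_{\hat\g}(K_\chi,K_\chi)=0$, which is stronger than what is needed. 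Checking $\operatorname{sdim}K_\chi\neq0$ from the $\Lambda(\hat\g_{-1})$-description (where $\hat\g_{-1}$ is purely odd of dimension $4$, so $K_\chi\simeq\C_\chi\otimes\Lambda\C^{0|4}$ has $\operatorname{sdim}=0$ — so this shortcut fails and one is thrown back on the $F^1$-restriction, which is precisely why the lemma is stated only for $F^1(\gm)$). Hence the honest argument is the Shapiro/highest-weight one, and the care needed in the final length comparison is the crux.
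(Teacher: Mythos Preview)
Your reduction via Shapiro's lemma is correct and is exactly how the paper begins. The gap is in your preferred ``highest-weight'' route. First, the $\g_0$-trivial isotypic component of $K_\chi$ is \emph{two}-dimensional, not one: under $K_\chi\simeq\Lambda(\hat\g_{-1})\otimes\mathbb C_\chi$ both $\Lambda^0$ and $\Lambda^4$ are $\g_0$-invariant, so in $\tilde K$ the corresponding space is four-dimensional. Second, and more seriously, there is no grading on $\tilde K$ that forces $\hat\g_1$ to annihilate a lift of the generator $v$. The $\Lambda^\bullet$-grading on $K_\chi$ is induced by $\tfrac12(E_{11}+E_{22}-E_{33}-E_{44})$, which lies in $\gl(2|2)\setminus\ssl(2|2)$ and so does not act on an arbitrary $\ssl(2|2)$-module; in particular it need not act on $\tilde K$. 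A direct weight check shows the obstruction: for $x=E_{13}\in\hat\g_1$ and a lift $v_1$ of $v$, the vector $xv_1$ has the same $(h_1,h_2)$-weight as $E_{42}v$ in the submodule $K_\chi$, so nothing prevents $xv_1\neq 0$. Showing one can adjust $v_1$ so that $\hat\g_1 v_1=0$ is exactly the statement that the associated $1$-cocycle is a coboundary, so the argument is circular.

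The paper finishes your approach (a) instead. In the cochain complex for $H^1(\pp,\pp_{\bar 0};K_\chi)$ one has $C^0=\Hom_{\g_0}(\mathbb C,K_\chi)=\mathbb C^2$ and $\Hom_{\g_0}(\g_1,K_\chi)=\mathbb C^2$; since $H^0=\mathbb C$ the image of $d_0$ is one-dimensional, and modulo coboundaries the only remaining candidate is $\varphi(x)=x^*v$. The crucial computation you are missing is the check that this $\varphi$ is \emph{not} a cocycle: the cocycle condition forces $x\varphi(x)=xx^*v=-[x,x^*]v=(c\det x)\,v=0$, which fails for $c\neq 0$. This is the precise point where the hypothesis $c\neq 0$ enters, and it is what distinguishes this case from the nilpotent one in Lemma~\ref{auxKacnil}, where $\varphi(x)=x^*v$ \emph{is} a cocycle and produces the nontrivial self-extension $\bar K_\chi$.
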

\begin{proof}  We need to show that $H^1(\gh,\gh_{\bar 0}; K_\chi^*\otimes K_\chi)$ vanishes. Since $K_\chi$ is the induced module, by the Shapiro Lemma
  it suffices to prove  $H^1(\pp,\pp_{\bar 0}; K_\chi)$. Write down the corresponding cochain complex:
  \begin{equation}\label{cochainm}
    0\to\Hom_{\g_{0}}(\mathbb C,K_\chi)=\mathbb C^2\xrightarrow{d_0} \Hom_{\g_{0}}(\mathbb \g_1,K_\chi)=\mathbb C^2\to \dots.
    \end{equation}
    Furthermore, $H^1(\pp,\pp_{\bar 0}; K_\chi)=\mathbb C$. Hence the image of $d_0$ is one dimensional. Modulo this image we can assume that our cocycle has
    the form $\varphi(x)=x^*v$ for all $x\in\g_1$, where $v$ is the highest weight vector. Let us write the cocycle condition
    $$x\varphi(x)=xx^*v=-[x,x^*]v=(c \operatorname{det} x) v=0.$$
    Clearly it does not hold for $c\neq 0$. Hence the statement.
  \end{proof}

  \begin{lem}\label{projtypsim} Let $k=p=0$ and $c\neq 0$. The module  $\hat K_{\chi}^{(m)}$ is projective in $F^m(\gm^{\,\chi})$ and
    $\operatorname{End}_{\gh}(\hat K_{\chi}^{(m)})\simeq \mathcal S/\mathcal J^m$.
  \end{lem}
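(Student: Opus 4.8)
The plan is to prove projectivity of $\hat K_{\chi}^{(m)}$ in $F^m(\gm^{\,\chi})$ by first establishing it for $m=1$ and then proceeding by induction, and to compute the endomorphism ring directly from the construction. Since $\hat K_{\chi}^{(m)}$ has a filtration whose subquotients are all isomorphic to $K_{\chi}$ (this follows because $\hat K_{\chi}$ is a free rank $1$ module over $\mathcal S$, so $\mathcal J^i \hat K_{\chi}/\mathcal J^{i+1}\hat K_{\chi} \cong (\mathcal J^i/\mathcal J^{i+1}) \otimes K_{\chi}$), and since by Theorem~\ref{shortsl} the only simple objects in $\gm^{\,\chi}$ are $K_{\chi}$ and $K_{\chi}^{op}$, it suffices to check that $\Ext^1_{F^m(\gm^{\,\chi})}(\hat K_{\chi}^{(m)}, K_{\chi}) = 0 = \Ext^1_{F^m(\gm^{\,\chi})}(\hat K_{\chi}^{(m)}, K_{\chi}^{op})$. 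The vanishing against $K_{\chi}^{op}$ is immediate from Lemma~\ref{weighargument}, since $K_{\chi}$ and $K_{\chi}^{op}$ lie in different blocks of the category of finite-dimensional $\hat\g$-modules. So the whole content is the vanishing of self-extensions within $F^m(\gm^{\,\chi})$.

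For the base case $m=1$: an extension $0\to K_{\chi}\to M\to K_{\chi}\to 0$ in $F^1(\gm^{\,\chi})$ is precisely a self-extension in the category of modules annihilated by $z-\chi(z)$, and Lemma~\ref{auxKac} shows this is zero. Hence $\hat K_{\chi}^{(1)} = K_{\chi}$ is projective in $F^1(\gm^{\,\chi})$. For the inductive step, suppose $\hat K_{\chi}^{(m-1)}$ is projective in $F^{m-1}(\gm^{\,\chi})$. Consider a short exact sequence $0\to K_{\chi}\to M \to \hat K_{\chi}^{(m)}\to 0$ in $F^m(\gm^{\,\chi})$; I want to show it splits. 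Using the exact sequence $0 \to \mathcal J^{m-1}\hat K_{\chi}/\mathcal J^m \hat K_{\chi} \to \hat K_{\chi}^{(m)} \to \hat K_{\chi}^{(m-1)}\to 0$, where the left term is a direct sum of copies of $K_{\chi}$, the long exact sequence in $\Ext$ reduces the problem to (i) $\Ext^1_{F^m}(\hat K_{\chi}^{(m-1)}, K_{\chi})=0$ and (ii) controlling $\Ext^1_{F^m}(K_{\chi}, K_{\chi})$. For (i), note $\hat K_{\chi}^{(m-1)}$ is annihilated by $(z-\chi(z))^{m-1}$, so any extension by $K_{\chi}$ within $F^m$ still lies in $F^m$, but one checks that the existence of such a nonsplit extension would force, after pulling back along the projective cover in $F^{m-1}$, a nonsplit self-extension of $K_{\chi}$ in $F^1$ — contradicting Lemma~\ref{auxKac}; more cleanly, projectivity of $\hat K_{\chi}^{(m-1)}$ in $F^{m-1}$ and the fact that $\Ext^1$ is computed by the same cochain complex (truncated in central degree) give the vanishing. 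The key mechanism throughout is that $H^1(\pp,\pp_{\bar 0}; K_{\chi})$ is one-dimensional and the corresponding cocycle does not survive because the cocycle condition $x x^* v = (c\det x) v \neq 0$ fails to vanish, exactly as in Lemma~\ref{auxKac}; the deformation over $\mathcal S$ realizes the universal way this one-dimensional obstruction space is ``used up'' by the three central parameters' worth of deformation, but over the semisimple slice $p=k=0$ only the $t$-direction is effective, which is why the endomorphism ring ends up being $\mathcal S/\mathcal J^m$ rather than something smaller.

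For the endomorphism ring: by construction $\hat K_{\chi} = (\mathcal R\otimes \tilde K_{\chi})^g$ with $\tilde K_{\chi} = \Ind^\g_\pp \mathbb C[[z_0-c-t]]$, so $\hat K_{\chi}$ is a free $\mathcal S$-module of rank $1$ (in the appropriate completed/graded sense) carrying a $\mathcal S\otimes\hat\g$-action, and $\hat K_{\chi}^{(m)} = \hat K_{\chi}/\mathcal J^m$. Multiplication by elements of $\mathcal S/\mathcal J^m$ gives an injection $\mathcal S/\mathcal J^m \hookrightarrow \End_{\hat\g}(\hat K_{\chi}^{(m)})$. For surjectivity, I would use Frobenius reciprocity / the adjointness $\Hom_{\hat\g}(\hat K_{\chi}^{(m)}, \hat K_{\chi}^{(m)}) \simeq \Hom_{\pp}(\mathbb C[[z_0-c-t]]/\mathcal J^m \cdot(\text{appropriate}), \hat K_{\chi}^{(m)})$, reducing to maps of $\pp$-modules landing in the $\g_0$-highest-weight line; since that weight space of $\hat K_{\chi}^{(m)}$ is exactly a free $\mathcal S/\mathcal J^m$-module of rank $1$ generated by the image of the cyclic vector $v$, every endomorphism is determined by where it sends $v$, and $\pp$-equivariance forces the image of $v$ to be $s\cdot v$ for some $s\in\mathcal S/\mathcal J^m$ — here one uses that $z_0, z_1, z_{-1}$ act on $\hat K_{\chi}$ via the explicit formulae \eqref{eqnz1}, which together with the $\g_0$-structure pin down $s$.

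The main obstacle I expect is step (i) of the inductive argument — showing $\Ext^1_{F^m(\gm^{\,\chi})}(\hat K_{\chi}^{(m-1)}, K_{\chi}) = 0$ — because one must be careful that extensions by $K_{\chi}$ of a module killed by $(z-\chi(z))^{m-1}$ can a priori fail to be killed by $(z-\chi(z))^{m-1}$ (only by $(z-\chi(z))^{m}$), so this is genuinely an $\Ext$ computation in $F^m$ and not reducible to $F^{m-1}$ formally; the resolution is that the relevant piece of the cochain complex \eqref{cochainm} for $H^1(\pp,\pp_{\bar 0}; -)$ is independent of which $F^m$ we work in once we fix the source to have the appropriate central filtration, and the single obstruction cocycle was already shown to be non-exact-preserving in Lemma~\ref{auxKac}. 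Making this precise — i.e., identifying $\Ext^1_{F^m}$ with a concrete subquotient of a Chevalley--Eilenberg-type complex and tracking the central action through it — is the technical heart, but it is a routine (if slightly fiddly) homological-algebra argument given Lemma~\ref{auxKac} and the freeness of $\hat K_{\chi}$ over $\mathcal S$. The computation of $\End_{\hat\g}$ is then essentially bookkeeping with the formulae \eqref{eqnz1}.
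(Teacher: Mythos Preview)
Your inductive step has a genuine gap: claim (i), that $\Ext^1_{F^m(\gm^{\,\chi})}(\hat K_{\chi}^{(m-1)}, K_{\chi}) = 0$, is false. Already for $m=2$ this reads $\Ext^1_{F^2}(K_\chi, K_\chi)=0$, but $K_\chi \otimes \mathbb C[\epsilon]/(\epsilon^2)$ with $z_0$ acting as $c+\epsilon$ (and the rest of $\gh$ acting through $K_\chi$) is a non-split self-extension lying in $F^2$. More generally, the very sequence $0 \to (\mathcal J^{m-1}/\mathcal J^m)\otimes K_\chi \to \hat K_\chi^{(m)} \to \hat K_\chi^{(m-1)}\to 0$ that you invoke is itself non-split in $F^m$: by \eqref{eqnz1} the elements $z_0-c,\,z_1,\,z_{-1}$ act as $ct,\,-2cx,\,2cy$ modulo $\mathcal J^2$, none of which vanish since $c\neq 0$. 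Your attempt to rescue (i) by ``pulling back along the projective cover in $F^{m-1}$'' cannot work, because the obstruction to splitting lives precisely in the failure of $Z$ to act semisimply, and that is invisible inside $F^{m-1}$.

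The paper avoids the induction entirely. Given $0\to K_\chi\to M\to \hat K_\chi^{(m)}\to 0$ in $F^m$, one first splits it over $\g_0\oplus Z$: the even part $\g_0$ is semisimple, and by \eqref{eqnz1} with $c\neq 0$ the image of $U(Z)$ in $\End(\hat K_\chi^{(m)})$ is all of $\mathcal S/\mathcal J^m$, over which $\hat K_\chi^{(m)}$ is free, hence projective. The residual $\gh$-cocycle $\psi:\gh\to\Hom(\hat K_\chi^{(m)},K_\chi)$ then vanishes on $\g_0\oplus Z$ and is $Z$-equivariant; since $Z$ acts on $K_\chi$ by the character $\chi$, the map $\psi$ factors through $\hat K_\chi^{(m)}/\mathcal J\hat K_\chi^{(m)}=K_\chi$, i.e.\ it defines a self-extension of $K_\chi$ in $F^1$, which Lemma~\ref{auxKac} kills. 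This is the mechanism you were reaching for when you said the cochain complex is ``independent of which $F^m$ we work in'': what actually makes the reduction go through is the preliminary splitting over $Z$, not a formal property of the complex.

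Your endomorphism-ring argument via Frobenius reciprocity is fine. The paper instead observes that once projectivity is known, $\hat K_\chi^{(m)}$ has simple head $K_\chi$ (because $\hat K_\chi^{(m)}/\mathcal J\hat K_\chi^{(m)}=K_\chi$), so $\dim\End_{\gh}(\hat K_\chi^{(m)})$ equals the length of $\hat K_\chi^{(m)}$, namely $\dim\mathcal S/\mathcal J^m$; comparing with the obvious inclusion $\mathcal S/\mathcal J^m\hookrightarrow\End_{\gh}(\hat K_\chi^{(m)})$ finishes.
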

  \begin{proof} For projectivity we note that an exact sequence in $F^m(\gm^{\,\chi})$ of the form
    $$0\to \hat K_{\chi}^{(m)}\to M\to \hat K_{\chi}\to 0$$
    splits over $\g_0\oplus Z$. On the other hand, Lemma \ref{auxKac} implies the splitting over $\gh$.
    The second assertion is a simple consequence of the fact that  $\dim\operatorname{End}_{\gh}(\hat K_{\chi}^{(m)})$ coincides with the length of 
    $K_{\chi}$ and hence equals $\dim\mathcal S/\mathcal J^m$.
    \end{proof}

\begin{thm}\label{extunitaltypicalsemisimple} Assume that $\chi$ is typical and semisimple. Then the category $\gm^{\,\chi}$ is a direct sum of two blocks,
  each block is equivalent to the category of
  finite dimensional modules
  over polynomial algebra $\mathbb C[x,y,t]$ with nilpotent action of $x,y,t$.
    \end{thm}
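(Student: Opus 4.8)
The plan is to show that the deformed module $\hat K_\chi^{(m)}$ constructed just before the theorem is a projective generator of its block, and that its endomorphism ring is $\mathcal S/\mathcal J^m$; passing to the limit over $m$ then identifies the block with finite-dimensional modules over $\mathcal S$, i.e. over $\mathbb C[x,y,t]$ with nilpotent action of the three variables. The second block is handled by applying the change-of-parity functor, exactly as in Proposition~\ref{nonzeroblocks}.

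First I would reduce to the case $p=k=0$, $c\neq 0$: by Lemma~\ref{equivmat} any typical semisimple $\chi$ lies in the $SL(2)$-orbit of such a character, and the twist functor is an equivalence of categories carrying $\gm^{\,\chi}$ to $\gm^{\,\chi'}$. By Theorem~\ref{shortsl}(a) the category $\gm^{\,\chi}$ then has exactly two simple objects $K_\chi$ and $K_\chi^{op}$, and by Lemma~\ref{weighargument} these lie in different blocks; so each block has a single simple object and it suffices to treat the block $\Omega_\chi$ containing $K_\chi$. Every object of $\Omega_\chi$ lies in $F^m(\gm^{\,\chi})$ for some $m$, so it is enough to understand each $F^m(\gm^{\,\chi})$.

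Next I would invoke Lemma~\ref{projtypsim}: $\hat K_\chi^{(m)}$ is projective in $F^m(\gm^{\,\chi})$ with $\operatorname{End}_{\hat\g}(\hat K_\chi^{(m)})\simeq \mathcal S/\mathcal J^m$. Since $K_\chi$ is the unique simple object and $\hat K_\chi^{(m)}$ surjects onto $K_\chi$ (being a quotient deformation of it), $\hat K_\chi^{(m)}$ is a projective cover of $K_\chi$, hence a projective generator of $F^m(\gm^{\,\chi})$. By the standard Morita argument recalled in Section~3.1 (Hom from a projective generator), $F^m(\gm^{\,\chi})$ is equivalent to the category of finite-dimensional right modules over $\mathcal S/\mathcal J^m$. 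These equivalences are compatible with the inclusions $F^m\hookrightarrow F^{m+1}$ (the projective covers map to one another), so taking the union one gets that $\Omega_\chi$ is equivalent to the category of those finite-dimensional $\mathcal S$-modules on which the maximal ideal $\mathcal J$ acts nilpotently — equivalently, finite-dimensional $\mathbb C[x,y,t]$-modules with $x,y,t$ acting nilpotently, since such a module factors through some $\mathcal S/\mathcal J^m$.

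The main obstacle is really the input already packaged in Lemma~\ref{projtypsim} and Lemma~\ref{auxKac}, namely the vanishing $H^1(\pp,\pp_{\bar 0};K_\chi)$-type statement that makes $\hat K_\chi^{(m)}$ projective; granting those, what remains here is essentially bookkeeping: checking that $\hat K_\chi^{(m)}$ generates (clear, as the only simple is $K_\chi$ and it is a quotient), that the endomorphism rings assemble correctly into $\mathcal S$, and that nilpotence of $\mathcal J$ on a finite-dimensional module is automatic. One should also note that the formulae \eqref{eqnz1} show $z_0-c,\,z_1,\,z_{-1}$ act on $\hat K_\chi^{(m)}$ through $\mathcal J/\mathcal J^m$, which is what pins down the three nilpotent parameters $x,y,t$ as the generators of the polynomial algebra in the final statement.
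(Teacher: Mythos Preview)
Your proposal is correct and follows exactly the paper's approach: the paper's own proof is a two-line appeal to Lemma~\ref{weighargument} for the block decomposition and Lemma~\ref{projtypsim} for the equivalence, and you have simply unpacked those appeals (the Morita step, the passage to the limit over $m$, and the reduction via $SL(2)$-twist) in the natural way.
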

    \begin{proof} The first assertion is a  consequence of Lemma \ref{weighargument} and the second follows from Lemma \ref{projtypsim}.
    \end{proof}

    Now let us assume that $\chi$ is non-zero nilpotent. Without loss of generality we assume that $k=c=0$ and $p\neq 0$.

    \begin{lem}\label{auxKacnil} Assume $k=c=0$ and $p\neq 0$. Then there exist a unique up to isomorphism non-trivial self-extensions $\bar K_{\chi}$
      of $K_{\chi}$ in the    category $F^1(\gm)$. Moreover, $\bar K_{\chi}$ is projective in $F^1(\gm)$.  
    \end{lem}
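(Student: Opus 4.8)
The plan is to compute the relevant relative Lie superalgebra cohomology exactly as in the proof of Lemma~\ref{auxKac}, the only change being that now $c=0$, $p\neq 0$, which reverses the conclusion. First I would reduce, via the Shapiro lemma \eqref{shapiro's_lemma}, the computation of $\Ext^1$ in $F^1(\gm)$ from $H^1(\gh,\gh_{\bar 0};K_\chi^*\otimes K_\chi)$ to $H^1(\pp,\pp_{\bar 0};K_\chi)$, since $K_\chi=\operatorname{Ind}^{\gh}_{\pp}\C_\chi$. Then I would write down the cochain complex \eqref{cochainm}: $0\to\Hom_{\g_0}(\C,K_\chi)\xrightarrow{d_0}\Hom_{\g_0}(\g_1,K_\chi)\to\dots$, with both terms of dimension $2$ (the multiplicity of the $\g_0$-trivial summand in $K_\chi$, which is the length of $R=Jor(K_\chi)$ restricted appropriately). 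As in Lemma~\ref{auxKac}, $H^0(\pp,\pp_{\bar 0};K_\chi)$ is the space of $\g_0$-invariants killed by $\g_1$, and one reads off from the explicit action on $R$ that $\operatorname{im}d_0$ is again one-dimensional, spanned by the ``obvious'' invariant. Modulo $\operatorname{im}d_0$ the candidate cocycle is $\varphi(x)=x^*v$ on $\g_1$, and the cocycle condition forces $x\varphi(x)=-[x,x^*]v=(c\det x)v$; now with $c=0$ this vanishes identically, so $\varphi$ \emph{is} a cocycle, giving $H^1(\pp,\pp_{\bar 0};K_\chi)=\C$ and hence a unique up-to-isomorphism non-split self-extension $\bar K_\chi$ in $F^1(\gm)$.

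For the projectivity of $\bar K_\chi$ in $F^1(\gm)$ I would argue as in Lemma~\ref{projtypsim} (with $m=1$): since the only simple objects of $\gm^{\,\chi}$ are $K_\chi$ and $K_\chi^{op}$ by Theorem~\ref{shortsl}(a) (as $\chi$ is typical), and $\Ext^1(K_\chi,K_\chi^{op})=0$ by the weight-parity argument of Lemma~\ref{weighargument}, it suffices to show $\Ext^1_{F^1}(\bar K_\chi,K_\chi)=0$. Given any short exact sequence $0\to K_\chi\to M\to\bar K_\chi\to 0$ in $F^1(\gm)$, the extension splits over $\g_0\oplus Z$ (the center acts trivially by the $F^1$ condition, and $\g_0$-modules are semisimple), and then one feeds this splitting into the long exact sequence / cocycle computation: any obstruction lives in $H^1(\pp,\pp_{\bar 0};K_\chi^{(2)})$ where $K_\chi^{(2)}$ is the length-$2$ module $\bar K_\chi$ pulled back to $\pp$, and the same explicit computation bounds the cocycle space, forcing $\Ext^1_{F^1}(\bar K_\chi,K_\chi)=0$. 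Alternatively, and perhaps more cleanly, I would build $\bar K_\chi$ as a deformation $\operatorname{Ind}^{\gh}_{\pp}(\C[z_{-1}]/(z_{-1}-p)^2)$ — the analogue of $I_t$ with a Jordan block for the nilpotent central element $z_{-1}$ — and check directly that it is indecomposable of length $2$ with both factors $K_\chi$ and that it represents the nonzero class just found; projectivity in $F^1(\gm^{\,\chi})$ is then immediate since induction from $\pp$ sends projectives to projectives and $\C[z_{-1}]/(z_{-1}-p)^2$ is the projective cover of $\C_\chi$ in the corresponding category of $\pp$-modules.

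The main obstacle I anticipate is purely bookkeeping: pinning down that $\operatorname{im}d_0$ is genuinely one-dimensional (not two-dimensional, which would kill $H^1$) and that $\Ker d_1$ modulo it is exactly one-dimensional. This is where the hypothesis $p\neq 0$ must enter — it is what prevents the \emph{second} candidate cocycle (coming from the $z_{-1}$-direction) from also being closed, or equivalently what makes the relevant $\g_0$-equivariant map land in a nonzero weight space. Concretely I would exhibit the two natural elements of $\Hom_{\g_0}(\g_1,K_\chi)$ explicitly in terms of the basis $E_{41}^{\theta_1}E_{31}^{\theta_2}E_{42}^{\theta_3}E_{32}^{\theta_4}v$ of $K_\chi$, apply $d_1$ (the differential $d_2$ in the notation of Lemma~\ref{extnonzero}, involving the bracket $\g_1\times\g_1\to\g_2=\C z_1$), and verify that with $c=0,p\neq 0$ exactly one combination survives. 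The rest — the Shapiro reduction, semisimplicity of $\g_0$-modules, and the splitting-lifting step for projectivity — is routine given the machinery already set up in Sections~2 and~3 and the preceding lemmas of this section.
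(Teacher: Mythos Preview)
Your first paragraph is correct and matches the paper exactly: the cochain computation of Lemma~\ref{auxKac} carries over verbatim, and with $c=0$ the candidate cocycle $\varphi(x)=x^*v$ now satisfies the closedness condition $x\varphi(x)=(c\det x)v=0$, giving $H^1(\pp,\pp_{\bar 0};K_\chi)=\C$.

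There are two genuine problems in the rest. First, your ``alternative'' construction of $\bar K_\chi$ as $\operatorname{Ind}^{\gh}_{\pp}\bigl(\C[z_{-1}]/(z_{-1}-p)^2\bigr)$ does not lie in $F^1(\gm)$: on that module $z_{-1}-p$ acts as a nonzero nilpotent, so the center does not act semisimply. The self-extension $\bar K_\chi$ in the lemma lives in $F^1$ and is precisely \emph{not} obtained by a Jordan block in a central element; it is $K_\chi\oplus K_\chi$ as a $(\gh_{\bar 0}\oplus\g_{-1})$-module with the $\g_1$-action deformed by the cocycle $\varphi$, namely $x(w,w')=(xw,\,x^*w+xw')$. (The deformation over $\C[[t]]$ you may have in mind is the module $T_\chi$ built \emph{after} this lemma, with $\bar K_\chi=T_\chi/tT_\chi$.)

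Second, you have located the role of $p\neq 0$ in the wrong place. The first-part computation $H^1(\pp,\pp_{\bar 0};K_\chi)=\C$ uses only $c=0$; nothing about $p$ enters there, since $\operatorname{im}d_0$ is one-dimensional because $H^0=(K_\chi)^{\g_1\oplus\g_0}=\C v$ regardless of $p$. Where $p\neq 0$ is actually needed is in the projectivity step, and this is the crux the paper isolates: one must show $H^1(\pp,\pp_{\bar 0};\bar K_\chi)=0$. The long exact sequence for $0\to K_\chi\to\bar K_\chi\to K_\chi\to 0$ yields an injection $H^1(\pp,\pp_{\bar 0};\bar K_\chi)\hookrightarrow H^1(\pp,\pp_{\bar 0};K_\chi)=\C$, so the only candidate cocycle on $\bar K_\chi$ is $\psi(x)=(x^*v,0)$. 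The cocycle condition $x\psi(x)=0$ in $\bar K_\chi$ reads, via the deformed $\g_1$-action, $\bigl(xx^*v,\,(x^*)^2v\bigr)=0$; the first component vanishes since $c=0$, but $(x^*)^2=\tfrac12[x^*,x^*]\in\C z_{-1}$ gives a nonzero multiple of $pv$ in the second component. Hence $\psi$ is \emph{not} closed precisely because $p\neq 0$, and $H^1(\pp,\pp_{\bar 0};\bar K_\chi)=0$. Your first approach to projectivity gestures at ``the same explicit computation'' without identifying it; this calculation in $\bar K_\chi$ is the actual content of the argument.
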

    \begin{proof} Retain the notations of the proof of Lemma \ref{auxKac}. The argument with the cochain complex goes exactly as in this proof except the last step
      where we indeed obtain a non-trivial one-cocycle $\varphi(x)=x^*v$. Hence we have one non-trivial self-extension.

      For the second assertion we would like to show
      $$H^1(\gh,\gh_{\bar 0}; K_\chi^*\otimes \bar K_\chi)=H^1(\pp,\pp_{\bar 0}; \bar K_\chi)=0.$$
      From the long exact sequence we have an isomorphisms
      $$ H^0(\pp,\pp_{\bar 0};  K_\chi)\simeq\mathbb C\simeq H^0(\pp,\pp_{\bar 0};  \bar K_\chi),$$
      $$H^0(\pp,\pp_{\bar 0}; K_\chi)\simeq \mathbb C\simeq H^1(\pp,\pp_{\bar 0};K_\chi)$$
      and hence an injective map
      $$H^1(\pp,\pp_{\bar 0};\bar K_\chi)\to H^1(\pp,\pp_{\bar 0}; K_\chi).$$
      Consider $\gh_{\bar 0}\oplus \g_{-1}$ decomposition $\bar K_{\chi}=K_\chi\oplus K_\chi$. Then we may assume that the action of $\g_1$ is given by the formula
      $x(w,w')=(xw,\varphi(x)w+xw')$. Let $\psi\in\Hom_{\g_{0}}(\g_1,\bar K_\chi)$ be a 1-cocycle. We may assume that $\psi(x)=(x^*v,0)$. Then the cocycle
      condition $x\psi(x)=0$ becomes
      $$(xx^*v,(x^*)^2v)=(0,p \operatorname{det} x^*z_1v)=0.$$
      That implies $p=0$. Contradiction.
    \end{proof}
    We define a $\gh\otimes\mathbb C[[t]]$-module $T_\chi$ as follows:  $T_\chi=(K_\chi\oplus K_\chi)\otimes\mathbb C[[t]]$ 
    as a module over $\g_0\oplus\g_{-1}\oplus\mathbb Cz_0$ and define the action of $\g_1$ by
    $$x(u,w)=(xu+tx^*w,xw+x^*u)\ x\in\g_1, u,w\in K_\chi.$$
    Finally we set that $z_1$ acts as $pt$. It is straightforward that $T_\chi$ is indeed  a $\gh\otimes\mathbb C[[t]]$-module 
    and $T_\chi/t T_\chi$ is isomorphic to $\bar K_\chi$.
    
    Next, let $g=\left[\begin{array}{cc} (1+x)^{-1}& y\\0&1+x\end{array}\right]$ be an element of $SL(2,\mathcal R)$. Define $\mathcal S\otimes\gh$-modules 
    $Q_\chi$ and $Q_\chi^{(m)}$ by
    $$Q_{\chi}:=(\mathcal R\otimes T_{\chi})^g,\quad Q_{\chi}^{(m)}:=Q_{\chi}/\mathcal J^m.$$
    The action of $Z$ on $Q_\chi$  is given by
    \begin{equation}\label{eqnz1}
  z_0\mapsto (1+x)py,\quad z_1\mapsto -y^2p,\quad z_{-1}\mapsto pt+p(1+x)^2.
\end{equation}

\begin{lem}\label{projtypnil} The module  $Q_{\chi}^{(m)}$ is projective in $F^m(\gm^{\,\chi})$ and
    $$\operatorname{End}_{\gh}(Q_{\chi}^{(m)})\simeq (\mathcal S/\mathcal J^m)\otimes\mathbb C[\theta]/(\theta^2-t).$$
  \end{lem}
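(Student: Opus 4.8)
The plan is to argue in parallel with Lemma~\ref{projtypsim}, the only structural difference being that in the nilpotent case the projective cover of $K_\chi$ already fails to be simple in $F^1(\gm)$: it is the module $\bar K_\chi=Q_\chi^{(1)}$ produced by Lemma~\ref{auxKacnil}. By Lemma~\ref{weighargument} the block of $\gm^{\,\chi}$ containing $K_\chi$ has $K_\chi$ as its unique simple object (the opposite block has $K_\chi^{op}$ and no extension joins the two), and $\operatorname{End}_{\hat\g}(K_\chi)=\C$. Hence, for projectivity, it suffices by dévissage to show that every short exact sequence
$$0\to K_\chi\to M\to Q_\chi^{(m)}\to 0$$
lying in $F^m(\gm^{\,\chi})$ splits; equivalently, that $Q_\chi^{(m)}$ is the projective cover of $K_\chi$ in $F^m(\gm^{\,\chi})$, which exists since $F^m(\gm^{\,\chi})$ has enough projectives.

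Exactly as in Lemma~\ref{projtypsim}, I would first split the sequence after restriction to $\hat\g_{\bar 0}=\g_0\oplus Z$: over the reductive subalgebra $\g_0$ every finite-dimensional module is semisimple, and by construction $Q_\chi^{(m)}$ is free over $\mathcal S/\mathcal J^m$ with $Z$ acting $\mathcal S$-linearly through the explicit homomorphism \eqref{eqnz1}, which makes $Q_\chi^{(m)}$ relatively projective over $\hat\g_{\bar 0}$ inside $F^m$. The obstruction to upgrading the $\hat\g_{\bar 0}$-splitting to an $\hat\g$-splitting is then a class in the relative cohomology group $H^1(\hat\g,\hat\g_{\bar 0};\Hom_{\C}(Q_\chi^{(m)},K_\chi))$. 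To see this class vanishes I would induct on $m$, the base case $m=1$ being precisely Lemma~\ref{auxKacnil}, and use for the inductive step the $\mathcal J$-adic filtration of $Q_\chi^{(m)}$, whose associated graded is a direct sum of copies of $\bar K_\chi$, together with the long exact sequences; after untwisting by $g\in SL(2,\mathcal R)$ (which is congruent to the identity modulo $\mathcal J$ and so acts trivially on $K_\chi$, as $\mathcal J K_\chi=0$) and applying Shapiro's Lemma \eqref{shapiro's_lemma} through $K_\chi=\operatorname{Ind}^{\hat\g}_{\pp}\C_\chi$, every term is controlled by the cochain complex \eqref{cochainm}, where the only candidate one-cocycle $\varphi(x)=x^{\ast}v$ fails the cocycle identity precisely because the central element pairing $\g_1$ with $\g_{-1}$ acts on $Q_\chi$ by a nonzero multiple of $p$. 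I expect the delicate point to be this bookkeeping: ensuring that the cohomology classes which survive in the larger category of $\hat\g_{\bar 0}$-semisimple modules are actually killed inside $F^m$, which is exactly what the induction on $m$ and the explicit shape of the deformation $Q_\chi$ take care of.

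For the endomorphism ring, the $\mathcal S$-action on $Q_\chi$ descends to an injection $\mathcal S/\mathcal J^m\hookrightarrow\operatorname{End}_{\hat\g}(Q_\chi^{(m)})$ since $Q_\chi^{(m)}$ is $\mathcal S/\mathcal J^m$-free, and one checks directly that $\theta(u,w):=(tw,u)$ is an $\hat\g$-endomorphism of $T_\chi$: it visibly commutes with $\g_0\oplus\g_{-1}\oplus\C z_0$ and with $z_1$, and the identity $\theta\bigl(x(u,w)\bigr)=x\bigl(\theta(u,w)\bigr)$ for $x\in\g_1$ is immediate from the defining formula $x(u,w)=(xu+tx^{\ast}w,xw+x^{\ast}u)$. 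Being $\mathcal S$-linear and commuting with all of $\hat\g$, the map $\theta$ survives the $g$-twist and descends to $\operatorname{End}_{\hat\g}(Q_\chi^{(m)})$, where $\theta^2=t$ while $\theta$ is not multiplication by an element of $\mathcal S$ (it interchanges the two free $\mathcal S/\mathcal J^m$-summands of $Q_\chi^{(m)}$). Thus $\mathcal S/\mathcal J^m$ and $\theta$ generate a copy of $(\mathcal S/\mathcal J^m)\otimes\C[\theta]/(\theta^2-t)$ inside $\operatorname{End}_{\hat\g}(Q_\chi^{(m)})$. This inclusion is an equality by a dimension count: $Q_\chi^{(m)}$ is indecomposable (its reduction $Q_\chi^{(1)}=\bar K_\chi$ is) with simple top $K_\chi$, so once projectivity is established $\dim_{\C}\operatorname{End}_{\hat\g}(Q_\chi^{(m)})=[Q_\chi^{(m)}:K_\chi]=2\dim_{\C}(\mathcal S/\mathcal J^m)=\dim_{\C}\bigl((\mathcal S/\mathcal J^m)\otimes\C[\theta]/(\theta^2-t)\bigr)$, which finishes the proof.
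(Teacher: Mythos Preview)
Your proof is correct and follows essentially the same approach as the paper: projectivity via the splitting argument of Lemma~\ref{projtypsim} upgraded to use Lemma~\ref{auxKacnil}, and the endomorphism ring via the explicit $\theta(u,w)=(tw,u)$ with $\theta^2=t$ followed by a dimension count. You have simply fleshed out the details (the induction on $m$ through the $\mathcal J$-adic filtration, the verification that $\theta$ commutes with $\g_1$, and the reason the dimension count works) that the paper leaves implicit.
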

  \begin{proof} The proof of the first assertion is similar to the proof of Lemma \ref{projtypsim} with use of Lemma \ref{auxKacnil}. For the second,
    define action of $\theta$ on $Q_{\chi}^{(m)}$ by
    $\theta(u,w)=(tw,u)$. This defines a $\gh$-endomorphism of $Q_{\chi}^{(m)}$ satisfying $\theta^2=t$. The rest follows from comparison of dimensions.
  \end{proof}

  The following theorem is a consequence of the previous Lemma and Lemma \ref{weighargument}.
  \begin{thm}\label{typicalnil} Let $\chi$ be typical nilpotent, then $\gm^{\,\chi}$ (and thus  $\JJ^{\,\chi}$)  has two blocks, each of them is equivalent to the category
    of finite-dimensional $\mathbb C[x,y,\theta]$-modules with nilpotent action of $x,y,\theta$.
    \end{thm}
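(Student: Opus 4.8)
The plan is to read off the block structure of $\gm^{\,\chi}$ from Lemma~\ref{projtypnil} and Lemma~\ref{weighargument} by means of the projective-generator formalism of Section~3.1, and then transport the answer to $\JJ^{\,\chi}$. By Lemma~\ref{equivmat} I would first reduce to the normal form $k=c=0$, $p\neq 0$ (so that $\gh/\Ker\chi\simeq\spo(0,4)$), in which case Theorem~\ref{shortsl}(a) tells us that the only simple objects of $\gm^{\,\chi}$ are $K_\chi$ and $K_\chi^{op}$.

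The first step is the block decomposition. Since $\g=\psl(2|2)$ is simple with $\h_{\bar 1}=0$, Lemma~\ref{weighargument} applies and forces $K_\chi$ and $K_\chi^{op}$ into different blocks; thus $\gm^{\,\chi}=\Omega^+_\chi\oplus\Omega^-_\chi$, where $\Omega^+_\chi$ (resp.\ $\Omega^-_\chi$) is the block of $K_\chi$ (resp.\ $K_\chi^{op}$), and the change of parity functor identifies the two, so it suffices to describe $\Omega^+_\chi$. The second step is to exhibit a projective generator of each layer $F^m(\Omega^+_\chi)$. The module $Q_\chi^{(m)}$ has all composition factors isomorphic to $K_\chi$ (its $\mathcal J$-adic associated graded is a sum of copies of $\bar K_\chi$), hence lies in $\Omega^+_\chi$, and by Lemma~\ref{projtypnil} it is projective in $F^m(\gm^{\,\chi})$ with endomorphism ring $A_m:=(\mathcal S/\mathcal J^m)\otimes\mathbb C[\theta]/(\theta^2-t)$. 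One checks that $A_m$ is a finite-dimensional local $\mathbb C$-algebra — its maximal ideal is generated by $x,y,t,\theta$, and $\theta^{2m}=t^m=0$ — so $Q_\chi^{(m)}$ is indecomposable with simple top $K_\chi$, hence a projective generator of $F^m(\Omega^+_\chi)$. By the equivalence recalled in Section~3.1, $F^m(\Omega^+_\chi)$ is then equivalent to the category of finite-dimensional modules over the commutative algebra $A_m$, which upon eliminating the relation $t=\theta^2$ becomes $\mathbb C[x,y,\theta]/(x,y,\theta^2)^m$.

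The final step is to pass to the colimit over $m$: every object of $\Omega^+_\chi$ lies in some $F^m$, the layerwise equivalences are compatible with the filtrations $F^m\subset F^{m+1}$, and a finite-dimensional $\mathbb C[x,y,\theta]$-module on which $x,y,\theta$ act nilpotently is an $A_N$-module for $N$ large since $(x,y,\theta^2)^N\subseteq(x,y,\theta)^N$, while conversely every $A_m$-module has $x,y,\theta$ nilpotent. Hence $\Omega^+_\chi$ is equivalent to the category of finite-dimensional $\mathbb C[x,y,\theta]$-modules with nilpotent action of $x,y,\theta$, which proves the claim for $\gm^{\,\chi}$; the assertion for $\JJ^{\,\chi}$ follows because $\chi\neq 0$, so $Jor$ is an equivalence $\gm^{\,\chi}\xrightarrow{\ \sim\ }\JJ^{\,\chi}$ by Remark~\ref{corr}. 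The substantive input here is entirely contained in Lemma~\ref{projtypnil} (building the deformation $Q_\chi$ of $\bar K_\chi$ over $\mathcal S$ and computing $\End_{\gh}(Q_\chi^{(m)})$); the only points in the present argument that need care are the locality of $A_m$ — which is what upgrades ``$Q_\chi^{(m)}$ is projective'' to ``$Q_\chi^{(m)}$ is a projective generator of the block'' — and the compatibility of the equivalences as $m$ varies.
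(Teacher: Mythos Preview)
Your proposal is correct and follows the paper's approach exactly: the paper's proof is a single sentence deferring to Lemma~\ref{projtypnil} and Lemma~\ref{weighargument}, and you have simply filled in the details of how those lemmas yield the result via the projective-generator formalism. Your extra care in checking that $A_m=(\mathcal S/\mathcal J^m)[\theta]/(\theta^2-t)\simeq\mathbb C[x,y,\theta]/(x,y,\theta^2)^m$ is local (so that $Q_\chi^{(m)}$ is indecomposable and hence generates the block), and that the resulting filtered equivalences assemble to the desired description, is exactly what the paper leaves implicit.
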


    \subsection{Geometry of $3$-parameter family of representations of $\hat\g$}
    We provide here a geometric construction which shades some light on the results of the previous subsection.      
      We will construct a three-dimensional family of representation of $\hat\g$. We have
      $$\g_{\bar 1}=U\times \mathbb C^2,$$
      where $U$ is the $4$-dimensional irreducible representation of $\g_{\bar 0}=\mathfrak{sl}(2)\oplus\mathfrak{sl}(2)$ with highest weight $(1,1)$. For every line $\ell\subset\mathbb C^2$, we have
      a commutative subalgebra $\g_{\ell}\subset\g_{\bar 1}$, and it can be lifted to the subalgebra $\hat\g_{\ell}$ with one-dimensional center $Z_\ell\subset Z$.
      Note that $Z_\ell$ is a line $\mathbb C^3=Z$, thus, we have
      the map $\psi:\mathbb P^1\to \mathbb P(Z)\simeq\mathbb P^2$. Now let $\chi\in\mathbb Z^*$, we say that $\ell$ is $\chi$-compatible if
      $\chi([\g_\ell,\g_\ell])=\chi(\psi)=0$. To compute $\psi$ consider the realization
      $$\g_\ell=\left\{X_B=\left[\begin{array}{cc} 0&t_1B\\t_2B^*&0\end{array}\right]\right\}$$
      where $(t_1,t_2)$ are homogeneous coordinates of $\ell$. Then
      $$[X_B,X_B]=\det B( t_1^2z_1+2t_1t_2z_0+t_2^2z_{-1}).$$
      Thus, $\psi$ is the Veronese map. Therefore for every $\chi\neq 0$ there exists at most two choices of a compatible $\ell$. More precisely, for a semisimple
      $\chi$ we have two $\chi$-compatible lines, and for a nilpotent $\chi$ a $\chi$-compatible $\ell$ is unique. Let
      $$M_{\chi}:=\operatorname{Ind}^{\hat\g}_{\hat\g_{\bar 0}+\g_\ell}\mathbb C_{\chi}.$$
      If $k=0$ then $M_\chi$ is isomorphic to $K_\chi$.
      Let $$\mathcal M=\{(\chi,\ell)\,|\,\chi\neq 0,\chi(\psi(\ell))=0\}$$ with obvious structure of smooth complex manifold.
      By construction $\mathcal M$ is isomorphic to a non-trivial $SL(2)$-equivariant two-dimensional vector bundle on $\mathbb P^1$.
      Our construction defines a vector bundle on $\mathcal M$ with fiber isomorphic
      to $M_\chi$. For every open set $\mathcal U\subset \mathcal M$, we thus obtain a representation of the Lie superalgebra
      $\mathcal O(\mathcal U)\otimes \hat\g$. For every point $(\chi,\ell)\in \mathcal M$ we obtain a representation of
      $\mathcal O_{\chi,\ell}\otimes \g$, where $\mathcal O_{\chi,\ell}$ is the local ring of the point. If $\mathcal J_{\chi,\ell}$ denote the unique maximal
      ideal of $\mathcal O_{\chi,\ell}$, the quotient $\mathcal O_{\chi,\ell}/\mathcal J^m_{\chi,\ell}$ is isomorphic to $\mathbb C[x_1,x_2,x_3]/(x_1,x_2,x_3)^m$.
      In the previous section we have proved  that for a non-zero semisimple $\chi$ the $\gh$-module
      $$M^{(m)}_\chi\otimes_{\mathcal O_{\chi,\ell}}\mathcal O_{\chi,\ell}/\mathcal J^m_{\chi,\ell}$$
      is projective in $F^{(m)}(\gm)$.
                                                        
      \subsection{Atypical blocks}
      We proceed to the description of $\gm^{\,\chi}$ in the case of an atypical $\chi$.
      This amounts to considering two cases $k=p=0,c=2$ and $\chi=0$.
      We start with the first case.
      \begin{lem}\label{atypicalsemisimple} Let $k=p=0,c=2$. There is the following non-split exact sequence
        $$0\to S^2V\to K_\chi\to \Lambda^2V\to 0.$$
      \end{lem}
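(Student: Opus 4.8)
The plan is to realize the desired sequence as the radical filtration of the Kac module $K_\chi$, using the classification of simple objects from Theorem~\ref{shortsl}. Since $\chi=(c,p,k)=(2,0,0)$ is semisimple, every object of $\gm^{\,\chi}$ is a module over $\hat\g/\operatorname{Ker}\chi\cong\ssl(2|2)$, with $z_0$ acting by $2$ and $z_{\pm1}$ by $0$. From the proof of Theorem~\ref{shortsl}(c) — the atypical subcase with $(\lambda,\alpha)=1$, which is precisely the one producing $c=2$ — the module $\Lambda^2 V$ is the unique simple quotient of $K_\chi$. Thus there is a surjection $\pi\colon K_\chi\twoheadrightarrow\Lambda^2 V$, and $K_\chi$, having simple cosocle, is indecomposable.

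Next I would do a dimension count. The nilradical of $\pp$ opposite to $\g_1\oplus\g_2$ is the $(0|4)$-dimensional odd space $\g_{-1}$, together with the central $\g_{-2}$, which acts by $0$ since $p=0$; hence $K_\chi\cong\Lambda(\g_{-1})\otimes\C_\chi$ has dimension $16$, while $\dim S^2 V=\dim\Lambda^2 V=8$ for $V=\C^{2|2}$. Therefore $N:=\ker\pi$ has total dimension $8$. By Lemma~\ref{weighargument} all composition factors of the indecomposable module $K_\chi$ lie in the block of $\Lambda^2 V$; since a simple module and its opposite lie in different blocks, Theorem~\ref{shortsl}(c) leaves only $S^2 V$ and $\Lambda^2 V$ as possible composition factors. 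Comparing dimensions, $N$ is simple, isomorphic to $S^2 V$ or to $\Lambda^2 V$, and $K_\chi$ has length $2$.

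The remaining, and only genuinely delicate, point is to exclude $N\cong\Lambda^2 V$. If that held, $K_\chi$ would be a self-extension of $\Lambda^2 V$, non-split because $K_\chi$ is indecomposable. But the Cartan subalgebra of $\g_{\bar 0}=\ssl(2)\oplus\ssl(2)\oplus\C z_0$ acts semisimply on $K_\chi$: the two $\ssl(2)$-summands act semisimply on any finite-dimensional module, and $z_0$ acts by the scalar $2$. Hence the argument in the proof of Lemma~\ref{selfext} applies verbatim and forces any such self-extension to split — a contradiction. Therefore $N\cong S^2 V$, and the sequence $0\to S^2 V\to K_\chi\to\Lambda^2 V\to 0$ is non-split because $K_\chi$ is indecomposable. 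I expect the main obstacle to be precisely this last step, i.e. checking that the semisimplicity hypothesis of Lemma~\ref{selfext} is met on $K_\chi$ despite $\ssl(2|2)$ itself not having semisimple even part; everything else is a length-and-dimension bookkeeping argument on top of Theorem~\ref{shortsl}.
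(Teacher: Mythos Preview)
Your argument is correct, modulo one small inaccuracy in the setup: it is not true that \emph{every} object of $\gm^{\,\chi}$ is an $\ssl(2|2)$-module with $z_{\pm 1}$ acting by $0$, since $\gm^{\,\chi}$ only demands that $(z-\chi(z))$ act nilpotently. What you actually need, and what does hold, is that $z_1$ and $z_{-1}$ act by $0$ on $K_\chi$ specifically. For $z_{-1}$ this is built into $\pp$; for $z_1$ it follows because $z_1=[E_{13},E_{24}]\in U(\pp)$ and $\g_1$ annihilates the inducing line $\mathbb C_\chi$. Once this is said, the Cartan of $\hat\g_{\bar 0}$ (including all of $Z$) acts semisimply on $K_\chi$ and the highest-weight splitting argument from Lemma~\ref{selfext} goes through as you describe.

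Your route is genuinely different from the paper's. The paper argues directly: Frobenius reciprocity for $K_\chi=\operatorname{Ind}^{\hat\g}_\pp\mathbb C_\chi$ produces the surjection $K_\chi\twoheadrightarrow\Lambda^2V$, while the identification $K_\chi\simeq\operatorname{Coind}^{\hat\g}_\pp\mathbb C_\chi$ together with Frobenius reciprocity for coinduction produces the injection $S^2V\hookrightarrow K_\chi$; indecomposability is read off from $K_\chi^{\g_1}=\mathbb C_\chi$. This identifies the socle explicitly and avoids any Ext computation. Your argument instead leans on the classification in Theorem~\ref{shortsl}, a block/parity restriction via Lemma~\ref{weighargument}, and then a self-extension vanishing to rule out $N\cong\Lambda^2V$. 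The paper's approach is shorter and more self-contained; yours has the virtue of showing that the only real content of the lemma, beyond dimension-counting, is the absence of a semisimple self-extension of $\Lambda^2V$.
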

      \begin{proof} The map $\mathbb C_{\chi}\to\Lambda^2V_0\to \Lambda^2V$ is a homomorphism of $\pp$-modules.
        Hence by Frobenius reciprocity we have a surjection $K_\chi\to \Lambda^2V$. On the other hand, 
        $K_{\chi}\simeq\operatorname{Coind}^\g_\pp (\mathbb C_{\chi})$  and 
       $S^2V\to S^2V_1\to \mathbb C_\chi$ is an homomorphism of $\pp$-modules. Hence we have an injection $S^2V\to K_{\chi}$. 
       Finally, $K_\chi^{\g_1}=\mathbb C_\chi$ which implies indecomposability of $K_\chi$.
      \end{proof}

      By Lemma \ref{weighargument} we obtain that $\gm^{\,\chi}$ has two blocks obtained from each other by parity switch. By Lemma \ref{projtypsim} $\hat K_{\chi}^{(m)}$
      is a projective cover of $\Lambda^2V$ in $F^m(\gm^{\,\chi})$. To construct a projective cover of $S^2V$ consider the automorphism $\pi$ of $\gh$
      defined by $\pi\left[{}^A_C{}^B_D\right]=\left[{}^D_B{}^C_A\right]$, $\pi(z_0)=z_0$, $\pi(z_{\pm 1})=z_{\mp 1}$. We have $V^\pi\simeq V^{op}$ and hence
      $(\Lambda^2V)^\pi\simeq S^2V$. Thus, $(\hat K_{\chi}^{(m)})^\pi$ is a projective cover of $S^2V$ in  $F^m(\gm^{\,\chi})$.
      The algebra $\operatorname{End}_{\gh}(\hat K_{\chi}^{(m)}\oplus(\hat K_{\chi}^{(m)})^\pi)$ is isomorphic to the path algebra of the quiver 
     
      $$
      Q\qquad \xymatrix{\bullet \ar@(ul,ur)[]|{\alpha} \ar@(dl,dr)[]|{\zeta}\ar@/^0.4pc/[r]^\beta & \bullet\ar@(ul,ur)[]|{\gamma}\ar@(dl,dr)[]|{\eta}\ar@/^0.4pc/[l]^{\delta}}  \qquad \text{with relations} \quad R=\left\{\begin{array}{c} 
       \beta\alpha=\gamma\beta,\ \beta\zeta=\eta\beta,\ \zeta\delta=\delta\eta\\
       \alpha\delta=\delta\gamma,\ \alpha\zeta=\zeta\alpha,\ \gamma\eta=\eta\gamma\end{array}\right\}
       $$     
    \\
      Therefore we obtain the following
      \begin{thm}\label{atypical1} Let $\chi$ be semisimple atypical. Each of two blocks of $\gm^{\,\chi}$ (and  $\JJ^{\,\chi}$) is equivalent to the category of finite-dimensional
        nilpotent representations of the quiver $Q$ with relations $R$.
      \end{thm}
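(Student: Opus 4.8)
The plan is to deduce the theorem from the general mechanism recalled in Section~3.1 together with the projective modules already constructed. First I would reduce to a single block. By Lemma~\ref{weighargument} the category $\gm^{\,\chi}$ decomposes as $\mathcal B\oplus\mathcal B^{op}$, where $\mathcal B^{op}$ is obtained from $\mathcal B$ by the parity change functor, so it is enough to describe one block, say the block $\mathcal B$ whose simple objects are $S^2V$ and $\Lambda^2V$ (these are the only two, by Theorem~\ref{shortsl}(c) together with the parity splitting). Every object of $\mathcal B$ lies in some $F^m(\gm^{\,\chi})$, hence $\mathcal B=\bigcup_m F^m(\mathcal B)$, and it suffices to treat each $F^m(\mathcal B)$ and then pass to the limit.

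Next I would check that $P^{(m)}:=\hat K_{\chi}^{(m)}\oplus(\hat K_{\chi}^{(m)})^{\pi}$ is a projective generator of $F^m(\mathcal B)$. Projectivity of $\hat K_{\chi}^{(m)}$ in $F^m(\gm^{\,\chi})$ is Lemma~\ref{projtypsim}, applicable since $c=2\neq 0$, and twisting by the automorphism $\pi$ gives projectivity of $(\hat K_{\chi}^{(m)})^{\pi}$; by Lemma~\ref{atypicalsemisimple} the cosocle of $\hat K_{\chi}^{(m)}$ is $\Lambda^2V$, so that of $(\hat K_{\chi}^{(m)})^{\pi}$ is $(\Lambda^2V)^{\pi}\simeq S^2V$, and thus $P^{(m)}$ covers both simple objects of $\mathcal B$. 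By the equivalence recalled in Section~3.1, $\Hom_{\gh}(P^{(m)},-)$ identifies $F^m(\mathcal B)$ with the category of finite-dimensional right modules over $A_m:=\End_{\gh}(P^{(m)})$. The identification of $\End_{\gh}(\hat K_{\chi}^{(m)}\oplus(\hat K_{\chi}^{(m)})^{\pi})$ with the path algebra of $Q$ modulo the relations $R$ (and modulo all sufficiently long paths) made above then shows $A_m\simeq \mathbb C Q/(R)$ truncated; taking the union over $m$ yields an equivalence of $\mathcal B$ with the category of finite-dimensional nilpotent representations of $(Q,R)$, and applying the parity functor gives the same description for $\mathcal B^{op}$. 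For the Jordan side, since $\chi\neq 0$, Remark~\ref{corr} says $Jor\colon\gm^{\,\chi}\to\JJ^{\,\chi}$ is an equivalence of categories, so $\JJ^{\,\chi}$ inherits the same block decomposition and the same quiver description.

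The main obstacle is the endomorphism-algebra computation $A_m\simeq \mathbb C Q/(R)$ (truncated). Carrying it out requires reading off the radical filtrations of $\hat K_{\chi}^{(m)}$ and $(\hat K_{\chi}^{(m)})^{\pi}$ together with the spaces $\Hom_{\gh}(\hat K_{\chi}^{(m)},(\hat K_{\chi}^{(m)})^{\pi})$ and its transpose, recognizing the two commuting loops at each vertex as the deformation directions surviving in $\End_{\gh}(\hat K_{\chi}^{(m)})\cong\mathcal S/\mathcal J^m$, identifying the connecting arrows $\beta,\delta$ with the non-split extension of Lemma~\ref{atypicalsemisimple} and its $\pi$-twist, and finally verifying that the six relations in $R$ both hold and generate all relations in the truncated algebra. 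Everything else is a routine application of the formalism already in place.
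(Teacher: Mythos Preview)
Your proposal is correct and follows essentially the same approach as the paper: the paper's argument (given in the paragraph immediately preceding the theorem) also uses Lemma~\ref{weighargument} for the block decomposition, Lemma~\ref{projtypsim} to identify $\hat K_{\chi}^{(m)}$ and $(\hat K_{\chi}^{(m)})^{\pi}$ as projective covers of $\Lambda^2V$ and $S^2V$, and then asserts the isomorphism of $\End_{\gh}(\hat K_{\chi}^{(m)}\oplus(\hat K_{\chi}^{(m)})^{\pi})$ with the path algebra of $(Q,R)$ without further detail. One small remark: when you say the ``two commuting loops at each vertex'' correspond to the deformation directions in $\End_{\gh}(\hat K_{\chi}^{(m)})\cong\mathcal S/\mathcal J^m$, note that $\mathcal S$ has three variables, so the third generator at, say, the first vertex must be the composite $\delta\beta$; the relations in $R$ do force $\alpha$, $\zeta$, and $\delta\beta$ to pairwise commute, which is what makes the corner algebra a truncated polynomial ring in three variables.
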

     \noindent  Observe that the algebra obtained in Theorem~\ref{description-JP(2)} is a quotient of $(Q,R)$. Hence $(Q,R)$ has wild representation type.

      Now let us consider the case $\chi=0$. We start by describing the projective cover of $\ad$ in $\ggm$. Recall that $\g=\mathfrak{psl}(2|2)$.
      We set $\g^+:=\g_0\oplus\g_1$. Consider the $\g^+$-module $S:=\g_1\oplus \C$ with action of $x\in\g_1$ given by $x(y,1)=(0,\operatorname{tr}(xy))$.

      \begin{lem}\label{auxilaryzero} $\Ext^1_{\g^+}(S,\mathbb C)=\Ext^1_{\g^+}(S,\ad)=0$.
      \end{lem}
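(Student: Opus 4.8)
The plan is to compute both groups as relative Lie superalgebra cohomology, $\Ext^1_{\g^+}(S,X)\simeq H^1(\g^+,\g_0;\Hom_{\C}(S,X))$, which is legitimate because $\g_0\simeq\ssl(2)\oplus\ssl(2)$ is semisimple and so every finite-dimensional $\g^+$-module is $\g_0$-semisimple; this is the device used in Lemmas~\ref{extnonzero}, \ref{auxKac} and \ref{auxKacnil}. The relevant cochain complex is $0\to C^0=\Hom_{\g_0}(S,X)\xrightarrow{d_0}C^1=\Hom_{\g_0}(\g_1,\Hom_{\C}(S,X))\xrightarrow{d_1}\cdots$, and since $\g_1=\g^+/\g_0$ is abelian in $\psl(2|2)$ (its bracket lands in $\g_2=0$), the $1$-cocycle condition on $\varphi\in C^1$ collapses, after polarization, to $x\cdot\varphi(x)=0$ in $\Hom_{\C}(S,X)$ for every $x\in\g_1$; it therefore suffices to show that every such $\g_0$-equivariant $\varphi$ is a coboundary $d_0\psi$.

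The representation-theoretic input is the $\g_0$-module structure: $S\simeq\C\oplus\g_1$; both $\g_1$ and $\g_{-1}$ are the irreducible $U$ of highest weight $(1,1)$; $\ad=\g_{-1}\oplus\g_0\oplus\g_1$ with $\g_0\simeq(2,0)\oplus(0,2)$; and $\g_1\otimes\g_1\simeq(2,2)\oplus(2,0)\oplus(0,2)\oplus(0,0)$, in which $S^2\g_1=(2,2)\oplus(0,0)$ and $\Lambda^2\g_1=(2,0)\oplus(0,2)$. For $X=\C$ this settles the claim at once: $C^0$ is spanned by the projection $S\to\C$ killing $\g_1$, $C^1=\Hom_{\g_0}(\g_1\otimes\g_1,\C)$ is spanned by the nondegenerate pairing $\operatorname{tr}(xy)$, and $d_0$ of the projection is exactly this pairing (immediate from the definition of $S$); hence $\operatorname{im}d_0=C^1\supseteq\ker d_1$ and $H^1=0$.

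For $X=\ad$ one has $\dim C^0=2$ and $\dim C^1=4$: $C^0=\Hom_{\g_0}(\g_1,\g_{-1}\oplus\g_1)$ is spanned by the $\g_0$-isomorphism $\psi_{-1}\colon\g_1\xrightarrow{\sim}\g_{-1}$ (given on the off-diagonal blocks by $B\mapsto B^{*}$) and by the inclusion $\psi_1\colon\g_1\hookrightarrow\g_1$, both extended by zero on $\C\subset S$, while $C^1$ picks up $\Hom_{\g_0}(\g_1,\g_{-1}\oplus\g_1)\simeq\C^2$ from the $\C$-summand of $S$ and $\Hom_{\g_0}(\g_1\otimes\g_1,\g_0)\simeq\C^2$ from the $\g_1$-summand. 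A short check gives $d_0\psi_1=0$ — indeed $\psi_1$ is already a $\g^+$-homomorphism since $[\g_1,\g_1]=0$ — whereas $d_0\psi_{-1}$ is the nonzero cocycle $x\mapsto\bigl((y,a)\mapsto[x,\psi_{-1}(y,0)]\bigr)$, so $\operatorname{im}d_0$ is one-dimensional. Writing a cocycle as $\varphi=\varphi'+\varphi''$ with $\varphi'\in\Hom_{\g_0}(\g_1,\g_{-1}\oplus\g_1)$ and $\varphi''\in\Hom_{\g_0}(\g_1\otimes\g_1,\g_0)$, and splitting $x\cdot\varphi(x)=0$ by $\Z$-degree, one finds: the $\g_{-1}$-part of $\varphi'$ vanishes by nondegeneracy of $\operatorname{tr}$; $\varphi''$ is skew because $\Hom_{\g_0}(S^2\g_1,\g_0)=0$, so $\varphi''(x\otimes x)=0$, and then the $\g_1$-part of $\varphi'$ vanishes as well by nondegeneracy; there remains the single relation $[x,\varphi''(x\otimes y)]=0$ for all $x,y\in\g_1$.

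This last relation is the heart of the matter. Realizing $\g_1$ as the off-diagonal block matrices and using the $2\times2$ adjugate identity $B_1B_2^{*}+B_2B_1^{*}=\operatorname{tr}(B_1B_2^{*})\,I$ together with $[x,x]=0$ in $\psl(2|2)$, one writes the general skew $\varphi''$ as $\mu_1\beta_A+\mu_2\beta_D$, where $\beta_A,\beta_D$ are the skew $\g_0$-maps $\g_1\otimes\g_1\to\g_0$ valued in the two $\ssl(2)$-summands, namely $(x,y)\mapsto(B_xB_y^{*})_0$ and $(x,y)\mapsto(B_y^{*}B_x)_0$ (traceless parts); a short computation then gives $[x,\varphi''(x\otimes y)]=(\mu_2-\mu_1)\bigl(B_xB_y^{*}B_x-\frac12\operatorname{tr}(B_xB_y^{*})B_x\bigr)$, and since the bracketed expression is not identically zero the relation holds precisely when $\mu_1=\mu_2$, i.e.\ only on $\C\cdot d_0\psi_{-1}$. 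Hence $\ker d_1=\operatorname{im}d_0$ and $H^1(\g^+,\g_0;\Hom_{\C}(S,\ad))=0$. I expect the main obstacle to be exactly this final $X=\ad$ computation — pinning down the four-dimensional $C^1$ explicitly and verifying that the quadratic constraint cuts out only the line $\operatorname{im}d_0$, with no spurious cocycle hiding in $\Hom_{\g_0}(\Lambda^2\g_1,\g_0)$; everything else is routine bookkeeping in $\ssl(2)\oplus\ssl(2)$-representation theory.
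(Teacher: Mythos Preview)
Your argument is correct. For $\Ext^1_{\g^+}(S,\C)$ both you and the paper compute the relative cochain complex directly, though the paper packages the same count as a long exact sequence coming from $0\to\C\to S\to\g_1\to 0$; the content is identical.

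For $\Ext^1_{\g^+}(S,\ad)$ you take a genuinely different route. The paper does not touch the four-dimensional $C^1$ at all: instead it observes that the Kac module $K_0$ is injective as a $\g^+$-module and uses the two filtrations $0\to K'_0\to K_0\to\C\to 0$ and $0\to\C\to K'_0\to\ad\to 0$ to embed $\Ext^1_{\g^+}(S,\ad)$ into $\Ext^2_{\g^+}(S,\C)=H^2(\g^+,\g_0;S^*)$, which it then kills by a one-line dimension count of $C^1$ and $C^2$. Your approach is a head-on analysis of the cocycle condition in $\Hom_{\g_0}(\g_1,\Hom_\C(S,\ad))$: you split by $\Z$-degree, use the skewness of $\varphi''$ forced by $\Hom_{\g_0}(S^2\g_1,\g_0)=0$ to decouple $\varphi'_1$ (the key step is setting $y=x$ and using that the pairing $\operatorname{tr}(B_xB_x^*)=2\det B_x$ is generically nonzero), and then carry out the explicit $2\times 2$ adjugate computation on the two-dimensional space $\Hom_{\g_0}(\Lambda^2\g_1,\g_0)$ to see that only the line $\operatorname{im}d_0$ survives. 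The paper's argument is shorter and more structural, avoiding any matrix manipulation; yours is self-contained and does not require knowing that $K_0$ is injective over $\g^+$, at the cost of the final hands-on verification you correctly flag as the crux.
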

      \begin{proof} A simple computation shows that
        $$\Ext^1_{\g^+}(\g_1,\mathbb C)=H^1(\g^+,\g_0; \g_1)=\mathbb C,$$
        $$\Ext^1_{\g^+}(\mathbb C,\mathbb C)=H^1(\g^+,\g_0; \mathbb C)=0.$$
        Using the long exact sequence associated with the short exact sequence of $\g^+$-modules $0\to\mathbb C\to S\to\g_1\to 0$ we get
        $$0\to \Hom_{\g^+}(\mathbb C,\mathbb C)\to \Ext^1_{\g^+}(\g_1,\mathbb C)\to \Ext^1_{\g^+}(S,\mathbb C)\to 0,$$
        which implies  $\Ext^1_{\g^+}(S,\mathbb C)=0$.

        To prove the second vanishing we note that $K_0$ is both injective and projective in the category of $\g^+$-modules.
        Let $K'_0$ be the submodule defined the exact sequence $0\to K'_0\to K_0\to\mathbb C\to 0$. Since $\Hom_{\g^+}(S,\mathbb C)=0$
        and $\Ext^1_{\g^+}(S,K_0)$, we obtain $\Ext^1_{\g^+}(S,K'_0)=0$. Next we consider the exact sequence
        $$0\to\mathbb C\to K'_0\to\ad\to 0.$$
        Form the corresponding long exact sequence we have an embedding $\Ext^1_{\g^+}(S,\ad)\to \Ext^2_{\g^+}(S,\mathbb C)$. We will show that
        $\Ext^2_{\g^+}(S,\mathbb C)=H^2(\g^+,\g_0; S^*)=0$. Indeed, we have
        $$\Hom_{\g_0}(\g_1\otimes S,\mathbb C)=\Hom_{\g_0}(\Lambda^2\g_1\otimes S,\mathbb C)=\mathbb C.$$
        On the other hand $H^1(\g^+,\g_0; S^*)=\Ext^1_{\g^+}(S,\mathbb C)=0$, therefore the differential
        $$d:\Hom_{\g_0}(\g_1\otimes S,\mathbb C)\to\Hom_{\g_0}(\Lambda^2\g_1\otimes S,\mathbb C)$$
        is an isomorphism and there are no non-trivial two cocycles.
        The proof of lemma is complete.
      \end{proof}

      Let $P$ be the maximal quotient of $\operatorname{Ind}^\g_{\g^+}(S)$ which lies
      in $\ggm$. By the Shapiro lemma we have 
      $$\Ext^1_{\g}(\operatorname{Ind}^\g_{\g^+}(S),\ad)=\Ext^1_{\g}(\operatorname{Ind}^\g_{\g^+}(S),\mathbb C)=0.$$
      If $N$ is the kernel of the canonical projection $\operatorname{Ind}^\g_{\g^+}(S)\to P$, then $\Hom_{\g}(N,\ad)=\Hom_{\g}(N,\mathbb C)=0$
      and hence $\Ext^1_{\g}(P,\ad)=\Ext^1_{\g}(P,\mathbb C)=0$. Thus, $P$ is projective in $\ggm$. Furthermore, it is not difficult to see that
      $N$ is generated by a highest weight vector of weight $(2,2)$ and the structure of $P$ can be described by the exacts sequence
      $$0\to\mathbb C^3\to P\to \ad\to 0.$$

      Next we define $P^{(m)}$ as the maximal quotient of the induced module $\operatorname{Ind}^{\hat \g}_{\pp}(S\otimes (S(Z)/(Z)^m))$. Repeating the argument of the
      proof of Lemma \ref{projtypsim} one can show that $P^{(m)}$ is projective in $F^m(\gm^{0})$. It is always straightforward  $S(Z)/(Z)^m$ is isomorphic
      to $\operatorname{End}_{\gh}(P^{(m)})$. Finally $Jor(P^{(m)})$ is projective in $F^m(\JJ^0)$ and we obtain the following
      \begin{thm}\label{lastm} The category $\JJ^{\,0}$ is equivalent to the category of finite-dimensional representations of the polynomial ring $\mathbb C[x,y,t]$
        with nilpotent action of $x,y,t$.
        \end{thm}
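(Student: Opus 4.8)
The plan is to construct, for each $m$, an explicit projective generator of $F^m(\JJ^{\,0})$ with endomorphism ring a three-variable truncated polynomial ring, and then pass to the union over $m$. First one records the block decomposition of $\gm^{\,0}$: by Lemma~\ref{weighargument} the modules $\ad,\C$ cannot lie in the same block as $\ad^{op},\C^{op}$, while $\Ext^1_\g(\C,\ad)=H^1(\g,\ad)=\Der(\g)/\g=\ssl(2)\neq 0$ (recalled in the setup) links $\ad$ and $\C$; hence $\gm^{\,0}=\Omega\oplus\Omega^{op}$ where the block $\Omega$ has exactly the two simple objects $\ad$ and $\C$. Since $Jor(\C)=0$ while $Jor(\ad)=R$ is the regular $M^+_{1,1}$-bimodule (cf.\ the discussion after Theorem~\ref{shortsl}), it suffices to describe the block of $\JJ^{\,0}$ containing $R$; the remaining block is obtained from it by parity twist via $\Omega\simeq\Omega^{op}$.

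Next I would identify the projective cover of $\ad$ over $\g=\psl(2|2)$, i.e.\ in $\ggm=F^1(\gm^{\,0})$. The $\g^+$-module $S=\g_1\oplus\C$ is designed so that $\operatorname{Ind}^\g_{\g^+}(S)$ has $\ad$ in its head, and Lemma~\ref{auxilaryzero} together with the relative Shapiro lemma~\eqref{shapiro's_lemma} gives $\Ext^1_\g(\operatorname{Ind}^\g_{\g^+}(S),\C)=\Ext^1_\g(\operatorname{Ind}^\g_{\g^+}(S),\ad)=0$. Letting $P$ be the maximal quotient of $\operatorname{Ind}^\g_{\g^+}(S)$ lying in $\ggm$ and observing that the kernel $N$ of $\operatorname{Ind}^\g_{\g^+}(S)\twoheadrightarrow P$ has no quotient isomorphic to $\C$ or to $\ad$, one obtains $\Ext^1_\g(P,\C)=\Ext^1_\g(P,\ad)=0$; thus $P$ is projective in $\ggm$, and a direct inspection yields the structure $0\to\C^3\to P\to\ad\to 0$.

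Then comes the deformation over the three-dimensional center $Z$. With $\pp=\hat\g[0]\oplus\hat\g[1]=\g_0\oplus\g_1\oplus Z$, regard $S\otimes S(Z)/(Z)^m$ as a $\pp$-module ($\g_1$ acting through $S$, $Z$ through the second factor) and let $P^{(m)}$ be the maximal quotient of $\operatorname{Ind}^{\hat\g}_\pp(S\otimes S(Z)/(Z)^m)$ lying in $F^m(\gm^{\,0})$. Running the argument of Lemma~\ref{projtypsim} verbatim --- the required self-extension vanishing is the one just established, and $Z$ acts nilpotently of degree exactly $m$ by construction --- shows that $P^{(m)}$ is projective in $F^m(\gm^{\,0})$ with $\operatorname{End}_{\hat\g}(P^{(m)})\cong S(Z)/(Z)^m\cong\C[x,y,t]/(x,y,t)^m$, and its $(Z)$-adic layers reproduce $P$ as in Lemma~\ref{qext}, so $P^{(m)}$ is the projective cover of $\ad$ in $F^m(\Omega)$. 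Now $F^m(\Omega)$ has enough projectives and only two simples, hence is equivalent to $B_m$-mod for the finite-dimensional path algebra $B_m$ with vertices $\ad$ and $\C$, under which $P^{(m)}$ corresponds to $B_me_{\ad}$ and so $\operatorname{End}_{\hat\g}(P^{(m)})=e_{\ad}B_me_{\ad}$. By Proposition~\ref{quivers-relation}(3) (whose proof, from \cite{KS}, applies verbatim inside $F^m(\Omega)$), the path algebra of the $R$-block of $F^m(\JJ^{\,0})$ is $(1-e_0)B_m(1-e_0)$ with $e_0$ the idempotent of $\C$; since $\Omega$ has only the two vertices, $1-e_0=e_{\ad}$ and this equals $\operatorname{End}_{\hat\g}(P^{(m)})\cong\C[x,y,t]/(x,y,t)^m$. (Consistently, $\hat\g P^{(m)}=P^{(m)}$ --- the cosocle $\ad$ being nontrivial --- so Proposition~\ref{adjoint} already gives that $Jor(P^{(m)})$ is projective in $F^m(\JJ^{\,0})$.) Therefore the $R$-block of $F^m(\JJ^{\,0})$ is equivalent to finite-dimensional $\C[x,y,t]/(x,y,t)^m$-modules; taking the union over $m$ (every object lies in some $F^m$) yields the equivalence of that block with finite-dimensional $\C[x,y,t]$-modules on which $x,y,t$ act nilpotently, and the parity twist handles the remaining block.

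The main obstacle is the deformation step: one must check that both the projectivity of $P$ and the computation of its endomorphism ring genuinely propagate to all the thickenings $P^{(m)}$. Concretely, splitting a short exact sequence $0\to P^{(m)}\to M\to P^{(m)}\to 0$ in $F^m(\gm^{\,0})$ should reduce, after splitting over $\g_0\oplus Z$, to the vanishing of $\Ext^1_\g(P,\C)$ and $\Ext^1_\g(P,\ad)$ established above, exactly as in Lemma~\ref{projtypsim}; and the identification $\operatorname{End}_{\hat\g}(P^{(m)})\cong S(Z)/(Z)^m$ --- i.e.\ the absence of endomorphisms beyond central multiplications --- should be obtained from the length count $\operatorname{length}(P^{(m)})=\dim_{\C}(S(Z)/(Z)^m)\cdot\operatorname{length}(P)$, which ought to be spelled out with care.
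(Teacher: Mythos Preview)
Your proposal is correct and follows essentially the same route as the paper: construct the $\g^+$-module $S$, use Lemma~\ref{auxilaryzero} and Shapiro's lemma to see that the maximal short-graded quotient $P$ of $\operatorname{Ind}^\g_{\g^+}(S)$ is projective in $\ggm$ with structure $0\to\C^3\to P\to\ad\to 0$, then thicken over $S(Z)/(Z)^m$ to obtain $P^{(m)}$, argue as in Lemma~\ref{projtypsim} that it is projective in $F^m(\gm^{\,0})$ with $\operatorname{End}_{\hat\g}(P^{(m)})\cong S(Z)/(Z)^m$, and apply $Jor$. Your added detail on the block decomposition and the explicit invocation of Proposition~\ref{quivers-relation}(3) makes the passage from $\gm^{\,0}$ to $\JJ^{\,0}$ cleaner than in the paper, which simply asserts that $Jor(P^{(m)})$ is projective. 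One small sharpening: the endomorphism count is not the total length but rather $\dim\operatorname{End}_{\hat\g}(P^{(m)})=[P^{(m)}:\ad]$, which equals $\dim S(Z)/(Z)^m$ since each $(Z)$-layer is a copy of $P$ and contributes exactly one $\ad$; together with the evident injection $S(Z)/(Z)^m\hookrightarrow\operatorname{End}_{\hat\g}(P^{(m)})$ this gives the isomorphism.
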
      
       
        \section{Jordan superalgebra of a bilinear form}
        Let $V=V_{\bar 0}+V_{\bar 1}$ be a $\Z_2$-graded vector space equipped with a 
         bilinear form $(\cdot |\cdot )\,:\, V\times V\to\C$ which is symmetric on
         $V_{\bar 0}$, skewsymmetric on $V_{\bar 1}$ and satisfies
        $(V_{\bar 0}|V_{\bar 1})=0=(V_{\bar 1}|V_{\bar 0})$. Then superspace $J=\C 1\oplus V$, where $1\in J_0$
        has a Jordan superalgebra structure with respect to a product 
        $$
        (\alpha1+a)\cdot(\beta1+b)=(\alpha\beta+(a|b))1+\alpha b+\beta a, \quad \alpha,\,\beta\in\C,\ a,b\in V.
        $$
        Moreover if $(\cdot|\cdot)$ is non-degenerate then $J$ is simple. Let $\dim V_{\bar 0}=m-3$, $\dim V_{\bar 1}=2n$
        then the TKK construction of $J$ gives the orthosymplectic Lie superalgebra 
        $$
        \mathfrak{osp}(m|2n)=\left\{ A\in\gl(m|2n)\,|\, (Ax,y)+(-1)^{|A||x|}(x,Ay)=0,\ x,y\in V\right\}.
        $$
        Denote $\g=\mathfrak{osp}(m|2n)$ with $m\geq 3$ and $n\geq 1$.  
        In what follows we need the description of the roots of $\g$ 
        $$
        \begin{array}{c}
        \Delta_{\bar 0}=\{\pm (\varepsilon_i\pm\varepsilon_j)\mid 1\leq i<j\leq k\}\cup \{\pm (\delta_i\pm\delta_j)\mid 1\leq i<j\leq n\},\\
        \Delta_{\bar 1}=\{\pm (\varepsilon_i\pm\delta_j)\mid 1\leq i\leq k, 1\leq j\leq n\}\end{array} \qquad \text{if}\  m=2k
        \ \text {is\ even}$$
     and
        $$
        \begin{array}{c}\Delta_{\bar 0}=\{\pm (\varepsilon_i\pm\varepsilon_j),\pm \varepsilon_i\mid 1\leq i<j\leq k\}\cup \{\pm (\delta_i\pm\delta_j)\mid 1\leq i<j\leq n\},\\
\Delta_{\bar 1}=\{\pm (\varepsilon_i\pm\delta_j),\pm\delta_j\mid 1\leq i\leq k, 1\leq j\leq n\}\end{array}
\qquad \text{if}\  m=2k+1
        \ \text {is\ odd.}$$        

The semisimple element which defines the short grading on $\g$  is $h:=\varepsilon_1^\vee$.  The short $\ssl(2)$-subalgebra 
is spanned by $h$ and $e,f$. The definition of $e,f$ depends on the parity of $m$. If $m=2k+1$
$e\in \g_{\varepsilon_1},\, f\in \g_{\varepsilon_1}$ are roots vector corresponding to the short roots, For $m=2k$ let $\alpha=\varepsilon_1-\varepsilon_2,\ \beta=\varepsilon_1+\varepsilon_2 $ and
$e\in\g_{\alpha}\oplus\g_{\beta}$, $f\in\g_{-\alpha}\oplus\g_{-\beta}$. In both cases the short grading $\g=\g[-1]\oplus\g[0]\oplus\g[1]$ satisfies the condition
$\g_\gamma\in\g[i]$ iff $(\gamma,\varepsilon_1)=i$. We set $J:=Jor(\g)$.

\subsection{Modules in $\ggm$}
We choose the Borel subalgebra of $\g$ associated with the set of simple roots
$$\delta_1-\delta_2,\dots,\,\delta_{n-1}-\delta_n,\,\delta_n-\varepsilon_1,\,
\varepsilon_1-\varepsilon_2,\dots,\varepsilon_{k-1}-\varepsilon_k,\,\varepsilon_{k-1}+\varepsilon_k \quad \text{for\ } m=2k$$
and
$$\delta_1-\delta_2,\dots,\delta_{n-1}-\delta_n,\,\delta_n-\varepsilon_1,\,\varepsilon_1-\varepsilon_2,\dots,\varepsilon_{k-1}-\varepsilon_k,\,\varepsilon_k \quad \text{for\ } m=2k+1.$$
Denote by $L(\lambda)$ the simple $\g$-module with highest weight $\lambda$ with respect to this Borel subalgebra.
The invariant bilinear form on $\g$ induces the form on $\h$ and $\h^*$, the latter is defined in $\varepsilon,\delta$-basis by
$$(\varepsilon_i,\varepsilon_j)=\delta_{i,j},\,(\delta_i,\delta_j)=-\delta_{i,j},\, (\varepsilon_i,\delta_j)=0.$$
For $\mu\in\h^*$ such that $(\mu,\mu)\neq 0$ we define $\mu^\vee\in \h$ satisfying $\nu(\mu^\vee)=\frac{2(\mu,\nu)}{(\mu,\mu)}$.
The Casimir element $\Omega\in U(\g)$ is defined by the invariant form acts on $L(\lambda)$ by the scalar $(\lambda+2\rho,\lambda)$ where
$$\rho=\frac{1}{2}\sum_{\alpha\in\Delta_{\bar 0}}\alpha-\frac{1}{2}\sum_{\alpha\in\Delta_{\bar 1}}\alpha.$$
It was shown in \cite{Kac2} that $\hat{\g}=\g$.

According to \cite{ZM2} the Jordan superalgebra $J$ does not have finite-dimensional one sided modules due to the fact that the universal enveloping of $J$ is
the tensor product of the Clifford and Weyl algebras. Thus, $\gmh$ is empty.
The classification of simple objects of $\ggm$ is done in \cite{ZM}. We give the proof using TKK here for the sake of completeness.
\begin{lem}\label{simpbil} A simple finite-dimensional $\g$-module $L(\lambda)$ lies in $\ggm$ if and only if $\lambda=a\delta_1$ for $a\in\mathbb Z_{\geq 0}$.  In this case $L(\lambda)$ is isomorphic to $\Lambda^a(V)$ where $V$ is the standard $\g$-module.
\end{lem}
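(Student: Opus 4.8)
The plan is to translate membership in $\ggm$ into a condition on weights and then run a highest‑weight/dominance argument. The grading element $h$ of the short $\mathfrak{sl}(2)$ acts on the weight space $L(\lambda)_\mu$ by the scalar $(\mu,\varepsilon_1)$, and every root $\gamma$ of $\g$ has $(\gamma,\varepsilon_1)\in\{-1,0,1\}$; hence the $\varepsilon_1$‑grading of any module is automatically compatible with the short grading of $\g$, and $L(\lambda)\in\ggm$ if and only if $(\mu,\varepsilon_1)\in\{-1,0,1\}$ for every weight $\mu$ of $L(\lambda)$. With this reformulation the lemma breaks into two parts: (i) $\Lambda^a(V)\in\ggm$ and $\Lambda^a(V)\cong L(a\delta_1)$; (ii) if $L(\lambda)\in\ggm$ then $\lambda=a\delta_1$ for some $a\in\Z_{\geq 0}$.

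For (i): the extreme graded pieces $V[1]=\C v_{\varepsilon_1}$ and $V[-1]=\C v_{-\varepsilon_1}$ of the standard module are one‑dimensional and spanned by \emph{even} vectors, so in the super exterior power $\Lambda^a(V)$ each of $v_{\varepsilon_1},v_{-\varepsilon_1}$ occurs in a monomial at most once; therefore every weight of $\Lambda^a(V)$ has $\varepsilon_1$‑component in $\{-1,0,1\}$, i.e. $\Lambda^a(V)\in\ggm$. The vector $v_{\delta_1}^{\,a}\in S^a(V_{\bar 1})\subset\Lambda^a(V)$ is killed by all positive root vectors (its weight $a\delta_1$ is maximal for the chosen Borel), so $L(a\delta_1)$ is a subquotient of $\Lambda^a(V)$. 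To conclude $\Lambda^a(V)=L(a\delta_1)$ one uses that $V\cong V^{*}$ via the invariant form, hence $\Lambda^a(V)$ is self‑dual, together with part (ii): every composition factor of $\Lambda^a(V)$ is of the form $L(c\delta_1)$ with $c\le a$, while the $a\delta_1$‑weight multiplicity in $\Lambda^a(V)$ is $1$; a short multiplicity count then leaves no room for lower factors and forces irreducibility. (Equivalently, this irreducibility may be quoted from \cite{ZM}.)

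For (ii): write $\lambda=\sum_i a_i\varepsilon_i+\sum_j b_j\delta_j$. Dominance with respect to the even simple roots gives $a_1\ge a_2\ge\cdots\ge 0$ and a descending condition $b_1\ge b_2\ge\cdots\ge b_n\ge 0$ on the $\delta$‑coefficients, so $(\lambda,\varepsilon_1)=a_1\ge 0$ and hence $a_1\in\{0,1\}$. If $a_1=1$, take the odd isotropic simple root $\gamma=\delta_n-\varepsilon_1$: since $(\rho,\gamma)=0$ we have $(\lambda+\rho,\gamma)=(\lambda,\gamma)=-b_n-1\ne 0$, so $f_\gamma v_\lambda\ne 0$, and $\lambda-\gamma=\lambda-\delta_n+\varepsilon_1$ is a weight of $L(\lambda)$ with $(\lambda-\gamma,\varepsilon_1)=2$, contradicting $L(\lambda)\in\ggm$. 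Thus $a_1=0$, whence all $a_i=0$ and $\lambda=\sum_j b_j\delta_j$. Finally, suppose $b_j\ne 0$ for some $j\ge 2$; applying to $v_\lambda$ a suitable composite of lowering operators along the even roots $\delta_i-\delta_{i+1}$ and the odd root $\delta_n-\varepsilon_1$ produces a weight of $L(\lambda)$ with $\varepsilon_1$‑component $2$, again a contradiction, so $b_2=\cdots=b_n=0$ and $\lambda=a\delta_1$ with $a=b_1\in\Z_{\ge 0}$.

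I expect the dominance inequalities (which require a small case split between $m$ odd and $m$ even) and the multiplicity count in (i) to be routine. The genuine obstacle is the last step of (ii): one must realize the required $\varepsilon_1$‑pairing‑$2$ weight inside the \emph{simple} module $L(\lambda)$ rather than merely in its Verma cover, so the non‑vanishing of the relevant composite of root vectors has to be controlled — this is where I would restrict to the rank‑$\le 3$ subalgebra generated by $\delta_{n-1}-\delta_n$ and $\delta_n-\varepsilon_1$ (an $\mathfrak{sl}(3)$ or $\mathfrak{gl}(2|1)$) and use its representation theory to guarantee the composite does not die.
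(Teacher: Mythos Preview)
Your approach is correct and matches the paper's strategy up to the final step of (ii). The difference lies exactly at the point you flagged as the genuine obstacle: rather than tracking whether a composite of lowering operators survives in $L(\lambda)$, the paper uses \emph{odd reflections}. After establishing $\lambda=\sum_j b_j\delta_j$, it applies the sequence $r_{\delta_n-\varepsilon_1},\,r_{\delta_{n-1}-\varepsilon_1},\dots,r_{\delta_1-\varepsilon_1}$ to compute the highest weight $\mu$ of the same simple module with respect to a different Borel subalgebra. The rule $r_\alpha(\nu)=\nu-\alpha$ if $(\nu,\alpha)\neq 0$ and $r_\alpha(\nu)=\nu$ otherwise yields $\mu=\lambda+l\varepsilon_1-\sum_{j=1}^l\delta_j$, where $l$ is the largest index with $b_l\neq 0$; since $\mu$ is automatically a weight of $L(\lambda)$, the short-grading constraint forces $(\mu,\varepsilon_1)=l\in\{0,1\}$ and hence $\lambda=b_1\delta_1$. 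This sidesteps your nonvanishing worry entirely: the highest weight vector for the reflected Borel is nonzero in $L(\lambda)$ by definition, so no subalgebra restriction or PBW bookkeeping is needed. Your $\mathfrak{gl}(2|1)$ idea would also work but is more laborious; odd reflections are the standard device here and give a one-line conclusion.

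For part (i), your argument that $\Lambda^a(V)\in\ggm$ (via $v_{\pm\varepsilon_1}$ being even) is actually more explicit than what the paper writes; the paper simply declares the identification $\Lambda^a(V)\cong L(a\delta_1)$ ``straightforward''. Your proposed multiplicity count for irreducibility is not really spelled out, but neither is the paper's argument, so on that point you are in the same position.
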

\begin{proof} Let $\lambda=\sum_{j=1}^na_i\delta_i+\sum_{i=1}^kb_i\varepsilon_i$. Since $L(\lambda)$ is finite-dimensional we have by the
  dominance condition
  $$a_1\geq\dots\geq a_n\geq 0,\, a_i\in\mathbb Z,$$
  $$ b_i\in\mathbb Z/2,\, b_1\geq\dots \geq b_k\geq 0\ \text{if}\ m=2k+1,$$
  $$ b_i\in\mathbb Z/2,\, b_1\geq\dots\geq |b_k|\ \text{if}\ m=2k,$$
  and finally if $l$ is the maximal index for which $b_l\neq 0$ we have $a_n\geq l$.
  On the other hand, since $L(\lambda)$ has a short grading, we have $b_1=(\lambda,\varepsilon_1)=0$ or $1$.

  First, assume that $b_1=1$. Consider the odd simple root $\alpha=\delta_n-\varepsilon_1$, then $\lambda-\alpha$ is not a weight of $L(\lambda)$.
  That is possible only if $(\lambda,\alpha)=0$. But $(\lambda,\alpha)=a_n+b_1>0$. A contradiction.

  Therefore, $b_1=0$. Hence $\lambda=\sum_{i=1}^n a_i\delta_i$. To finish the proof we compute the highest weight of $L(\lambda)$ with respect to the Borel subalgebra
  obtained from our Borel subalgebra by the reflections with respect to the isotropic roots $\delta_n-\varepsilon_1,\dots,\delta_1-\varepsilon_1$. Recall the formula
  $$r_{\alpha}(\mu)=\begin{cases}\mu-\alpha\,\,\,\text{if}\,\,(\mu,\alpha)\neq 0, \\ \mu\,\,\,\text{if}\,\,(\mu,\alpha)=0. \end{cases}$$
    Thus, we have
    $$\mu:=r_{\delta_1-\varepsilon_1}\dots r_{\delta_n-\varepsilon_1}(\lambda)=\lambda+l\varepsilon_1-\sum_{j=1}^l\delta_i,$$
    where $l$ is the maximal index such that $a_l\neq 0$. Since $(\mu,\varepsilon_1)=\pm 1,0$ we obtain $l=1$ or $l=0$.
    Therefore $\lambda=a\delta_1$.
    That proves the first assertion. The second assertion is straightforward.
  \end{proof}

  \begin{thm}\label{semisimplebil} The category $\ggm$ is semisimple. Hence the category $\JJ$ is semisimple.
  \end{thm}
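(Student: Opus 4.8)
The plan is to prove that $\ggm$ has no non‑split extensions between simple objects; since every module in sight is finite‑dimensional, this already gives semisimplicity of $\ggm$, and semisimplicity of $\JJ$ then follows from Proposition~\ref{quivers-relation}: here $\hat\g=\g$ has trivial center, so $\ggm$ coincides with $\gm=\gm^{\,0}$ and $\JJ=\JJ^{\,0}$, and once the $\Ext$‑quiver of $\ggm$ has no arrows the same is true of the quiver of $\JJ$ by part (3) of that proposition. By Lemma~\ref{simpbil} the simple objects of $\ggm$ are the modules $L(a\delta_1)\cong\Lambda^aV$, $a\in\Z_{\geq 0}$, with $V=\C^{m|2n}$ the standard $\g=\mathfrak{osp}(m|2n)$‑module. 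Since $\g_{\bar 0}$ is semisimple and $\hat\g=\g$, Lemma~\ref{selfext} gives $\Ext^1_\g(\Lambda^aV,\Lambda^aV)=0$, so it remains to treat $\Ext^1_\g(\Lambda^aV,\Lambda^bV)$ with $a\neq b$.

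The first step is the Casimir $\Omega$. From the formula recalled above, together with $(\delta_1,\delta_1)=-1$ and $(\rho,\delta_1)=\frac{m}{2}-n$ (read off from the root data), $\Omega$ acts on $L(a\delta_1)$ by the scalar $c_a=a(m-2n-a)$. Hence, whenever $c_a\neq c_b$, any extension of $\Lambda^aV$ by $\Lambda^bV$ splits, because $\Omega$ is central and decomposes the extension into its generalized eigenspaces. An elementary computation shows that $c_a=c_b$ with $a\neq b$ occurs exactly when $a+b=m-2n$; note that then $a+b<m$ because $n\geq 1$. So the whole theorem comes down to the ``resonant'' case $a+b=m-2n$, $a\neq b$.

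Here I would compute the extension group as relative Lie superalgebra cohomology, $\Ext^1_\g(\Lambda^aV,\Lambda^bV)\cong H^1(\g,\g_{\bar 0};\Lambda^aV\otimes\Lambda^bV)$ — all the modules are self‑dual, so $\Hom_\C(\Lambda^aV,\Lambda^bV)\cong\Lambda^aV\otimes\Lambda^bV$ — and analyse the first terms of the standard cochain complex $\Hom_{\g_{\bar 0}}(\C,-)\to\Hom_{\g_{\bar 0}}(\g_{\bar 1},-)\to\Hom_{\g_{\bar 0}}(\Lambda^2\g_{\bar 1},-)$. Using the decomposition $\Lambda^aV=\bigoplus_i\Lambda^i(\C^m)\otimes S^{a-i}(\C^{2n})$ over $\g_{\bar 0}=\mathfrak{so}(m)\oplus\mathfrak{sp}(2n)$ and $\g_{\bar 1}\cong\C^m\boxtimes\C^{2n}$, one finds that the degree‑$0$ term $(\Lambda^aV\otimes\Lambda^bV)^{\g_{\bar 0}}$ vanishes precisely because $a+b<m$, and that the degree‑$1$ term $\Hom_{\g_{\bar 0}}(\g_{\bar 1},\Lambda^aV\otimes\Lambda^bV)$ vanishes as well unless $|a-b|=2$, i.e.\ $\{a,b\}=\{k,k+2\}$ with $m-2n=2k+2$; in that case it is an explicit finite‑dimensional space (of dimension at most $k+1$). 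Thus $H^1$ vanishes except possibly in this last sub‑case, where everything reduces to showing that the first differential is injective on that explicit space.

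The hard part is exactly this residual sub‑case. Note that the cruder invariants are powerless: by Lemma~\ref{weighargument} a weight space of $\Lambda^aV$ has parity independent of $a$, so the parity argument gives nothing, and the resonance $a+b=m-2n$ is ``self‑similar'' — it persists for the Casimir of the Levi $\mathfrak{osp}(m-2|2n)$ acting on the short‑graded pieces — so no obvious central element separates $\Lambda^aV$ from $\Lambda^bV$. I expect the injectivity of the differential to be established by a direct computation in the monomial basis of $\mathfrak{osp}(m|2n)$: writing a putative cocycle $\varphi\in\Hom_{\g_{\bar 0}}(\g_{\bar 1},\Lambda^aV\otimes\Lambda^bV)$ and evaluating the cocycle identity $x\varphi(x)=\frac{1}{2}\varphi([x,x])$ on suitably chosen root vectors $x\in\g_{\bar 1}$ to force $\varphi=0$, in the spirit of Lemmas~\ref{extnonzero} and~\ref{crucial}.
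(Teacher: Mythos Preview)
Your strategy coincides with the paper's: reduce via the Casimir to $a+b=m-2n$, interpret $\Ext^1$ as $H^1(\g,\g_{\bar 0};\Hom(\Lambda^aV,\Lambda^bV))$, and isolate the case $|a-b|=2$. Your claims that $C^0=0$ and that $C^1=0$ unless $|a-b|=2$ are correct --- the constraint $a+b=m-2n<m$ is precisely what kills potential contributions coming from the $\mathfrak{so}(m)$-duality $\Lambda^j\cong\Lambda^{m-j}$ --- so the reduction is sound, and your deduction of semisimplicity of $\JJ$ from that of $\ggm$ via Proposition~\ref{quivers-relation} is fine.

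The difference is in the residual case $b=a+2$. You propose to show $d_1:C^1\to C^2$ is injective on the full $(a+1)$-dimensional space $C^1$, and leave this as an expectation. The paper shortcuts this with a highest-weight argument you do not use: if a cocycle $\varphi$ satisfies $\varphi(\g_{\bar 1},v)=0$ for the highest weight vector $v\in S^a(V_{\bar 1})\subset\Lambda^aV$, then a preimage of $v$ in the extension is again a highest weight vector generating a copy of $\Lambda^aV$, so the sequence splits; since $C^0=0$ this forces $\varphi=0$. Hence one only needs to rule out a cocycle whose restriction to $\g_{\bar 1}\otimes S^a(V_{\bar 1})$ is nonzero, and that restriction is (up to scalar) the single map $\varphi(X_{u\otimes w},x)=u\wedge w\wedge x$. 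The paper kills it in one line: for $X_{u\otimes w}$ in the Borel and $x=v$, the cocycle condition $X_{u\otimes w}\varphi(X_{u\otimes w},v)=0$ gives $(w|w)\,u\wedge u\wedge v=0$, which fails for an isotropic-plus-dual choice of $w$. Your route would work, but without the highest-weight trick you face a genuine $(a+1)$-parameter computation rather than a single check; that trick is the piece your sketch is missing.
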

  \begin{proof} We have to show that
    \begin{equation}\label{Casimir}
      \Ext^1(L(a\delta_1), L(b\delta_1))=0.
      \end{equation}
    First we note that if  $\Ext^1(L(a\delta_1), L(b\delta_1))\neq 0$
    then the Casimir element acts on both modules by the same scalar. In our case it amounts to the condition
    $$a(a+2n-m)=b(b+2n-m).$$
    Since both $a,b$ are non-negative integers this is only possible if $a+b=m-2n$. All modules in question are self-dual it suffices to prove (\ref{Casimir})
    in the case when $b>a$ or equivalently
    $$H^1(\g,\g_{\bar 0}; \Lambda^a V\otimes \Lambda^b V)=0.$$
    We have the decomposition
\begin{equation}\label{decomp}
  \Lambda^c(V)=\bigoplus_{p+q=c}S^p(V_{\bar 1})\otimes\Lambda^q(V_{\bar 0}).
  \end{equation}
    The highest weight vector $v$ of $\Lambda^a(V)$ lies in the component $S^a(V_{\bar 1})$.
    We claim that if $\varphi\in\Hom_{\g_{\bar 0}}(\g_{\bar 1}\otimes \Lambda^a(V),\Lambda^b(V))$ is a non-trivial cocycle
    then $\varphi(g_{\bar 1},v)\neq 0$. Indeed, assume the opposite. Consider the sequence $0\to L(b\delta_1)\to M\to L(a\delta_1)\to 0$ defined by the
    cocycle $\varphi$. The $\g$-submodule
    of $M$ generated by $v$ is isomorphic to $L(a\delta_1)$ and the sequence splits.
    Thus, if there is a non-trivial extension we must have $\Hom_{\g_{\bar 0}}(\g_{\bar 1}\otimes S^a(V_{\bar 1}),\Lambda^b(V))\neq 0$. Furthermore,
    $\g_{\bar 1}\simeq V_{\bar 1}\otimes V_{\bar 0}$ as
    a $\g_{\bar{0}}$-module, therefore (\ref{decomp}) implies that $\Lambda^b(V)$ must have a component isomorphic to
    $S^{a+1}(V_{\bar 1})\otimes V_{\bar 0}$ or to  $S^{a-1}(V_{\bar 1})\otimes V_{\bar 0}$.
    This is possible only if $b=a+2$, $b=a+1+m$, $b=a$ or $b=a-1+m$. The case $b=a$ can be dismissed right away since there is no self-extension.
    The condition (\ref{Casimir}) helps to exclude the cases $b=a+1+m$, $b=a-1+m$. The following lemma completes the proof.
    \begin{lem} $$\Ext^1(\Lambda^a V,\Lambda^{a+2}V)=0.$$
    \end{lem}

    \begin{proof} We will show that there is no cocycle $\varphi\in\Hom_{\g_{\bar 0}}(\g_{\bar 1}\otimes \Lambda^a(V),\Lambda^b(V))$. Consider the restriction
      $\varphi: \g_{\bar 1}\otimes S^a(V_{\bar 1})\to S^{a+1}(V_{\bar 1})\otimes V_{\bar 0}$. Let $X_{u\otimes w}\in \g_{\bar 1}$ be the element corresponding to
      $u\otimes w$ for $u\in V_{\bar 1}$ and
      $w\in V_{\bar 0}$. Then without loss of generality we may assume
      $$\varphi(X_{u\otimes w},x)=u\wedge w\wedge x.$$
      In the case when $X_{u\otimes w}$ belongs to the Borel subalgebra and $x=v$ is a highest weight vector of $\Lambda^a(V)$ the cocycle condition implies
      $$X_{u\otimes w}\varphi(X_{u\otimes w},v)=X_{u\otimes w}(u\wedge w\wedge v)=0.$$
      Since $X_{u\otimes w}v=0$, the above condition actually implies $X_{u\otimes w}(u\wedge w)=0$. Now we use the formula
      $$X_{u\otimes w}(u\wedge w)=(w|w)u\wedge u. $$
      Let $u$ be a weight vector of weight $\delta_1$ and $w=w'+w''$ where $w',w''$
      are weight vector of weights $\varepsilon_1$ and $-\varepsilon_1$ respectively. Then $X_{u\otimes w}$ is a sum of root vectors in $\g_{\delta_1+\varepsilon_1}$ 
and $\g_{\delta_1-\varepsilon_1}$, hence $X_{u\otimes w}$ belongs to the Borel subalgebra. But $(w|w)\neq 0$. Thus we obtain a contradiction with the cocycle condition.
    \end{proof}
    
  \end{proof}

  \section{Acknowledgement} The first author was supported by Fapesp grant FAPESP 2017/25777-9. The second author was supported by  NSF grant 1701532.
  The first author express her gratitude to Department of Mathematics, University of California, Berkeley, were most of the work was done.

\end{document}